\documentclass{amsart}
\usepackage{amscd,amssymb,amsmath,amsthm}
\usepackage[dvips]{graphicx}
\usepackage[all]{xy}
\theoremstyle{plain}
\newtheorem{definition}{Definition}
\newtheorem{proposition}{Proposition}
\newtheorem{theorem}[proposition]{Theorem}

\newtheorem{lemma}[proposition]{Lemma}
\newtheorem{remark}[proposition]{Remark}
\newtheorem*{proposition*}{Proposition}
\newtheorem*{theorem*}{Theorem}
\newtheorem*{corollary*}{Corollary}
\newtheorem*{lemma*}{Lemma}
\newtheorem*{remark*}{Remark}
\newtheorem*{example*}{Example}

\newcommand{\Z}{\mathbb{Z}}
\newcommand{\Q}{\mathbb{Q}}
\newcommand{\R}{\mathbb{R}}
\newcommand{\C}{\mathbb{C}}

\begin{document}

\title{Point-Splitting Perturbative Chern-Simons I}
 
\author{Vito Iacovino}


\email{vito.iacovino@gmail.com}

\date{version: \today}


\begin{abstract}
We define mathematically the Point-Splitting Perturbative Chern-Simons Theory. 
The framing anomaly arise directly from the dependence of the configuration space of points on the frame. 
\end{abstract}

\maketitle

\section{Introduction}

Let $M$ be an oriented compact three manifold. 
The Perturbative Chern-Simons ($PCS$ for short) theory on $M$ was defined mathematically  by Axelrod and Singer more than thirty years ago  (\cite{AS1}, \cite{AS2})    . 
The $PCS$ partition function was defined in terms of an expansion in Feymann diagrams, where
 the contribution of a graph is obtained integrating a differential form (which depends on a metric of $M$) on the compactification of the configuration space of points associated to the vertices of the graph.
The framing anomaly arise because in order to cancel the dependence on the metric of $M$ it is necessary to introduce correction terms which depend on a frame of $M$.

In this paper we introduce the mathematical theory of  Point-Splitting-Perturbative-Chern-Simons ($PSPCS$ for short). The frame of $M$ is used directly to define (the analogous of) the configuration space of points. Making this definition mathematically precise is quite subtle. We introduce the Point-Splitting-Homology ($PSH$) which is the object playing the main role in our construction.

$PSPCS$ provides the natural ambient space to understand the Skein relations, a longstanding open problem in the context of standard $PCS$.  Indeed, in \cite{PSPCS2} we show that (a version of) $PSH$ is isomorphic to (a version of) the Skein.


The structures studied in this paper plays a fundamental role in the general Open Gromov-Witten theory (see \cite{MCH}, \cite{OGW-QME1}, \cite{OGW-QME2} ).
In degree zero the decorated graphs used in \cite{OGW-QME2}  reduces to the decorated graphs used in this paper and, accordingly, the Multi-Curve-Homology introduced in  \cite{OGW-QME2}  reduces  to the Pont-Splitting-Homology introduced in this paper.
The conditions defining Multi-Curve-Cycles are directly related to the constrains of the perturbation of the moduli space of (multi-)pseudo-holomorphic curves defined in \cite{MCH}. 

In this paper we consider the case of pure Chern-Simons, i.e. , without Wilson loops. In \cite{PSPCS2}  we extend the formalism to include Wilson loops. In \cite{PSPCS3} we consider the not compact case and the gluing procedure.

The starting point of the construction is as follows. 
In the standard approach to $PCS$ to any graph $G$ it is associated a (compactified) configuration space of points on $M$ which is a differential blow-up along the diagonals of the space of arrays $M^{V(G)}$ of points of $M$ labelled by the vertices of the graph (see \cite{AS2}). Denoting with $H(G)$ the set of half-edges of $G$, the space $M^{V(G)}$ may be identified with a subspace of $M^{H(G)}$ (this is the subspace where the points corresponding to the half-edges attached to each vertex coincide). 
In $PSPCS$ we consider a perturbation of this subspace which is transversal to the diagonals associated to the edges of the graph, that is, we \emph{split} the points associated to the half-edges attached to the same vertex. We are lead to consider a collection of singular chains labelled by the graphs. 
Of course this perturbation suffer of a very high indetermination. 
We constrains the admissible perturbations introducing the cycle condition and the forgetful compatibility condition which relate chains associated to different graphs. These conditions leads to the notion of Point-Splitting-Cycle.  A consequence of the cycle condition is that the integration of the Chern-Simons differential forms does not produces boundary terms (in contrast with the situation of the standard $PCS$ where these terms are responsible of the framing anomaly). Let us stress that in $PSPCS$ we do not use the compactified configuration spaces of points but we simply work with chains on $M^{H(G)}$.
In order to formulate the cycle condition it is necessary to consider graphs equipped with a suitable decoration: any graph is equipped with a set (called components), to each component it is associated a genus and a set of vertices. (The vertices may have attached an arbitrary number of half-edges, each component is required to satisfy a stability condition.)
There is a natural notion of isotopies of $PS$-cycles.
Taking isotopies classes of $PS$-cycles we obtain $PS$-homology.
The next step consists in showing that to each frame of $M$ we can associate canonically a $PSH$-class. 
Roughly this class may be characterized by requiring the invariance under the translation of the vertices. This condition is defined in a certain limit defined using the frame.  We make this condition precise introducing the notion of flat-approaching-$PS$-cycle. We show that, for any fixed frame, there is a unique flat-approaching-$PS$-cycle up to isotopies.

The paper is organized as follow.

In Section 2 we introduce $PS$-cycles and isotopies of $PS$-cycles. 
A $PS$-cycle associates to any decorated graph $G$ a singular chain on $M^{H(G)}$.
This collection of chains is required to satisfy the transversality condition, the cycle condition and the forget compatibility condition. To formulate the last condition we need to fix an Annulus Datum. 

In Section 3 we introduce $PS$-homology. We first define nice $PS$-cycles which may be thought as a sequence of isotopic $PS$-cycles shrinking in a certain sense.  
Point-Splitting-Homology $PSH(M | U)$ is defined by the isotopy classes of nice $PS$-cycles ($U$ denotes a smooth Euler Structure of $M$ related to the Annulus Datum).

In Section 4 we associate to each $PS$-cycle $Z$ its partition function $\mathfrak{P}(Z)$.  We integrate on $Z$ a collection of differential forms which are a generalization of the one appearing in \cite{CS}. 
The Stokes theorem yields the Quantum Master Equation for  $\mathfrak{P}(Z)$.  The cycle condition (for isotopies of) $PS$-cycles leads to the cancellations of the boundary terms.  
Compared to the standard approach here the analysis is simpler since it does not involve the vanishing lemmas.

In Section 5 we associate to any frame $\mathbf{fr}$ of $M$ a class $PSH$-class $[Z^{\mathbf{fr}, U}] \in PSH(M | U)$  (the Euler Structure $U$ is assumed compatible with $\mathbf{fr}$).  We characterize $[Z^{\mathbf{fr}, U}] $ introducing the notion of flat-approaching-$PS$-cycle. The frame $\mathbf{fr}$ is used in the formulation of the convergence condition of flat-approaching $PS$-cycle.  We refer to these classes as coherent $PSH$-classes. 

In  Section 6 we introduce the reduced-$PSH$, defined as a suitable quotient of $PSH$. Its computation takes a particular simple form.  We show that reduced $PSH$ is a rank one free modulo over the ring of formal power series on two variables $\Q[[\hbar,a]]$.  
A coherent $PSH$-class is a generator of $PS$-Homology as $\Q[[\hbar,a]]$-module.




\section{Point Splitting Cycles}

\subsection{Decorated Graphs}

A decorated graph  $G $ is defined by the data 
\begin{equation} \label{array-point-splitting}
   (    Comp_0 , (V_c,d_c,g_c)_c , (H_v )_v  , E )
\end{equation} 

where 
\begin{itemize}
\item $Comp_0$ is a finite set;
\item for each $c \in Comp_0 $, $V_c$ is a finite set and $d_c, g_c \in \Z_{\geq 0}$;

Set 
$$V(G)=  \bigsqcup_{c \in Comp_0} V_c ,  d(G)=  \sum_c d_c .$$ 

\item for each $v \in V$,  $H_v$ is a finite cyclically ordered set; 

Set 
$$H(G)= \bigsqcup_{v \in V(G)} H_v .$$
\item $E$ is a partition of $H$ in subset of cardinality two or one. The sets of cardinality two are called internal edges $E^{in}(G)$, the sets of cardinality one are called external edges $E^{ex}(G)$;
\end{itemize}
For each $c \in Comp_0$, set 
$$\chi_c= 2- 2g_c - |V_c|  - d_c.$$
We say that $G$ is stable if
$$ \chi_c - \frac{1}{2} |H_c| <0 \quad \forall c \in Comp_0.$$

There is an obvious notion of isomorphism of decorated graphs. 
Denote by $ \mathfrak{G}$ the set of isomorphism classes of stable decorated graphs.



Set 
$$ \chi(G) =  \sum_{c \in Comp_0} \chi_c - |E^{in}(G)| .$$
Denote by $\mathfrak{G}(\chi)$ the set of stable graphs $G$ with $\chi(G)= \chi$.


\subsubsection{Operation $\delta_v$, $v \in V(G)$}

 Let $G \in \mathfrak{G}$ and $v \in V(G)$ with $H_v = \emptyset$. Let  $c \in Comp_0(G)$ such that $v \in V_{c}$. The graph $\delta_{v} G$ is
  defined discarding $v$ from $V_{c}$ and replacing $d_{c}$ with $d_{c} +1$.
 

\subsubsection{Operation $\delta_e$,  $e \in E^{in}(G) $}

For $G \in \mathfrak{G}$  and $e = \{ h_1,h_2 \} \in E^{in}(G) $ the graph $\delta_e G$  is defined as follows:


\begin{itemize}
\item
Assume $h_1 \in H_{v_1}$ and $h_2 \in H_{v_2}$ with $v_1 , v_2 \in V(G), v_1 \neq v_2$. $V(\delta_e G)$ is obtained from $V(G)$ discarding $v_1,v_2$  and adding a  new vertex. Denote by $v_0$ the new vertex. 
Set $I_1= H_{v_1} \setminus \{ h_1 \},  I_2= H_{v_2} \setminus \{ h_2 \}.$  
The cyclic order of $H_{v_1 }$ and  $H_{v_2 }$ gives an ordering  of the sets  $I_1$ and $ I_2$.
$H_{v_0}$ is defined as 
$ H_{v_0} =  \{ I_1 , I_2  \}$
where the identity is considered as identity of cyclically ordered sets.

We have the following sub-cases:
\begin{itemize}
\item $v_1 \in V_{c_1}$, $v_2 \in V_{c_2}$ for  $c_1 , c_2 \in Comp_0(G)$ with $c_1 \neq c_2$: $Comp(\delta_e G)$ is obtained from $Comp(G)$ replacing the components $c_1$ and $c_2$ with a unique component. Denote by $c_0$ the new component. $V_{c_0}$ is obtained from $V_{c_1} \sqcup V_{c_2} $ replacing the vertices $v_1$ and $v_2$ with $v_0$. We set 
$d_{c_0}=d_{c_1} + d_{c_2}$, $  g_{c_0} = g_{c_1}+ g_{c_2}$.

\item $v_1,v_2 \in V_{c_0}$ for some $c_0  \in Comp_0(G)$:  $Comp_0(\delta_e G) =Comp_0(G)$,
$V_{c_0}$ is obtained replacing the vertices $v_1$ and $v_2$ with $v_0$, the genus $g_{c_0}$ is increased by one, 
 $d_{c_0}$ does not change.

\end{itemize}
\item
Assume $h_1,h_2 \in H_{v_0}$ for $v_0 \in V_{c_0}$ with  $c_0 \in Comp_0(G)$. $ V(\delta_e G)$ is obtained from $V(G)$ replacing $v_0$ with two new vertices $v_1, v_2$. Let  $I_1$ and $I_2$  be the unique ordered sets such that the identity  
$$H_{v_0 }= \{ h_1 , I_1 , h_2, I_2 \},$$ 
holds as identity of  cyclicaly ordered sets .  We set 
$$ H_{v_1} =  I_1, \text{    }  H_{v_2} =  I_2 ,$$
where the identity is considered as identity of cyclically ordered sets.
 $Comp_0(\delta_e G) = Comp_0(G)$, $V_{c_0}(\delta_e G)$ is obtained replacing $v_0$ with $v_1, v_2$,    $g_{c_0}$ and $d_{c_0}$ do not change. 

\end{itemize}

\subsubsection{Marked Graphs} \label{marking-section}
 
Define the set of  decorated graphs $\mathfrak{G}_l$  as the set of pairs $(G,m)$, where $G \in \mathfrak{G}$ and  
$$m= \{ E_0, E_1,..., E_l \}$$ 
with $E_0 \subset E_1 \subset ... \subset E_l \subset E(G)$ be an increasing finite sequence of subsets of $E(G)$ with $E^{ex}(G) \subseteq E_0$.
The definition the operator $\delta_e$  works on $\mathfrak{G}_l$ for $e \in E^{in}(G) \setminus E_l$.  

 
 Denote
 $$ \mathfrak{G}_* = \sqcup_l  \mathfrak{G}_l .$$

Let  $cut_{E_0}G$ be the graph obtained cutting the edges $E_0$. So $E^{in }(cut_{E_0}G)= E^{in}(G) \setminus E_0^{in}, H(cut_{E_0}G)= H(G)$.  
We consider the decorated graphs up to the equivalence relation generated by
$$ (G,m ) \sim ( cut_{E_0}G, cut_{E_0}m   ) $$ 
where 
$$cut_{E_0}m =   \{ E^{ex}(cut_{E_0}G), E_1,..., E_l \}.$$


\subsection{Point Splitting Cycles}

We now introduce the space of Point Splitting Cycles. The definition depends on the choice of  what we call an Annulus Data, which enter in the forgetful compatibility condition. 

In the following we use the notations for the chains on global orbit-fold of Subsection   \ref{orbitfold}. We also use chains on (multi)-loops as in Section \ref{chain-loop-section}.

\subsubsection{Annulus data} \label{annulus-data-section}



Let $Disk0$ be the graph  with exactly one component (which we denote by $c$) characterized as follows
$$Disk0: |V_c|=1, d_c= 0, |H_c| =3 ,  |E^{in}_c|=  0.$$

Define the chain $\underline{Z}_{Disk0    }   \in  C_*(\mathfrak{L}(M) , \mathfrak{o}(H(Disk0))  $  by the maps
$$w: \mathbb{S}^1 \times M \rightarrow M $$
$$ t_{v}: H_{v} \rightarrow   \mathbb{S}^1 $$
where, $w$ is the projection on the second factor, $v$ is the only vertex of $Disk0$, $t_{v}$ is any injective map between $H_v$ and $ \mathbb{S}^1 $ respecting the cyclic order. 
Up to triangulations, $\underline{Z}_{Disk0    }$ does not depend on the choice of  $t_{v}$.

Let $Ann0 = \{  G_0, G_1,G_2 \}$  be the set of  decorated graphs with exactly one component (which we denote by $c$) characterized as follows
\begin{itemize} \label{unstable-cases}
\item $G_0$: $|V_c|=1, d_c=0$,  $|H_c| =3 $,  $|E^{in}_c|=1$;  \label{disk-edge}
\item  $G_1$: $|V_c|=2, d_c=0$ , $|H_c| = 1 $;   \label{annelus} 
\item    $G_2$: $|V_c|=1, d_c=1$ ,$|H_c| = 1 $.   \label{annelus-deg}
\end{itemize}

An Annulus Data $\underline{Z}^{\circledcirc}$ consists in a collection of chains $(\underline{Z}_G^{\circledcirc})_{G \in Ann0}$ 
\begin{equation} \label{chains-loops-annulus}
\underline{Z}_{G}^{\circledcirc} \in  C_*(\mathfrak{L}_{\hat{V}(G)}(M) , \mathfrak{o}(H(G)))^{\text{Aut}(G)}  .
\end{equation}
such that
\begin{itemize}
\item \begin{equation} \label{cycle-condition-annulus}
 \partial ( \underline{Z}_{G_1} ^{\circledcirc} +   \underline{Z}_{G_2}^{\circledcirc}) =   \delta_{e} \underline{Z}_{G_0}^{\circledcirc};
 \end{equation}
 (see subsection \ref{operator-delta} for the definition of the operator $\delta_e$.) 
\item
$\underline{Z}^{\circledcirc}_{G_0} $ is close in the $C^0$ topology to $\underline{Z}_{Disk0}$. 
\end{itemize}


\subsubsection{Point Splitting Cycles} \label{PSC-subsection}

Fix an Annulus Data $\underline{Z}^{\circledcirc}$. 

A point splitting cycle  $Z$ with annulus data $\underline{Z}^{\circledcirc}$ consists on a collection of chains
\begin{equation} \label{collection-chains-definition}
( Z_{G,m} )_{(G,m) \in \mathfrak{G}_*}
\end{equation}
with
$${Z}_{G,m} \in C_{|H(G)| + |m|} ( M^{H(G)} , \mathfrak{o}_{H(G)})^{\text{Aut}(G,m)}  .$$
such  that 
\begin{itemize}
\item  $  Z_{G,m}= Z_{cut_{E_0}G,m}$ for each $(G,m)$.
\item  the transversality condition is satisfied (see  subsection \ref{transversality-section});
\item  the forgetful compatibility is satisfied  with annulus data $\underline{Z}^{\circledcirc}$ (see subsection  \ref{forget-PS-section});
\item the cycle condition is satisfied (see subsection \ref{cycle-condition-section});
\end{itemize}

We denote by $ \mathcal{Z}(M|  {Z}^{\circledcirc}) $ the vector space of $PS$-cycles with annulus data $\underline{Z}^{\circledcirc}$.




\subsubsection{Transversality Condition} \label{transversality-section}

We say that the transversality condition is satisfied for $(G,m)$ if $ Z_{G,m} $ is transversal  to   $ \sqcap_{e \in E'} \pi_e^{-1}(Diag)$ for each subset $E' \subset E^{in }(G )  \setminus E^l$;

\subsubsection{Forgetful Compatibility} \label{forget-PS-section}

Let $(G,m) \in \mathfrak{G}_l$ and $e \in E^{ex}(G)$. The graph $(G',m')= forget_e(G,m) \in \mathfrak{G}_l$  is obtained from $(G,m) $ removing the edge $e$.  

The definition of $(G',m')$   is straightforward in the case that $G$ is stable after removing $e$ .  

Assume that $G$ becomes unstable after removing $e$.
Let $v \in V(G)$ with $e \in H_v$ and let $c \in Comp(G)$ with $v \in V_c$. 
Let $G_e$ be the decorated graph defined by
$$Comp(G_e)= \{  c \}, \beta(G_e)= \beta_c ,  V(G_e)= V_c, g_{G_e}= g_c, d_{G_e}= d_c,$$
$$ H(G_e)=H_c, E^{in}(G_e) = \{ e \in E^{in}(G)| e \subset H_c \} .$$
The assumption on $e$ tells us that $G_e \in Ann0$ or $G_e=Disk0$.

We say that $e$ is not removable if $G_e = G_0$   and  $E^{in}_c \subset E_l$ . In all the other cases we say that $e$ is removable.


When $G_e = Disk0$ define $G'$ removing the component $c$ and gluing the two elements of $H_v \setminus \{ e \}$. More precisely let $H(G_e)= \{ h_1, h_2, e \}$.  If there exists $h_2' \in H(G)$ such that $h_1 \in E^{ex}(G)$, $\{ h_2, h_2' \} \in E^{in}(G)  $, declare $h_2' \in E^{ex}(G')$. 
If there exists $h_1', h_2' \in H(G)$ such that $\{ h_1, h_1' \} \in E^{in}(G)$, $\{ h_2, h_2' \} \in E^{in}(G)  $
set $\{ h_1', h_2' \} \in E^{in}(G')$.
If $\{ h_1, h_1' \} \in E_i,  \{ h_2, h_2' \} \in E(G) \setminus E_{i-1} $  set   $\{ h_1', h_2' \} \in E_i$. 

When  $G_e \neq Disk0$,  $G'$ is defined removing $G_e$ from $G$.
Note that in this case $G_e$ is a connected component of $G$. 



We say that the collection of chains  (\ref{collection-chains-definition}) satisfies the forget compatibility condition if there exists a collection of chains 
\begin{equation} \label{chains-loops}
\underline{Z}_{G,m} \in  C_{|H(G)| + |m|}(\mathfrak{L}_{\hat{V}(G)}(M) , \mathfrak{o}(H(G)))^{\text{Aut}(G,m)}  .
\end{equation}
such that the following happen:
\begin{itemize}
\item  for each $(G,m)$
\begin{equation} \label{chains-ev}
Z_{G,m}= \text{ev}_* ( \underline{Z}_{G,m}) 
\end{equation}
\item If $G$ is stable after removing $e $  we require
    \begin{equation} \label{stable-forget-condition}    
     \underline{Z}_{G,m} = \underline{Z}_{G',m'} \times_{\textbf{forget}_e} \mathfrak{L}_{\hat{V}(G)}( M)  .
     \end{equation}
      \item in the case $G$ is unstable after removing $e $ and $e$ is removable  we require
 \begin{equation} \label{removible-forget-condition}
 \underline{Z}_{G,m} = \underline{Z}_{G',m'} \times \underline{Z}_{G_e}^{\circledcirc}  
 \end{equation}     
 when $G_e \in Ann0$, and
 \begin{equation} \label{removible-forget-condition2}
 \underline{Z}_{G,m} = \underline{Z}_{G',m'} \times \underline{Z}_{Disk0}  
 \end{equation}  
 when $G_e= Disk0$.
      \item In the case  $e$ is not removable we require that $ \underline{Z}_{G,m}$ is close in the $C^1$-topology to $ \underline{Z}_{G',m'} \times \underline{Z}_{Disk0}^{}$.  We write this as
 \begin{equation} \label{unstable-forget-condition}
 \underline{Z}_{G,m} \approx \underline{Z}_{G',m'} \times \underline{Z}_{Disk0}^{} .
 \end{equation}
  \end{itemize}

\begin{remark}
The collection of chains (\ref{chains-loops}) satisfying the above conditions, if it exists, is uniquely determined by the collection of chains (\ref{collection-chains-definition}). 
\end{remark}

\subsubsection{Cycle Condition} \label{cycle-condition-section}




 

Let  $v \in V(G)$ with $H_{v} = \emptyset$. Let $c \in Comp_0$ with $v \in V_c$.
Denote with $\delta_v G$ the decorated graph obtained from $G$ discarding the vertex $v$ and increasing $d_c$ by one.


According to our definition of chain on loop spaces provided in section \ref{chain-loop-section}, we have special boundary faces  corresponding to the face  $N_{0}$ in the generator  (\ref{delta-loop0}) , 
Becouse of these special boundary faces  $\partial   {\underline{Z}}_{G,m} $ does not define an element of  $C_*(  \mathfrak{L}_{\hat{V}(G)}(M) )$. 
Denote by $\partial^{red}  {\underline{Z}}_{G,m} \in C_*(  \mathfrak{L}_{\hat{V}(G)}(M) )$ the chain obtained discarding from $\partial   {\underline{Z}}_{G,m} $ the special boundary faces.

The projection  $  \mathfrak{L}_{\hat{V}(G)}(M)  \rightarrow  \mathfrak{L}_{\hat{V}(\delta_v G)}(M) $ applied to the special boundary faces  $N_{0,v}$ yields an element of  $ C_*(  \mathfrak{L}_{ \hat{V}{\delta_v G}}(M) )$. Denote by $ \delta_v {\underline{Z}}_{G,m} \in C_*(  \mathfrak{L}_{ \hat{V}(\delta_v G})(M) )$ the chain obtained from $ {\underline{Z}}_{G,m}$ in this way. 

We say that the cycle condition is satisfied for $(G,m) \in \mathfrak{G}_*$  if
\begin{equation} \label{cycle-condition-loop}
 \partial^{red}  {\underline{Z}}_{G,m}  =   \sum_{0 \leq i \leq |m|} (-1)^i {\underline{Z}}_{G, \partial_i m} + \sum_{(G',m', e')}  \delta_{e'} {\underline{Z}}_{G',m'}  +  \sum_{(G',m',v')} \delta_{v'} {\underline{Z}}_{G',m'} ,
  \end{equation}
where the second sum is over the triples  $(G',m', e')$ with $\delta_{e'}(G',m')=(G,m)$ and the last sum is over the triples  $(G',m', v')$ with $\delta_{v'}(G',m')=(G,m)$.

 We say that $ {\underline{Z}}$ satisfies the cycle condition if the cycle condition is satisfied for each  $(G,m) \in \mathfrak{G}_*$. 
 


\subsubsection{Operator $\delta_e$} \label{operator-delta}

Let us first fix the conventions about orientations. Let $N, X$ be oriented manifold. 
Let $f: N \rightarrow X$ be a smooth map and $Y$ be a submanifold of $X$. Assume that $f$ is transversal to $Y$, i.e,  $df_p : T_pN \rightarrow T_{f( p  )}X/T_{f(  p  )}Y$ is surjective for each $p \in f^{-1}(Y)$. The orientation of $f^{-1}(Y)$ is defined according to the (not canonical) isomorphism 
 \begin{equation} \label{orientation-intersection}
 TN \cong TX/TY \oplus T(f^{-1}(Y)) . 
 \end{equation}

We need to compare the orientation of $\partial (f^{-1}(Y)) $ with  $f_{\partial}^{-1}(Y)$, 
 where $f_{\partial}$ is the restriction of $f$ to $\partial N$. We assume that the codimension of $Y$ on $X$ is odd.

By definition
$$T (\partial N)=  TX/TY \oplus T(f_{\partial}^{-1}(Y)) .$$
and so
$$ TN= \R \oplus T (\partial N) =\R \oplus   TX/TY \oplus T(f_{\partial}^{-1}(Y))= - TX/TY \oplus  \R \oplus T(f_{\partial}^{-1}(Y)).$$
Comparing with (\ref{orientation-intersection}) we obtain $T( f^{-1}(Y))  = -   \R \oplus T(f_{\partial}^{-1}(Y))$, that is,
\begin{equation} \label{comparing-orientation}
T( \partial (f^{-1}(Y)))   = -    T(f_{\partial}^{-1}(Y)).
  \end{equation}

Now take $G \in \mathfrak{G}$ and $e \in E^{in}(G)$. 
For a chain $C \in C_*(M^{H(G)}, \mathfrak{o}_{H(G)})^{\text{Aut}((G,m)) }$ we define $\delta_e C= C \cap Diag_e$ as follows. We need to be careful about the orientation and the automorphism groups.



Denote by $\text{Aut}((G,m),e)$ the group of automorphims of $(G,m)$  fixing the edge $e$.
Consider the chain $C \cap Diag_e$  as an element of  $C_*(M^{H(G)}, \mathfrak{o}_{H(G)})^{\text{Aut}((G,m),e) }$.


The orientation of $C \cap  Diag_e $ is defined on each simplex $(N,f)$ according to (\ref{orientation-intersection}) where $X=M^{H(G)}, Y=Diag_e$.


There is an homomorphism of groups  $\text{Aut}((G,m),e) \rightarrow \text{Aut}((\delta_e G,m))$ which, together the projection $M^{H(G)} \rightarrow M^{H(\delta_e(G))}$, induces a map of global orbitfolds
$$pr: M^{H(G)}/\text{Aut}((G,m),e) \rightarrow M^{H(\delta_e G)}/ \text{Aut}((\delta_e G,m)).$$

Set
 $$\delta_e C = pr_*( C \cap Diag_e ).$$
 where we use the notations of (\ref{maps-orbitfold}) .
 


From  (\ref{comparing-orientation}) we obtain
$$ \delta_e \partial = - \partial  \delta_e . $$

In order to extend $\delta_e$ to chain on loops (\ref{chains-loops}) we observe that, the forgetful compatibility property implies that there exists a unique collection of chains
\begin{equation} \label{chains-loops-edge}
\{ \delta_e \underline{Z}_{G,m} \}_{G,m,e}  
\end{equation}
$$\delta_e \underline{Z}_{G,m} \in  C_*(  \mathfrak{L}_{\hat{V}{\delta_e G}}(M) )$$
where $e \in E(G) \setminus E_l$, 
such that 
\begin{equation} \label{chains-delta}
\delta_e {Z}_{G,m}= \text{ev} ( \delta_e \underline{Z}_{G,m}) 
\end{equation}
for each $(G,m,e)$.
Relation (\ref{chains-delta}) defines the operator $\delta$ on the chains of loops (\ref{chains-loops}). This operator can be seen as the higher genus generalization of the topological string bracket.

\subsubsection{Chains on Global Orbitfolds}   \label{orbitfold}

Let $X$ be a manifold, and let $\Gamma$ be a finite group acting on $X$. If $\mathfrak{o}$ is a fined set on which $\Gamma$ acts,  
 $\mathfrak{o}$ defines  a  local system on the global orbit-fold $X/\Gamma$.  The chains on $X/\Gamma$ with coefficients on  $\mathfrak{o}$ may be identified with the $\Gamma$-invariant chains on $X$:
$$C_*(X/\Gamma, \mathfrak{o}) = C_*(X , \mathfrak{o})^{\Gamma}.$$
Let $h: \Gamma_1 \rightarrow \Gamma_2$ be an homomorphism of groups. An $h$-equivariant   smooth map  $f : (X_1, \mathfrak{o_1} ) \rightarrow (X_2, \mathfrak{o_2})$ defines  a map of orbit-folds $(X_1, \mathfrak{o_1} )   /\Gamma_1 \rightarrow (X_2, \mathfrak{o_2} )/\Gamma_2$. The induced map on the chains is given by 
\begin{equation} \label{maps-orbitfold}
  C_*(X_1, \mathfrak{o_1})^{\Gamma_1} \rightarrow  C_*(X_2, \mathfrak{o_2})^{\Gamma_2} 
\end{equation}
$$ C  \mapsto \frac{1}{|\Gamma_1|} \sum_{g_2 \in \Gamma_2} (g_2)_*(f_*(C)).$$

For finite  set $S$,  denote by $\mathfrak{o}_S$ the set of ordering of $S$ up to parity: 
$$\mathfrak{o}(S) = \frac{\text{ordering of }S}{\text{even permutations of  }  S}.$$

 The main case  we are interested in this paper is the following:  $X=M^{H(G)}$, $\Gamma= \text{Aut}(G,m)$ and $\mathfrak{o}= \mathfrak{o}_{H(G)}$  is the set of ordering of $H(G)$ up to parity, where  $(G,m)$ is a decorated graph. 

\subsection{Isotopies} \label{isotopies-section}

Let $\tilde{Z}^{\circledcirc}$ be an isotopy between annulus data ${Z}^{\circledcirc,0}$ and  ${Z}^{\circledcirc,1}$. Let $Z^0 \in \mathcal{Z}(M|  {Z}^{\circledcirc,0})$ and $Z^1 \in \mathcal{Z}(M|  {Z}^{\circledcirc,1})$. An isotopy  $\tilde{Z} \in \tilde{\mathcal{Z}}$ of $PS$-cycles  between $Z^0$ and $Z^1$  consists in a collection of Borel-Moore chains
\begin{equation} \label{collection-chains-isotopy}
\tilde{Z}=  ( \tilde{Z}_{G,m} )_{G,m} ,
\end{equation}
with
$$\tilde{Z}_{G,m} \in C_{|H(G)| + |m|+1} ( M^{H(G)} \times \R  , \mathfrak{o}_{H(G)})^{\text{Aut}(G,m)}  .$$
We require that
\begin{itemize}
\item there exists  $\lambda^{\dagger} >0$  big enough such that
$$ \tilde{Z}^{<-\lambda^{\dagger}} =  Z^0  \times  \R_{<-\lambda^{\dagger}}  , \quad \tilde{Z}^{>\lambda^{\dagger}} =   Z^1  \times \R_{>\lambda^{\dagger}}   ;$$

 \item 
the transversality condition is satisfied;


\item the forgetful compatibility condition is satisfied.

\item the cycle condition is satisfied.


\end{itemize}

We denote the space of isotopies of $PS$-cycles with $ \tilde{\mathcal{Z}}(M| \tilde{Z}^{\circledcirc}) $.

\subsubsection{Transversality Condition}

We say that the transversality condition is satisfied for $(G,m)$ if  $\tilde{Z}_{G,m} $ is transversal  to   
$ (\sqcap_{e \in E'}  \pi_e^{-1}(Diag )) \times \R$  for each subset $E' \subset E^{in }(G)  \setminus E^l$.

\subsubsection{Forgetful Compatibility} \label{forget-PS-section-iso}



We say that the collection of chains  (\ref{collection-chains-definition}) satisfies the forget compatibility condition if there exists a collection of chains 
\begin{equation} \label{chains-loops-iso}
\underline{\tilde{Z}}_{G,m} \in  C_*(\mathfrak{L}_{\hat{V}(G)}(M) \times \R , \mathfrak{o}(H(G)))^{\text{Aut}(G,m)}  .
\end{equation}
such that the following happen:
\begin{itemize}
\item  for each $(G,m)$
\begin{equation} \label{chains-ev-iso}
\tilde{Z}_{G,m}= \text{ev}_* ( \underline{\tilde{Z}}_{G,m}) 
\end{equation}
\item If $G$ is stable after removing $e $  we require
    \begin{equation} \label{stable-forget-condition-iso}    
     \underline{\tilde{Z}}_{G,m} = \underline{\tilde{Z}}_{G',m'} \times_{\textbf{forget}_e} \mathfrak{L}_{\hat{V}(G)}( M)  .
     \end{equation}
      \item in the case $G$ is unstable after removing $e $ and $e$ is removable  we require
 \begin{equation} \label{removible-forget-condition-iso}
 \underline{\tilde{Z}}_{G,m} = \underline{\tilde{Z}}_{G',m'} \times_{\R} \underline{\tilde{Z}}_{G_e}^{\circledcirc} 
 \end{equation}     
 if $G_e \in Ann0$, and
 \begin{equation} \label{removible-forget-condition2-iso}
 \underline{\tilde{Z}}_{G,m} = \underline{\tilde{Z}}_{G',m'} \times\underline{{Z}}_{Disk0}
 \end{equation} 
 if $G_e = Disk0$;
     \item In the case  $e$ is not removable we require that $ \underline{\tilde{Z}}_{G,m}$ is close in the $C^1$-topology to $ \underline{\tilde{Z}}_{G',m'} \times \underline{Z}_{Disk0}^{\circledcirc}$:
 \begin{equation} \label{unstable-forget-condition-iso}
 \underline{\tilde{Z}}_{G,m} \approx \underline{\tilde{Z}}_{G',m'} \times \underline{Z}_{Disk0}^{\circledcirc} .
 \end{equation}
  \end{itemize}

\subsubsection{Cycle Condition} \label{cycle-condition-section-iso}




Define $\partial^{red}   \tilde{\underline{Z}}_{G,m} $ analogously to subsection \ref{cycle-condition-section} discarding from  $\partial   \tilde{\underline{Z}}_{G,m} $ the contribution coming from  the boundary faces coming from  $N_{0,v}$  appearing in  (\ref{delta-loop0}). For  $v \in V(G)$ with $H_{v} = \emptyset$, $ \delta_v \tilde{\underline{Z}}_{G,m} $   is defined similarly to subsection \ref{cycle-condition-section}.


We say that the cyclic condition is satisfied for $(G,m) \in \mathfrak{G}_*$  if
\begin{equation} \label{cycle-condition-loop-isotopy}
 \partial  \tilde{\underline{Z}}_{G,m}  =   \sum_i (-1)^i \tilde{\underline{Z}}_{G, \partial_i m} + \sum_{(G',m', e')}  \delta_{e'} \tilde{\underline{Z}}_{G',m'}  +  \sum_{(G',m',v')} \delta_{v'} \tilde{\underline{Z}}_{G',m'} .
 \end{equation}



\subsubsection{Isotopy of Isotopies}

The definition of isotopy of isotopies of $PS$-cycles  is analogous to the definition of isotopies where we consider a two-parameters  families of chains instead of one-parameter families.

For simplicity  of notations below we omit  the dependence on the annulus data. We fix an isotopy of isotopies of annulus data.

\begin{definition} 
Let $ Z^{0, 0},   Z^{1, 0}  ,    Z^{ 0, 1}  ,    Z^{ 1 , 1} $ be $PS$-cycles. Let $ Z^{0, \bullet}$ be an isotopy between $Z^{0, 0}$ and $  Z^{ 0, 1}$,  $ Z^{1, \bullet}$ isotopy between $Z^{1, 0}$ and $  Z^{ 1, 1}$, $ Z^{ \bullet,0}$ isotopy between $Z^{0, 0}$ and $  Z^{ 1, 0}$,  $ Z^{ \bullet,1}$ isotopy between $Z^{0, 1}$ and $  Z^{ 1, 1}$.

An isotopy of isotopies of $PS$-cycles between $ Z^{0, \bullet},   Z^{1, \bullet}  ,    Z^{ \bullet, 0}  ,    Z^{ \bullet , 1} $ consists in a collection 
of Borel-Moore chains
\begin{equation} \label{collection-chains-isotopy-isotopy}
\tilde{ \tilde{Z}}=  (\tilde{\tilde{Z}}_{G,m} )_{G,m} ,
\end{equation}
with
$$\tilde{\tilde{Z}}_{G,m} \in C_{|H(G)| + |m|+2} (M^{H(G)} \times \R_{\lambda} \times \R_{\lambda'}  , \mathfrak{o}_{H(G)})^{\text{Aut}(G,m)}  .$$
such that
\begin{itemize}


\item
There exists  $\lambda^{\dagger} \gg 0$ big enough such that:
\begin{itemize}
\item $\tilde{\tilde{Z}}^{\lambda' <-\lambda^{\dagger} } =  Z_{ G,m}^{0, \bullet} \times \R_{\lambda'<-\lambda^{\dagger}}$;
\item $\tilde{\tilde{Z}}^{\lambda' >\lambda^{\dagger} } =  Z_{ G,m}^{1, \bullet} \times \R_{\lambda'>\lambda^{\dagger}} $;
\item  $\tilde{\tilde{Z}} ^{\lambda <-\lambda^{\dagger} } =  Z_{ G, m}^{ \bullet, 0 } \times \R_{\lambda < -\lambda^{\dagger}}$;
\item $\tilde{\tilde{Z}}^{ \lambda >\lambda^{\dagger} } =  Z_{ G,m}^{ \bullet, 1} \times  \R_{\lambda >\lambda^{\dagger}}$  .
\end{itemize}


\item  the transversality condition is satisfied;
\item forgetful compatibility condition is satisfied;

\item the cycle condition is satisfied.


\end{itemize}

We denote by $  \tilde{\tilde{ \mathcal{Z}}}$ the vector space if isotopies of isotopies.

\end{definition}

\subsubsection{Extension Lemmas}

 In order to construct $PS$-cycles, we shall often use inductive argument on the set of decorated graphs. 
 



We  consider the partial order on $\mathfrak{G}$ generated by the following relations:  
$G'\prec G$ if
\begin{itemize}
\item  $\delta_{e'}G' = G$ for some $e' \in E^{in} (G')$;
\item  $\delta_{v'}G' = G$, for some $v' \in V(G')$ with $H_{v'}= \emptyset$;
\item $forget_e G=G'$ for some $e \in E^{ex} (G)$.
\end{itemize}

This partial order admits a natural extension to  $\mathfrak{G}_*$:
$(G',m') \prec (G,m)$ if
\begin{itemize}
\item  $\delta_{e'}(G',m' ) = (G,m)$ for some $e' \in E^{in} (G')$;
\item  $\delta_{v'}(G',m' ) = (G,m)$, for some $v' \in V(G')$ with $H_{v'}= \emptyset$;
\item $(G',m')= (G, \partial_i m)$ for some $i$ with $0 \leq i \leq |m|$;
\item $forget_e(G,m)=(G',m')$ for some $e \in E^{ex} (G)$.
\end{itemize}

We denote by $\mathcal{Z}_{\preccurlyeq G_0}$ the set of  collection of chains (\ref{chains-loops}) truncated to $G \preccurlyeq G_0$ satisfying the  
the same conditions as before for each   $G \preccurlyeq G_0$.
Observe that it make sense to consider the conditions on the truncation.

Similarly we denote by $\mathcal{Z}_{\prec G_0}$   the truncation of  the collection of chains (\ref{chains-loops}) to the set of graphs with $G \prec G_0$.




\begin{lemma} \label{not-removible-lemma}

Let $G \in \mathfrak{G}$ with $ E^{ex}(G) \neq \emptyset$.    For any $Z_{\prec G} \in \mathcal{Z}_{\prec G}$ there exists  $Z_{\preccurlyeq G}  \in \mathcal{Z}_{\preccurlyeq G}$ extending $Z_{\prec G}$.

\end{lemma}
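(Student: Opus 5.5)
The plan is to build $\underline{Z}_{G,m}$ directly out of the lower-order data, using one external edge. Fix $e \in E^{ex}(G)$ and put $(G',m') = forget_e(G,m)$, so that $(G',m') \prec (G,m)$ and $\underline{Z}_{G',m'}$ is already part of $Z_{\prec G}$. The forgetful compatibility conditions of subsection \ref{forget-PS-section} then tell us what $\underline{Z}_{G,m}$ should be, and I would split into the three cases used there.

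\begin{itemize}
\item If $G$ stays stable after removing $e$, set $\underline{Z}_{G,m} = \underline{Z}_{G',m'} \times_{\textbf{forget}_e} \mathfrak{L}_{\hat{V}(G)}(M)$, as in (\ref{stable-forget-condition}).
\item If $e$ is removable and $G_e \in Ann0$, set $\underline{Z}_{G,m} = \underline{Z}_{G',m'} \times \underline{Z}^{\circledcirc}_{G_e}$, as in (\ref{removible-forget-condition}).
\item If $e$ is removable and $G_e = Disk0$, set $\underline{Z}_{G,m} = \underline{Z}_{G',m'} \times \underline{Z}_{Disk0}$, as in (\ref{removible-forget-condition2}).
\item If $e$ is not removable, start from $\underline{Z}_{G',m'} \times \underline{Z}_{Disk0}$ and correct it by a $C^1$-small perturbation, as described below.
\end{itemize}

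In every case I then set $Z_{G,m} = \text{ev}_*(\underline{Z}_{G,m})$, and I symmetrize over $\text{Aut}(G,m)$ using the orbifold conventions of subsection \ref{orbitfold}.

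Next I would check the conditions on $G$ itself.

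\emph{Independence of the choice of $e$.} If $e'$ is another external edge, the operations $forget_e$ and $forget_{e'}$ commute. Also $\underline{Z}_{G',m'}$ already satisfies forgetful compatibility with respect to $e'$. Hence the chains produced from $e$ and from $e'$ coincide; in the unstable cases this uses that the annulus or disk factor is attached to a component that is disjoint from, or unaffected by, $e'$.

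\emph{Transversality.} The factor attached to the forgotten half-edge is free. So transversality of $Z_{G,m}$ to $\sqcap_{e''\in E'} \pi_{e''}^{-1}(Diag)$ is inherited from transversality of $Z_{G',m'}$, because $\delta$-edges of $G$ correspond to $\delta$-edges of $G'$.

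\emph{Cycle condition.} The boundary operator $\partial^{red}$ and the operations $\partial_i m$, $\delta_{e'}$, $\delta_{v'}$ all commute with $forget_e$. For the product with $\mathfrak{L}$ or with $\underline{Z}_{Disk0}$, the cycle condition (\ref{cycle-condition-loop}) for $(G,m)$ follows from the one for $(G',m')$. The term-by-term matching is that every $(G'',m'',e'')$ with $\delta_{e''}(G'',m'')=(G,m)$ has $forget_e(G'',m'') \prec (G',m')$. These lower chains are themselves forget-pullbacks, by the forgetful compatibility already imposed on $Z_{\prec G}$.

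In the annulus case one extra boundary contribution appears, namely $\underline{Z}_{G',m'} \times \partial(\underline{Z}^{\circledcirc}_{G_1}+\underline{Z}^{\circledcirc}_{G_2})$. By (\ref{cycle-condition-annulus}) this equals the $\delta_e$-term $\delta_e\underline{Z}_{G_0}$ coming from the graph with an extra internal edge. This is exactly the term required by (\ref{cycle-condition-loop}), once the signs are fixed using $\delta_e\partial = -\partial\delta_e$.

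The main obstacle is the non-removable case: $G_e = G_0$ with its internal edge in $E_l$. There the product $\underline{Z}_{G',m'} \times \underline{Z}_{Disk0}$ need not satisfy the cycle condition exactly. The reason is that the faces $\underline{Z}_{G,\partial_i m}$ and the $\delta$-terms are already fixed by $Z_{\prec G}$, and they are only $C^1$-close to products. Let $B$ denote the right-hand side of (\ref{cycle-condition-loop}); it is entirely determined by $Z_{\prec G}$.

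\begin{itemize}
\item First, $\partial^{red} B = 0$, by the cycle conditions already satisfied for the lower graphs.
\item Second, $B$ is $C^1$-close to $\partial^{red}(\underline{Z}_{G',m'} \times \underline{Z}_{Disk0})$, because each of its terms is close to the corresponding product.
\end{itemize}

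I would interpolate between these two cycles along a thin collar: take a small chain $K$ with $\partial K = B - \partial^{red}(\underline{Z}_{G',m'}\times\underline{Z}_{Disk0})$, built from the straight-line homotopy in the $C^1$-close regime. Then I would set $\underline{Z}_{G,m} = \underline{Z}_{G',m'}\times\underline{Z}_{Disk0} + K$. A final small generic perturbation, fixed near the boundary, restores transversality to the diagonals without changing the boundary.
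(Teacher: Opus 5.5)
Your construction follows the paper's outline: use an exact forgetful relation when one is available; otherwise correct $\underline{Z}_{forget_e(G,m)}\times\underline{Z}_{Disk0}$ by a small chain so that (\ref{cycle-condition-loop}) holds exactly, then perturb for transversality. However, the way you choose $e$ breaks one step. You fix an arbitrary $e\in E^{ex}(G)$ and run the perturbative construction whenever that particular $e$ is non-removable.

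Take $G$ to be the disjoint union of two components:
\begin{itemize}
\item a $G_0$-type component carrying $e$, with its loop edge in $E_l$;
\item a second component carrying an external edge $e'$ that is removable.
\end{itemize}
Forgetful compatibility demands the \emph{exact} equality (\ref{stable-forget-condition}), (\ref{removible-forget-condition}) or (\ref{removible-forget-condition2}) for $e'$. The chain $\underline{Z}_{forget_e(G,m)}\times\underline{Z}_{Disk0}+K$, after your generic perturbation, will not satisfy it.

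Your paragraph on independence of the choice of $e$ does not repair this. Commuting $forget_e$ with $forget_{e'}$ identifies the two constructions only when all the relations involved are exact equalities. Along a non-removable edge you only have the approximate relation (\ref{unstable-forget-condition}), and $K$ is an arbitrary small correction. The fix is the paper's case order: if some external edge is removable, define $\underline{Z}_{G,m}$ by the exact relation for that edge, with no perturbation. Perform the correction only when every external edge is non-removable.

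With that order, what remains is to check (\ref{unstable-forget-condition}) for the other non-removable edges. This requires closeness, not equality, and here commuting forgets is the right tool.
\begin{itemize}
\item In the mixed case, transport the lower-level relation $\underline{Z}_{forget_{e'}(G,m)}\approx\underline{Z}_{forget_e forget_{e'}(G,m)}\times\underline{Z}_{Disk0}$ along the exact $e'$-operation. Combined with the exact $e'$-relation on $forget_e(G,m)$, this gives $\underline{Z}_{G,m}\approx\underline{Z}_{forget_e(G,m)}\times\underline{Z}_{Disk0}$.
\item In the all-non-removable case, the same comparison together with the smallness of $K$ and of the final perturbation gives closeness for the remaining edges.
\end{itemize}

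A second, bookkeeping point: the faces $\underline{Z}_{G,\partial_i m}$ entering your $B$ are not supplied by $Z_{\prec G}$. That truncation runs over graphs $G'\prec G$ and contains no marking of $G$ itself. As in the paper, you need an inner induction on $l=|m|$, constructing $\underline{Z}_{G,m}$ for $|m|=l$ once all $\underline{Z}_{G,m'}$ with $|m'|<l$ are defined.

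With these two corrections, the rest of your argument is the paper's ``standard transversality argument'', spelled out in more detail. That rest consists of $\partial^{red}B=0$, $B$ being close to $\partial^{red}$ of the product, filling by a small $K$, and perturbing relative to the boundary.
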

\begin{proof}

Fix $l \in \Z_{\geq 0}$, let $(G,m ) \in \mathfrak{G}_l$ and assume by induction we have defined $\underline{Z}_{G, m'}$ for each $m'$ with $|m'| <l$.

If there is  $e \in  E^{ex}(G)$ removable use   (\ref{stable-forget-condition}) ,  (\ref{removible-forget-condition}) or   (\ref{removible-forget-condition2}) to define $\underline{Z}_{G, m}$. 

Assume that all the external edges are not removable.
Take  $e \in  E^{ex}(G)$. 
Set
$$ \underline{Z}_{G,  m}^{prel}=  \underline{Z}_{ forget_e (G,m)} \times \underline{Z}_{Disk0}.$$

Using the cycle assumption for $ forget_e (G,m) $ and condition (\ref{unstable-forget-condition}) for $(G',m')$ with $(G',m') \prec (G,m)$ we obtain 
$ \partial  \underline{Z}_{G,  m}^{prel}$ is close in the $C^0$ topology to $\sum_i (-1)^i \underline{Z}_{G, \partial_i m } + \sum_{G',e'}  \delta_{e'} \underline{Z}_{G',m' }  $.
From a standard transversality argument  we may define
 $ \underline{Z}_{G,  m }$ close in the $C^0$-topology to $  \underline{Z}_{G,  m}^{prel}$ such that the transversality condition and 
 the cycle condition hold for $(G,m)$.

\end{proof}
 
\begin{lemma} \label{nice-extension}

Let $G \in \mathfrak{G}$ with $H(G) \neq \emptyset$. For any $Z_{\prec G} \in \mathcal{Z}_{\prec G}$ there exists  $Z_{\preccurlyeq G}  \in \mathcal{Z}_{\preccurlyeq G}$ extending $Z_{\prec G}$. 

\end{lemma}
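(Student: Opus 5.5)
The plan is to reduce to Lemma \ref{not-removible-lemma} whenever possible, and otherwise to handle graphs with $E^{ex}(G)=\emptyset$ but $H(G)\neq\emptyset$ by a direct obstruction-theoretic extension. If $E^{ex}(G)\neq\emptyset$, Lemma \ref{not-removible-lemma} gives the extension at once. So assume every half-edge of $G$ lies in an internal edge. As in the proof of Lemma \ref{not-removible-lemma}, I would argue by induction on $l$: fix $(G,m)\in\mathfrak{G}_l$ and suppose $\underline{Z}_{G,m'}$ has been constructed for all $m'$ with $|m'|<l$. I must then produce $\underline{Z}_{G,m}$ satisfying transversality and the cycle condition (\ref{cycle-condition-loop}). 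Forgetful compatibility is vacuous for $G$ itself, since there are no external edges. Note, however, that the equivalence $(G,m)\sim(cut_{E_0}G,cut_{E_0}m)$ is in force. Whenever $E_0\cap E^{in}(G)\neq\emptyset$, the chain $\underline{Z}_{G,m}$ is therefore forced to equal $\underline{Z}_{cut_{E_0}G,\,cut_{E_0}m}$. That graph has external edges, so it is handled by Lemma \ref{not-removible-lemma}, and I would check that it lies in $\preccurlyeq G$ or is already determined. This leaves the case $E_0=\emptyset$, and hence $E^{ex}=\emptyset$.

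In the remaining case, define the candidate boundary
\[
B_{G,m}=\sum_i (-1)^i \underline{Z}_{G,\partial_i m}+\sum_{(G',m',e')}\delta_{e'}\underline{Z}_{G',m'}+\sum_{(G',m',v')}\delta_{v'}\underline{Z}_{G',m'}.
\]
All terms on the right are known: they involve $(G,\partial_i m)$ with smaller $|m|$, or graphs $G'\prec G$. The first step is to show $\partial^{red} B_{G,m}=0$, modulo the special faces, which match up with the $\delta_v$ terms. This is the standard computation. I would apply the cycle conditions of the lower pieces together with the simplicial identities for $\partial_i$, the anticommutation $\delta_e\partial=-\partial\delta_e$ derived from (\ref{comparing-orientation}), and the (anti)commutation of distinct $\delta_e,\delta_{e''}$ and $\delta_v$. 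Transversality of the lower chains to the intersections $\sqcap_{e\in E'}\pi_e^{-1}(Diag)$ guarantees that the compositions $\delta_{e}\delta_{e''}$ are well defined, and that the two orders of contracting give the same chain up to sign.

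The second step is to fill $B_{G,m}$. Here $H(G)\neq\emptyset$ is used. The chain $\underline{Z}_{G,m}$ lives on the (multi)-loop space $\mathfrak{L}_{\hat V(G)}(M)$. I would write $B_{G,m}$ as a cone: use the $\mathbb{S}^1$ parametrization of each vertex loop, where $H_v$ is cyclically ordered and nonempty for some $v$, to contract each vertex loop to a constant loop. Equivalently, I would take $\underline{Z}_{G,m}$ to be a homotopy of $B_{G,m}$ shrinking the loops, on which the $N_0$-type special faces appear as the extra boundary. Since the degree is $|H(G)|+|m|$ and the fibres of the evaluation are contractible in the relevant directions, the cycle $B_{G,m}$ is a boundary in the reduced complex. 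Finally, a small generic perturbation, performed $\text{Aut}(G,m)$-equivariantly by averaging as in (\ref{maps-orbitfold}), achieves transversality to all $\sqcap_{e\in E'}\pi_e^{-1}(Diag)$ with $E'\subset E^{in}(G)\setminus E_l$, without changing the boundary. This relies on transversality already holding on the boundary.

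I expect the main obstacle to be the second step, namely showing that $B_{G,m}$ is actually exact. A priori there could be a homological obstruction in $H_*(\mathfrak{L}_{\hat V(G)}(M))$, and it is precisely the hypothesis $H(G)\neq\emptyset$ that should kill it. My first attempt would be to construct an explicit contracting homotopy on the loop parameters at a vertex with $H_v\neq\emptyset$, and to check that it is compatible with the special faces $N_{0,v}$.
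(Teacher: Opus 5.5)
Your first step, checking that $B_{G,m}$ is a $\partial^{red}$-cycle, is fine. The second step, however, is a genuine gap: nothing makes $B_{G,m}$ exact in $C_*(\mathfrak{L}_{\hat V(G)}(M))$, and $H(G)\neq\emptyset$ does not provide a contraction. For $H_v\neq\emptyset$ the space $\mathfrak{L}_{H_v}(M)$ is homotopy equivalent to the free loop space of $M$, which has nontrivial homology in many degrees (constant loops alone already carry $H_*(M)$). The fibres of $ev$ are products of path spaces, and these are not contractible. A loop can be shrunk to a constant loop only if it is null-homotopic. Even then, the end of the homotopy is a family of constant multi-loops parametrized by points of $M^{V(G)}$, which has no reason to vanish.

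There is also a structural defect in your induction. If you fill $B_{G,m}$ by an arbitrary chain, that choice is ambiguous up to a cycle $c$. This $c$ re-enters the cycle condition (\ref{cycle-condition-loop}) for $(G,m\sqcup\{E(G)\})$ through the face $\partial_{l+1}(m\sqcup\{E(G)\})=m$. So exactness at the next stage depends on the choice made at this stage, and an unconstrained filling can make the next boundary non-exact.

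The missing idea is that the markings themselves supply the null-homotopy. Since $E^{ex}(G)=\emptyset$ and $H(G)\neq\emptyset$, we have $E(G)=E^{in}(G)\neq\emptyset$. Hence for every $m$ with $E(G)\notin m$ the longer marking $m\sqcup\{E(G)\}$ exists. The paper builds $\underline{Z}_{G,m}$ and $\underline{Z}_{G,m\sqcup\{E(G)\}}$ simultaneously, by induction on $|m|=l$. It sets
\[
\underline{Z}^{prel}_{G,m}=(-1)^l\Big(\sum_i(-1)^i\underline{Z}_{G,\partial_i m\sqcup\{E(G)\}}+\sum_{(G',e')}\delta_{e'}\underline{Z}_{G',m'\sqcup\{E(G)\}}+\sum_{(G',v')}\delta_{v'}\underline{Z}_{G',m'\sqcup\{E(G)\}}\Big),
\]
whose terms are all already defined. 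The cycle conditions for those chains give $\partial^{red}\underline{Z}^{prel}_{G,m}=B_{G,m}$ directly, so no homological input is needed.

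One then perturbs $\underline{Z}^{prel}_{G,m}$ to a transversal $\underline{Z}_{G,m}$ with the same reduced boundary. Take $\underline{Z}_{G,m\sqcup\{E(G)\}}$ to be (a transversal perturbation of) $(-1)^{l+1}$ times the small cobordism of that perturbation. Then
\[
\partial^{red}\underline{Z}_{G,m\sqcup\{E(G)\}}=(-1)^{l+1}\bigl(\underline{Z}_{G,m}-\underline{Z}^{prel}_{G,m}\bigr),
\]
which is exactly the cycle condition for $(G,m\sqcup\{E(G)\})$.

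This is also where $H(G)\neq\emptyset$ really enters. If $H(G)=\emptyset$, then $E(G)=\emptyset$ lies in every marking, so the condition $E(G)\notin m$ never holds and the device is unavailable. In that case the paper does meet genuine obstructions in $\Q$ (Subsection \ref{obstruction-section}, Lemma \ref{obstruction-lemma}).
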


\begin{proof}

When  $ E^{ex}(G) \neq \emptyset$ use  Lemma \ref{not-removible-lemma}.

Assume  $ E^{ex}(G) = \emptyset$.
Let $l \in \Z_{\geq 0}$, and assume that we have defined $\underline{Z}_{G, m}$ and $\underline{Z}_{G,m \sqcup \{ E(G)\}}$ for each $|m| <l$ with $E(G) \notin m$.

Take $m$ with $|m|=l$ and  $E(G) \notin m$.

Set 
$$  \underline{Z}_{G,  m  }^{prel} =  (-1)^l(   \sum_i (-1)^i \underline{Z}_{G, \partial_i m \sqcup \{ E(G) \}} + \sum_{(G',e')}  \delta_{e'} \underline{Z}_{G', m' \sqcup \{ E(G)\}} +  \sum_{(G',v')}  \delta_{v'} \underline{Z}_{G', m' \sqcup \{ E(G)\}} .$$
The cyclic condition for $(G, \partial_i m \sqcup \{ E(G) \})$ and $(G', m' \sqcup \{ E(G)\})$ implies
$$\partial^{red} \underline{Z}_{G,  m  }^{prel} = \sum_i (-1)^i \underline{Z}_{G, \partial_i m } +\sum_{(G',e')}  \delta_{e'} \underline{Z}_{G', m' } +  \sum_{(G',v')}  \delta_{v'} \underline{Z}_{G', m'}.$$

From the standard transversality argument there exists $\underline{Z}_{G,  m  }$ close in the $C^0$-topology to $\underline{Z}_{G,  m  }^{prel}$ such that   the transversality condition is satisfied and $\partial^{red} \underline{Z}_{G,  m  } = \partial^{red} \underline{Z}_{G,  m  }^{prel}$.

The transversality argument involves  a finite dimensional family of elements of $C_*(\mathfrak{L}_{\hat{V}(G)}(M))$ such that the evaluation on the punctures labelled by $H(G) $  is submersive in the family. The argument produces a chain $\underline{Z}_{G,m}$ cobordant to   $\underline{Z}_{G,  m  }^{prel}$. We may assume that  $\partial^{red} \underline{Z}_{G,  m  }= \partial^{red} \underline{Z}_{G,  m  }^{prel}$ and that the associated restriction of the cobordism is trivial.
We denote by $\underline{Z}_{G,m \sqcup \{ E(G)\}}^{prel}$ the chain associated to this cobordism multiplied by a sign $(-1)^{l+1}$. We have 
$$\partial \underline{Z}_{G,  m \sqcup \{ E(G) \}}^{prel}  = (-1)^{l+1}( \underline{Z}_{G,  m  } -   \underline{Z}_{G,  m  }^{prel}).$$

From a standard transversality argument we may  define  $\underline{Z}_{G,m \sqcup \{ E(G)\}}$ such that  the transversality condition is satisfied
and $\partial^{red} \underline{Z}_{G,m \sqcup \{ E(G)\}}=\partial^{red} \underline{Z}_{G,m \sqcup \{ E(G)\}}^{prel}  $.


\end{proof}


\begin{lemma} \label{not-removible-lemma-isotopies}


Let $Z_{\preccurlyeq G}^0, Z_{\preccurlyeq G}^1 \in  \mathcal{Z}_{\preccurlyeq G} $ and  let  $\tilde{Z} _{\prec G} \in  \tilde{ \mathcal{Z}}_{\prec G} $ be an isotopy between
 $Z_{\prec G}^0$ and $ Z_{\prec G}^1$. 
 There exists  
 $\tilde{Z} _{\preccurlyeq G} \in  \tilde{ \mathcal{Z}}_{\preccurlyeq G} $ isotopy between 
 $Z_{\preccurlyeq G}^0$ and $Z_{\preccurlyeq G}^1 $ extending $\tilde{Z} _{\prec G} $.  

\end{lemma}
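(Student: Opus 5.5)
The plan is to follow the inductive scheme of Lemmas \ref{not-removible-lemma} and \ref{nice-extension}, now for chains on $\mathfrak{L}_{\hat{V}(G)}(M)\times \R$, with the extra requirement that outside $[-\lambda^{\dagger},\lambda^{\dagger}]$ the chain agrees with $Z^0_{G,m}\times \R_{<-\lambda^{\dagger}}$ and $Z^1_{G,m}\times\R_{>\lambda^{\dagger}}$. I would first enlarge $\lambda^{\dagger}$ so that it works for all the finitely many $(G',m')\prec (G,m)$ involved in the cycle condition and in the forgetful conditions for $(G,m)$. I then induct on $l=|m|$, assuming $\underline{\tilde Z}_{G,m'}$ has been constructed for $|m'|<l$. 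Throughout I treat the problem as a relative extension: the data on the ends are fixed, and every perturbation is supported in $M^{H(G)}\times(-\lambda^{\dagger}-1,\lambda^{\dagger}+1)$. This is compatible with the data on the ends because $Z^0$ and $Z^1$ already satisfy transversality and the cycle condition for $(G,m)$.

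\emph{Case $E^{ex}(G)\neq\emptyset$ with a removable external edge $e$.} Here $\underline{\tilde Z}_{G,m}$ is forced by (\ref{stable-forget-condition-iso}), (\ref{removible-forget-condition-iso}) or (\ref{removible-forget-condition2-iso}). On the ends it agrees with $Z^0,Z^1$, since those satisfy the same forgetful relations with respect to the annulus data $\tilde Z^{\circledcirc}$ restricted to the ends. The cycle condition follows from the cycle condition for $forget_e(G,m)$, together with (\ref{cycle-condition-annulus}) in the $Ann0$ case and $\partial \underline{Z}_{Disk0}=0$ in the $Disk0$ case. Transversality is inherited from the product structure.

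\emph{Case where all external edges are non-removable.} I set $\underline{\tilde Z}^{prel}_{G,m}=\underline{\tilde Z}_{forget_e(G,m)}\times\underline{Z}_{Disk0}$. As in Lemma \ref{not-removible-lemma}, its boundary is $C^0$-close to the right-hand side of (\ref{cycle-condition-loop-isotopy}). On the ends I do not use the preliminary chain but $Z^i_{G,m}\times\R$, which are themselves $C^1$-close to the same product by (\ref{unstable-forget-condition}). I then glue: with a cutoff in $\lambda$ I interpolate between $\underline{\tilde Z}^{prel}_{G,m}$ and $Z^i_{G,m}\times\R$ near $\pm\lambda^{\dagger}$, which is possible because the two are $C^1$-close. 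I correct the boundary so that it equals exactly the prescribed right-hand side, using a small chain of the form (homotopy between close chains), and finally perturb relative to the ends to achieve transversality.

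\emph{Case $E^{ex}(G)=\emptyset$.} I follow Lemma \ref{nice-extension}. The chain $\underline{\tilde Z}^{prel}_{G,m}$ is defined by the same signed formula using $\underline{\tilde Z}_{G,\partial_i m\sqcup\{E(G)\}}$ and the $\delta$-terms. Its $\partial^{red}$ equals the required right-hand side, because $\delta_e\partial=-\partial\delta_e$ and the cycle conditions hold below $(G,m)$. On the ends this preliminary chain coincides with $Z^i_{G,m}\times\R$ up to the ends of $Z^i_{G,m\sqcup\{E(G)\}}\times\R$, which fits since $Z^i$ were themselves produced this way. Then I perturb to transversality relative to the ends, record the cobordism as $\underline{\tilde Z}_{G,m\sqcup\{E(G)\}}$, and perturb that one as well.

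I expect the main obstacle to be the non-removable case: making the relative transversality perturbation agree exactly with the prescribed $Z^0,Z^1$ on the ends while keeping the exact cycle identity. Since only $C^1$-closeness is available there, the fix is to absorb the interpolation error into a boundary correction supported in a compact $\lambda$-interval.
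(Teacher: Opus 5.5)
Your first two cases (graphs with $E^{ex}(G)\neq\emptyset$) are exactly the paper's proof, and the paper's proof treats only that situation. The lemma is the isotopy analogue of Lemma \ref{not-removible-lemma}, implicitly carries the hypothesis $E^{ex}(G)\neq\emptyset$, and is invoked in Lemma \ref{nice-extension-isotopy} precisely when $E^{ex}(G)\neq\emptyset$. If some external edge is removable, $\tilde{\underline Z}_{G,m}$ is forced by the forgetful relations. Otherwise one starts from $\tilde{\underline Z}_{forget_e(G,m)}\times\underline Z_{Disk0}$, which is $C^1$-close to $\underline Z^i_{G,m}\times\R$ on the ends, and applies a transversality argument relative to the ends. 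Your cutoff-and-boundary-correction step just spells out that ``standard transversality argument''.

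The gap is your third case, which you present as covering every $G$ with $E^{ex}(G)=\emptyset$.

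\emph{The case $H(G)=\emptyset$.} Here the mechanism of Lemma \ref{nice-extension} does not exist: $E(G)=\emptyset$, so the only marking is $m=\{E(G)\}$ and no $m$ with $E(G)\notin m$ is available. Worse, the assertion is false there. For $V(G)=H(G)=\emptyset$ the chains are rational numbers, and an extension exists only if the obstruction $\mathfrak{obs}_G$ of Subsection \ref{obstruction-section} vanishes. The simplest instance is $G^{\heartsuit}$: nothing lies below it, so $\tilde{\underline Z}_{G^{\heartsuit}}$ would be a Borel--Moore $1$-cycle on $\R$. That forces $\underline Z^0_{G^{\heartsuit}}=\underline Z^1_{G^{\heartsuit}}$, which need not hold. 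This is exactly why Lemma \ref{extension-isotopies-inductive} must modify $\underline Z^1_G$ when $H(G)=\emptyset$. You should restrict the claim to $E^{ex}(G)\neq\emptyset$, or to $H(G)\neq\emptyset$, which is the content of Lemma \ref{nice-extension-isotopy}.

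\emph{The ends when $H(G)\neq\emptyset$.} Your matching at the ends also needs repair. For arbitrary $Z^i$ (not ``because they were produced this way''), the cycle condition for $(G,m\sqcup\{E(G)\})$ shows that on the ends the preliminary chain differs from $\underline Z^i_{G,m}\times\R$ by $\pm\partial^{red}(\underline Z^i_{G,m\sqcup\{E(G)\}}\times\R)$. The paper cancels this in two steps. First, it subtracts $\partial^{red}$ of an auxiliary chain $\tilde{\underline Z}^{\dagger}_{G,m\sqcup\{E(G)\}}$ that equals $\underline Z^i_{G,m\sqcup\{E(G)\}}\times\R$ on the ends. Second, it adds $\tilde{\underline Z}^{\dagger}$ back to the cobordism when defining $\tilde{\underline Z}_{G,m\sqcup\{E(G)\}}$. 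The bare cobordism you propose to record would vanish on the ends instead of matching $\underline Z^i_{G,m\sqcup\{E(G)\}}\times\R$.
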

\begin{proof}


Let $(G,m ) \in \mathfrak{G}_l$ and assume  we have defined $\tilde{\underline{Z}}_{G, m'}$ for each $m'$ with $|m'| <l$.

If there is  $e \in  E^{ex}(G)$ removable use (\ref{removible-forget-condition}) and   (\ref{stable-forget-condition})  to define $\tilde{\underline{Z}}_{G, m}$. 

Assume that each external edge is not removable.
Take  $e \in  E^{ex}(G)$. 
Set 
$$  \tilde{\underline{Z}}_{G,  m}^{prel}=   \tilde{\underline{Z}}_{ forget_e (G,m) } \times  \tilde{\underline{Z}}_{Disk0}.$$

By assumptions we have $\tilde{\underline{Z}}_{G,  m}^{prel, <-\lambda^{\dagger}} \approx {\underline{Z}}_{G,  m}^0 \times \R_{<-\lambda^{\dagger}}$ and  $\tilde{\underline{Z}}_{G,  m}^{prel, >\lambda^{\dagger}} \approx {\underline{Z}}_{G,  m}^1 \times \R_{>\lambda^{\dagger}}$, for $T \gg 0$.

The cycle property for $forget_e (G,m)$ and the inductive assumption (\ref{unstable-forget-condition}) imply
$$ \partial   \tilde{\underline{Z}}_{G,  m}^{prel} \approx \sum_i (-1)^i  \tilde{\underline{Z}}_{G, \partial_i m } + \sum_{G',e'}  \delta_{e'}  \tilde{\underline{Z}}_{G',m' }  .$$

From a standard transversality argument  we may define
 $  \tilde{\underline{Z}}_{G,  m }$ close in the $C^0$-topology to $   \tilde{\underline{Z}}_{G,  m}^{prel}$ such that the transversality and cycle condition hold
  and $\tilde{\underline{Z}}_{G,  m}^{ <-\lambda^{\dagger}} = {\underline{Z}}_{G,  m}^0 \times \R_{<-\lambda^{\dagger}}, \tilde{\underline{Z}}_{G,  m}^{ >\lambda^{\dagger}} = {\underline{Z}}_{G,  m}^1 \times \R_{>\lambda^{\dagger}}$, for $T \gg 0$.

\end{proof} 
 
  \begin{lemma} \label{nice-extension-isotopy}
 Assume $H(G) \neq \emptyset$.
 
 Let $Z_{\preccurlyeq G}^0, Z_{\preccurlyeq G}^1 \in  \mathcal{Z}_{\preccurlyeq G} $ and  let  $\tilde{Z} _{\prec G} \in  \tilde{ \mathcal{Z}}_{\prec G} $ be an isotopy between
 $Z_{\prec G}^0$ and $ Z_{\prec G}^1$. 
 There exists  
 $\tilde{Z} _{\preccurlyeq G} \in  \tilde{ \mathcal{Z}}_{\preccurlyeq G} $ isotopy between 
 $Z_{\preccurlyeq G}^0$ and $Z_{\preccurlyeq G}^1 $ extending $\tilde{Z} _{\prec G} $.     
 \end{lemma}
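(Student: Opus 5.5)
The plan is to mirror the proof of Lemma \ref{nice-extension}, working now with one-parameter families over $\R$ and keeping track of the asymptotic ends. When $E^{ex}(G) \neq \emptyset$ there is nothing new to do: Lemma \ref{not-removible-lemma-isotopies} already gives the extension. So assume $E^{ex}(G)=\emptyset$, so that every half-edge lies in an internal edge and $E(G)=E^{in}(G)$. I will run a double induction on $l=|m|$. At stage $l$ we assume that $\tilde{\underline{Z}}_{G,m}$ and $\tilde{\underline{Z}}_{G,m\sqcup\{E(G)\}}$ are already defined for all $m$ with $|m|<l$ and $E(G)\notin m$.

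For $|m|=l$ with $E(G)\notin m$, I first set
$$\tilde{\underline{Z}}_{G,m}^{prel}=(-1)^l\Big(\sum_i(-1)^i\tilde{\underline{Z}}_{G,\partial_i m\sqcup\{E(G)\}}+\sum_{(G',e')}\delta_{e'}\tilde{\underline{Z}}_{G',m'\sqcup\{E(G)\}}+\sum_{(G',v')}\delta_{v'}\tilde{\underline{Z}}_{G',m'\sqcup\{E(G)\}}\Big),$$
exactly as in Lemma \ref{nice-extension}, except that this chain must also be compared with the ends. The cycle conditions (\ref{cycle-condition-loop-isotopy}) already hold for the terms on the right, since those graphs are $\prec (G,m)$ or come from lower $l$. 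Using them together with $\delta_e\partial=-\partial\delta_e$, the same cancellation as before gives $\partial^{red}\tilde{\underline{Z}}^{prel}_{G,m}$ equal to the right-hand side of the cycle condition for $(G,m)$. For $\lambda<-\lambda^\dagger$ every ingredient is a product $Z^0\times\R$, so $\tilde{\underline{Z}}^{prel,<-\lambda^\dagger}_{G,m}=\underline{Z}^{0,prel}_{G,m}\times\R_{<-\lambda^\dagger}$. Similarly for $\lambda>\lambda^\dagger$.

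Here lies the main obstacle. On the ends the prescribed chain is $\underline{Z}^0_{G,m}$, not its preliminary version. In Lemma \ref{nice-extension} these two were related by the cobordism $\underline{Z}^0_{G,m\sqcup\{E(G)\}}$. So I will correct the preliminary chain by adding, over the ends, the cylinders over these cobordisms, glued in via a cutoff in $\lambda$. Concretely, I define $\tilde{\underline{Z}}^{prel'}_{G,m}$ by the following recipe. On $\lambda<-\lambda^\dagger-1$ it is $\underline{Z}^0_{G,m}\times\R$. On $[-\lambda^\dagger-1,-\lambda^\dagger]$ it is the cobordism $(-1)^{l+1}\underline{Z}^0_{G,m\sqcup\{E(G)\}}$, reparametrized along $\lambda$. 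Elsewhere it is $\tilde{\underline{Z}}^{prel}_{G,m}$, and it is defined symmetrically at $+\infty$. Because Lemma \ref{nice-extension} arranged that the cobordism restricted to $\partial^{red}$ is trivial, this gluing does not change $\partial^{red}$. Then the standard transversality argument, relative to the ends (perturbing only over a compact $\lambda$-interval, with the finite-dimensional family submersive on the punctures labelled by $H(G)$), produces $\tilde{\underline{Z}}_{G,m}$ with the following properties: it is close in $C^0$, it is transverse to $(\sqcap_{e\in E'}\pi_e^{-1}(Diag))\times\R$, it has the same $\partial^{red}$, and it agrees with $Z^0,Z^1$ on the ends after enlarging $\lambda^\dagger$.

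Finally I take the cobordism produced by this perturbation, multiplied by $(-1)^{l+1}$, as $\tilde{\underline{Z}}^{prel}_{G,m\sqcup\{E(G)\}}$. On the ends I arrange it to equal $\underline{Z}^{0,1}_{G,m\sqcup\{E(G)\}}\times\R$; this is possible because the perturbation is trivial there, up to the inserted collar. I then perturb once more, relative to the ends, to get transversality while keeping $\partial^{red}$ fixed. This defines $\tilde{\underline{Z}}_{G,m\sqcup\{E(G)\}}$ and completes the inductive step. Forgetful compatibility is vacuous since $E^{ex}(G)=\emptyset$. The delicate point to check is the sign and orientation consistency of the glued collar with $\partial_i$ acting on the new marking $E(G)$.
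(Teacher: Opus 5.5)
Your proposal is correct and follows the paper's proof. The paper implements your collar algebraically: it chooses a chain $\tilde{\underline{Z}}^{\dagger}_{G,m\sqcup\{E(G)\}}$ that agrees with $\underline{Z}^{0}_{G,m\sqcup\{E(G)\}}\times\R_{<-\lambda^{\dagger}}$ and $\underline{Z}^{1}_{G,m\sqcup\{E(G)\}}\times\R_{>\lambda^{\dagger}}$ on the ends, subtracts $\partial^{red}\tilde{\underline{Z}}^{\dagger}_{G,m\sqcup\{E(G)\}}$ inside $\tilde{\underline{Z}}^{prel}_{G,m}$, and sets $\tilde{\underline{Z}}^{prel}_{G,m\sqcup\{E(G)\}}=(-1)^{l}\tilde{\underline{Z}}^{cob}_{G,m\sqcup\{E(G)\}}+\tilde{\underline{Z}}^{\dagger}_{G,m\sqcup\{E(G)\}}$. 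The slice of $\partial^{red}\tilde{\underline{Z}}^{\dagger}$ at $\mp\lambda^{\dagger}$ is your glued-in cobordism, so no reparametrization along $\lambda$ is needed, and $\partial^{red}$ is unchanged because only a boundary was added.

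There are two points to tighten. First, since $Z^0$ and $Z^1$ are arbitrary, the identity you need, $\partial^{red}\underline{Z}^{0}_{G,m\sqcup\{E(G)\}}=(-1)^{l+1}(\underline{Z}^{0}_{G,m}-\underline{Z}^{0,prel}_{G,m})$, comes from the cycle condition of $Z^0$ at $(G,m\sqcup\{E(G)\})$, not from how Lemma \ref{nice-extension} built it. Second, in the last step the perturbation cobordism only bounds $\tilde{\underline{Z}}_{G,m}$ against your glued chain, not against the unglued $\tilde{\underline{Z}}^{prel}_{G,m}$. It is therefore the explicitly added half-cylinders (the $+\tilde{\underline{Z}}^{\dagger}$ term) that give the right ends and close up the cycle condition for $(G,m\sqcup\{E(G)\})$, not the triviality of the perturbation there.
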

 
\begin{proof}

When  $ E^{ex}(G) \neq \emptyset$ use  Lemma \ref{not-removible-lemma-isotopies}.

Assume  $ E^{ex}(G) = \emptyset$.
Let $(G,m ) \in \mathfrak{G}_l$ and assume  we have defined $\tilde{\underline{Z}}_{G, m'}$ for each $m'$ with $|m'| <l$.
 and assume that we have defined $\tilde{\underline{Z}}_{G, m}$ and $\tilde{\underline{Z}}_{G,m \sqcup \{ E(G)\}}$ for each $|m| <l$.


Take a chain $\tilde{\underline{Z}}_{G,m \sqcup \{ E(G) \} }^{\dagger}$ which agrees with $ \underline{Z}_{G,m \sqcup \{ E(G) \} }^0 \times \R_{<-\lambda^{\dagger}}$ and  $ \underline{Z}_{G,m \sqcup \{ E(G) \} }^1 \times \R_{>\lambda^{\dagger}}$  for $T \gg 0$.

Set 
$$  \tilde{\underline{Z}}_{G,  m  }^{prel} =  (-1)^l(   \sum_i (-1)^i \tilde{\underline{Z}}_{G, \partial_i m \sqcup \{ E(G) \}} + \sum_{(G',e')}  \delta_{e'} \tilde{\underline{Z}}_{G', m' \sqcup \{ E(G)\}} + \sum_{(G',v')}  \delta_{v'} \tilde{\underline{Z}}_{G', m' \sqcup \{ E(G)\}}- \partial^{red} \tilde{\underline{Z}}_{G,m \sqcup \{ E(G) \} }^{\dagger}) .$$
The inductive assumptions imply
$$\partial^{red} \tilde{\underline{Z}}_{G,  m  }^{prel} = \sum_i (-1)^i \tilde{\underline{Z}}_{G, \partial_i m } + \sum_{G', e'}  \delta_{e'} \tilde{\underline{Z}}_{G',m' } + \sum_{(G',v')}  \delta_{v'} \tilde{\underline{Z}}_{G', m' },$$
 $\tilde{\underline{Z}}_{G,  m}^{ prel, <-\lambda^{\dagger}} = {\underline{Z}}_{G,  m}^0 \times \R_{<-\lambda^{\dagger}}, \tilde{\underline{Z}}_{G,  m}^{prel,  >\lambda^{\dagger}} = {\underline{Z}}_{G,  m}^1 \times \R_{>\lambda^{\dagger}}$, for $T \gg 0$.

From a standard transversality argument there exists $\tilde{\underline{Z}}_{G,  m  }$ close in the $C^0$-topology to $\tilde{\underline{Z}}_{G,  m  }^{prel}$ such that  the transversality condition is satisfied , 
$\partial^{red} \tilde{\underline{Z}}_{G,  m  } = \partial^{red} \tilde{\underline{Z}}_{G,  m  }^{prel}$ and
  $\tilde{\underline{Z}}_{G,  m}^{  <-\lambda^{\dagger}} = {\underline{Z}}_{G,  m}^0 \times \R_{<-\lambda^{\dagger}}, \tilde{\underline{Z}}_{G,  m}^{ >\lambda^{\dagger}} = {\underline{Z}}_{G,  m}^1 \times \R_{>\lambda^{\dagger}}$, for $T \gg 0$.

Now let   $ \tilde{\underline{Z}}^{cob}_{G,m \sqcup \{ E(G)\}  }$ be a chain obtained from a small cobordism between  $ \tilde{\underline{Z}}^{}_{G,m}$ and $  {\underline{Z}}_{G,  m }^{ prel}$. We assume the cobordism trivial between   $\partial^{red} \underline{Z}_{G,  m  }$ and $ \partial^{red} \underline{Z}_{G,  m  }^{prel}$, and in the regions $\mathfrak{L}_{\hat{V}(G)}(M) \times  \R_{<T}, \mathfrak{L}_{\hat{V}(G)}(M) \times \R_{>\lambda^{\dagger}}$.
 Set
$$ \tilde{\underline{Z}}^{ prel}_{G,m \sqcup \{ E(G)\} } = (-1)^l  \tilde{\underline{Z}}^{cob}_{G,m \sqcup \{ E(G)\}  } + { \tilde{\underline{Z}}}_{G,m \sqcup \{ E(G)\}}^{\dagger}.$$

We have
$$\partial^{red}  \tilde{\underline{Z}}_{G,  m \sqcup \{ E(G)\}  }^{ prel} = (-1)^l (   \tilde{\underline{Z}}_{G,  m  }^{prel }  -  \tilde{\underline{Z}}_{G,  m}^{} )+ \partial^{red} { \tilde{\underline{Z}}}_{G,m \sqcup \{ E(G)\}}^{\dagger}  ,$$
$$\tilde{\underline{Z}}_{G,m \sqcup \{ E(G)\} }^{ prel,< -T} = \underline{Z}_{G,m \sqcup \{ E(G)\} }^{0} \times \R_{<-\lambda^{\dagger}}, \quad \tilde{\underline{Z}}_{G,m \sqcup \{ E(G)\} }^{prel, > T} =  \underline{Z}_{G,m \sqcup \{ E(G)\}}^{1} \times \R_{>\lambda^{\dagger}}$$  
for $T \gg 0$. 





From q standard transversality argument  we may define $\tilde{\underline{Z}}_{G,  m   \sqcup \{ E(G)\}}$ close in the $C^1$-topology to $\tilde{\underline{Z}}_{G,  m  \sqcup \{ E(G)\} }^{prel}$ such that  the transversality condition is satisfied,
$ \partial^{red} \tilde{\underline{Z}}_{G,m \sqcup \{ E(G)\}} = \partial^{red}  \tilde{\underline{Z}}_{G,m \sqcup \{ E(G)\}}^{prel}$ ,  
$\tilde{\underline{Z}}_{G,  m   \sqcup \{ E(G)\}}^{  <-\lambda^{\dagger}} = {\underline{Z}}_{G,  m   \sqcup \{ E(G)\}}^0 \times \R_{<-\lambda^{\dagger}}, \tilde{\underline{Z}}_{G,  m   \sqcup \{ E(G)\}}^{ >\lambda^{\dagger}} =\tilde{\underline{Z}}_{G,  m   \sqcup \{ E(G)\}}^1 \times \R_{>\lambda^{\dagger}} \text{  for  }T \gg 0.$

\end{proof}

From the last Lemma we deduce the following
  \begin{lemma} \label{extension-isotopies-inductive}
 Let $\underline{Z}_{\prec G}^0 \in  \mathcal{Z}_{\prec G} $,  $\underline{Z}_{\prec G}^1 \in  \mathcal{Z}_{\prec G} $ and    $\tilde{Z} _{\prec G} \in  \tilde{ \mathcal{Z}}_{\prec G} $  isotopy between $\underline{Z}_{\prec G}^0$ and $\underline{Z}_{\prec G}^1$.
 Let $\underline{Z}_{\preccurlyeq G}^0    \in  \mathcal{Z}_{\preccurlyeq G} $ extending $\underline{Z}_{\preccurlyeq G}^0 $.
 

There exist
$\underline{Z}_{\preccurlyeq G}^1 \in  \mathcal{Z}_{\preccurlyeq G} $ and 
 $\tilde{Z} _{\preccurlyeq G} \in  \tilde{ \mathcal{Z}}_{\preccurlyeq G} $  isotopy between $\underline{Z}_{\preccurlyeq G}^0$ and $\underline{Z}_{\preccurlyeq G}^1  $ such that
\begin{itemize} 
\item $\underline{Z}_{\preccurlyeq G}^1  $ extends  $\underline{Z}_{\prec G}^1  $;
\item  $\tilde{Z} _{\preccurlyeq G}  $ extends $\tilde{Z} _{\prec G}  $. 
 \end{itemize}
 \end{lemma}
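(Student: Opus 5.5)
We obtain this statement from Lemma \ref{nice-extension-isotopy} (and Lemma \ref{not-removible-lemma-isotopies} when $E^{ex}(G)\neq\emptyset$). The only difference is that the endpoint $\underline{Z}^1_{\preccurlyeq G}$ is not prescribed on $G$; it must be produced together with the isotopy. The plan has three steps.

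First, we extend the endpoint $\underline{Z}^1_{\prec G}$ to some $\underline{Z}^1_{\preccurlyeq G}\in\mathcal{Z}_{\preccurlyeq G}$. When $H(G)\neq\emptyset$ this is exactly Lemma \ref{nice-extension} applied to $\underline{Z}^1_{\prec G}$. When $H(G)=\emptyset$, the lemma as stated in the paper does not directly apply. In that case I would first try the same inductive construction used in the proof of Lemma \ref{nice-extension}: define $\underline{Z}^{1,prel}_{G,m}$ as the signed sum of the already constructed terms $\underline{Z}^1_{G,\partial_i m}$, $\delta_{e'}\underline{Z}^1_{G',m'}$ and $\delta_{v'}\underline{Z}^1_{G',m'}$, then perturb. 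The one point to verify here is that this sum is $\partial^{red}$-closed, which follows from the cycle conditions on the truncation. Alternatively, if the subsequent use only needs $H(G)\neq\emptyset$, we restrict to that case as in Lemma \ref{nice-extension-isotopy}.

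Second, with both endpoints $\underline{Z}^0_{\preccurlyeq G}$ and $\underline{Z}^1_{\preccurlyeq G}$ now defined on all of $\preccurlyeq G$, we apply Lemma \ref{nice-extension-isotopy}. Its hypotheses are then satisfied verbatim, and it yields an isotopy $\tilde Z_{\preccurlyeq G}$ extending $\tilde Z_{\prec G}$.

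Third, we check that the two bulleted conclusions hold. Extension of $\underline{Z}^1_{\prec G}$ holds by construction in the first step, and extension of $\tilde Z_{\prec G}$ is the conclusion of Lemma \ref{nice-extension-isotopy}.

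The main obstacle is the first step when $H(G)=\emptyset$, where no extension lemma has been stated. A more robust alternative that avoids this issue is to build the isotopy and the endpoint simultaneously. We would extend $\tilde Z_{\prec G}$ over $G$ on the half-line $\R_{<\lambda^\dagger}$ using the constructions in the proofs of Lemmas \ref{not-removible-lemma-isotopies} and \ref{nice-extension-isotopy}, imposing only the left boundary condition $\underline{Z}^0\times\R_{<-\lambda^\dagger}$. By genericity, a slice $\{\lambda_1\}$ for large $\lambda_1$ is transversal. We then define $\underline{Z}^1_{G,m}$ as the restriction of $\tilde{\underline Z}_{G,m}$ to $\lambda=\lambda_1$ and stretch the chain to be a product for $\lambda>\lambda_1$. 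Since $\tilde Z_{\prec G}$ is already a product there, the restriction automatically satisfies the cycle, forgetful and transversality conditions, and it extends $\underline{Z}^1_{\prec G}$.
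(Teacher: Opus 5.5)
For $H(G)\neq\emptyset$, your Steps 1--2 are exactly the paper's proof: extend $\underline{Z}^1_{\prec G}$ by Lemma~\ref{nice-extension}, then extend the isotopy by Lemma~\ref{nice-extension-isotopy}.

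\textbf{The gap.} Your main plan fails when $H(G)=\emptyset$. Step~2 claims the hypotheses of Lemma~\ref{nice-extension-isotopy} hold verbatim, but that lemma assumes $H(G)\neq\emptyset$, and this is not a technicality. Over $G$ the isotopy must satisfy $\partial^{red}\tilde{\underline{Z}}_G=R$, where $R=\sum_{(G',e')}\delta_{e'}\tilde{\underline{Z}}_{G'}+\sum_{(G',v')}\delta_{v'}\tilde{\underline{Z}}_{G'}$. This $R$ is already fixed by $\tilde Z_{\prec G}$ and is supported in $|\lambda|\leq\lambda^\dagger$.

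When also $V(G)=\emptyset$, the chains over $G$ are rational numbers. Then $\tilde{\underline{Z}}_G$ is a $1$-chain on $\R$ (a piecewise constant function of $\lambda$) whose jumps are prescribed by $R$. Its two ends are therefore forced to satisfy $\underline{Z}^1_G=\underline{Z}^0_G\pm pr_*(R)$. An endpoint chosen in Step~1 independently of $\tilde Z_{\prec G}$ will in general violate this. The defect is exactly the obstruction $\mathfrak{obs}_G\in\Q$ of Subsection~\ref{obstruction-section} (cf.\ Lemma~\ref{obstruction-lemma}). So ``extend the endpoint first, then the isotopy'' cannot work here.

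Your Step~1 recipe is also off. The signed sum of right-hand-side terms is the required \emph{boundary} of $\underline{Z}^1_{G,m}$, one degree too low to serve as $\underline{Z}^{1,prel}_{G,m}$. Lemma~\ref{nice-extension} gets $\underline{Z}^{prel}_{G,m}$ from the markings $m\sqcup\{E(G)\}$, which do not exist when $E(G)=\emptyset$. In any case, finding \emph{some} $\underline{Z}^1_G$ is not the difficulty; finding one compatible with $\tilde Z_{\prec G}$ is.

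Restricting to $H(G)\neq\emptyset$ is not an option either. Lemmas~\ref{extension-nice} and~\ref{transfer-annulus} invoke this lemma for every $G$, and $H(G)=\emptyset$ is precisely where it says something beyond Lemma~\ref{nice-extension-isotopy}.

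\textbf{The fix.} Your ``more robust alternative'' is the right idea, and once made precise it is the paper's argument: the endpoint is dictated by the isotopy. The paper sets $\underline{Z}^1_G$ equal to $\underline{Z}^0_G$ plus $pr_*$ of the $\delta$-terms of $\tilde{\underline{Z}}_{\prec G}$, i.e.\ $\underline{Z}^0_G\pm pr_*(R)$. After that, $\tilde{\underline{Z}}_G$ is immediate. Take $\underline{Z}^0_G\times\R$ plus, up to sign, the image of $R\times[0,\infty)$ under $(x,\lambda,s)\mapsto(x,\lambda+s)$. This chain has boundary $\pm R$, and it equals $pr_*(R)\times\R$ for $\lambda>\lambda^\dagger$ because $R$ vanishes there.

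You should not appeal to the constructions of Lemmas~\ref{not-removible-lemma-isotopies} and~\ref{nice-extension-isotopy} for this step. With $E(G)=\emptyset$ there is no external edge to forget and no marking $m\sqcup\{E(G)\}$. No slicing or genericity argument is needed either, since transversality and forgetful compatibility are vacuous when $H(G)=\emptyset$. It is simply the free right end that removes the obstruction.

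With $H(G)=\emptyset$ handled by this explicit formula and $H(G)\neq\emptyset$ by your Steps~1--2, the proof goes through.
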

\begin{proof}
When $H(G) \neq \emptyset$ the lemma follows from Lemma \ref{nice-extension-isotopy} and Lemma \ref{nice-extension}.

When $H(G) = \emptyset$ , set $\underline{Z}_{G}^{1}=\underline{Z}_{G}^{0}+ pr_*(\delta_{e'} \tilde{\underline{Z}}_{G'}  )$, where $pr: M \times \R  \rightarrow  M$ is the the projection on the first factor. With this definition of $\underline{Z}_{G}^{1}$  is immediate to check that there exists 
$\tilde{\underline{Z}}_G$ such that the Lemma is satisfied. 

\end{proof}

The next Lemma may be considered as a one parameter version of Lemma  \ref{extension-isotopies-inductive}  , and it can be proved in a similar way.
 \begin{lemma} \label{extension-isotopies-isotopies}

Let  $ Z_{ \preccurlyeq G}^{0, 0},   Z_{ \preccurlyeq G}^{1, 0}  ,    Z_{ \preccurlyeq G}^{ 0, 1}  ,    Z_{ \preccurlyeq G}^{ 1 , 1} $ be $PS$-cycles. Let $ Z_{ \preccurlyeq G}^{0, \bullet}$ be an isotopy between $Z_{ \preccurlyeq G}^{0, 0}$ and $  Z_{ \preccurlyeq G}^{ 0, 1}$,  $ Z_{ \preccurlyeq G}^{ \bullet,0}$ be an isotopy between $Z_{ \preccurlyeq G}^{0, 0}$ and $  Z_{ \preccurlyeq G}^{ 1, 0}$,  $ Z_{ \preccurlyeq G}^{ \bullet,1}$ be an isotopy between $Z_{ \preccurlyeq G}^{0, 1}$ and $  Z_{ \preccurlyeq G}^{ 1, 1}$,
$ Z_{ \prec G}^{1, \bullet}$ be an isotopy between $Z_{ \prec G}^{1, 0}$ and $  Z_{ \prec G}^{ 1, 1}$.

For each $\tilde{\tilde{Z}} _{\prec G} \in  \tilde{\tilde{ \mathcal{Z}}}_{\prec G} $  isotopy of isotopies between  $Z_{ \prec G}^{ \bullet ,0 },  Z_{ \prec G}^{ \bullet ,1 }, Z_{ \prec G}^{0 ,\bullet} ,Z_{ \prec G}^{1 ,\bullet} $
there exists $Z_{ \preccurlyeq G}^{1, \bullet} \in \tilde{ \mathcal{Z}}_{\preccurlyeq G} $  and  $\tilde{\tilde{Z}}_{\preccurlyeq G} \in  \tilde{\tilde{ \mathcal{Z}}}_{\preccurlyeq G} $ isotopy of isotopies between  $Z_{ \preccurlyeq G}^{ \bullet ,0 },  Z_{ \preccurlyeq  G}^{ \bullet ,1 }, Z_{ \preccurlyeq  G}^{0 ,\bullet} ,Z_{ \preccurlyeq  G}^{1 ,\bullet} $ extending
$Z_{ \prec G}^{1, \bullet}$ , $\tilde{\tilde{Z}} _{\prec G} $ respectively. 


 \end{lemma}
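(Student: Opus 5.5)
We prove Lemma \ref{extension-isotopies-isotopies} by repeating the proof of Lemma \ref{extension-isotopies-inductive} one parameter higher. The square $\R_\lambda \times \R_{\lambda'}$ replaces the line $\R$. The three sides $Z^{0,\bullet}_{\preccurlyeq G}$, $Z^{\bullet,0}_{\preccurlyeq G}$, $Z^{\bullet,1}_{\preccurlyeq G}$ are already given on the whole of $\preccurlyeq G$. The missing data are the fourth side $Z^{1,\bullet}_G$ and the filling $\tilde{\tilde{Z}}_G$ at the top graph $G$. As before we split according to whether $H(G)$ is empty.

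\emph{Case $H(G)\neq\emptyset$.} We first build $Z^{1,\bullet}_{\preccurlyeq G}$ by Lemma \ref{nice-extension-isotopy}, applied to the endpoints $Z^{1,0}_{\preccurlyeq G}$, $Z^{1,1}_{\preccurlyeq G}$ and the isotopy $Z^{1,\bullet}_{\prec G}$. We then construct $\tilde{\tilde{Z}}_{G,m}$ by induction on $|m|$, with two sub-cases.

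When $E^{ex}(G)\neq\emptyset$ we argue as in Lemma \ref{not-removible-lemma-isotopies}. If some external edge $e$ is removable, $\tilde{\tilde{Z}}_{G,m}$ is forced by the forgetful relations, using the given isotopy of isotopies of annulus data for (\ref{removible-forget-condition-iso}). Otherwise we set
$$\tilde{\tilde{\underline{Z}}}{}^{prel}_{G,m}=\tilde{\tilde{\underline{Z}}}_{forget_e(G,m)}\times \underline{Z}_{Disk0}.$$
Its boundary is $C^0$-close to the right-hand side of the cycle condition, and near the four ends it is $C^0$-close to the prescribed sides. A transversality perturbation, kept fixed outside a compact subset of the square, then gives the required chain.

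When $E^{ex}(G)=\emptyset$ we copy the proof of Lemma \ref{nice-extension-isotopy}. Choose a chain $\tilde{\tilde{\underline{Z}}}{}^{\dagger}_{G,m\sqcup\{E(G)\}}$ that agrees with the four given sides near the four ends of the square. This is possible because the sides agree with each other on the corners. Define $\tilde{\tilde{\underline{Z}}}{}^{prel}_{G,m}$ by the same signed formula, with the correction term $-\partial^{red}\tilde{\tilde{\underline{Z}}}{}^{\dagger}$. Perturb it to a transverse chain, take a small cobordism to obtain $\tilde{\tilde{\underline{Z}}}_{G,m\sqcup\{E(G)\}}$, and perturb again. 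Throughout, all cobordisms must be trivial near $\partial^{red}$ and in the four end regions $\{|\lambda|>\lambda^\dagger\}\cup\{|\lambda'|>\lambda^\dagger\}$.

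\emph{Case $H(G)=\emptyset$.} Here $G$ has only degree decorations, so the chains live on $M^{\emptyset}\times\R^2$. Each $\delta_{e'}$ and $\delta_{v'}$ term of the cycle condition is one degree too low to be filled freely, so we define the missing side explicitly. Put
$$Z^{1,\bullet}_G = \bigl(Z^{0,\bullet}_G + pr_*\bigl(\textstyle\sum \delta_{e'}\tilde{\tilde{\underline{Z}}}_{G'}+\sum\delta_{v'}\tilde{\tilde{\underline{Z}}}_{G'}\bigr)\bigr)\ \text{corrected at the ends}.$$
Here $pr$ projects $\R_\lambda\times\R_{\lambda'}$ to $\R_{\lambda'}$, and the end correction is chosen so that the ends match $Z^{1,0}_G$ and $Z^{1,1}_G$. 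Those ends themselves satisfy $Z^{1,i}_G=Z^{0,i}_G+pr_*(\cdots)$, by the construction in Lemma \ref{extension-isotopies-inductive} applied to the sides $Z^{\bullet,i}$. With this choice $\tilde{\tilde{Z}}_G$ exists exactly as in that lemma, by Stokes on the square. This is why the statement lets $Z^{1,\bullet}$ be produced rather than prescribed.

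\emph{Main obstacle.} The delicate step is the case $H(G)=\emptyset$ together with the corner compatibility. One must check that the ends of $pr_*(\delta\tilde{\tilde{\underline{Z}}})$ reproduce exactly the boundary terms $pr_*(\delta\tilde{\underline{Z}}{}^{\bullet,i})$ used for the sides $Z^{\bullet,i}$, including signs from (\ref{comparing-orientation}). I would verify this first by writing $\partial$ of the square chain as four edge contributions and using the cycle conditions of the given sides. In the case $H(G)\neq\emptyset$, the only new point compared with Lemma \ref{nice-extension-isotopy} is that the perturbations must be cut off on an $L$-shaped end region instead of two half-lines. This is routine.
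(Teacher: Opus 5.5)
Your proposal is correct and is the argument the paper intends: the paper's proof is only the remark that this lemma is the one-parameter version of Lemma~\ref{extension-isotopies-inductive}. Your split into $H(G)\neq\emptyset$ (produce the free side $Z^{1,\bullet}_G$ by Lemma~\ref{nice-extension-isotopy}, then fill the square by the two-parameter versions of Lemmas~\ref{not-removible-lemma-isotopies} and~\ref{nice-extension-isotopy}) and $H(G)=\emptyset$ (define the free side by pushing the $\delta$-terms across the square) is exactly that argument written out.

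Two small fixes. First, the corner relations between $Z^{1,i}_G$ and $Z^{0,i}_G+pr_*(\cdots)$ should be derived from the cycle condition of the given sides $Z^{\bullet,i}_G$, since these are hypotheses and not outputs of Lemma~\ref{extension-isotopies-inductive}; the relation is exact when $V(G)=\emptyset$ and holds only up to $\partial^{red}$ on the loop space otherwise, which your end correction must absorb. Second, with the paper's convention $\tilde{\tilde{Z}}^{\lambda'<-\lambda^{\dagger}}=Z^{0,\bullet}\times\R_{\lambda'<-\lambda^{\dagger}}$, the projection should be onto $\R_\lambda$ rather than $\R_{\lambda'}$.
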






\subsection{Appendix: Chains on Loops Space} \label{chain-loop-section}

Denote by $Map(\mathbb{S}^1,M) $ the set of piecewise smooth maps between the circle $\mathbb{S}^1$ and $M$. 
 Given a  cyclic ordered finet set $S$, denote by
 $Conf_S(\mathbb{S}^1)$ the set of injective maps between $S$ and $\mathbb{S}^1$ respecting the cyclic order.

The set $  \mathfrak{L}_S(M) $ of loops with marked points labeled by $S$ is defined by 
$$  \mathfrak{L}_S(M) = (  Map(\mathbb{S}^1,M) \times Conf_S(\mathbb{S}^1) )/(Diff^+(\mathbb{S}^1) ) .$$

We denote by 
$$ ev_S : \mathcal{L}_S M \rightarrow M^S $$
 the evaluation map on the marked  points.

When $S= \emptyset$, we simply denote $  \mathfrak{L}(M)  =   \mathfrak{L}_{\emptyset}(M)$.


In the following we define the space of chains (up to triangulations)  on $ \mathcal{L}_S M$. 

\subsubsection{Adding a marked point}

In this subsection we consider the operation of adding a marked point $\star$ to $S$ and we introduce some notation which will be useful in the following. 

Denote by $Conf_S^+(\mathbb{S}^1)'$ the space of injective maps between $S \sqcup \{  \star  \}$  to $\mathbb{S}^1$ respecting the cyclic order of $S$, and by $\overline{Conf}_S^+(\mathbb{S}^1)'$ its compactification. 
 $\overline{Conf}_S^+(\mathbb{S}^1)'$  is a manifold with corners whose  connected components are in  correspondence to the position of $\star$ in $\mathbb{S}^1 \setminus S$.

The connected components of  $\overline{Conf}_S^+(\mathbb{S}^1)'$ may be attached in a natural way to obtain a connected manifold $\overline{Conf}_S^+(\mathbb{S}^1)$.
The forget map $\overline{Conf}_S^+(\mathbb{S}^1) \rightarrow \overline{Conf}_S(\mathbb{S}^1)$ is a $\mathbb{S}^1$-fibration, trivial when $S \neq \emptyset$:
$$Conf_S^+(\mathbb{S}^1)  \cong Conf_S(\mathbb{S}^1) \times \mathbb{S}^1 \text{  if  } S \neq \emptyset.$$

\subsubsection{$k$-chains on $ \mathfrak{L}_S(M)$}



Consider arrays $(N,(w, (t_i)_i ))$ where
\begin{itemize}
    \item  $N$ is a compact oriented $k$-manifold with corners;
    \item  $(t_i)_{i \in S}: N \rightarrow \overline{Conf}_S(\mathbb{S}^1) \label{delta-configuration}$;
    \item $w: \overline{Conf}_S^+(\mathbb{S}^1) \times_{ \overline{Conf}_S(\mathbb{S}^1)   } N \rightarrow M .$ \label{delta-loop}
\end{itemize}
    We assume that the maps $(w, (t_i)_i ) $ are continuous and piecewise smooth. We assume that $w$ not constant on each fiber of the projection $\overline{Conf}_S^+(\mathbb{S}^1) \times_{ \overline{Conf}_S(\mathbb{S}^1)   } N  \rightarrow N$.

A $k$-chain on $ \mathfrak{L}_S(M)$ should be defined as a formal linear combination of arrays as above, modulo certain equivalence relations.
However for our purpose  it is necessary to add some extra structure to this definition.

Consider first the case $S \neq \emptyset$.  We consider  arrays
\begin{equation} \label{array-chains}
((N,N_0),( w, (t_i)_i  )  )
\end{equation}
 where
\begin{itemize}
\item  $N$ is a compact oriented $k$-manifold with corners;
    \item  $N_0 $ is a codimension two submanifold of $N$ intersecting transversally the boundary of $N$;
     \item $(t_i)_i : \hat{N} \rightarrow \overline{Conf}_S(\mathbb{S}^1) \label{delta-configuration1}$,  where $\hat{N}$ denotes the differential blow-up of $N$ along $N_0$. ;
    \item  $w : \overline{Conf}_S^+(\mathbb{S}^1) \times_{ \overline{Conf}_S(\mathbb{S}^1)   } \hat{N} \rightarrow M $\label{delta-loop1}.
\end{itemize}

Let $\hat{N}_0 $ be the pre-image of $N_0$ by the blow-down map $\hat{N} \rightarrow N$.
By definition $ \hat{N}_0$ is equipped by an $ \mathbb{S}^1$-action with $\hat{N}_0/  \mathbb{S}^1 = N_0$.
We assume that the restriction on  $\hat{N}_0 $ of $(w, (t_i)_i )$ is  $\mathbb{S}^1$-equivariant when $M$ is equipped with the trivial $\mathbb{S}^1$-action.

From the evaluation map associated to $(N,(w, (t_i)_i ))$ we obtain a map 
$$N  \rightarrow M^S$$
$$z \mapsto ( w(t_i(z), z) )_i$$ 
where we have used the  
 $\mathbb{S}^1$-equivariance  in order to blow-down the map from $\hat{N}$ to $N$.

We define a $k$-chain  on  $ \mathfrak{L}_S(M)$  as a formal linear combinations of arrays (\ref{array-chains}).



We consider the following equivalence relation.
Assume $ N= N^1 \sqcup_{P} N^2$, with $P$ transversal to $N_0$. Set 
$$N_0^1= N^1 \cap N_0, N_0^2= N^2 \cap N_0, $$
$$ (w^1, (t_i^1)_i ))= (w, (t_i)_i )|_{N^1} , (w^2, (t_i^2)_i ))= (w, (t_i)_i )|_{N^2}.$$ 
We declare
$$((N,N_0),(w, (t_i)_i )) \sim ((N^1,N_0^1),(w^1, (t_i^1)_i )) +((N^2, N_0^2),(w^2, (t_i^2)_i )).$$    


Now we consider the case $S = \emptyset$. We consider arrays
\begin{equation} \label{array-chains-zero}
((N,N_0), w  )
\end{equation}
 where
$N_0 $ is a boundary face of $N$, 
\begin{equation} \label{delta-loop0}
w: N \times \mathbb{S}^1 \rightarrow M .
\end{equation}
We assume that the restriction of $w$ to 
$N_0 \times \mathbb{S}^1 $ is constant along the $\mathbb{S}^1$-direction.

We define a $k$-chain  on  $ \mathfrak{L}_0(M)$  as a formal linear combinations of arrays (\ref{array-chains-zero}).



We consider the following  equivalence relation.
Assume $ N= N^1 \sqcup_{P} N^2$, with $P$ transversal to $N_0$. Set 
$$N_0^1= N^1 \cap N_0, N_0^2= N^2 \cap N_0,$$ 
$$ w^1= w|_{N^1} , w^2= w|_{N^2}.$$ 
We declare
$$((N,N_0), w  ) \sim ((N^1,N_0^1), w^1  ) +((N^2,N_0^2), w^2  ).$$

\subsubsection{Forgetting map}

 The definition of the equivalence relation on the $k$-chains is based on the notion of pull-back of a $k$-chain under the forget map which we define in this sub-section.

Let $S'$ be the set obtained adding a point to $S$. Denote by $s_0 \in S'$ the added point.

We have a natural forgetting map
$$ \mathbf{forget}_{s_0}:  \mathfrak{L}_{S'}(M) \rightarrow   \mathfrak{L}_{S}(M)  $$ 
which would like to consider as a fibration whose fibers are closed intervals if $S' \neq \emptyset$ or circles $\mathbb{S}^1$ if $S' = \emptyset$.
The precise meaning of this statement is that to each $k$-chain in $\mathcal{L}_{S'}(M) $ corresponds a pull-back $(k+1)$-chain on $\mathcal{L}_{S'}(M) $.
This chain can be explicitly defined as follows.

We need to associate to a generator  $(N,(w, (t_i)_i ))$ of $k$-chains on $ \mathfrak{L}_{S}(M)$ a generator  $(N',(w', (t_i')_i ))$ of $k+1$-chains on  $ \mathfrak{L}_{S'}(M)$. This is done as follows.

Consider first the case $N_0 = \emptyset$.
Set
\begin{equation} \label{pull-back-delta}
N' = \overline{Conf}_{S'}(\mathbb{S}^1) \times_{ \overline{Conf}_S(\mathbb{S}^1)   } N  .
\end{equation}
The map 
$$(t_i')_i  :   N' \rightarrow  \overline{Conf}_{S'}(\mathbb{S}^1)  $$
 is  the projection on the first factor.
The map
$$w': \overline{Conf}_{S'}^+(\mathbb{S}^1) \times_{ \overline{Conf}_{S'}(\mathbb{S}^1)   } N'  \rightarrow M $$
 is defined  from $w$ applying
  the isomorphism
 $$ \overline{Conf}_{S'}^+(\mathbb{S}^1) \times_{ \overline{Conf}_{S'}(\mathbb{S}^1)   } N' = \overline{Conf}_{S'}^+(\mathbb{S}^1) \times_{ \overline{Conf}_S(\mathbb{S}^1)   }  N$$ 
and the forget  map 
  $ \overline{Conf}_{S'}^+(\mathbb{S}^1) \rightarrow   Conf_{S}^+(\mathbb{S}^1) . $

Now consider the case $N_0 \neq \emptyset, S \neq \emptyset$. Set  
\begin{equation} \label{pull-back-delta-2}
\hat{N}' = \overline{Conf}_{S'}(\mathbb{S}^1) \times_{ \overline{Conf}_S(\mathbb{S}^1)   } \hat{N}  .
\end{equation}
The submanifold
$$\hat{N}_0'  := \overline{Conf}_{S'}(\mathbb{S}^1) \times_{ \overline{Conf}_S(\mathbb{S}^1)   } \hat{N}_0  $$
is equipped  with the $\mathbb{S}^1$-action induced from the $\mathbb{S}^1$-action on $ \overline{Conf}_{S'}(\mathbb{S}^1)$ and  $\hat{N}_0$.
The maps 
\begin{subequations} 
\begin{align}
(t_i')_i : \hat{N}' \rightarrow \overline{Conf}_{S'}(\mathbb{S}^1) \label{delta-configuration-pull-back-2} \\
w':  \overline{Conf}_{S'}^+(\mathbb{S}^1) \times_{ \overline{Conf}_{S'}(\mathbb{S}^1)   } \hat{N}'  \rightarrow M \label{delta-loop-pull-back-2}
\end{align}
\end{subequations}
are defined 
as before.

Define $N'$ as the quotient of $\hat{N}'$ with respect the $\mathbb{S}^1$-action on $\hat{N}_0'  $. 
 The maps (\ref{delta-configuration-pull-back-2}) ,  (\ref{delta-loop-pull-back-2}) restricted to $\hat{N}_0'$ are  $\mathbb{S}^1$-equivariant.

Finally consider the case $S= \emptyset$ (\ref{delta-loop0}). Set
\begin{equation} \label{pull-back-delta-0}
\hat{N}' = \overline{Conf}_{s_0}(\mathbb{S}^1) \times N.
\end{equation}
Define $N'$  quotient its subspace  $ \overline{Conf}_{s_0}(\mathbb{S}^1) \times N_0 $  by  the $\mathbb{S}^1$-action . 
The definition of $w'$ is similar to the case above.



\subsubsection{Isotopies}

The definition of chains given above may be extended  to chains  on  $ \mathfrak{L}_S(M) \times [0,1]$ straighforwardly. 

For semplicity we consider the case $N_0 = \emptyset$. 
A generator consists in an array $(N, (\tilde{w},(\tilde{t}_{i})_i, \lambda ))$ where
\begin{itemize}
    \item  $N$ is a compact oriented $k$-manifold with corners;
    \item  $(\tilde{t}_{i})_i: N \rightarrow Conf_S(\mathbb{S}^1) $ \label{delta-configuration-iso};
    \item $\tilde{w}_{}: \overline{Conf}_S^+(\mathbb{S}^1) \times_{ \overline{Conf}_S(\mathbb{S}^1)   } N \rightarrow M $ \label{delta-loop-iso};
    \item $\lambda : N \rightarrow [0,1]$.
    \end{itemize}

We have the natural evaluation map 
$$\text{ev}:   \mathfrak{L}_S(M) \times [0,1] \rightarrow M^S \times [0,1] $$
and the natural forgetting map
$$ \mathbf{forget}_{s_0}:  \mathfrak{L}_{S'}(M)  \times [0,1] \rightarrow   \mathfrak{L}_{S}(M)   \times [0,1]$$ 
where as before $S' = S \sqcup {s_0} $.

The definition of pull-back chain defined above may be adapted  straightforwardly.

\subsubsection{Multi-loop space}

Now  we consider chains on the (multi-)loop space. We consider arrays 
$$\hat{V} = (V , (H_v))_{v \in V})$$
where $V$ is a finite set and $H_v$ is a cyclic order set for each $v \in V$.

To $\hat{V}$ we associate the multi-loop space
$$\mathfrak{L}_{\hat{V}}(M) =  \prod_{v \in V} \mathfrak{L}_{H_v}(M) .$$
Chains on $\mathfrak{L}_{\hat{V}}(M)$ are linear combinations  of arrays $(N, ( N^0_v )_v , (w_v, t_v )_v)$ , where, for each $v \in V$, $(N,N^0_v, (w_v,t_v))$ is a generator of  $\mathfrak{L}_{ H_v}(M)$ as above.
We assume that the the manifolds $ \{ N^0_v \}_v$ are transversal.

Similarly we consider isotopies, i.e., chains on $  \mathfrak{L}_{\hat{V}}(M) \times [0,1]$. A chain on  $  \mathfrak{L}_{\hat{V}}(M) \times [0,1]$ is formal linear combination of  arrays $(N, ( N^0_v )_v , (w_v, t_v )_v,  \lambda)$ , where, for each $v \in V$, $(N,N^0_v, (w_v,t_v), \lambda)$ is a generator  $C_*(\mathfrak{L}_{ H_v}(M) \times [0,1])$ as above.
We assume that the the manifolds $ \{ N^0_v \}_v$ are transversal.

\section{ Point-Splitting-Homology} \label{PSH-section}

In this section we define Point-Splitting-Homology (PSH) as isotopy equivalence classes of nice $PS$-cycles. We start defining the notion of nice $PS$-cycle.

\subsection{Nice $PS$-Cycles}    \label{nice-section}

\subsubsection{Limit Support}

Let $G$ be a decorated graph.

Let $\underline{C}= ( {\underline{C}}^{(n)} )_n$ be a sequence of chains on $\mathfrak{L}_{\hat{V}(G)}M$.  We say that the limit support $\lim supp(\underline{C}) $ of $\underline{C}$
is empty, if for each $\epsilon$, for  $n$ big enough we have
\begin{equation} \label{support-bound}
 Diameter(\bigsqcup_{v \in V_c} supp(w_v)) < \epsilon 
 \end{equation}
  for each  $(w_v)_v \in supp (\underline{C}^{(n)})$ and $c \in Comp_0$.

\subsubsection{Nice Annulus Data}

\begin{definition} 
 A nice annulus data $\underline{Z}^{\circledcirc, \diamond}$ consists in a sequence of annulus data $(\underline{Z}^{\circledcirc, (n)})_n$ such that 
 \begin{itemize}
\item $ \lim Supp(\underline{Z}^{\circledcirc, \diamond}_G)= \emptyset$  for each $G \in Ann0$;
\item there exists   $(\tilde{\underline{Z}}^{\circledcirc, int, (n)})_n$, where $ \tilde{\underline{Z}}^{\circledcirc, int, (n)}$  is an isotopy of annulus data between $ \tilde{\underline{Z}}^{\circledcirc,  (n)} $ and $ \tilde{\underline{Z}}^{\circledcirc,  (n-1)}$   with
$ \lim Supp( pr_*(\tilde{\underline{Z}}^{\circledcirc, int}) =  \emptyset.$
\end{itemize} 
\end{definition}

\subsubsection{Nice $PS$-Cycles}

  \begin{definition} \label{nice-PSC-definition}
 Fix  a  nice annulus data $\underline{Z}^{\circledcirc, \diamond}$.  
  A nice $PS$-cycle  $Z^{\diamond } $ with annulus data $\underline{Z}^{\circledcirc, \diamond}$  consists in a sequence of $PS$-cycles $( Z^{(n)} )_n$ such that 
\begin{itemize}
\item $Z^{(n)} \in \mathcal{Z}(M| \underline{Z}^{\circledcirc, (n)})$ for each $n$;
\item   $\lim Supp(Z_{G,m}) = \emptyset$ for each $(G,m) \in \mathfrak{G}_*$;
\item there exists a sequence $(\tilde{Z}^{\circledcirc, int, (n)})_n$,  where $\tilde{Z}^{int, (n)}  \in \tilde{\mathcal{Z}}(M| \tilde{\underline{Z}}^{\circledcirc, int, (n)})$  isotopy of  $PS$-cycles   between $Z^{(n)} $ and $Z^{(n-1)}$, with  $\lim supp( pr_*(\tilde{Z}_{G,m}^{int}))= \emptyset$ for each $(G,m)$. (here $pr: M \times \R \rightarrow M$ is the projection on the first factor.)
\end{itemize} 
\end{definition}
We denote by $\mathcal{Z}(M| \underline{Z}^{\circledcirc, \diamond})$ the space of nice $PS$-cycles.





\begin{lemma} \label{extension-nice}
For each $\underline{Z}_{\prec G}^{\diamond} \in  \mathcal{Z}_{\prec G}^{\diamond} $ there exists $\underline{Z}_{\preccurlyeq G}^{\diamond} \in  \mathcal{Z}_{\preccurlyeq G}^{\diamond} $ extending $\underline{Z}_{\prec G}^{\diamond}$.
\end{lemma}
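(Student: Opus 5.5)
The plan is to run the extension machinery of Lemmas \ref{nice-extension} and \ref{extension-isotopies-inductive} levelwise in $n$, while keeping the construction local enough that the limit-support conditions survive. A nice truncated cycle $\underline{Z}_{\prec G}^{\diamond}$ consists of a sequence $\underline{Z}_{\prec G}^{(n)}$, interpolating isotopies $\tilde{\underline{Z}}^{int,(n)}_{\prec G}$ between consecutive terms, and nice annulus data. We must produce three things:
\begin{itemize}
\item chains $\underline{Z}^{(n)}_{G,m}$ for each $n$;
\item extensions of the isotopies $\tilde{\underline{Z}}^{int,(n)}$ to $G$;
\item a proof that $\lim Supp$ of both the new chains and the projections $pr_*$ of the new isotopies is empty.
\end{itemize}

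\textbf{Case $H(G)=\emptyset$.} Here there is no transversality to arrange. We define $\underline{Z}^{(n)}_G$ inductively in $n$, following the proof of Lemma \ref{extension-isotopies-inductive}:
$$\underline{Z}^{(n)}_G=\underline{Z}^{(n-1)}_G+pr_*(\delta_{e'}\tilde{\underline{Z}}^{int,(n)}_{G'}),$$
and take the corresponding isotopy. The supports are controlled by those of $\tilde{\underline{Z}}^{int,(n)}_{G'}$, which shrink by hypothesis. The one subtlety is the initial term $\underline{Z}^{(0)}_G$. Its support does not shrink, so it cannot be kept as it is. Instead I would reindex, starting the sequence at a large $n_0$, or rewrite $\underline{Z}^{(n)}_G$ as a telescoping sum in which only late terms survive. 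This needs care, and I note it as a secondary difficulty.

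\textbf{Case $H(G)\neq\emptyset$.} I would proceed in three steps.
\begin{enumerate}
\item For $n=0$, apply Lemma \ref{nice-extension} to obtain $\underline{Z}^{(0)}_{\preccurlyeq G}$.
\item For $n\ge 1$, apply Lemma \ref{extension-isotopies-inductive} with $\tilde{Z}^{int,(n)}_{\prec G}$ reversed. This produces $\underline{Z}^{(n)}_{\preccurlyeq G}$ extending $\underline{Z}^{(n)}_{\prec G}$, together with an isotopy to $\underline{Z}^{(n-1)}_{\preccurlyeq G}$ extending the given one.
\item Check the shrinking condition. Each extension step in Lemmas \ref{not-removible-lemma}, \ref{nice-extension}, \ref{nice-extension-isotopy} has the same shape: a preliminary chain is built from lower data, by products with $\underline{Z}_{Disk0}$ or $\underline{Z}^{\circledcirc}$, by $\delta$-operators and boundaries of lower chains, and is then perturbed by an arbitrarily $C^0$-small transversality perturbation.
\end{enumerate}

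Diameters of supports of components are preserved or controlled by these operations. For $\underline{Z}_{Disk0}$ this holds because its loops are constant in $M$. The operation $\delta_e$ merges components whose supports meet at a point, so the diameter grows at most additively. The cobordism chains can be chosen $C^0$-close to $\underline{Z}^{prel}$. So at stage $n$ we choose the perturbation size $\epsilon_n\to0$, and inductively the component diameters are bounded by a constant times the lower-level bound plus $\epsilon_n$, which tends to zero.

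\textbf{Main obstacle.} The real difficulty is the unforced choices in Lemma \ref{nice-extension} for $E^{ex}(G)=\emptyset$. These are the chain $\underline{Z}_{G,m\sqcup\{E(G)\}}$, and in the isotopy version the auxiliary chain $\tilde{\underline{Z}}^{\dagger}$. Their supports are not dictated by lower data. I would choose them as small cobordisms, namely straight-line, fibrewise-small homotopies inside a tubular neighbourhood of the preliminary chain, so that their support is within $\epsilon_n$ of the lower data. I expect this step to need the most care, together with the $H(G)=\emptyset$ base-term issue above.
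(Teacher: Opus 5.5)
For $H(G)\neq\emptyset$ your argument is the paper's. You extend at level $0$ by Lemma~\ref{nice-extension}, propagate along the interpolating isotopies with Lemma~\ref{extension-isotopies-inductive}, and read off the shrinking of supports from the constructions of Lemmas~\ref{nice-extension} and~\ref{nice-extension-isotopy}. Your bookkeeping (constant loops in $\underline{Z}_{Disk0}$, additive growth of diameters under $\delta_e$, perturbations $\epsilon_n\to 0$) correctly unpacks the paper's one-line claim. The free chain $\tilde{\underline{Z}}^{\dagger}$ is not a real obstacle: take it to be the two half-cylinders over the prescribed ends, so its projected support lies in the supports of $\underline{Z}^{(n)}$ and $\underline{Z}^{(n-1)}$.

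The genuine gap is the case $H(G)=\emptyset$, $V(G)\neq\emptyset$. (For $V(G)=\emptyset$ the chains live on a point, the support condition is vacuous, and the formula is fine.)
\begin{itemize}
\item Iterating the formula gives $\underline{Z}^{(n)}_G=\underline{Z}^{(0)}_G+\sum_{k\le n}c_k$, where $c_k$ is the projected $\delta_{e'}$- and $\delta_{v'}$-terms of $\tilde{\underline{Z}}^{int,(k)}_{G'}$. The hypothesis shrinks each increment, not the sum. So every early $c_k$, not just the initial term, stays in the support for all later $n$, and neither reindexing nor telescoping changes this.
\item No isotopy at $G$ can discard such a term except through the special faces $N_{0,v}$ of (\ref{delta-loop0}), because the reduced boundary of the $1$-chain $\tilde{\underline{Z}}^{int,(n)}_G$ is fixed by its ends and the lower data. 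Hence the class of $\underline{Z}^{(n)}_G$ in $H_0$ of $\prod_v\mathfrak{L}(M)$, relative to the locus where some $w_v$ is constant, is determined by $[\underline{Z}^{(0)}_G]$ and the lower data.
\item This class must vanish once supports are small, since small loops are contractible. When $\pi_1(M)\neq 0$, early corrections can contain tuples of non-contractible loops, so the level-$0$ chain at such $G$ cannot be chosen arbitrarily. (The paper's proof only cites the $H(G)\neq\emptyset$ lemmas at this point and is silent here too.)
\end{itemize}

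The missing idea is the loop shrinking the paper uses in Subsection~\ref{obstruction-section}, applied backwards in $n$:
\begin{itemize}
\item Choose $n_0$ so that for $n\ge n_0$ all lower chains and isotopies have loops inside balls below the injectivity radius.
\item Set $\underline{Z}^{(n)}_G=0$ for $n\ge n_0$. For $n>n_0$, let $\tilde{\underline{Z}}^{int,(n)}_G$ contract one loop of each tuple in the (small) correction to a constant loop. The constant end is an $N_{0,v}$-face; it is discarded by $\partial^{red}$ and passed via $\delta_v$ to the later graph $\delta_v G$.
\item For $n\le n_0$, define $\underline{Z}^{(n-1)}_G$ from $\underline{Z}^{(n)}_G$ by the formula of Lemma~\ref{extension-isotopies-inductive}. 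This is the lemma's natural direction, since $\tilde{Z}^{int,(n)}$ runs from $Z^{(n)}$ to $Z^{(n-1)}$.
\end{itemize}
Building the sequence backwards automatically puts $\underline{Z}^{(0)}_G$ in the correct relative class, and all chains and isotopies at $G$ beyond $n_0$ have small support.
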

\begin{proof}
Take  $\underline{Z}_{\preccurlyeq G}^{(0)} \in  \mathcal{Z}_{\preccurlyeq G}$ extending $\underline{Z}_{\preccurlyeq G}^{(0)} \in  \mathcal{Z}_{\preccurlyeq G}$. Using induction on $n$, assume we have defined $\underline{Z}_{\preccurlyeq G}^{(n)}$. Apply Lemma  \ref{extension-isotopies-inductive} 
to $\underline{Z}_{\preccurlyeq G}^{(n)}, \tilde{ \underline{Z}}^{int, (n)}_{\prec G}$ to obtain $\underline{Z}_{\preccurlyeq G}^{(n+1)}, \tilde{ \underline{Z}}^{int, (n)}_{\preccurlyeq G}$.
The proof of Lemma \ref{nice-extension-isotopy} and Lemma \ref{nice-extension} tells us that we may construct $\underline{Z}_{\preccurlyeq G}^{(n+1)}, \tilde{ \underline{Z}}^{int, (n)}_{\preccurlyeq G}$ such that limit support is empty.
\end{proof}

\subsubsection{Isotopies}

Take a sequence of chains $\tilde{\underline{C}}= ( {\tilde{\underline{C}}}^{(n)} )_n$ on $ \mathfrak{L}_{\hat{V}(G)}M \times \R$.  We say that the limit support $\lim \tilde{supp}(\tilde{\underline{C}}) $ of $\tilde{\underline{C}}$ is empty if
for each $\epsilon>0$, for  $n$ big enough we have
 $$Diameter(\bigsqcup_{v \in V_c} supp(w_v)) < \epsilon $$
  for each $c \in Comp_0$ and $((w_v)_v, t) \in supp (\tilde{\underline{C}}^{(n)})$.


\begin{definition}
Let $\tilde{\underline{Z}}^{\circledcirc, \diamond}$ be an isotopy of nice annulus data  between nice annulus data  $\underline{Z}^{0, \circledcirc, \diamond}$ and $\underline{Z}^{1,\circledcirc, \diamond}$.


An isotopy  of nice $PS$-cycle $\tilde{Z}^{(n)}$ between nice $PS$-cycles   $Z^{0,\diamond} \in \mathcal{Z}(M|   Z^{0,\circledcirc,\diamond}    )$ and  $Z^{1 , \diamond} \in \mathcal{Z}(M|   Z^{1, \circledcirc ,\diamond}    )$ with annullus data $\tilde{\underline{Z}}^{\circledcirc, \diamond}$ consists in a sequence $\tilde{Z}^{\diamond }= ( \tilde Z^{(n)} )_n$, where $\tilde{Z}^{(n)} \in \tilde{\mathcal{Z}}(M| \tilde{\underline{Z}}^{\circledcirc, (n)})$   isotopy of $PS$-cycles between $Z^{0, (n)}$ and $Z^{1,(n)}$, with
$\lim \tilde{Supp} (\tilde Z_{G,m}^{(n)})_n = \emptyset$   for each $(G,m)$.
\end{definition}

We denote by $ \tilde{\mathcal{Z}}(M| \tilde{\underline{Z}}^{\circledcirc, \diamond})$ the vector space of isotopies of nice $PS$-cycle  with annullus data $\underline{Z}^{\circledcirc, \diamond}$.

\subsubsection{Obstructions} \label{obstruction-section}

Let $Z^{0,\diamond} , Z^{1 , \diamond} \in \mathcal{Z}(M|   Z^{ \circledcirc ,\diamond}    )$ . 
  
  Let $\tilde{\underline{Z}}_{\prec G} \in  \tilde{\mathcal{Z}}_{\prec G}(M| \tilde{\underline{Z}}^{\circledcirc, \diamond})$ be an isotopy between   $Z_{\prec}^{0,\diamond} , Z_{\prec G}^{1 , \diamond}$ .  
   In this subsection we determine the conditions for which there exists an isotopy $\tilde{\underline{Z}}_{\preccurlyeq G}$ extending $\tilde{\underline{Z}}_{\prec G}$.

In the case $H(G) \neq \emptyset$,  Lemma \ref{nice-extension-isotopy} tells us that there exists $\tilde{\underline{Z}}_{\preccurlyeq G}^{(n)}$ extending $\tilde{\underline{Z}}_{\prec G}^{(n)}$  . The construction may be made such that the limit support is trivial.

Assume  $H(G) = \emptyset$. If there exists $v \in V(G)$, for $n$ big enough, $\tilde{\underline{Z}}_{ G}^{(n)}$ may be defined shrinking $w_v$ in the chains 
$\underline{Z}^{0,(n)}, \underline{Z}^{1,(n)}$ and $   \sum_{(G', e')}  \delta_{e'} \tilde{\underline{Z}}_{G'}^{(n)}  +  \sum_{(G', v')} \delta_{v'} \tilde{\underline{Z}}_{G'}^{(n)} $.

In the case  $H(G) = \emptyset$, $V(G) = \emptyset$ we define the obstruction
\begin{equation} \label{definition-obstruction}
\mathfrak{obs}_G^{(n)} = \underline{Z}^{1,(n)} -  \underline{Z}^{0,(n)} + pr_*(   \sum_{(G', v')} \delta_{v'} \tilde{\underline{Z}}_{G'}^{(n)})  \in \Q.
\end{equation}

\begin{lemma} \label{obstruction-lemma}
The obstruction (\ref{definition-obstruction}) does not depend on $n$. The extension of $\tilde{\underline{Z}}$ to the graph $G$ exists if and only if $\mathfrak{obs}_G=0$.
\end{lemma}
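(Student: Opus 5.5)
When $H(G)=\emptyset$ and $V(G)=\emptyset$, the relevant chains are $0$-chains on a point; I identify these with rational numbers via the total weight (augmentation). In that degree the cycle condition (\ref{cycle-condition-loop-isotopy}) for the isotopy $\tilde{\underline{Z}}_G$ reduces to an equation of $1$-chains on $\R$. There are no $\partial_i m$ terms: $E(G)=\emptyset$ forces $m$ to be trivial in the relevant degree. Since $G$ has no vertices, no edge can be contracted into it, so there are no $\delta_{e'}$ terms either. The only incoming terms are therefore the $\delta_{v'}\tilde{\underline{Z}}_{G'}$, where $G'$ has a single isolated vertex with $H_{v'}=\emptyset$.

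The plan is to apply $\partial$ and take the augmentation. Suppose $\tilde{\underline{Z}}_G$ is a $1$-chain on $\R$. It equals $Z^0_G\times\R_{<-\lambda^\dagger}$ and $Z^1_G\times \R_{>\lambda^\dagger}$ near the ends. Its boundary is then the difference of the end values plus interior points, and the interior points must equal $\sum\delta_{v'}\tilde{\underline{Z}}_{G'}$. Augmenting, a $1$-chain with these prescribed ends and prescribed interior boundary exists if and only if
$$\underline{Z}^1_G-\underline{Z}^0_G+\epsilon\Bigl(pr_*\sum_{(G',v')}\delta_{v'}\tilde{\underline{Z}}_{G'}\Bigr)=0$$
with the sign conventions of (\ref{comparing-orientation}). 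For the converse, if $\mathfrak{obs}_G=0$, I take a step-function $1$-chain on $\R$ jumping at the points of $pr_*\delta_{v'}\tilde{\underline{Z}}_{G'}$. Its weight on each segment is determined by the jumps, and the zero obstruction makes the weights match $Z^1_G$ at $+\infty$. Transversality is vacuous, since $E^{in}(G)=\emptyset$. Forgetful compatibility is vacuous, since $E^{ex}(G)=\emptyset$. The limit support condition holds trivially because $V_c=\emptyset$. This gives the iff, once the obstruction is shown to be independent of $n$.

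The independence of $n$ is the key step. I will use the interpolating isotopies $\tilde{Z}^{0,int,(n)}$, $\tilde{Z}^{1,int,(n)}$ and $\tilde{Z}^{int}$ from Definition \ref{nice-PSC-definition}, and glue them into a square. First I extend the interpolation of the isotopies $\tilde{Z}^{(n)}$ and $\tilde{Z}^{(n-1)}$ to an isotopy of isotopies, truncated to $\prec G$, using Lemma \ref{extension-isotopies-isotopies}. For each graph $G'$ with one empty vertex I get a $2$-chain $\tilde{\tilde{Z}}_{G'}$ over $\R^2$. I then apply the cycle condition to it and push forward $\delta_{v'}$ to $\R^2$. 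This relates the boundary of the $1$-chain $pr_*\delta_{v'}\tilde{\tilde{Z}}_{G'}$ to the four sides of the square. By Stokes, the total augmentation of the boundary of a compactly supported (Borel--Moore, trivial at infinity) $1$-chain is zero. Combining this with the corresponding relation for the interpolations of $Z^0$ and $Z^1$ in the graph $G$ should yield $\mathfrak{obs}^{(n)}_G=\mathfrak{obs}^{(n-1)}_G$.

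The main obstacle will be the bookkeeping in this last step. The signs from $\delta_v\partial$ versus $\partial\delta_v$ (the analogue of $\delta_e\partial=-\partial\delta_e$) must be tracked. One must also check that the isotopy-of-isotopies exists on $\prec G$ without itself running into an obstruction at lower graphs with $H=V=\emptyset$. I would handle this by induction on the partial order, assuming the lemma for all smaller such graphs. I expect to need that the lower obstructions already vanish because the given $\tilde{\underline{Z}}_{\prec G}$ exists.
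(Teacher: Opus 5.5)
Your proposal is correct and follows essentially the paper's route. The paper likewise uses Lemma \ref{extension-isotopies-isotopies} to build an isotopy of isotopies on $\prec G$ spanning $\tilde{\underline{Z}}^{(n-1)},\tilde{\underline{Z}}^{(n)},\tilde{\underline{Z}}^{0,int,(n)},\tilde{\underline{Z}}^{1,int,(n)}$, then shows that the one-chain $\tilde{\underline{Z}}^{1,int}_G-\tilde{\underline{Z}}^{0,int}_G+pr_*\sum\delta_{v'}\tilde{\tilde{\underline{Z}}}_{G'}$ on $\R$ is closed with ends $\mathfrak{obs}^{(n-1)}_G$ and $\mathfrak{obs}^{(n)}_G$; this is your Stokes-on-the-square relation combined with the cycle conditions of the interpolating isotopies at $G$, and your step-function argument for the ``if and only if'' and your point that the lower two-parameter obstructions vanish because the sides already exist on $\prec G$ (the paper's subsequent Remark) spell out what the paper leaves implicit.
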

\begin{proof}

Let $n \gg 0$.
Using recursively Lemma  \ref{extension-isotopies-isotopies} we may construct an isotopy of isotopies $\tilde{\tilde{\underline{Z}}}_{\prec G}^{(n)}$ between $\tilde{\underline{Z}}_{\prec G}^{(n-1)}, \tilde{\underline{Z}}_{\prec G}^{(n)} , \tilde{\underline{Z}}_{ \prec G}^{0, int, (n)}, \tilde{\underline{Z}}_{\prec G}^{1, int, (n)}$. We may define the obstruction to the existence of 
$\tilde{\tilde{\underline{Z}}}_{ G}^{(n)}$
analogously to (\ref{definition-obstruction}):
\begin{equation} \label{definition-obstruction-2}
\tilde{\mathfrak{obs}}_G^{int, (n)} = \tilde{\underline{Z}}^{1,int, (n)} -  \tilde{\underline{Z}}^{0, int, (n)} + pr_*(   \sum_{(G', v')} \delta_{v'} \tilde{\tilde{\underline{Z}}}_{G'}^{(n)})  \in C_1(\R),
\end{equation}
that is  a one-chain on $\R$.
It is easy to check that
$$\partial \tilde{\mathfrak{obs}}_G^{int,(n)} =0, $$
and
$$ \tilde{\mathfrak{obs}}_G^{int, (n), <-\lambda^{\dagger}}= \mathfrak{obs}_G^{(n-1)} \times \R_{<-\lambda^{\dagger}},  \tilde{\mathfrak{obs}}_G^{int, (n), > \lambda^{\dagger}}= \mathfrak{obs}_G^{(n)} \times \R_{>\lambda^{\dagger}}.$$
We conclude that $ \mathfrak{obs}_G^{(n-1)}= \mathfrak{obs}_G^{(n)}$.

\end{proof}

\begin{remark}
Let $\tilde{\underline{Z}} \in  \tilde{\mathcal{Z}}(M| \tilde{\underline{Z}}^{\circledcirc, \diamond})$ be an isotopy between   $Z^{0,\diamond} , Z^{1 , \diamond}$ .  It is easy to see that, for each $n$, there exists   an isotopy of isotopies $\tilde{\tilde{\underline{Z}}}^{(n)}$ between $\tilde{\underline{Z}}^{(n-1)}, \tilde{\underline{Z}}^{(n)} , \tilde{\underline{Z}}^{0, int, (n)}, \tilde{\underline{Z}}_{\prec G}^{1, int, (n)}$. This claim may be proved applying recursively Lemma  \ref{extension-isotopies-isotopies} . It is easy to see that the obstruction to the existence of $\tilde{\tilde{\underline{Z}}}^{(n)}_G$  vanishes automatically when $H(G) = \emptyset$  . 

\end{remark}

\subsection{$PS$-Homology} \label{PSH-subsection}

We are now in position to define Point-Splitting-Homology:

\begin{definition}
Let  $\underline{Z}^{\circledcirc, \diamond}$ be a nice annulus data. The Point-Splitting-Homology with annulus data $\underline{Z}^{\circledcirc, \diamond}$ ,  $PSH(M| \underline{Z}^{\circledcirc, \diamond})$,  is defined as the space of nice $PS$-cycles  $ \mathcal{Z}(M| \underline{Z}^{\circledcirc, \diamond})$ modulo isotopies  of nice $PS$-cycles  $ \tilde{\mathcal{Z}}(M| \underline{Z}^{\circledcirc, \diamond})$ .
\end{definition}





\subsubsection{Transfer Map} 
 
 The definition of $PSH$ depends on the choice of an annulus data. The next Lemma tells us how to relate  $PSH$  for isotopic annulus data .

  \begin{lemma} \label{transfer-annulus}
 Take $\tilde{Z}^{\circledcirc, \diamond}$ isotopy between nice annulus data  $Z^{\circledcirc, 0 , \diamond }$  and  $Z^{\circledcirc, 1 , \diamond}$.




 
 For each $Z^{0, \diamond} \in \mathcal{Z}(M|   Z^{\circledcirc, 0 ,\diamond}    )$ there exists $Z^{1, \diamond} \in \mathcal{Z}(M|   Z^{\circledcirc, 1 ,\diamond}    )$ and $\tilde{Z}^{\diamond} \in \tilde{ \mathcal{Z}}(M|   \tilde{Z}^{\circledcirc , \diamond}    )$  isotopy between $Z^{0, \diamond}$ and $Z^{1, \diamond}  $.
 
 The homology class $[Z^{1, \diamond}] \in PSH(M|   Z^{\circledcirc, 1 ,\diamond}    )$ is determined by the homology class $[Z^{0, \diamond}] \in PSH(M|   Z^{\circledcirc, 0,\diamond}    )$. We obtain a map
 \begin{equation} \label{transfer-map-annulus}
 transfer_{  \tilde{Z}^{\circledcirc , \diamond} }   : PSH(M| Z^{\circledcirc, 0 , \diamond}  )  \rightarrow PSH(M| Z^{\circledcirc, 1 , \diamond }  ) .
  \end{equation}
Take $\tilde{Z}^{\circledcirc, \prime }$ another isotopy between nice annulus data  $Z^{\circledcirc, \diamond, 0 }$  and  $Z^{\circledcirc, 1 }$. If $\tilde{Z}^{\circledcirc, \diamond, \prime }$  is related to  $\tilde{Z}^{\circledcirc , \diamond}$ by an isotopy of isotopies we have
$$transfer_{  \tilde{Z}^{\circledcirc , \diamond} }  =  transfer_{  \tilde{Z}^{\circledcirc , \diamond, \prime} } .$$
 
 \end{lemma}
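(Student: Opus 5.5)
The plan is to build $Z^{1,\diamond}$ and $\tilde{Z}^{\diamond}$ graph by graph, by induction on the partial order $\prec$ on $\mathfrak{G}_*$. For each $n$ separately, Lemma \ref{extension-isotopies-inductive} is applied in the form where the annulus data vary along the isotopy $\tilde{Z}^{\circledcirc,(n)}$. The inductive step is the following. Suppose $Z^{1,(n)}_{\prec G}$ and $\tilde{Z}^{(n)}_{\prec G}$ have been constructed for all $n$. The key extra point is that we start from $Z^{0,(n)}_{\preccurlyeq G}$ on the $\lambda<-\lambda^\dagger$ end, rather than prescribing both ends as in Lemma \ref{nice-extension-isotopy}. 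When $H(G)\neq\emptyset$, the constructions in the proofs of Lemma \ref{nice-extension-isotopy} and Lemma \ref{nice-extension} supply $\tilde{Z}^{(n)}_G$. We then read off $Z^{1,(n)}_G$ as the restriction over $\lambda>\lambda^\dagger$; in the chain-level construction the positive end is left free and is just cut off. When $H(G)=\emptyset$, we use the explicit formula from the proof of Lemma \ref{extension-isotopies-inductive}, namely
$$Z^{1,(n)}_G=Z^{0,(n)}_G+pr_*\Big(\sum\delta_{e'}\tilde{Z}^{(n)}_{G'}+\sum\delta_{v'}\tilde{Z}^{(n)}_{G'}\Big).$$
This sidesteps the obstruction of Lemma \ref{obstruction-lemma} entirely, because nothing is prescribed at the second end.

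Next we have to arrange niceness, meaning empty limit support and the interpolating isotopies $\tilde{Z}^{1,int,(n)}$. Emptiness of the limit support follows as in Lemma \ref{extension-nice}: each perturbation is $C^0$-close to a chain built from data whose component supports already shrink, namely the products with $\underline{Z}_{Disk0}$, the annulus data, and the $\delta$-images. Every choice can therefore be made within an $\epsilon_n$-neighbourhood with $\epsilon_n\to 0$. To obtain $\tilde{Z}^{1,int,(n)}$, we glue three pieces: the reversed $\tilde{Z}^{(n-1)}$, then $\tilde{Z}^{0,int,(n)}$, then $\tilde{Z}^{(n)}$. This gives an isotopy from $Z^{1,(n-1)}$ to $Z^{1,(n)}$ over the concatenated annulus isotopy, and that annulus isotopy is isotopic to $\tilde{Z}^{\circledcirc,1,int,(n)}$ by the nice annulus hypotheses. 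An application of Lemma \ref{extension-isotopies-isotopies} then adjusts the annulus data. Supports stay small because all the pieces have small support.

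For well-definedness, suppose $Z^{0,\diamond}\sim Z'^{0,\diamond}$ via an isotopy, and that $Z^{1,\diamond},Z'^{1,\diamond}$ are two transfers. We form the square of isotopies with three sides known: the given isotopy on one side and the two transfer isotopies on two others, with the annulus square being constant along one direction. We fill it with an isotopy of isotopies by induction on $G$, using Lemma \ref{extension-isotopies-isotopies}. The output is the fourth side, an isotopy of nice $PS$-cycles between $Z^{1,\diamond}$ and $Z'^{1,\diamond}$. The same square argument, now with the annulus square given by the isotopy of isotopies between $\tilde{Z}^{\circledcirc,\diamond}$ and $\tilde{Z}^{\circledcirc,\diamond,\prime}$, proves independence of the annulus isotopy.

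The main obstacle is the graphs with $H(G)=V(G)=\emptyset$ in this filling step. There the fourth side is not free: both ends are already fixed, by $Z^{1}$ and $Z'^{1}$. The obstruction $\mathfrak{obs}_G$ of Lemma \ref{obstruction-lemma}, a rational number independent of $n$, must therefore vanish. My first approach is to show that this number equals the total boundary of a closed $1$-chain on $\R^2$ (over the square), made up of the scalar components of the four sides together with the $pr_*\delta_{v'}$ contributions of the filled lower graphs. The cycle condition on the filled square forces the sum to be zero, in the same way that $\partial\tilde{\mathfrak{obs}}=0$ is used in the proof of Lemma \ref{obstruction-lemma}.
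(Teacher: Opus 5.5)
Your proposal is correct and follows the same route as the paper. Existence comes from applying Lemma \ref{extension-isotopies-inductive} graph by graph for each $n$ along $\tilde{Z}^{\circledcirc,(n)}$ with shrinking supports, and well-definedness, including independence of the annulus isotopy, comes from filling a square of isotopies via Lemma \ref{extension-isotopies-isotopies}, whose fourth side is the required isotopy.

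You also make explicit several points the paper leaves implicit:
\begin{itemize}
\item the interpolating isotopies for $Z^{1,\diamond}$, whose construction tacitly uses a filling of the annulus-data square, exactly as the paper does in the Remark after Lemma \ref{obstruction-lemma};
\item independence from the representative of $[Z^{0,\diamond}]$;
\item the vanishing of the $H(G)=V(G)=\emptyset$ obstruction in the square, by closedness of the $\delta_{v'}$ jump chain as in the proof of Lemma \ref{obstruction-lemma}.
\end{itemize}
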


 \begin{proof}

We proceed by induction on  graphs. Assume we have defined $\tilde{\underline{Z}}_{\prec G}^{(n)}, {\underline{Z}}_{\prec G}^{1,(n)}$ for each $n$.
Apply  Lemma  \ref{extension-isotopies-inductive} to ${\underline{Z}}_{\preccurlyeq G}^{0,(n)}, \tilde{\underline{Z}}_{\prec G}^{(n)}$ to obtain 
${\underline{Z}}_{\preccurlyeq G}^{1,(n)}, \tilde{\underline{Z}}_{\preccurlyeq G}^{(n)}$.
The construction may be made such that the limit support is trivial.



Let $Z^{1,\prime} \in \mathcal{Z}(M|   Z^{\circledcirc, \diamond, 1}    )$ and  $\tilde{Z}^{\prime} \in \tilde{ \mathcal{Z}}(M|   \tilde{Z}^{\circledcirc }    )$ be a $PS$-cycle and an isotopy of $PS$-cycle obtained from $\tilde{Z}^{\circledcirc, \prime }$.  Applying inductively Lemma \ref{extension-isotopies-isotopies}  we may define an isotopy of isotopies between $\tilde{Z}$ and $\tilde{Z}^{\prime}$. In particular this yields  an isotopy between $Z^1$ and $Z^{1,\prime}$. 


\end{proof}

\subsection{Euler Structures}

Recall that an Euler Structures on a compact oriented three manifold $M$ is defined by a nowhere vanishing vector field on $M$.  Two Euler Structures on $M$ are said homologous if they are isotopic outside a ball as nowhere vanishing vector fields. We denote by $\mathfrak{Eul}(M)$  the set of homology classes of Euler Structures. It is an elementary fact that $\mathfrak{Eul}(M)$ is a torsor on $H_1(M, \Z)$.


\subsubsection{An alternative definition}

Consider the set of  pairs $U^{\blacktriangle}=(U_0 , U_1)$ where
\begin{itemize}
\item
 $U_0$ is a three chain on $M \times M$ close in the $C^0$ topology to $Diag_M$ and transversal to $Diag_M$;
\item $U_1$ is a one-chain on $M$ with
$$\partial U_1 = U_0 \cap Diag_M.$$
Here we consider $U_0 \cap Diag_M$ as a chain on $Diag_M \cong M$.  
\end{itemize}

An isotopy $\tilde{U}^{\blacktriangle}$ between $U^{\blacktriangle,0}=(U_0^0 , U_1^0)$ and $U^{\blacktriangle,1}=(U_0^1 , U_1^1)$ is defined by a pair $(\tilde{U}_0 , \tilde{U}_1)$ where

\begin{itemize}
\item $\tilde{U}_0$ is a four chain on $ M \times M \times \R$ close in the $C^0$ topology to $ Diag_M \times \R$ and transversal to $ Diag_M \times \R $;
\item $\tilde{U}_1$ is a two-chain on $ M \times \R$ with
$$\partial \tilde{U}_1 = \tilde{U}_0 \cap ( Diag_M \times \R).$$
\item 

$$\tilde{U}_0^{< -\lambda^{\dagger}} = U_0^0     \times \R_{<-\lambda^{\dagger}} , \quad  \tilde{U}_1^{< -\lambda^{\dagger}} = U_1^0   \times   \R_{<-\lambda^{\dagger}},$$

$$\tilde{U}_0^{>\lambda^{\dagger}}= U_0^1  \times  \R_{>\lambda^{\dagger}}    , \quad  \tilde{U}_1^{>\lambda^{\dagger}} =  U_1^1  \times   \R_{>\lambda^{\dagger}}.$$

\end{itemize}


Let $\mathfrak{Eul}(M)^{\blacktriangle}$  be the set of isotopy classes of the objects $U^{\blacktriangle}$ .  The group $H_1(M, \Z)$ acts  on $\mathfrak{Eul}(M)^{\blacktriangle}$ by addiction on $U_1$, making $\mathfrak{Eul}(M)^{\blacktriangle}$ a torsor on $H_1(M, \Z)$.

It is easy to see that
\begin{equation}    \label{equivalence-Euler}
  \mathfrak{Eul}(M) \cong \mathfrak{Eul}(M)^{\blacktriangle}.
\end{equation}
To see this observe that
when $V$ is a vector field on $M$ with  small norm, using the exponential map, we can consider $graph(V)$ as three chain on $M \times M$ close in the $C^0$ topology to $Diag_M$. Hence  $ (graph(V), 0)$
defines an element of $\mathfrak{Eul}(M)^{\blacktriangle}$.
The isomorphism (\ref{equivalence-Euler}) follows since the map 
$$V \mapsto (graph(V), 0)$$
is compatible with the action of $H_1(M,\Z)$,
 

\subsubsection{Relation with Annulus Data}

 To an annulus data $\underline{Z}^{\circledcirc}=(\underline{Z}^{\circledcirc}_G)_{G \in Ann0}$ it is associated  an Euler Structure $(U_0,U_1)$ as follows.
 
Since $\delta_e(pr_e(Z_{G_0}))$ is close in the $C^0$-topology to $\delta_e Z_{G_0}$ and $\delta_e Z_{G_0} + \partial Z_{G_1} + \partial Z_{G_2}=0$, there exists 
$Z_{G_1}'$ close in the $C^0$-topology to $Z_{G_1}$ such that 
$$\delta_e(pr_e Z_{G_0}) + \partial Z_{G_1}' + \partial Z_{G_2}=0.$$ 
It follows that
\begin{equation}  \label{Ann-Euler-map0}
U^{\blacktriangle}= (pr_e(Z_{G_0}), Z_{G_1}' + Z_{G_2}) 
\end{equation}
defines an Euler Structure. 
$U^{\blacktriangle}$ is not uniquely determinate from $\underline{Z}^{\circledcirc}$, however the set of its choice is contractible. 


Assume that   $Z^{\circledcirc, 0 }$  and  $Z^{\circledcirc, 1 }$ are annulus data and that $U^{\blacktriangle,0}$ and $U^{\blacktriangle,1}$ are the Euler Structures associated $Z^{\circledcirc, 0 }$  and  $Z^{\circledcirc, 1 }$ respectively.
The one parameter version of the argument above shows that to each isotopy $\tilde{Z}^{\circledcirc}$ between annulus data  $Z^{\circledcirc, 0 }$  and  $Z^{\circledcirc, 1 }$ we may associate an isotopy $\tilde{U}^{\blacktriangle}$ between $U^{\blacktriangle,0}$ and $U^{\blacktriangle,1}$, well defined up to isotopy of isotopies.

Set
$$ PSH(M |   U^{\blacktriangle} ) =PSH(  M|   Z^{\circledcirc} ) .$$
The definition makes sense since Lemma \ref{transfer-annulus} tells us that we may identify canonically the spaces 
$PSH(  M|   Z^{\circledcirc} ) $ for different choices of $Z^{\circledcirc}$ corresponding to the same Euler Structure $U^{\blacktriangle}$ .

To each isotopy $\tilde{U}^{\blacktriangle} $  between  the Euler Structures  $U^{\blacktriangle,0} $ and $U^{\blacktriangle,1}    $ it is associated a map
$$ Transfer_{\tilde{U}^{\blacktriangle}}:  PSH(M |   \tilde{U}^{\blacktriangle,0}) \rightarrow PSH(M |  \tilde{U}^{\blacktriangle,1})  .$$

\subsection{Factorization Property}

 In the standard  approach to Perturbative Chern-Simons the configuration space of points has a trivial factorization property, namely the configuration space of points of a disjoint union of graphs  is the product of the configuration of points of each graph.  This property leads to write  the partition function as the exponential of the Chern-Simons potential defined in terms of  connected graphs.
  
 In the situation of the present paper
 the factorization property is not compatible with transversality condition in the trivial sense.
 In this subsection we introduce   a more sophisticate version of the factorization condition for $PSH$-classes .

\subsubsection{Factorization $PS$-cycles}

Set
$$\mathfrak{G}^{!}=  \mathfrak{G} \times \mathfrak{G}.$$
Denote by $G^1 \oplus G^2$ the disjoint union of the decorated graphs $G^1,G^2 \in \mathfrak{G}$.

Let $\mathfrak{G}^{!}_*$ be the set of pairs $((G^1,G^2) , m)$ with $(G^1,G^2) \in \mathfrak{G}^{!}$ and $m$ is a marking  for $G^1 \oplus G^2$   as in  Subsection \ref{marking-section} . Note that 
define $m$ is equivalent to define  a marking $m^1$ for $G^1$ and a marking $m^2$ for $G^2$. We write $m= m^1 \oplus m^2$.

Fix an  annulus data $\underline{Z}^{\circledcirc}$.
A factorization $PS$-cycles $Z^!$ with annulus data $\underline{Z}^{\circledcirc}$  consists on a collection of chains
$$( Z^!_{(G^1,G^2) , m)} )_{((G^1,G^2),m) \in \mathfrak{G}_*^!} $$
with
$$ Z^!_{(G^1,G^2) , m)}  \in C_{|H(G^1)| + |H(G^2)|+ |m|} ( M^{H(G^1) \sqcup H(G^2)} , \mathfrak{o}_{H(G^1) \sqcup H(G^2)})^{\text{Aut}(G^1,G^2) , m)}  .$$
such that
\begin{itemize}
\item the transversality condition holds; 
\item  the forgetful compatibility holds  with annulus data $\underline{Z}^{\circledcirc}$ ;
\item the cycle condition holds .
\end{itemize}

The definitions  for factorization $PS$-cycles of the transversality, the forgetful compatibility and the cycle conditions are a straightforward extension of the definitions provided in  Subsection \ref{PSC-subsection}.




 We denote by $\mathcal{Z}^!$  the vector space of factorization $PS$-cycles.

Isotopies $\tilde{\mathcal{Z}}^!$ of  factorization $PS$-cycles can be defined analogously  modifying the definition of isotopies of  subsection ( \ref{isotopies-section}).


\subsubsection{Factorization Map}

Define the factorization map between the usual space of $PS$-cycles and the factorization $PS$-cycles:
\begin{equation} \label{factorization-map}
 \mathbf{fact} :  \mathcal{Z} \rightarrow  \mathcal{Z}^!
\end{equation}
\begin{equation} \label{factorization-formula-definition}
 \mathbf{fact}(Z)_ {(( G^1,G^2),m  ) }  := Z_{(G^1 \oplus G^2,  m) }. 
 \end{equation}

\begin{lemma}
For each $PS$-cycle $Z$,  $   \mathbf{fact}(Z)$ is a factorization $PS$-cycle.
\end{lemma}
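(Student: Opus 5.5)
The plan is to verify directly that the collection $\mathbf{fact}(Z)$ defined by (\ref{factorization-formula-definition}) satisfies each of the three defining conditions of a factorization $PS$-cycle. For a disjoint union $G^1 \oplus G^2$, each condition for the pair $((G^1,G^2), m^1 \oplus m^2)$ should unwind to the corresponding condition already satisfied by $Z$ at $(G^1 \oplus G^2, m)$. The first step is bookkeeping: $H(G^1 \oplus G^2) = H(G^1) \sqcup H(G^2)$, $Comp_0$ and $V$ are disjoint unions, and $m = m^1 \oplus m^2$ is a marking of $G^1 \oplus G^2$ as in Subsection \ref{marking-section}. Hence $Z_{(G^1\oplus G^2,m)}$ lives in $C_{|H(G^1)|+|H(G^2)|+|m|}(M^{H(G^1)\sqcup H(G^2)}, \mathfrak{o}_{H(G^1)\sqcup H(G^2)})$. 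I will check that it is invariant under $\text{Aut}((G^1,G^2),m)$, which is the subgroup of $\text{Aut}(G^1\oplus G^2, m)$ that preserves the splitting, so invariance is inherited. Similarly, the lift $\underline{Z}_{(G^1\oplus G^2,m)}$ on $\mathfrak{L}_{\hat V(G^1)\sqcup \hat V(G^2)}(M)$ serves as the loop-space chain required in the forgetful compatibility.

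The transversality condition is immediate. $E^{in}(G^1\oplus G^2)\setminus E_l$ is the disjoint union of the corresponding sets for $G^1$ and $G^2$, so every subset $E'$ considered for the pair is a subset considered for $G^1\oplus G^2$. The forgetful compatibility is also immediate. An external edge $e$ of the pair lies in exactly one of $G^1, G^2$, and $forget_e$ commutes with $\oplus$. Stability and removability are conditions on the component $c$ containing $e$ only, and in the unstable case $G_e$ is the same for $G^1\oplus G^2$ as for the pair. Therefore (\ref{stable-forget-condition}), (\ref{removible-forget-condition}), (\ref{removible-forget-condition2}) and (\ref{unstable-forget-condition}) for $Z$ at $G^1\oplus G^2$ are literally the required identities for $\mathbf{fact}(Z)$.

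The main step is the cycle condition (\ref{cycle-condition-loop}). Here I must match the terms on the right side for $G^1\oplus G^2$ with the terms prescribed in the factorization setting. The terms $\underline{Z}_{G,\partial_i m}$ correspond because $\partial_i(m^1\oplus m^2)$ is again a marking of the disjoint union. The terms $\delta_{v'}$ correspond because $\delta_{v'}$ acts inside a single component. The delicate terms are the $\delta_{e'}$ terms. A triple $(G',m',e')$ with $\delta_{e'}G' = G^1\oplus G^2$ may involve an edge $e'$ joining two vertices of $G'$ in different components that are merged. Since $\delta_{e'}$ only merges, such a $G'$ must itself split as $G'^1\oplus G'^2$ with $\delta_{e'}$ acting within one summand. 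I will argue this by observing that the components of $\delta_{e'}G'$ are unions of components of $G'$. The one subtle case is the splitting of a vertex, which keeps the component fixed; there I need to check that $G'$ decomposes compatibly. This is the natural definition of the "straightforward extension" of the cycle condition to $\mathfrak{G}^!_*$, and I expect that making this correspondence precise, together with the automorphism factors $\frac{1}{|\Gamma_1|}\sum_{g_2}$ from (\ref{maps-orbitfold}) under the change of groups from $\text{Aut}(G^1\oplus G^2,m)$ to $\text{Aut}((G^1,G^2),m)$, is the main obstacle. If the factors do not match on the nose, I would fix the normalization by summing over ordered decompositions, which is what the factorization cycle condition should encode.
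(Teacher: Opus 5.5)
Your proposal is correct and is the direct verification that the paper compresses into ``It is an easy check''. Each condition for $((G^1,G^2),m)$ is the corresponding condition for $Z$ at $(G^1\oplus G^2,m)$. The one point needing care is that every $G'$ with $\delta_{e'}G'=G^1\oplus G^2$ inherits the splitting. This holds because all edges of $G'$ other than $e'$ are edges of $G^1\oplus G^2$, and $e'$ is attached to vertices landing in a single component. Your stated reason, that components of $\delta_{e'}G'$ are unions of components of $G'$, is not enough by itself, since edges of a decorated graph may join distinct components.

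The automorphism factors you flag match without renormalization. By orbit--stabilizer, summing the averaged push-forward (\ref{maps-orbitfold}) over isomorphism classes of triples gives a sum over preimages equipped with an identification $\delta_{e'}G'\cong G^1\oplus G^2$. That sum is the same whether one uses $\text{Aut}(G^1\oplus G^2,m)$ or its subgroup $\text{Aut}(G^1,m^1)\times\text{Aut}(G^2,m^2)$.
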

\begin{proof} It is an easy check.
\end{proof}

The factorization map  for isotopies of $PS$-cycles 
\begin{equation} \label{factorization-map-iso}
 \mathbf{fact} : \tilde{\mathcal{Z}} \rightarrow  \tilde{\mathcal{Z}}^!.
\end{equation}
is defined from
\begin{equation} \label{factorization-formula-definition-iso}
 \mathbf{fact}(\tilde{Z})_ {(( G^1,G^2),m  ) }  := \tilde{Z}_{(G^1 \oplus G^2,  m) }. 
 \end{equation}


\subsubsection{Product}

Define the  product of $PS$-cycles
\begin{equation} \label{product-factorization}
  \boxtimes : \mathcal{Z}  \times  \mathcal{Z} \rightarrow \mathcal{Z}^! ,
  \end{equation}
$$   (Z^1, Z^2) \rightarrow  Z^1\boxtimes Z^2$$
from
\begin{equation} \label{cup-formula}
(Z^1\boxtimes Z^2)_{(G^1,G^2), m } := \sum_{0 \leq r \leq l}  Z^1_{(G^1, m^1_{[0,r]})} \times  Z^2_{(G^2, m^2_{[r,l]})}. 
\end{equation}
Here we have used the notation $m_{[a,b]} := \{ E_i \}_{a \leq i \leq b}$ for $m = \{ E_i \}_{0 \leq i \leq l}$ and $0 \leq a \leq b \leq l$.

\begin{lemma}
For each pair of $PS$-cycles $(Z^1, Z^2)$,  $ Z^1\boxtimes Z^2$ is a factorization $PS$-cycle.
\end{lemma}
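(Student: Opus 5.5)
We verify the three defining conditions for $Z^1\boxtimes Z^2$ (transversality, forgetful compatibility, cycle condition) one at a time, working on the loop-space level. First we define the candidate loop chains
$$(\underline{Z^1\boxtimes Z^2})_{(G^1,G^2),m} := \sum_{0\le r\le l}\underline{Z}^1_{(G^1,m^1_{[0,r]})}\times \underline{Z}^2_{(G^2,m^2_{[r,l]})},$$
using $\mathfrak{L}_{\hat V(G^1\oplus G^2)}(M)=\mathfrak{L}_{\hat V(G^1)}(M)\times\mathfrak{L}_{\hat V(G^2)}(M)$. Evaluation commutes with products, so (\ref{chains-ev}) holds. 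The degree is also right: $|m^1_{[0,r]}|+|m^2_{[r,l]}|=l$, matching $|m|$.

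\textbf{Transversality.} Each internal edge $e\notin E_l$ of $G^1\oplus G^2$ lies in exactly one factor. So $\sqcap_{e\in E'}\pi_e^{-1}(Diag)$ splits as a product of the analogous loci for $E'\cap E(G^1)$ and $E'\cap E(G^2)$. A product of chains, each transversal to its own locus, is transversal to the product locus. We need $E'\cap E(G^1)\subset E(G^1)\setminus E^1_r$ rather than $E^1_l$. This is the one point to check: since $E_r\subset E_l$, an edge avoiding $E_l$ also avoids $E_r$, so transversality of $Z^1_{(G^1,m^1_{[0,r]})}$ suffices.

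\textbf{Forgetful compatibility.} An external edge $e$ lies in, say, $G^1$. Then $forget_e$ acts on the first factor only, and $G_e$ is a component of $G^1$. Conditions (\ref{stable-forget-condition}), (\ref{removible-forget-condition}), (\ref{removible-forget-condition2}) and (\ref{unstable-forget-condition}) for $Z^1$ at $(G^1,m^1_{[0,r]})$ give them term by term for the product. The only subtlety is that removability depends on whether $E^{in}_c\subset E_l$ or only $\subset E_r$. When the two differ, $e$ is removable for one marking and not for the other. In that case we only get an approximate equality, but the non-removable case only requires $C^1$-closeness, and an exact equality certainly satisfies that. We also have to confirm that this mismatch cannot go the other way, i.e.\ that an exact equality is never demanded of a term that is only approximately equal. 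This orientation of the mismatch is what we would check carefully.

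\textbf{Cycle condition (main obstacle).} We apply the Leibniz rule,
$$\partial^{red}(A\times B)=\partial^{red}A\times B\pm A\times\partial^{red}B,$$
and substitute (\ref{cycle-condition-loop}) for each factor. The $\delta_{e'}$ and $\delta_{v'}$ terms with $e'$ and $v'$ inside one factor reproduce the corresponding terms for $(G^1,G^2)$. No $\delta_{e'}$ joins the two factors, because factorization cycles only involve pairs $(G'^1,G'^2)$. The face terms $\sum_i(-1)^i$ need a simplicial computation. Write $A_{[0,r]}$ for the first factor and $B_{[r,l]}$ for the second. The interior faces $\partial_i$ with $i<r$ in $A_{[0,r]}$, and $i>r$ in $B_{[r,l]}$, give $\partial_i m$ of the full marking with the correct sign $(-1)^i$, once the Koszul sign $(-1)^{|A|}$ is included, where $|A|\equiv |H(G^1)|+r$.

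The boundary faces telescope. The face $\partial_r$ of $A_{[0,r]}$ together with $B_{[r,l]}$ cancels against the face $\partial_0$ of $B_{[r-1,l]}$ (that is, $B_{[r,l]}$) paired with $A_{[0,r-1]}$. These terms cancel provided the signs agree, and this is exactly where the $|H|$-part of the Koszul sign and the orientation convention $\mathfrak{o}_{H(G^1)\sqcup H(G^2)}$ must be arranged consistently. This is the Alexander–Whitney/cup-product identity. The part to verify is that the sign conventions make it hold on the nose, possibly after inserting a sign $(-1)^{r|H(G^2)|}$ into (\ref{cup-formula}) if the paper's conventions require it.
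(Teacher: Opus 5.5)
Your strategy is what the paper's ``easy check'' has to amount to: Leibniz for products of loop chains, plus Alexander--Whitney bookkeeping $m\mapsto\sum_r m^1_{[0,r]}\otimes m^2_{[r,l]}$ on markings. However, the one step with real content, the sign computation in the cycle condition, is wrong as you state it, and the repair you suggest does not work.

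Take the cross product of $\mathfrak{o}$-twisted chains naively (multiply simplices, concatenate orderings). Then Leibniz gives $(-1)^{\dim A}=(-1)^{|H(G^1)|+r}$. So the face $\partial_j B_{[r,l]}$, which is the $(r+j)$-th face of $m$, appears with $(-1)^{|H(G^1)|+r+j}$, not $(-1)^{r+j}$ as you claim. The telescoping pair is
\[
(-1)^{r}\,A_{[0,r-1]}\times B_{[r,l]}+(-1)^{|H(G^1)|+r-1}\,A_{[0,r-1]}\times B_{[r,l]},
\]
which vanishes only if $|H(G^1)|$ is even. Since $|H(G^1)|\equiv|E^{ex}(G^1)|\pmod 2$, this fails whenever $G^1$ has an odd number of legs.

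Inserting $(-1)^{r|H(G^2)|}$ does not cure this. It leaves the back-face discrepancy $(-1)^{|H(G^1)|}$ untouched, and it breaks the front faces $\partial_iA_{[0,r]}$, $i<r$, whenever $|H(G^2)|$ is odd. To find the right sign, put $\epsilon^{(l)}_r$ in front of the $r$-th summand. The front faces force $\epsilon^{(l)}_r=\epsilon^{(l-1)}_{r-1}$, and the back faces force $\epsilon^{(l-1)}_r=(-1)^{|H(G^1)|}\epsilon^{(l)}_r$; together these imply the telescoping. Hence $\epsilon^{(l)}_r=\pm(-1)^{|H(G^1)|(l-r)}$, which is the Koszul sign for moving $\mathfrak{o}_{H(G^1)}$, viewed as a line of degree $|H(G^1)|$, past the second chain.

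So the lemma holds as written once you define $(\sigma_1,o_1)\times(\sigma_2,o_2):=(-1)^{|H(G^1)|\dim\sigma_2}(\sigma_1\times\sigma_2,o_1o_2)$. With this convention:
\begin{itemize}
\item Leibniz becomes $\partial(A\times B)=\partial A\times B+(-1)^{r}A\times\partial B$, and your Alexander--Whitney cancellation is exact.
\item $\delta_{e'}$ and $\delta_{v'}$ acting on the second factor pick up exactly $(-1)^r$. Since these terms never change the marking, they still match summand by summand.
\end{itemize}

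The same graded convention must be used in the forgetful check, which you describe as term by term. With naive conventions that check is off by $r$-dependent signs. For example, moving $\underline{Z}^{\circledcirc}_{G_0}$ (dimension $3=|H(G_0)|$) past $\underline{Z}^2_{(G^2,m^2_{[r,l]})}$ costs $(-1)^{l-r}$. Similarly, the new fibre direction and new label in $\times_{\mathbf{forget}_e}$ produce such a sign for legs in one of the two factors.

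The removability direction you left open is easy to settle. If $G_e=G_0$ and $e$ is removable for $(G,m)$, the loop edge of $c$ is not in $E_l\supseteq E^1_r$, so $e$ is removable for every summand. If $G_e\neq G_0$, removability does not depend on the marking at all.

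In the remaining case ($e$ non-removable for $(G,m)$ but removable for a summand), the exact identity that summand satisfies involves $\underline{Z}^{\circledcirc}_{G_0}$, not $\underline{Z}_{Disk0}$. So ``exact equality implies closeness'' really uses $\underline{Z}^{\circledcirc}_{G_0}\approx\underline{Z}_{Disk0}$ from the annulus data. The paper states that only in $C^0$, while the non-removable condition asks for $C^1$.
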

\begin{proof} It is an easy check.
\end{proof}

The definition of product is extended to isotopies
\begin{equation} \label{product-factorization-isotopies}
  \boxtimes : \tilde{\mathcal{Z}}  \times  \tilde{\mathcal{Z}} \rightarrow \tilde{\mathcal{Z}}^! ,
  \end{equation}
$$   (\tilde{Z}^1, \tilde{Z}^2) \rightarrow  \tilde{Z}^1\boxtimes \tilde{Z}^2$$
from
\begin{equation} \label{cup-formula-iso}
(\tilde{Z}^1\boxtimes \tilde{Z}^2)_{(G^1,G^2), m } := \sum_{0 \leq r \leq l}  \tilde{Z}^1_{(G^1, m^1_{[0,r]})} \times_{\R}  \tilde{Z}^2_{(G^2, m^2_{[r,l]})}. 
\end{equation}

\subsubsection{Nice Factorization PS-cycles}

Using  factorization $PS$-cycles  in place of $PS$-cycles, we may define Nice Factorization $PS$-cycles $\mathcal{Z}^{!, \diamond}$ as in section  \ref{PSH-section}. The isotopies $\tilde{Z}^{int,(n)}$ used in the definition of section  \ref{PSH-section} now are assumed to be isotopies of  factorization $PS$-cycles.

 Factorization $PS$-homology $PSH^!(M  | \underline{Z}^{\circledcirc, \diamond}  )$ is defined taking equivalence classes $\mathcal{Z}^{!, \diamond}$  .


 \begin{lemma}
 For ${Z}^{ \diamond}  = (Z^{(n)})_n \in \mathcal{Z}^{\diamond}$  
 $$ \mathbf{fact}(Z^{\diamond}):=(\mathbf{fact}(Z^{(n)}))_n$$
  defines a nice factorization $PS$-cycle. 
\end{lemma}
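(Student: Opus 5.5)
The plan is to check directly the three requirements in the definition of a nice factorization $PS$-cycle for the sequence $(\mathbf{fact}(Z^{(n)}))_n$, using the corresponding properties of $Z^{\diamond}$ together with the Lemma that $\mathbf{fact}$ sends $PS$-cycles to factorization $PS$-cycles and its isotopy analogue (\ref{factorization-map-iso}).

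\emph{Step 1: each term is a factorization $PS$-cycle.} For each $n$, $Z^{(n)} \in \mathcal{Z}(M|\underline{Z}^{\circledcirc,(n)})$. By the Lemma stated above for $\mathbf{fact}$, $\mathbf{fact}(Z^{(n)})$ is a factorization $PS$-cycle with the same annulus data $\underline{Z}^{\circledcirc,(n)}$. Hence the first condition holds.

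\emph{Step 2: empty limit support.} By (\ref{factorization-formula-definition}) the chain indexed by $((G^1,G^2),m)$ is $Z^{(n)}_{(G^1\oplus G^2,m)}$, and the loop-space lift is likewise $\underline{Z}^{(n)}_{G^1\oplus G^2,m}$. The set of components of $G^1\oplus G^2$ is $Comp_0(G^1)\sqcup Comp_0(G^2)$. The diameter bound (\ref{support-bound}) is imposed componentwise, so the condition for $((G^1,G^2),m)$ is literally the condition for $(G^1\oplus G^2,m)$. The latter holds because $Z^{\diamond}$ is nice.

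\emph{Step 3: interpolating isotopies.} Take the isotopies $\tilde Z^{int,(n)}$ between $Z^{(n)}$ and $Z^{(n-1)}$ provided by Definition \ref{nice-PSC-definition}, and set $\tilde Z^{!,int,(n)}:=\mathbf{fact}(\tilde Z^{int,(n)})$ via (\ref{factorization-formula-definition-iso}).

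One has to check that this is an isotopy of factorization $PS$-cycles between $\mathbf{fact}(Z^{(n)})$ and $\mathbf{fact}(Z^{(n-1)})$. This is the isotopy analogue of the easy check for $\mathbf{fact}$:
\begin{itemize}
\item Transversality, the forgetful compatibility (with annulus data $\tilde{\underline{Z}}^{\circledcirc,int,(n)}$) and the cycle condition for $\mathbf{fact}(\tilde Z^{int,(n)})$ at $((G^1,G^2),m)$ are exactly those of $\tilde Z^{int,(n)}$ at $(G^1\oplus G^2,m)$.
\item The asymptotic conditions for $|\lambda|>\lambda^\dagger$ are inherited verbatim.
\end{itemize}
The limit support of $pr_*$ of these isotopies is empty by the same componentwise argument as in Step 2.

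\emph{Main obstacle.} The only delicate point is the bookkeeping in the cycle condition. The terms $\delta_{e'}$ and $\delta_{v'}$ for $G^1\oplus G^2$ must correspond exactly to the terms in the factorization cycle condition. In particular, $\delta_{e'}$ with $e'$ joining a vertex of $G^1$ to a vertex of $G^2$ merges components, so the predecessor graph is not of the form $G'^1\oplus G'^2$ compatible with the given splitting.

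I would handle this by taking the "straightforward extension" of the definitions to mean that the factorization cycle condition is indexed by all $(G',m',e')$ with $\delta_{e'}(G',m')=(G^1\oplus G^2,m)$ where $G'$ is an arbitrary decorated graph. With that reading, the identification is tautological. This is the same interpretation under which the earlier Lemma on $\mathbf{fact}$ is "an easy check", so I would simply invoke it.
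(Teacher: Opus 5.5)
Your proposal is correct and follows the paper's argument. The paper's entire proof is that one may take $(\mathbf{fact}(\tilde{Z}^{int,(n)}))_n$ as the interpolating isotopies, with your Steps 1 and 2 left implicit. The cycle-condition obstacle you flag does not actually arise: any $(G',m',e')$ with $\delta_{e'}(G',m')=(G^1\oplus G^2,m)$ (and likewise any $(G',m',v')$) splits canonically as a pair, because the vertices carrying $e'$ (or $v'$) are sent into a single component of $G^1\oplus G^2$, so no special reading of the definition is needed.
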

\begin{proof}
In order to show that $( \mathbf{fact}( Z^{(n)}))_n$ defines a  nice factorization $PS$-cycle   we may use  $( \mathbf{fact}(\tilde{Z}^{int,(n) }))_n$.
\end{proof}

 \begin{lemma}
For ${Z}^{ 1, \diamond}  = (Z^{1, (n)})_n , {Z}^{ 2, \diamond}  = (Z^{2, (n)})_n \in   \mathcal{Z}^{ \diamond}$  , 
$$Z^{1, \diamond} \boxtimes Z^{2, \diamond}:= ( Z^{1,(n)} \boxtimes Z^{2,(n)})_n$$
 defines a nice factorization $PS$-cycle. 
\end{lemma}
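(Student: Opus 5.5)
The plan mirrors the proof given above for $\mathbf{fact}(Z^{\diamond})$. Three things have to be checked for the sequence $(Z^{1,(n)} \boxtimes Z^{2,(n)})_n$: each term is a factorization $PS$-cycle, its limit support is empty, and consecutive terms are joined by isotopies of factorization $PS$-cycles whose limit support is also empty.

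\emph{First step.} For each fixed $n$, the preceding lemma shows that $Z^{1,(n)} \boxtimes Z^{2,(n)}$ is a factorization $PS$-cycle with annulus data $\underline{Z}^{\circledcirc,(n)}$. Both factors share this annulus data, so the forgetful compatibility for an unstable removal in either factor uses the same $\underline{Z}^{\circledcirc,(n)}_{G_e}$.

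\emph{Second step: limit support.} By (\ref{cup-formula}), each chain $(Z^{1,(n)} \boxtimes Z^{2,(n)})_{(G^1,G^2),m}$ is a finite sum of products $Z^{1,(n)}_{(G^1,m^1_{[0,r]})} \times Z^{2,(n)}_{(G^2,m^2_{[r,l]})}$. Every component $c \in Comp_0(G^1 \oplus G^2)$ lies entirely in $G^1$ or entirely in $G^2$. Hence the diameter bound (\ref{support-bound}) for $c$ involves only the loops of one factor. That bound holds for $n$ large by niceness of $Z^{1,\diamond}$ and $Z^{2,\diamond}$. Only finitely many values of $r$ occur, so a common threshold in $n$ exists.

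\emph{Third step: interpolating isotopies.} We use $(\tilde{Z}^{1,int,(n)} \boxtimes \tilde{Z}^{2,int,(n)})_n$, defined by (\ref{cup-formula-iso}). Here we must verify that the product of isotopies, taken as a fiber product over $\R$, is an isotopy of factorization $PS$-cycles between $Z^{1,(n)}\boxtimes Z^{2,(n)}$ and $Z^{1,(n-1)}\boxtimes Z^{2,(n-1)}$.

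\begin{itemize}
\item The asymptotic condition holds. For $\lambda<-\lambda^\dagger$ both factors are products with $\R_{<-\lambda^{\dagger}}$, and the fiber product over $\R$ returns $(Z^{1}\boxtimes Z^{2})\times\R_{<-\lambda^\dagger}$. Take $\lambda^\dagger$ to be the maximum of the two thresholds; the case $\lambda>\lambda^\dagger$ is identical.
\item Transversality, forgetful compatibility and the cycle condition follow as in the fixed-$n$ lemma, once one checks that the fiber product over $\R$ is transverse. The boundary of $\tilde Z^1_{(G^1,m^1_{[0,r]})}\times_\R \tilde Z^2_{(G^2,m^2_{[r,l]})}$ is computed by the Leibniz rule. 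The terms $\tilde Z^1_{(G^1,\partial_r m^1_{[0,r]})}$ and $\tilde Z^2_{(G^2,\partial_0 m^2_{[r,l]})}$ cancel telescopically across consecutive $r$. This is the same mechanism as for (\ref{cup-formula}), and it produces the $\sum_i(-1)^i$ term of (\ref{cup-formula-iso}).
\item The limit support of $pr_*$ of these isotopies is empty, by the componentwise argument of the second step applied to $\tilde{Z}^{1,int,(n)}$ and $\tilde{Z}^{2,int,(n)}$.
\end{itemize}

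The main obstacle is transversality of the fiber product over $\R$. The two factors' projections to $\R$ need not be transverse. I would first perturb one factor by a small reparametrization of the $\R$-coordinate supported in $[-\lambda^\dagger,\lambda^\dagger]$. Such a reparametrization preserves all the conditions of an isotopy and the emptiness of the limit support, since it does not move the loops $w_v$. If the isotopies are taken as in Lemma \ref{extension-isotopies-inductive}, with $\lambda$ a submersion on each simplex, then no perturbation is needed. The sign bookkeeping in the Leibniz rule (the $(-1)^{\deg}$ factor from the first chain, of degree $|H(G^1)|+r+1$) is routine and matches the convention in (\ref{cup-formula}).
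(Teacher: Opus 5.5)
Your proposal is correct and follows the paper's argument: the paper's proof consists only of using $(\tilde{Z}^{1,int,(n)} \boxtimes \tilde{Z}^{2,int,(n)})_n$ as the interpolating isotopies, and your fixed-$n$ and limit-support verifications are the routine checks it leaves implicit. One caution: your reparametrization fix for transversality over $\R$ is unnecessary, because the paper treats the product of isotopies (\ref{product-factorization-isotopies}) as already defined; it is also not harmless as stated, since reparametrizing only $\tilde{Z}^{1,int,(n)}$ makes it satisfy (\ref{removible-forget-condition-iso}) with respect to the reparametrized annulus isotopy rather than the common $\tilde{\underline{Z}}^{\circledcirc,int,(n)}$, so the product would no longer be an isotopy of factorization $PS$-cycles for a single annulus isotopy.
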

\begin{proof}
 In order to show that $( Z^{1,(n)} \boxtimes Z^{2,(n)})_n$   defines a  nice factorization $PS$-cycle we  may use  $(\tilde{Z}^{1,int, (n)} \boxtimes \tilde{Z}^{2,int, (n)})_n$. 

\end{proof}

\begin{lemma}
The product  (\ref{cup-formula}) induces  a commutative product in $   PSH(M| \underline{Z}^{\circledcirc, \diamond})$
\begin{equation} \label{product-nice-homology}
    PSH(M| \underline{Z}^{\circledcirc, \diamond}) \boxtimes PSH(M| \underline{Z}^{\circledcirc, \diamond}  ) \rightarrow PSH^!(M  | \underline{Z}^{\circledcirc, \diamond}  ).
\end{equation}

\end{lemma}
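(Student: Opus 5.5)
The plan is to show three things: the product of nice $PS$-cycles lands in nice factorization $PS$-cycles; its class in $PSH^!$ depends only on the classes of the factors; and it is commutative. The first point is the preceding lemma: for $Z^{1,\diamond},Z^{2,\diamond}\in\mathcal{Z}(M|\underline{Z}^{\circledcirc,\diamond})$ the sequence $(Z^{1,(n)}\boxtimes Z^{2,(n)})_n$ is a nice factorization $PS$-cycle, with interpolating isotopies $\tilde Z^{1,int,(n)}\boxtimes\tilde Z^{2,int,(n)}$. So the map (\ref{product-nice-homology}) is at least defined on representatives.

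\textbf{Well-definedness.} Suppose $\tilde Z^{\diamond}=(\tilde Z^{(n)})_n$ is an isotopy of nice $PS$-cycles between $Z^{1,\diamond}$ and $Z^{1,\prime,\diamond}$. Take $Z^{2,\diamond}$ and form the trivial isotopy $Z^{2,(n)}\times\R$, which is an isotopy of $PS$-cycles because every condition is stable under product with $\R$. I then set
$$\tilde Z^{(n)}\boxtimes(Z^{2,(n)}\times\R)$$
via (\ref{cup-formula-iso}). This is an isotopy of factorization $PS$-cycles; checking this is the same ``easy check'' as for (\ref{cup-formula}), using $\times_\R$ in place of $\times$. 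Its limit support is empty because the supports of both factors shrink. Outside $[-\lambda^\dagger,\lambda^\dagger]$ it agrees with $Z^{1,(n)}\boxtimes Z^{2,(n)}$ and with $Z^{1,\prime,(n)}\boxtimes Z^{2,(n)}$ respectively. The same argument applies in the second variable, so the product descends to $PSH\times PSH\to PSH^!$.

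\textbf{Commutativity.} Here the key point is that $\boxtimes$ is not symmetric on the nose. Let $\tau$ swap the two factors of $\mathfrak{G}^!$. In $\tau(Z^2\boxtimes Z^1)$ the flag $m$ is split as $m^1_{[r,l]}, m^2_{[0,r]}$, whereas in $Z^1\boxtimes Z^2$ it is split as $m^1_{[0,r]}, m^2_{[r,l]}$. I would compare both with $\mathbf{fact}$ of the product. Concretely, I would build an isotopy of factorization $PS$-cycles between $Z^1\boxtimes Z^2$ and $\tau(Z^2\boxtimes Z^1)$ that is the analogue of the Eilenberg--Zilber/shuffle homotopy: a chain homotopy built from prisms over the simplices in the marking index. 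One can also do this inductively over $\mathfrak{G}^!_*$ with the factorization analogues of Lemmas \ref{nice-extension-isotopy} and \ref{extension-isotopies-inductive}, since both endpoints are factorization $PS$-cycles with the same annulus data.

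The obstruction to the inductive extension only arises for graphs with $H(G)=\emptyset$ and $V(G)=\emptyset$, following the analysis of Subsection \ref{obstruction-section}. For such graphs both products coincide, because a product of two chains of degree $0$ is symmetric, so the obstruction of Lemma \ref{obstruction-lemma} vanishes.

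The main obstacle I expect is this commutativity step: keeping track of signs and orientations in $\mathfrak{o}_{H(G^1)\sqcup H(G^2)}$ under the swap, together with the $(-1)^i$ face signs in the marking. I would first try the purely inductive extension argument and show that the obstruction vanishes. Only if that fails would I write the explicit shuffle homotopy, making sure the limit supports stay empty so that the isotopy is one of nice factorization $PS$-cycles.
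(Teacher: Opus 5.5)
Your well-definedness step, forming $\tilde Z^{(n)}\boxtimes(Z^{2,(n)}\times\R)$ via (\ref{cup-formula-iso}), is fine; it is the check the paper calls immediate. The gap is in your primary route to commutativity.

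You claim that at pairs $(G^1,G^2)$ with $H=V=\emptyset$ the obstruction of Lemma \ref{obstruction-lemma} vanishes because both products give the same number there. But the obstruction (\ref{definition-obstruction}) is not just $\underline Z^{1}_G-\underline Z^{0}_G$. It also contains $pr_*\bigl(\sum_{(G',v')}\delta_{v'}\tilde{\underline Z}_{G'}\bigr)$, the signed count of constant-loop boundary faces of the isotopy already built on the pairs obtained by adding one empty vertex. Through the cycle condition for those $G'$, this count is tied to the diagonal intersections $\delta_{e''}\tilde{\underline Z}_{G''}$ of the isotopy on lower graphs with half-edges. An isotopy obtained by blind inductive extension (factorization versions of Lemmas \ref{nice-extension-isotopy} and \ref{extension-isotopies-inductive}, plus shrinking) gives you no control over these terms. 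So the obstruction depends on the choices made below $G$, not only on the endpoints. The $\delta_{v'}$-term is in (\ref{definition-obstruction}) precisely because the endpoints do not determine it. As written, the inductive argument does not prove commutativity.

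What makes the lemma true, and what the paper's proof uses, is the chain-level mechanism behind commutativity of the cup/cap product.
\begin{itemize}
\item At every $(G,m)$ with $|m|=0$ the two products agree on the nose. The sign $(-1)^{|H(G^1)||H(G^2)|}$ from exchanging the chain factors is cancelled by the reordering of $\mathfrak{o}_{H(G^1)\sqcup H(G^2)}$; this is the sign remark in the paper's proof, which you left open.
\item For $|m|\ge 1$ the products differ only in the front/back splitting of the flag. This is exactly how the Alexander--Whitney product differs from its transpose, with flags $m$ as simplices and $\partial_i m$ as faces.
\end{itemize}
The homotopy you need is therefore the analogue of Steenrod's $\cup_1$ product, not the Eilenberg--Zilber shuffle map. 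Take chains such as $\sum_{i<j}\pm Z^1_{(G^1,m^1_{[0,i]}\cup m^1_{[j,l]})}\times Z^2_{(G^2,m^2_{[i,j]})}$ and place them at $\lambda=0$, between $Z^1\boxtimes Z^2$ on $\lambda<0$ and $\tau(Z^2\boxtimes Z^1)$ on $\lambda>0$. Their boundary follows from the cycle conditions of $Z^1$ and $Z^2$, as in the coboundary formula for $\cup_1$. Transversality, forgetful compatibility and empty limit support are inherited from the factors.

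This isotopy is given by a formula on every graph, and at vertexless pairs it is the trivial isotopy, so no obstruction ever has to be computed. Your fallback points in this direction; it should be the main argument.
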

\begin{proof}

 It is immediate to check that the product of equivalent nice $PS$-cycles  leads to equivalent factorization $PS$-cycles.  

The commutativity of the product in $PSH$ follows imitating the argument used in the proof of the commutativity ( up to sign ) of the cap product of usual cohomology. In our case the sign does not appear since it is cancelled by the change of orientation associated to the exchange of  the order in the product of chains appearing in (\ref{cup-formula}).
\end{proof}

\subsubsection{Factorization condition}

Now we are in position to state the factorization condition:

\begin{definition}
We say that   $[Z^{\diamond}] \in    PSH(M| \underline{Z}^{\circledcirc, \diamond})$
 satisfies the factorization property if 
\begin{equation} \label{factoriation-equation}
  \mathbf{fact}([Z^{\diamond}]) = [Z^{\diamond}] \boxtimes [Z^{\diamond}] .
  \end{equation}
\end{definition}

\section{Chern-Simons Partition Function}

Let $M$ be a compact oriented three-manifold and let $\mathfrak{g}$ be a principal flat vector bundle over $M$.  Denote by $H^*(M, \mathfrak{g})$  the cohomology  of $M$ with coefficients in $\mathfrak{g}$ endowed with the odd symplectic form induced by the pairing
$$\langle \alpha \otimes X,  \alpha' \otimes X' \rangle =
 \int_M \alpha \wedge \alpha'  \langle  X,  X' \rangle_{\mathfrak{g}} .$$
In this section we define  the  Point Splitting Perturbative Chern-Simons partition function $\mathfrak{P}([Z])  \in \R [[ \hbar,x  ]]$ associated  to a $PSH$-class $[Z] \in PSH(M| U)$, which is a formal power series on the formal variable $x \in (H^*(M, \mathfrak{g})[1])^*$.  $\mathfrak{P}([Z]) $ is determined up to  up to Quantum Master Equation isotopies.

In this section we first define the partition function for each $PS$-cycle. 
The partition function is defined on a particular class of nice $PS$-cycles which we call smooth nice $PS$-cycles.  
This is enough to define the partition function on $PSH$.
If the factorization property is satisfied we show  that the Chen-Simons potential is defined.



The partition function will depend not only on the choice of flat connection of the $\mathfrak{g}$-bundle, but also on the choice of a representation
$$ \rho :  \mathfrak{g} \rightarrow \mathfrak{gl}(n, \C) .$$
In the following we implicitly identify $\mathfrak{g}$ with a sub-Lie-algebra of $\mathfrak{gl}(n, \C)$ using $\rho$.

\subsection{ Chern-Simons Propagator}

In this subsection  we introduce the main ingredient  of the definition of the partition function of $PCPCS$, i.e.,  the Chern-Simons Propagator.
The construction is mostly similar to the construction of the propagator   in the case of the standard perturbative Chern Simons of \cite{CS}  . The main difference consists on the conditions which are required to $i_{\partial}^*( P)$.
 In order to make control of the anomalies, in  \cite{CS} was necessary to make a more specific condition on $i_{\partial}^*( P)$, which in particular  depends on the choice of a connection on the tangent space of $M$. In $PCPCS$ we may use a more general propagator.
  




Take   a finite dimensional vector subspace $\mathcal{A} < \Omega^*(M, \mathfrak{g})$ of the space of closed differential forms of $M$ with value in $\mathfrak{g}$ such that its map in cohomology of $M$ yields an isomorphism:
\begin{equation} \label{harmonic-isomorphism}
   \mathcal{A} \xrightarrow{\,\raisebox{-0.4ex}{$\sim$}\,} H^*(M, \mathfrak{g}) .
   \end{equation}
Equivalently we have  a linear map
$$ r_{\mathcal{A}}: H^*(M, \mathfrak{g}) \rightarrow \Omega^*(M, \mathfrak{g})$$
such that 
$$ r_{\mathcal{A}}([\alpha])= \alpha \quad \forall \alpha \in \mathcal{A} .$$





Denote by  $\text{Conf}_2(M)$ the compactification of the configuration space of two non-overlapping points of $M$ , i.e. , the differential geometric blow-up of $M \times M$ along the diagonal $Diag \subset M \times M$. 
$\text{Conf}_2(M)$  is a manifold with boundary with the following properties:
\begin{itemize}
    \item 
    $$    \overset{\circ}{\text{Conf}}_2(M)  =  M \times M \setminus Diag  ;$$
    \item 
$$ \partial  \text{Conf}_2(M) = S(N(Diag \subset M \times M)) ,$$
where the LHS is the sphere bundle of the normal bundle of the diagonal;
\item there are natural projections $ \pi_i :   \text{Conf}_2(M)  \rightarrow M $, $i=1,2$ . 
\end{itemize}


The switch diffeomomorphism $(x,y) \rightarrow (y,x)$ on $M^2$ induces  a diffeomorphism 
$$sw : \text{Conf}_2(M,   \pi_1^*(\mathfrak{g}) \otimes \pi_2^*(\mathfrak{g}) )  \rightarrow \text{Conf}_2(M, \pi_2^*(\mathfrak{g}) \otimes \pi_1^*(\mathfrak{g})  ) .$$ 
The restriction of $sw$ to $ \partial  \text{Conf}_2(M) $ agrees with the reverse diffeomorphism  on the fibers of the spherical bundle $S(N(Diag \subset M \times M)) \rightarrow M$.
We denote by $H^2(\partial \text{Conf}_2(M) , \Z )^-$  the cohomology anti-invariant under the reverse diffeomorhism. We have
$$H^2(\partial \text{Conf}_2(M) , \Z )^- = H^2(S^2, \Z)  =  \Z .$$


Let $I_{\mathfrak{g}} \in \mathfrak{g} \otimes \mathfrak{g}$ be the tensor dual of the pairing on $\mathfrak{g}$. 
 $I_{\mathfrak{g}}$ defines a parallel section of $\pi_1^*(\mathfrak{g}) \otimes \pi_2^*(\mathfrak{g})$ over $Diag$, and so, by pull-back,    a differential form on $\partial \text{Conf}_2(M)$.  
 
We denote by
$$PD(Diag)  \in H^3(M \times M  ,   \pi_1^*(\mathfrak{g}) \otimes \pi_2^*(\mathfrak{g})    )$$
the Poincare dual of the diagonal of $M \times M$.

 
 
\begin{lemma}  \label{construction-propogator}
Take  $\mathcal{A} < \Omega^*(M, \mathfrak{g})$ as above. Let $Vol_{S^2}$ be a differential form representing the generator of $H^2(\partial \text{Conf}_2(M) , \Z )^-$. 

There exists a differential form 
$$P \in \Omega_2(\text{Conf}_2(M),   \pi_1^*(\mathfrak{g}) \otimes \pi_2^*(\mathfrak{g})  )$$
 such that 
\begin{itemize}
\item 
\begin{equation}     \label{propagator-differential}
dP = r_{\mathcal{A}} \otimes  r_{\mathcal{A}} (PD(Diag));
\end{equation}
\item   Denote by $i_{\partial}: \partial \text{Conf}_2(M) \rightarrow \text{Conf}_2(M) $ the inclusion map.
\begin{equation}   \label{propagator-singularity}
 i_{\partial}^*( P) = Vol_{S^2} \otimes I_{\mathfrak{g}} \in \Omega^*(\partial \text{Conf}_2(M)  ,\pi_1^*(\mathfrak{g}) \otimes \pi_2^*(\mathfrak{g}))^-.   
\end{equation}

\item 
\begin{equation} \label{parity}
 \text{sw}^*(P) = -P  ;
 \end{equation}
\item
\begin{equation} \label{fixing-class-condition}
\langle P,\pi_1^*(\alpha') \wedge \pi_2^*(\alpha'') \rangle =0
 \end{equation}
   for each   $ \alpha', \alpha'' \in \mathcal{A}$ . 
\end{itemize}

The differential form $P$ is uniquely determinate up to an element of 
\begin{equation} \label{propagator-unicity}
d \Omega^1(\text{Conf}_2(M), \partial \text{Conf}_2(M),  \pi_1^*(\mathfrak{g}) \otimes \pi_2^*(\mathfrak{g})  ).
\end{equation}


  \end{lemma}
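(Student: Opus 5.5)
We construct $P$ in three stages: first a form with the prescribed boundary behaviour, then a correction fixing $dP$, then a further correction imposing \eqref{fixing-class-condition}. Uniqueness comes from relative cohomology of $(\text{Conf}_2(M), \partial \text{Conf}_2(M))$. Throughout, $\text{Conf}_2(M)$ is homotopy equivalent to $M\times M\setminus Diag$, and $\partial \text{Conf}_2(M)$ is the unit normal sphere bundle of $Diag$.

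\emph{Step 1: an initial form with the right boundary values.} Choose a global angular form $\eta$ on $S(N(Diag))$. It is anti-invariant under the antipodal map, restricts to $Vol_{S^2}$ on the fibers, and satisfies $d\eta = -\pi^* e$, where $e$ is the Euler form of $TM$. Since $M$ is a $3$-manifold, $e=0$, so we may take $\eta$ closed and equal to $Vol_{S^2}$. Extend $\eta\otimes I_{\mathfrak{g}}$ to a collar using parallel transport of the flat bundle. Multiply by a cutoff function $\rho$ that equals $1$ near the boundary, and call the result $P_0$. Replacing $P_0$ by $\frac12(P_0 - sw^*P_0)$ makes \eqref{parity} hold, and \eqref{propagator-singularity} is preserved because the boundary restriction is already anti-invariant. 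The form $dP_0 = d\rho\wedge \eta\otimes I_{\mathfrak{g}}$ is closed and compactly supported in the interior. Under the Thom isomorphism it represents $PD(Diag)$ in $H^3(M\times M, \pi_1^*\mathfrak{g}\otimes\pi_2^*\mathfrak{g})$; this is the standard fact that a fiber integral of $Vol_{S^2}$ equal to $1$ gives the Thom class.

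\emph{Step 2: correcting $dP$.} By \eqref{harmonic-isomorphism} and the Künneth formula, $r_{\mathcal{A}}\otimes r_{\mathcal{A}}(PD(Diag))$ is a closed form on $M\times M$ in the same class as $dP_0$. So $dP_0 - r_{\mathcal{A}}\otimes r_{\mathcal{A}}(PD(Diag)) = d\beta$ for some $\beta$ on $M\times M$. We can choose $\beta$ anti-symmetric by averaging, using that $PD(Diag)$ and its representative are symmetric up to the sign convention that matches \eqref{parity}. Set $P_1 = P_0 - \beta|_{\text{Conf}_2(M)}$. This fixes \eqref{propagator-differential} but spoils \eqref{propagator-singularity}. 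The error $i_\partial^*\beta$ is the pullback of a smooth form on $Diag\cong M$, and it is closed along fibers.

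\emph{Step 3: repairing the boundary.} This is where the main obstacle lies. We need $i_\partial^*P_1 - Vol_{S^2}\otimes I_{\mathfrak{g}}$ to be exact on $\partial\text{Conf}_2(M)$ and anti-invariant. Then it equals $d\gamma$ with $\gamma$ anti-invariant, and we extend $\gamma$ by a cutoff and subtract its differential. Exactness reduces to a class in $H^2(\partial\text{Conf}_2(M))^-$, which is detected by fiber integration and is $0$ since $\beta$ is smooth across the diagonal. One must also check the $H^2(M)$-component. Here the anti-invariant part of $H^*(S(N)) = H^*(M)\otimes H^*(S^2)$ is exactly $H^{*-2}(M)\otimes[Vol]$, so the $H^2(M)$ part is invariant and drops out after anti-symmetrizing.

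\emph{Step 4: imposing \eqref{fixing-class-condition}.} Modify $P$ by $\sum c_{ij}\,\pi_1^*\alpha_i\wedge\pi_2^*\alpha_j$ with $\alpha_i\in\mathcal{A}$. These forms are closed and smooth, so they preserve \eqref{propagator-differential}. Restricted to the boundary they give pullbacks from $M$, which are invariant after anti-symmetrizing, and by the argument of Step 3 can be absorbed by a further $d\gamma$ correction. The pairing $\mathcal{A}\otimes\mathcal{A}\to$ coefficients is nondegenerate by Poincaré duality, so the linear system for $c_{ij}$ is solvable.

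\emph{Uniqueness.} Suppose $P$ and $P'$ both satisfy the conditions. Then $Q = P - P'$ is closed and vanishes on $\partial\text{Conf}_2(M)$, so it defines a class in $H^2(\text{Conf}_2(M),\partial;\ \cdot) \cong H^2(M\times M, Diag;\ \cdot)$. The long exact sequence, together with the fact that $H^1(Diag)\to H^2(M\times M,Diag)$ factors away after anti-symmetrization, identifies the anti-invariant part with a subspace of the anti-symmetric part of $H^2(M\times M)$, represented by $\mathcal{A}\otimes\mathcal{A}$. Condition \eqref{fixing-class-condition} together with nondegeneracy kills this class, so $Q\in d\Omega^1(\text{Conf}_2(M),\partial\text{Conf}_2(M))$. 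The delicate part is again the bookkeeping of signs and invariants under $sw$.
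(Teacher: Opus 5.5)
Your construction follows the paper's. You cut off $Vol_{S^2}\otimes I_{\mathfrak g}$ near $\partial\text{Conf}_2(M)$, subtract an $sw$-anti-invariant primitive on $M\times M$ (your $-\beta$ is the paper's $\psi$), and then add closed forms to kill the pairings with $\mathcal A\otimes\mathcal A$. The paper has no counterpart of your Step~3: it simply asserts that $\rho\,Vol_{S^2}\otimes I_{\mathfrak g}+\psi$ already satisfies the boundary condition. For uniqueness, the paper pushes $\theta$ off the boundary by a relative $d\zeta$ and shows the result is exact on $M\times M$. It never explains why a primitive vanishing on $\partial\text{Conf}_2(M)$ exists. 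That is exactly the $H^1(Diag)$ term you handle with the exact sequence of $(M\times M,Diag)$, so on that point your argument is more complete.

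The gap is Step~3, and the same reasoning reused in Step~4 and in the uniqueness.
\begin{itemize}
\item \emph{Closedness is never checked.} To speak of the class of the error $-\pi^*(i^*_{Diag}\beta)$ in $H^2(\partial\text{Conf}_2(M))^-$, you first need it closed. Since $dP_0=0$ near the diagonal, $d(i^*_{Diag}\beta)=-i^*_{Diag}\bigl(r_{\mathcal A}\otimes r_{\mathcal A}(PD(Diag))\bigr)$. You never check that this vanishes, although the lemma itself forces it to, because $i_\partial^*dP=d(Vol_{S^2}\otimes I_{\mathfrak g})=0$.
\item \emph{The invariance claim ignores the coefficients.} Your assertion that the $H^2(M)$-part is $sw$-invariant overlooks that over $Diag$ the switch acts on $\mathfrak g\otimes\mathfrak g$ by the flip.
\end{itemize}

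For scalar coefficients the right observation is simpler. Since $sw$ fixes $Diag$ pointwise, $sw^*\beta=-\beta$ gives $i^*_{Diag}\beta=0$ identically. Nothing is spoiled, Step~3 is vacuous, and this is what justifies the paper's assertion. Likewise, the anti-symmetrized $\mathcal A\otimes\mathcal A$ corrections of Step~4 vanish on the boundary. That matters: a correction $d(\chi\gamma)$ with $\gamma|_{\partial}\neq 0$ shifts the pairings by $\pm\int_{\partial\text{Conf}_2(M)}\gamma\wedge\pi^*(\alpha'\wedge\alpha'')$, so ``absorbing by a further $d\gamma$'' would undo Step~4.

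With genuinely $\mathfrak g\otimes\mathfrak g$-valued forms, anti-invariance only makes $i^*_{Diag}\beta$ a $\Lambda^2\mathfrak g$-valued $2$-form, not necessarily closed or exact. Neither your argument nor the paper's assertion goes through, and for arbitrary $\mathcal A$ the statement is false. Take $M=S^3$ and $\mathfrak g=\R^2$ with trivial flat structure and standard pairing. Let $\mathcal A^3$ be spanned by $\omega_1e_1$ and $\omega_2(e_1+e_2)$, where $\omega_1\neq\omega_2$ are volume forms of mass one. Then $i^*_{Diag}\bigl(r_{\mathcal A}\otimes r_{\mathcal A}(PD(Diag))\bigr)=\pm(\omega_2-\omega_1)(e_1\otimes e_2-e_2\otimes e_1)\neq 0$. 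So no $P$ satisfies both $dP=r_{\mathcal A}\otimes r_{\mathcal A}(PD(Diag))$ and $i_\partial^*P=Vol_{S^2}\otimes I_{\mathfrak g}$.

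Similarly, in the uniqueness argument, $sw$ acts on $H^1(Diag;\mathfrak g\otimes\mathfrak g)$ through the flip. Its anti-invariant part $H^1(M;\Lambda^2\mathfrak g)$ need not ``factor away'' unless it is hit by $H^1(M\times M)$.

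In short, with Step~3 deleted your proof is correct for scalar coefficients. In the $\mathfrak g$-valued generality of the statement, two things are missing:
\begin{itemize}
\item an extra hypothesis on $\mathcal A$, at least the pointwise vanishing of $i^*_{Diag}\bigl(r_{\mathcal A}\otimes r_{\mathcal A}(PD(Diag))\bigr)$;
\item a genuine argument in place of Step~3, which the paper does not supply either.
\end{itemize}
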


\begin{proof}



Let $U$ be a small tubular neighborhood of the diagonal $Diag$ on $M \times M$.  We can associate to $U$ a collar $\hat{U}$ of  $\partial  \text{Conf}_2(M)$ .
Taking the pull back from $\hat{U} \rightarrow \partial \text{Conf}_2(M)$ we may consider the differential two-form $Vol_{S^2}$   as a closed differential two-form  on  $\hat{U}$.

  $I_{\mathfrak{g}}$ may be extended to parallel section of $ \Omega^0(\hat{U}, \pi_1^*(\mathfrak{g}) \otimes \pi_2^*(\mathfrak{g}))$ which we denote in the same way:  
  $$I_{\mathfrak{g}} \in \Omega^0(\hat{U}, \pi_1^*(\mathfrak{g}) \otimes \pi_2^*(\mathfrak{g})).$$

Let $\rho$ be a cutoff function on $M \times M$ equal to one in a neighborhood of $Diag$ and zero outside a compact subset of $U$. We assume that $\rho$ invariant under the switch diffeomorphism $sw$.
The differential form $d (\rho Vol_{S^2}) \otimes I_{\mathfrak{g}}$ defines a smooth closed differential form on $M \times M$, whose cohomology class agrees with the Poincare dual of the diagonal  
$$ [d (\rho  Vol_{S^2}) \otimes  I_{\mathfrak{g}}  ] = PD (Diag) \in H^3(M \times M  ,   \pi_1^*(\mathfrak{g}) \otimes \pi_2^*(\mathfrak{g})    ).$$ 
So there exists  $\psi \in \Omega^2(M \times M,  \pi_1^*(\mathfrak{g}) \otimes \pi_2^*(\mathfrak{g})  )$ such that 
$$ r_{\mathcal{A}} \otimes r_{\mathcal{A}}  (PD(Diag)) = d (\rho Vol_{S^2})  \otimes  I_{\mathfrak{g}} + d \psi. $$
We may  assume that  $\psi $ is anti-invariant under the switch diffeomorphism $sw$.

Set
$$P^{prel}= \rho Vol_{S^2} \otimes  I_{\mathfrak{g}} + \psi .$$ 
$P^{prel}$ satisfies   conditions 
 (\ref{propagator-differential}), (\ref{propagator-singularity}) and (\ref{parity}) .
We may add to $P^{prel}$ a suitable closed differential two-form 
in order to satisfy condition (\ref{fixing-class-condition}). In this way we obtain the required differential form $P$.

In order to prove the last claim
 observe that
the difference between two differential forms satisfying the above conditions defines a differential form 
$$\theta  \in \Omega^2(\text{Conf}_2(M),   \pi_1^*(\mathfrak{g}) \otimes \pi_2^*(\mathfrak{g})  )$$
 such that
 \begin{equation} \label{conditions-difference-propagators}
 d \theta=0, i_{\partial}^*( \theta)=0,  \text{sw}^*(\theta) = -\theta, 
 \end{equation}
 \begin{equation} \label{conditions-difference-propagators2}
   \langle \theta ,\pi_1^*(\alpha') \wedge \pi_2^*(\alpha'') \rangle =0 
  \end{equation}
    for each   $ \alpha', \alpha'' \in \mathcal{A}$ .


There exists $\zeta \in  \Omega^1(\text{Conf}_2(M), \partial \text{Conf}_2(M),  \pi_1^*(\mathfrak{g}) \otimes \pi_2^*(\mathfrak{g})  )$ such that the differential form $\theta':= \theta + d \zeta $ has  support away from $ \partial \text{Conf}_2(M)$, and so
it defines an element of $ \Omega^2(M \times M,   \pi_1^*(\mathfrak{g}) \otimes \pi_2^*(\mathfrak{g})  )$. Condition ( \ref{conditions-difference-propagators2}) implies that $\theta'$ is exact in $ \Omega^2(M \times M,   \pi_1^*(\mathfrak{g}) \otimes \pi_2^*(\mathfrak{g})  )$.





\end{proof}


Instead to consider $P$ as a differential forms on $\text{Conf}_2(M)$ we can think $P$ as an element of $\Omega^2(M \times M \setminus Diag)$, with singularity along the diagonal.
The second condition implies that
$\text{lim}_{\epsilon \rightarrow 0} \int_{S^2_{\epsilon}} P =  1 $          
where $S^2_{\epsilon}$ is a family of spheres of radius $\epsilon$ surrounding the diagonal of $M \times M$. 

Using the propagator $P$ we  define the operator 
$$ \mathbf{P} : \Omega^*(M, \mathfrak{g}) \rightarrow \Omega^*(M, \mathfrak{g}) $$ 
$$ \mathbf{P}(\omega) = \pi_* (P \wedge \pi_2^* (\omega) )  $$ 
which satisfies the relation
\begin{equation} \label{diff-P}
d \mathbf{P} - \mathbf{P} d = Id - \pi_{\mathcal{A}} .
\end{equation}

\subsubsection{One parameter version} \label{parameter-propagator-section}

The propagator $P$  defined in Lemma \ref{construction-propogator} depends on some choices, in particular the data  $\mathcal{A},Vol_{S^2}$.   In this subsection we show that different choices lead to isotopic propagators. 

We start with the following two immediate Lemmas:
\begin{lemma} \label{isotopy-harmonic-lemma}
Let $\mathcal{A}_0, \mathcal{A}_1$ be subspaces of 
closed $\mathfrak{g}$-value differential forms on $M$ such that
$$   \mathcal{A}_0 \xrightarrow{\,\raisebox{-0.4ex}{$\sim$}\,} H^*(M, \mathfrak{g}) , \quad  \mathcal{A}_1 \xrightarrow{\,\raisebox{-0.4ex}{$\sim$}\,} H^*(M, \mathfrak{g}). $$
There exists a sub-vector space $\tilde{ \mathcal{A}} < \Omega^*(M \times [0,1], \mathfrak{g})$ of closed differential forms
such that  
$$i_0^*(\tilde{ \mathcal{A}}) = \mathcal{A}_0, \quad   i_1^*(\tilde{ \mathcal{A}}) = \mathcal{A}_1 .$$


\end{lemma}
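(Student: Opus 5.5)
The plan is to build $\tilde{\mathcal{A}}$ by choosing a basis of cohomology and interpolating representatives along $[0,1]$, correcting each interpolation by an exact form so that closedness holds on the product. Fix a basis $\{c_j\}$ of $H^*(M,\mathfrak{g})$, and let $\alpha_j^0\in\mathcal{A}_0$ and $\alpha_j^1\in\mathcal{A}_1$ be the unique closed forms representing $c_j$, which exist by the isomorphisms (\ref{harmonic-isomorphism}) for $\mathcal{A}_0,\mathcal{A}_1$. Since $\alpha_j^1$ and $\alpha_j^0$ represent the same class, there is $\beta_j\in\Omega^{*-1}(M,\mathfrak{g})$ with
$$\alpha_j^1-\alpha_j^0=d\beta_j .$$
Here $d$ denotes the flat covariant differential of $\mathfrak{g}$.

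Let $t$ be the coordinate on $[0,1]$ and let $\chi:[0,1]\to[0,1]$ be a smooth cutoff with $\chi(0)=0$ and $\chi(1)=1$. Define on $M\times[0,1]$
$$\tilde\alpha_j=\alpha_j^0+d\bigl(\chi(t)\beta_j\bigr)=\alpha_j^0+\chi(t)\,d\beta_j+\chi'(t)\,dt\wedge\beta_j ,$$
where $\alpha_j^0$ and $\beta_j$ are pulled back along the projection $M\times[0,1]\to M$, and the flat bundle $\mathfrak{g}$ is pulled back as well. Then $\tilde\alpha_j$ is closed, because it is a closed form plus an exact one. Its restrictions are $i_0^*\tilde\alpha_j=\alpha_j^0$ and $i_1^*\tilde\alpha_j=\alpha_j^0+d\beta_j=\alpha_j^1$, since $i_s^*(dt)=0$. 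Take $\tilde{\mathcal{A}}$ to be the span of the $\tilde\alpha_j$.

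The restriction $i_0^*$ sends the basis $\{\tilde\alpha_j\}$ to the basis $\{\alpha_j^0\}$ of $\mathcal{A}_0$, so $i_0^*(\tilde{\mathcal{A}})=\mathcal{A}_0$, and likewise $i_1^*(\tilde{\mathcal{A}})=\mathcal{A}_1$. As a byproduct, $\tilde{\mathcal{A}}$ maps isomorphically onto $H^*(M\times[0,1],\mathfrak{g})\cong H^*(M,\mathfrak{g})$, which will be the property needed in the one-parameter propagator construction.

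No step here is a real obstacle. The only point to watch is that the primitives $\beta_j$ must exist for the twisted differential; this follows because the classes agree in $H^*(M,\mathfrak{g})$. Taking $\chi$ to be constant near the endpoints makes $\tilde{\mathcal{A}}$ a product near $t=0,1$, which fits the collar conventions the paper uses for isotopies.
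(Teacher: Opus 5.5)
Your proof is correct: the forms $\pi^*\alpha_j^0+d(\chi(t)\pi^*\beta_j)$ are closed and restrict to $\alpha_j^0$ and $\alpha_j^1$ at the endpoints, so their span has the required images under $i_0^*$ and $i_1^*$. The paper states this lemma as immediate and gives no proof, and your interpolation by exact forms is the standard argument it leaves implicit; your remark that $\tilde{\mathcal{A}}$ maps isomorphically onto $H^*(M\times[0,1],\mathfrak{g})$ is exactly what makes $r_{\tilde{\mathcal{A}}}$ well defined afterwards.
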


To $\tilde{ \mathcal{A}} $ as in the Lemma it is associated a linear map
$$ r_{\tilde{ \mathcal{A}} }: H^*(M, \mathfrak{g}) \rightarrow \Omega^*(M \times [0,1], \mathfrak{g})$$
such that 
$$ r_{\tilde{ \mathcal{A}} }([\alpha])= \alpha \quad \forall \alpha \in \tilde{ \mathcal{A}}  .$$

\begin{lemma} \label{isotopy-volume-lemma}
Let $Vol_{S^2,0}, Vol_{S^2,1} $ be closed differential forms representing  the generator of $H^*(\partial \text{Conf}_2(M) , \Z )^-$ .
 There exists a closed differential form $\tilde{Vol}_{S^2} \in   \Omega^2( \partial \text{Conf}_2(M) \times [0,1] )^- $ such that 
 $$i_0^* (\tilde{Vol}_{S^2})= Vol_{S^2,0}, \quad i_1^* (\tilde{Vol}_{S^2})= Vol_{S^2,1} .$$
 \end{lemma}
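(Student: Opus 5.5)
The plan is to build $\tilde{Vol}_{S^2}$ by straight-line interpolation and then correct it with an exact term, working throughout in the anti-invariant subcomplex $\Omega^*(\partial \text{Conf}_2(M))^-$. Write $X = \partial \text{Conf}_2(M)$. Let $\sigma$ denote the reverse (antipodal) involution on the fibers of $X \rightarrow M$. Because $\sigma$ is a diffeomorphism of finite order, averaging $\omega \mapsto \frac{1}{2}(\omega - \sigma^*\omega)$ is a chain projection onto the anti-invariant part. Hence the cohomology of $\Omega^*(X)^-$ is $H^*(X,\R)^-$, and any exact anti-invariant form is the differential of an anti-invariant form: apply the projector to an arbitrary primitive.

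First, since $Vol_{S^2,0}$ and $Vol_{S^2,1}$ are closed and represent the same generator of $H^2(X,\Z)^- \cong \Z$, their difference is exact in $\Omega^2(X)^-$. So there is $\eta \in \Omega^1(X)^-$ with
$$Vol_{S^2,1} - Vol_{S^2,0} = d\eta.$$
Let $t$ be the coordinate on $[0,1]$ and $p: X \times [0,1] \rightarrow X$ the projection; extend $\sigma$ to $X \times [0,1]$ trivially in $t$. I would define
$$\tilde{Vol}_{S^2} = p^*Vol_{S^2,0} + d\big(t\, p^*\eta\big) = p^*Vol_{S^2,0} + t\, p^*d\eta + dt \wedge p^*\eta .$$

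It remains to check the required properties. The form is closed, being a closed form plus an exact one. It is anti-invariant, since $p^*Vol_{S^2,0}$, $p^*\eta$ and $t$, $dt$ behave correctly under $\sigma \times \mathrm{id}$. For the boundary values, note that $i_t^*(dt)=0$, so $i_0^*\tilde{Vol}_{S^2} = Vol_{S^2,0}$ and $i_1^*\tilde{Vol}_{S^2} = Vol_{S^2,0} + d\eta = Vol_{S^2,1}$. If one also wants the family to be constant near the ends, matching the $\R$-parametrized isotopy conventions used earlier, replace $t$ by a smooth cutoff $\beta(t)$ that equals $0$ near $0$ and $1$ near $1$; the same verification applies.

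The only step with any content is producing an anti-invariant primitive $\eta$. This is the averaging argument above. The point to state carefully is that the generator condition fixes the class in the anti-invariant part, so the difference of the two classes is zero. Everything else is immediate.
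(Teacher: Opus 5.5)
Your argument is correct. It is the standard interpolation-plus-exact-correction argument the paper has in mind when it calls this lemma immediate (the paper writes out no proof), and the averaging projector $\frac{1}{2}(1-\sigma^*)$ is the right way to obtain an anti-invariant primitive $\eta$. The one hypothesis you use silently is that $Vol_{S^2,0}$ and $Vol_{S^2,1}$ are themselves anti-invariant forms; this is implicit in the paper and is in fact forced by the conclusion, since $i_0^*$ and $i_1^*$ commute with the involution.
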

 

\begin{proposition}  \label{parameter-propagator-lemma}
 Let $P_0$ and $P_1$ be propagators satisfying the conditions of  Lemma \ref{construction-propogator} for the data $\mathcal{A}_0,  Vol_{S^2,0}$ and $\mathcal{A}_1,Vol_{S^2,1}$ respectively.

Take $\tilde{ \mathcal{A}}$ and $\tilde{Vol}_{S^2} $ as in Lemma \ref{isotopy-harmonic-lemma} and Lemma \ref{isotopy-volume-lemma}.


 There exists a differential form $\tilde{P} \in \Omega^2(\text{Conf}_2(M) \times [0,1],   \pi_1^*(\mathfrak{g}) \otimes \pi_2^*(\mathfrak{g})  ) )$ such that 
\begin{itemize}

\item 
\begin{equation} \label{initial-final}
i_0^*(\tilde{P})= P_0 , i_1^*(\tilde{P})= P_1 ;
\end{equation}
\item 
\begin{equation}     \label{propagator-differential-iso}
d \tilde{P} = r_{\tilde{\mathcal{A}}}  \otimes  r_{\tilde{\mathcal{A}}} (PD(Diag));
\end{equation}
\item 
\begin{equation}   \label{propagator-singularity-iso}
 i_{\partial}^*( \tilde{P}) = \tilde{Vol}_{S^2} \otimes I_{\mathfrak{g}} \in \Omega^*(\partial \text{Conf}_2(M) \times [0,1]  ,\pi_1^*(\mathfrak{g}) \otimes \pi_2^*(\mathfrak{g}))^-.   
\end{equation}

\item 
\begin{equation} \label{parity-iso}
 \text{sw}^*(\tilde{P}) = -\tilde{P}  ;
 \end{equation}
\item
\begin{equation} \label{fixing-class-condition-iso}
\langle \tilde{P},\pi_1^*(\tilde{\alpha}') \wedge \pi_2^*(\tilde{\alpha}'') \rangle =0 \in \Omega^*([0,1])
 \end{equation}
   for each   $ \tilde{\alpha}', \tilde{\alpha}'' \in \tilde{\mathcal{A}}$ .

\end{itemize}
\end{proposition}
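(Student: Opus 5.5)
We run the proof of Lemma \ref{construction-propogator} in the one-parameter setting on $\text{Conf}_2(M)\times[0,1]$, relative to the two ends. The only genuinely new point is matching the prescribed endpoints $P_0,P_1$.

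\emph{Step 1: a preliminary family.} Work on $\hat U\times[0,1]$, where $\hat U$ is the collar of $\partial\text{Conf}_2(M)$. Pull $\tilde{Vol}_{S^2}$ back along $\hat U\times[0,1]\to\partial\text{Conf}_2(M)\times[0,1]$ and extend $I_{\mathfrak g}$ parallelly. Take the same switch-invariant cutoff $\rho$ as before, independent of the parameter. Then $d(\rho\tilde{Vol}_{S^2})\otimes I_{\mathfrak g}$ is a smooth closed form on $M\times M\times[0,1]$. Both it and $r_{\tilde{\mathcal A}}\otimes r_{\tilde{\mathcal A}}(PD(Diag))$ represent the pull-back of $PD(Diag)$ under the projection, which is a homotopy equivalence. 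Hence there is $\tilde\psi$ with
$$r_{\tilde{\mathcal A}}\otimes r_{\tilde{\mathcal A}}(PD(Diag))=d(\rho\tilde{Vol}_{S^2})\otimes I_{\mathfrak g}+d\tilde\psi,$$
and averaging under $sw$ we may take $\tilde\psi$ anti-invariant. Set $\tilde P^{prel}=\rho\tilde{Vol}_{S^2}\otimes I_{\mathfrak g}+\tilde\psi$. It satisfies (\ref{propagator-differential-iso}), (\ref{propagator-singularity-iso}) and (\ref{parity-iso}).

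\emph{Step 2: the normalization (\ref{fixing-class-condition-iso}).} For each $t$, the pairing $\langle\cdot,\pi_1^*\tilde\alpha'\wedge\pi_2^*\tilde\alpha''\rangle$ is a linear functional on the finite-dimensional space $\tilde{\mathcal A}\otimes\tilde{\mathcal A}$. The error $\langle\tilde P^{prel},\ldots\rangle$ is a smooth family of such functionals. Correct it by subtracting $\sum_{ij}c_{ij}(t)\,\pi_1^*a_i\wedge\pi_2^*a_j$, where $a_i$ is a basis of $\tilde{\mathcal A}$ and the $a_i$ have no $dt$ component. This subtracted term is closed up to terms in $dt$, which have to be compensated. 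My plan is to require the correction to be $d_M$-closed in each slice and to absorb any $dt\wedge(\cdot)$ discrepancy by changing $\tilde\psi$. Nondegeneracy of the pairing on $H^*(M,\mathfrak g)$ lets us solve for $c_{ij}(t)$ smoothly in $t$. Antisymmetrize to keep (\ref{parity-iso}). The corrected form $\tilde P'$ satisfies all bullets except possibly (\ref{initial-final}).

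\emph{Step 3: matching the ends.} At $t=0$ the form $i_0^*\tilde P'$ satisfies the conditions of Lemma \ref{construction-propogator} for the data $(\mathcal A_0,Vol_{S^2,0})$, and so does $P_0$. By the uniqueness clause (\ref{propagator-unicity}), $P_0-i_0^*\tilde P'=d\zeta_0$ with $\zeta_0$ vanishing on $\partial\text{Conf}_2(M)$. The uniqueness argument can be run to give $\zeta_0$ switch anti-invariant and with vanishing pairing against $\mathcal A_0\otimes\mathcal A_0$ in the relevant sense. The same holds at $t=1$ with some $\zeta_1$. Choose a cutoff $\chi(t)$ equal to $1$ near $0$ and $0$ near $1$, and set
$$\tilde P=\tilde P'+d\bigl(\chi(t)\zeta_0+(1-\chi(t))\zeta_1\bigr).$$
Adding an exact form preserves (\ref{propagator-differential-iso}). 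Since the $\zeta_i$ vanish on the boundary, (\ref{propagator-singularity-iso}) is also preserved, and (\ref{parity-iso}) is preserved by anti-invariance.

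\emph{Main obstacle.} The hard step is (\ref{fixing-class-condition-iso}) after Step 3. The term $d\chi\wedge(\zeta_0-\zeta_1)$ and the $dt$-components can spoil the vanishing of the pairing as an element of $\Omega^*([0,1])$. The key computation is that pairing $d(\chi\zeta)$ against closed $\pi_1^*\tilde\alpha'\wedge\pi_2^*\tilde\alpha''$ gives, by Stokes on $\text{Conf}_2(M)$, only a boundary term, which vanishes because $\zeta$ vanishes on $\partial\text{Conf}_2(M)$. This leaves a $d_t$-exact function on $[0,1]$. If this does not vanish identically, I would redo the Step 2 correction after Step 3, using coefficients $c_{ij}(t)$ supported in the interior of $[0,1]$. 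This is compatible with the endpoints because the residual error vanishes near $t=0,1$.
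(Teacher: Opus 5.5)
Your Steps 1 and 3 are the paper's argument. Step 1 is $\tilde P^{prel}=\rho\,\tilde{Vol}_{S^2}\otimes I_{\mathfrak g}+\tilde\psi$. Step 3 adds $d$ of a primitive that vanishes on $\partial\text{Conf}_2(M)\times[0,1]$ and restricts at $t=0,1$ to the primitives given by (\ref{propagator-unicity}); your cutoff $\chi(t)$ just makes that extension explicit.

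\emph{The genuine gap is Step 2.} Subtracting $\sum c_{ij}(t)\,\pi_1^*a_i\wedge\pi_2^*a_j$ with $t$-dependent coefficients gives a form that is not closed, so (\ref{propagator-differential-iso}) fails. Since $\pi_1^*a_i\wedge\pi_2^*a_j$ does not vanish on $\partial\text{Conf}_2(M)$, (\ref{propagator-singularity-iso}) fails too.

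Absorbing the discrepancy into $\tilde\psi$ cannot rescue this. Let $\kappa$ be the total change from $\tilde P^{prel}$ to any admissible $\tilde P$, including your Step 3 term. Then $d\kappa=0$ and $i_\partial^*\kappa=0$, so fiberwise Stokes gives $d\langle\kappa,\pi_1^*\tilde\alpha'\wedge\pi_2^*\tilde\alpha''\rangle=0$. Hence the degree-zero part $f(t)$ of the pairing, for $\deg\tilde\alpha'+\deg\tilde\alpha''=4$, changes only by a constant. Step 2 can therefore work only if $f$ is already constant for $\tilde P^{prel}$. This is exactly what the paper asserts: (\ref{propagator-differential-iso}) and (\ref{propagator-singularity-iso}) force $d\langle\tilde P^{prel},\pi_1^*\tilde\alpha'\wedge\pi_2^*\tilde\alpha''\rangle=0$, so a $t$-independent closed correction suffices. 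Your proposal never states or proves this.

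Be aware, however, that this assertion does not hold for every $\tilde{\mathcal A}$. Apply Stokes to $\tilde P\wedge\pi_1^*\tilde\alpha'\wedge\pi_2^*\tilde\alpha''$ on $\text{Conf}_2(M)\times[t_0,t_1]$:
\begin{itemize}
\item The face $\partial\text{Conf}_2(M)\times[t_0,t_1]$ contributes $\pm\int_{M\times[t_0,t_1]}\tilde\alpha'\wedge\tilde\alpha''$, by (\ref{propagator-singularity-iso}).
\item The interior term $r_{\tilde{\mathcal A}}\otimes r_{\tilde{\mathcal A}}(PD(Diag))\wedge\pi_1^*\tilde\alpha'\wedge\pi_2^*\tilde\alpha''$ contributes $\pm$ the same integral once from each factor of $M\times M$ carrying the $dt$-component. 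This uses the Künneth identity $\sum_k\langle u_k,\alpha'\rangle\tilde v_k=\pm\tilde\alpha'$.
\end{itemize}
So $f(t_1)-f(t_0)$ is an odd, hence nonzero, multiple of $\int_{M\times[t_0,t_1]}\tilde\alpha'\wedge\tilde\alpha''$.

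On $[0,1]$ the normalizations of $P_0,P_1$ make the ends vanish. So (\ref{initial-final}), (\ref{propagator-differential-iso}) and (\ref{propagator-singularity-iso}) alone already force $\int_{M\times[0,1]}\tilde\alpha'\wedge\tilde\alpha''=0$. This fails in a simple example:
\begin{itemize}
\item take the trivial flat connection with $b_1(M)>0$ and $\mathcal A_0=\mathcal A_1$;
\item build $\tilde{\mathcal A}$ from the pull-back of $\mathcal A$ by replacing a degree-one element $a$ with $a+d\phi$, where $\phi=\phi(t)$ and $\phi(1)\neq\phi(0)$;
\item pair it against a volume form in $\mathcal A$.
\end{itemize}
So the step you are missing cannot be supplied for an arbitrary $\tilde{\mathcal A}$ as in Lemma \ref{isotopy-harmonic-lemma}. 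You must choose $\tilde{\mathcal A}$ so that $\int_M\tilde\alpha'\wedge\tilde\alpha''$ has no $dt$-component. Then $f$ is constant, and your Step 2 collapses to the paper's constant correction.

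Separately, your closing claim that the residual error vanishes near $t=0,1$ is unjustified, because $\tilde{\mathcal A}$ still varies with $t$ there.
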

In  (\ref{fixing-class-condition-iso}) the coupling is defined using the push forward of differential forms throughout the projection $ \text{Conf}_2(M) \times [0,1] \rightarrow [0,1]$. 

\begin{proof}
The construction of $\tilde{P}$ follows the same lines of  the argument   of the proof of Lemma  \ref{construction-propogator}.


 We may consider the differential two-form $\tilde{Vol}_{S^2}$   as a closed differential two-form  on  $\hat{U} \times [0,1]$.
The differential form $d (\rho \tilde{Vol}_{S^2})\otimes  I_{\mathfrak{g}} $ defines a smooth differential form on $M \times M \times [0,1]$. Its homology class agrees with the Poincare dual of the diagonal  
$$ [d (\rho  \tilde{Vol}_{S^2})  \otimes  I_{\mathfrak{g}}] = PD (Diag  \times [0,1] ) \in H^3(M \times M  \times [0,1]  ,\mathfrak{g}).$$ 
There exists  $\tilde{\psi} \in \Omega^2(M \times M \times [0,1],  \pi_1^*(\mathfrak{g}) \otimes \pi_2^*(\mathfrak{g})  )$ such that 
$$    r_{\tilde{\mathcal{A}}}  \otimes  r_{\tilde{\mathcal{A}}} (PD (Diag  \times [0,1] ))=d (\rho \tilde{Vol}_{S^2}  \otimes  I_{\mathfrak{g}})+ d \tilde{\psi}.$$
We may take  $\tilde{\psi} $ anti-invariant under the switch diffeomorphism $sw$.

Set
$$\tilde{P}^{prel}= \rho \tilde{Vol}_{S^2} \otimes  I_{\mathfrak{g}} + \tilde{\psi} .$$ 
$\tilde{P}^{prel}$ satisfies   conditions (\ref{propagator-differential-iso}), (\ref{propagator-singularity-iso}) and (\ref{parity-iso}) .

From  $(\ref{propagator-differential-iso})$ and $( \ref{propagator-singularity-iso})$ it follows that 
$$d \langle \tilde{P}^{prel},\pi_1^*(\tilde{\alpha}') \wedge \pi_2^*(\tilde{\alpha}'') \rangle =0 \in \Omega^*([0,1]) $$
 for each   $ \tilde{\alpha}', \tilde{\alpha}'' \in \tilde{\mathcal{A}} .$
 So we may add a suitable closed differential form  to $\tilde{P}^{prel}$  in order to satisfy condition (\ref{fixing-class-condition-iso}).

Finally , using   (\ref{propagator-unicity}), there exists $\psi_0 ,  \psi_1$ such that
 $i_0^*(\tilde{P}^{prel})- P_0 = d \psi_0, i_1^*(\tilde{P}^{prel})- P_1= d \psi_1$.  Take $\tilde{\psi}$ vanishing on $\partial Conf_2(M) \times [0,1]$ such that $i_0^*(\tilde{\psi})= \psi_0,  i_1^*(\tilde{\psi})= \psi_1$.  The differential form 
$$\tilde{P}=\tilde{P}^{prel} + d  \tilde{\psi}$$
 also satisfies condition (\ref{fixing-class-condition-iso}) .





\end{proof}

\subsection{Pertubative Chern-Simons Partition Function}

In this subsection we define the partition function of a $PS$-cycle as a formal power series on the formal variable $\hbar$ and the variable $x \in (H^*(M, \mathfrak{g})[1])^*$.

Applying  ( \ref{harmonic-isomorphism}) to the first factor of   $Identity \in H^*(M, \mathfrak{g})[1] \otimes (H^*(M, \mathfrak{g})[1])^*$ we obtain an element 
$$\mathfrak{x} \in \mathcal{A} \otimes \R[x] .$$

Let $G \in \mathfrak{G}$. 
For each $h \in H(G)$ denote by $\pi_h : M^{H(G)} \rightarrow M $  the projection on the factor $h$.  Set
$$ \mathfrak{g}_h = \pi_h^* (\mathfrak{g}) $$
and let $\mathfrak{g}_{H(G)}$ be the bundle over $M^{H(G)}$ defined as
$$ \mathfrak{g}_{H(G)} = \bigwedge (\bigoplus_{h \in H(G)} \mathfrak{g}_h) .$$



Let $U$ be an open set of $M^{H(G)}$ with the following property:  
\begin{equation} \label{close-points-trace}
 (z_h)_h \in U \Longrightarrow Diameter ( \{ z_h \}_{h \in H_v}) < \text{ injective radius  }, \forall v \in V(G).
 \end{equation}

Consider the trace on $\mathfrak{g}_{H(G)} $ 
\begin{equation} \label{trace}
Tr : \mathfrak{g}_{H(G)}  \rightarrow C_0(U, \mathfrak{o}(H(G)))
\end{equation}
defined by the following conditions:
\begin{itemize}
\item
$$ \text{Tr}(\bigwedge_{s \in S} X_{s}) =0$$
when $X_s \in  \mathfrak{g}_{h(s)}$ for some $h: S \rightarrow H(G)$ which is not a bijection;
\item
$$ \text{Tr}(\bigwedge_{h \in H(G)} X_h) = \prod_{v \in V(G)} \text{Tr}(\prod_{h \in H_v}^{cyclic}X_h). $$
Here, for each $v$, the order of the  elements $( X_h )_{h \in H_v}$ in the argument of $\text{Tr}$ is taken compatible with the cyclic order of $H_v$. 
For each point of $U$,  and $v \in V(G)$,  we use condition ( \ref{close-points-trace}) and a local trivialization of $\mathfrak{g}$  to identify $X_h$ with an element of the Lie-algebra for each $h \in H_v$.
\end{itemize}

\subsubsection{Partition Function}

Let $P$ be a propagator as in Lemma \ref{construction-propogator}.
As in \cite{AS1}, in order to make the signs simpler it is useful to introduce the super-propagator $P_s$ defined as the image of $P$ by the inclusion
$$ \pi_1^*(\mathfrak{g}) \otimes \pi_2^*(\mathfrak{g})   \rightarrow \bigwedge(\pi_1^*(\mathfrak{g}) \oplus \pi_2^*(\mathfrak{g})) . $$
Property $(\ref{parity})$  implies 
$$ tw^* (P_s) =P_s .$$

For $e \in E(G) \setminus E_l$ let $Conf_e(M)$  be he compactification of the configuration space of two points labeled by the half-edges of $e$. Define
$$Conf_{G,m}(M)= M^{H_l}   \times  \prod_{e \in E(G) \setminus E_l}Conf_e(M)  .$$
This space comes with the projections
$$ \pi_e : Conf_{G,m}(M)  \rightarrow  Conf_{h_1,h_2}(M) \text{    for   } e=\{h_1,h_2\} \in E^{in}(G) \setminus E_l,$$
$$ \pi_e : Conf_{G,m}(M)  \rightarrow  M^e \text{    for   } e \in E_l,$$
where $M^e \cong M^2$ if $e \in E^{in}$, $M^e \cong M$ if $e \in E^{ex}$.

To a chain $C \in C_*(M^{H(G)})$ which satisfies the transversality condition corresponds  a chain $\hat{C} \in C_*(Conf_{G,m}(M))$, which is defined blowing up $C$ along the diagonals. In the boundary of $ \hat{C}$ there are  extra boundary faces associated to the boundary of $Conf_{G,m}(M)$:
\begin{equation} \label{boundary-blow-up}
\partial \hat{C} = \widehat{\partial C} - \sum_{e } S(C \cap Diag_e),
\end{equation}
where $S(C \cap Diag_e)$ is the pre-image of $C \cap Diag_e$ from the map $Conf_{G,m}(M) \rightarrow M^{H(G)}$.

For $e \in E(G) \setminus E_l$, 
$$ \pi_e^*(P_s) \in \Omega^2(Conf_{G,m}(M), \wedge( \mathfrak{g}_h \oplus \mathfrak{g}_{h'}))   .$$

Assume 
\begin{equation} \label{condition-m}
m=\{ E_0,E_1,...,E_l  \} \text{   with     }|E_i|= |E_{i-1}| +1. 
\end{equation}
 So there exists $e_i \in E^{in}$, $1 \leq i \leq l $, such that $E_i= E_0 \sqcup \{e_1, . . . ,e_i   \}$.


To the graph $(G,m)$ we associate the differential form 
$$\underline{\Omega}_{G,m} \in  \R[x] \otimes \Omega^*(Conf_{G,m}(M), \mathfrak{g}_{H(G)})   $$
defined by
\begin{equation} \label{graph}
\underline{\Omega}_{G,m}   = (-1)^{|m|} \bigwedge_{e \in E(G) \setminus E_l} \pi_e^*(P_s) \wedge \bigwedge_{i} \pi_{e_i}^*(dP_s) \wedge \bigwedge_{e \in E^{ext}(G)}  \pi_e^*( \mathfrak{x} )  
\end{equation}

Applying  the trace (\ref{trace})  set
\begin{equation} \label{omega-graphs}
    \Omega_{G,m} :=  \text{Tr} ( \underline{\Omega}_{G,m} )\in   \R[x] \otimes \Omega^*(U, \mathfrak{o}(H(G))).    
\end{equation}




The partition function of the $PS$-cycles $Z$ is defined as 
\begin{equation} \label{partition-definition}
    \mathfrak{P} (Z) :=  \sum_{G,m}  \hbar^{- \chi(G)}  N^{d(G)} \langle   \Omega_{G,m} ,   Z_{G,m} \rangle  .
\end{equation}
in the last formula we have denoted by $ \langle   \omega ,   C \rangle \in \R[x]$ the natural integration pairing between $\omega \in     \R[x] \otimes \Omega^*(U) \otimes \mathfrak{o}(H(G))$ and $C \in C_*(U, \mathfrak{o}(H(G)))$ .


$\mathfrak{P} (Z)$ defines a formal Laurent series in $ \R((\hbar))[[x]]  $. Write
$$ \mathfrak{P} (Z) = \sum_{l,k} \hbar^l p_{k,l}(x) $$
where,   for each $k \geq 0$   , $p_{k,l}(x)$ is an homogeneus polynomial of degree $k$,  The stability condition on the graphs  tells us
$$ p_{k,l} \neq 0 \Rightarrow k + 3l \geq 0. $$




\subsubsection{Isotopies}

Take an isotopy of propagators $\tilde{P}$ and $\tilde{\mathcal{A}}$ as in section  \ref{parameter-propagator-section}.  As in (\ref{graph}), to each  graph $(G,m)$ we associate the differential form
$$\tilde{\Omega}_{G,m} \in  \R[x] \otimes \Omega^*(Conf_{G,m}(M) \times \R, \mathfrak{g}_{H(G)})   $$
defined by
\begin{equation} \label{graph-isotopy}
\tilde{\underline{\Omega}}_{G,m}   = (-1)^{|m|} \bigwedge_{e \in E(G) \setminus E_l} \tilde{\pi}_e^*(\tilde{P}_s) \wedge \bigwedge_{i} \tilde{\pi}_{e_i}^*(d \tilde{P}_s) \wedge \bigwedge_{e \in E^{ext}(G)}  \tilde{\pi}_e^*( \tilde{\mathfrak{x}} )  
\end{equation}
where now 
$$ \tilde{\pi}_e : Conf_{G,m}(M) \times \R  \rightarrow  Conf_e(M) \times \R \text{    for   } e \in E^{in}(G) \setminus E_l,$$
$$ \tilde{\pi}_e : Conf_{G,m}(M) \times \R  \rightarrow  M^e \times \R   \text{    for   } e \in E_l .$$

Applying  the trace (\ref{trace})   we set
\begin{equation} \label{omega-graphs-isotopy}
    \tilde{\Omega}_{G,m} :=  \text{Tr} ( \tilde{\underline{\Omega}}_{G,m}   )\in   \R[x] \otimes \Omega^*(U \times \R, \mathfrak{o}(H(G)).    
\end{equation}


 To an isotopy $ \tilde{Z} = (\tilde{Z}_{G,m})_{G,m}$ of  $PS$-cycles it is associated the isotopy of partition functions defined by
\begin{equation} \label{partition-definition-isotopies}
    \mathfrak{P} (\tilde{Z}) :=  \sum_{G,m}  \hbar^{- \chi(G)}  N^{d(G)} \langle   \tilde{\Omega}_{G,m} ,   \tilde{Z}_{G,m}  \rangle .
\end{equation}

$ \mathfrak{P} (\tilde{Z})$  defines a formal Laurent series with coefficients piecewise smooth differential forms on $\R$. We may write
$$ \mathfrak{P} (\tilde{Z}) = \sum_{l,k} \hbar^l \tilde{p}_{k,l}(x) $$
where now $\tilde{p}_{k,l}(x)$ is an homogeneus polynomial of degree $k$ with coefficients in piecewise smooth differential forms on $\R$.  The stability condition on the graphs  tells us
$$ \tilde{p}_{k,l} \neq 0 \Rightarrow k \geq -3l . $$



We may write
$$ \mathfrak{P} (\tilde{Z})= \mathfrak{Q} (\tilde{Z}) + \mathfrak{K} (\tilde{Z}) d \lambda , $$
where $\mathfrak{Q} (\tilde{Z})$ and $\mathfrak{K} (\tilde{Z}) $ are formal Laurent series with coefficients piecewise smooth functions on $\R$.

$ \mathfrak{Q} (\tilde{Z})$ may be consider as the function
$$\mathfrak{Q} (\tilde{Z})= \{ \lambda \mapsto \mathfrak{P} ({Z}^{\lambda}) \}$$
defined for generic value of $\lambda$.
Here $ \underline{{Z}}^{\lambda}$  is  the $PS$-cycle defined by the restriction of $\tilde{Z}$ to $\lambda$.

\begin{lemma}
$ \mathfrak{Q} (\tilde{Z})$ is a continuos function on the variable $\lambda$. 
\end{lemma}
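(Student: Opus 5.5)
The plan is to compute the jump of $\mathfrak{Q}(\tilde{Z})$ across a parameter value $\lambda_0$ and show it vanishes, using the cycle condition for $\tilde{Z}$ and the differential identities of the propagator. Fix a coefficient $\hbar^l x^k$; by stability only finitely many $(G,m)$ contribute, so it suffices to treat a finite sum. For each $(G,m)$ the pairing $\langle \tilde{\Omega}_{G,m}, \tilde{Z}^{\lambda}_{G,m}\rangle$ is computed on the blown-up chain $\widehat{\tilde{Z}}_{G,m}$ over $Conf_{G,m}(M)\times\R$. By the transversality condition for isotopies, the slices $\tilde Z^\lambda$ are piecewise smooth in $\lambda$ away from finitely many values, so each term is piecewise smooth. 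At a non-generic $\lambda_0$ I will apply Stokes to $\widehat{\tilde Z}_{G,m}$ restricted to $[\lambda_0-\epsilon,\lambda_0+\epsilon]$: the difference $\mathfrak{Q}(\lambda_0+\epsilon)-\mathfrak{Q}(\lambda_0-\epsilon)$ equals $\int d\tilde\Omega$ over the slab, minus contributions of $\partial\widehat{\tilde Z}$ inside the slab. The slab integral tends to $0$ as $\epsilon\to 0$ because the forms are integrable on the blown-up chain. So the task is to show the boundary contributions cancel in the total sum.

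Next, I will expand $\partial\widehat{\tilde Z}_{G,m}$ by (\ref{boundary-blow-up}) as $\widehat{\partial \tilde Z}_{G,m}-\sum_e S(\tilde Z_{G,m}\cap Diag_e)$, and substitute the cycle condition (\ref{cycle-condition-loop-isotopy}) for $\partial\tilde Z_{G,m}$. This produces three types of terms. The terms $\tilde Z_{G,\partial_i m}$ pair with $d\tilde P_s$ insertions coming from the $(-1)^{|m|}$ factor in (\ref{graph-isotopy}); they cancel against the $d\tilde\Omega$ contributions of the edges added to the marking. The terms $\delta_{e'}\tilde Z_{G',m'}$ cancel against the sphere faces $S(\tilde Z_{G',m'}\cap Diag_{e'})$: by (\ref{propagator-singularity-iso}), integrating $\tilde P_s$ over the normal sphere gives $I_{\mathfrak g}$. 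The trace identity
\[
\mathrm{Tr}(\cdots X_a\cdots X_b\cdots)\ \text{contracted with } I_{\mathfrak g}
\]
reproduces the trace of the graph $\delta_{e'}G'$ with merged or split cyclic orders. The contracted edge leaves $\chi$ unchanged: for a merge of components, $\chi_{c_1}+\chi_{c_2}-1=\chi_{c_0}$, and in the other two cases the genus or vertex count compensates. The orientation bookkeeping is handled by (\ref{comparing-orientation}). Finally, the $\delta_{v'}$ terms carry the factor $N^{d}$, which matches $\mathrm{Tr}(\mathrm{Id})=N$ for an empty vertex. The special faces $N_{0,v}$ are exactly those accounted for by $\partial^{red}$.

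The main obstacle will be the sign and automorphism-factor bookkeeping in matching $\delta_{e'}$ faces with sphere-bundle faces, including the orbifold pushforward (\ref{maps-orbitfold}). Another delicate point is the trace on the region $U$ of (\ref{close-points-trace}), which requires the splitting to be small; I will ensure this by working with nice cycles, where the limit support is empty. I would first verify the cancellation for a single internal edge between distinct vertices, and then argue that the general case factors edge by edge. The remaining $\mathfrak x$ insertions are closed, and $d\tilde P$ involves only $\tilde{\mathcal A}$ by (\ref{propagator-differential-iso}). Hence no further boundary terms arise at $\lambda_0$, and continuity follows.
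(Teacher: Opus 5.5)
Your skeleton is the paper's: Stokes on a thin slab $[\lambda^-,\lambda^+]$, substitution of the cycle condition (\ref{cycle-condition-loop-isotopy}), and cancellation of the $\delta_{e'}$-terms against the sphere faces of (\ref{boundary-blow-up}) via (\ref{propagator-singularity-iso}). The $\delta_{v'}$-terms are cancelled against the special faces using $\mathrm{Tr}(\mathrm{Id})=N$.

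The step that fails is your claim that the terms $\tilde Z_{G,\partial_i m}$ cancel against the $d\tilde\Omega$ contributions, so that ``no further boundary terms arise.'' By (\ref{differential-omega-relation}), $d\tilde\Omega_{G,m}$ matches only the \emph{last} face of the extended markings $m'$ with $\partial_{|m|+1}m'=m$. The intermediate faces cancel in pairs by the switch symmetry (\ref{twist-cancellation}). The faces $\partial_0$ survive: they are precisely the right-hand side of (\ref{quantum-master-preliminary}), i.e.\ the $\hbar\Delta$-term in Proposition \ref{QME-proposition}. If everything cancelled as you claim, the same computation would show that $\mathfrak{Q}(\tilde Z)$ is constant in $\lambda$. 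But by (\ref{master-equation}) its $\lambda$-derivative is an $\hbar\Delta$-term, which need not vanish. Note also that you had already discarded $\int d\tilde\Omega$ over the slab as negligible, so it cannot at the same time serve as an exact cancellation partner.

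The repair is what the paper does: these terms do not need to cancel, only to vanish in the limit. After the $\delta$-terms are removed, $\mathfrak{P}(Z^{\lambda^+})-\mathfrak{P}(Z^{\lambda^-})$ is, coefficientwise, a finite sum of two kinds of pairings: $d\tilde\Omega_{G,m}$ with the slab of $\tilde Z_{G,m}$, and $\tilde\Omega_{G,m}$ with the slabs of $\tilde Z_{G,\partial_i m}$. These slabs are positive-dimensional, so the pairings tend to zero as $\lambda^\pm\to\lambda_0$ by the integrability argument you already use.

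This also shifts the emphasis of your first paragraph. The boundary contributions restricted to the slab are themselves slab integrals. The only ones that need not shrink are those carried by zero-dimensional chains. An example is $H(G)=\emptyset$, where $Z^\lambda_G\in\Q$ genuinely jumps at the points of $\partial\tilde Z_G=\sum\delta_{e'}\tilde Z_{G'}+\sum\delta_{v'}\tilde Z_{G'}$. Those are the terms whose exact cancellation is essential, and your treatment of them is the right one.
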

\begin{proof}
Let $\lambda_0 \in \R$. Take generic $\lambda^-,\lambda^+ \in \R$  close to $\lambda_0$,  with $\lambda^- < \lambda_0 < \lambda^+$. We may assume that $  \underline{{Z}}_{G,m}^{\lambda^+}$ and $   \underline{{Z}}_{G,m}^{\lambda^-} $ are defined.

Set 
$$\underline{\tilde{Z}}_{G,m}^{[\lambda^-, \lambda^+]} = pr_*(\underline{\tilde{Z}}_{G,m}|_{ [\lambda^-, \lambda^+]}).$$  

The cycle condition for $\tilde{Z}$ implies
$$  \underline{{Z}}_{G,m}^{\lambda^+} -  \underline{{Z}}_{G,m}^{\lambda^-}  = \partial  \underline{\tilde{Z}}_{G,m}^{[\lambda^-, \lambda^+]} -\sum  \delta_{e'} \underline{\tilde{Z}}_{G',m}^{[\lambda^-, \lambda^+]}- \sum_i (-1)^i \underline{\tilde{Z}}_{G,\partial_i m}^{[\lambda^-, \lambda^+]}.$$

We also have
$$ \langle  \underline{{\Omega}}_{G,m} ,  \partial  \underline{\tilde{Z}}_{G,m}^{[\lambda^-, \lambda^+]}   \rangle  =  \langle  d \underline{{\Omega}}_{G,m} ,    \underline{\tilde{Z}}_{G,m}^{[\lambda^-, \lambda^+]}   \rangle + \langle  \underline{{\Omega}}_{G,m} ,  \sum_e  \delta_{e} \underline{\tilde{Z}}_{G,m}^{[\lambda^-, \lambda^+]}   \rangle, $$
where  the last term comes from formula ( \ref{boundary-blow-up}) and property  (\ref{propagator-singularity}) of the propagator.

Making the sum over the graphs we obtain
$$\mathfrak{P}(Z^{\lambda^+})  - \mathfrak{P}(Z^{\lambda^-})=   \sum_{G,m}  \hbar^{- \chi(G)}  N^{d(G)} (  \langle  d \underline{{\Omega}}_{G,m} ,    \underline{\tilde{Z}}_{G,m}^{[\lambda^-, \lambda^+]}   \rangle -      \langle   \underline{{\Omega}}_{G,m} , \sum_i (-1)^i \underline{\tilde{Z}}_{G,\partial_i m}^{[\lambda^-, \lambda^+]} \rangle ).$$
Note that the terms with $\delta_e$ are cancelled. It is easy to see that each term in the sums goes to zero when $\lambda^-,\lambda^+ \rightarrow \lambda_0$.


\end{proof}

\begin{proposition} \label{QME-proposition}
$\mathfrak{P} (\tilde{Z})$ satisfies the $QME$:
\begin{equation} \label{master-equation}
    d_{\lambda} \mathfrak{P} (\tilde{Z}) + \hbar \Delta \mathfrak{P} (\tilde{Z}) =0
\end{equation}
\end{proposition}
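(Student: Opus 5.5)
The plan is to apply Stokes' theorem graph by graph to the pairing $\langle \tilde{\Omega}_{G,m}, \tilde{Z}_{G,m} \rangle$ on $Conf_{G,m}(M)\times \R$, working with the blown-up chains $\hat{\tilde{Z}}_{G,m}$. I will then sum over all marked graphs with weights $\hbar^{-\chi(G)}N^{d(G)}$ and show that the boundary contributions reorganize into $d_\lambda \mathfrak{P}(\tilde Z) + \hbar\Delta\mathfrak{P}(\tilde Z)$. Concretely, for each $(G,m)$ I write
\[
\langle d\tilde{\Omega}_{G,m}, \hat{\tilde{Z}}_{G,m}\rangle = \langle \tilde{\Omega}_{G,m}, \partial \hat{\tilde{Z}}_{G,m}\rangle .
\]
Here $d = d_M + d_\lambda$. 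The part of $\partial$ along $\lambda$ is handled by the convention that the chains are Borel--Moore and constant for $|\lambda|>\lambda^\dagger$, so that it contributes no boundary at infinity.

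The first step expands both sides. On the left, $d\tilde{\Omega}_{G,m}$ is computed with the Leibniz rule using (\ref{propagator-differential-iso}). The exterior derivative of a propagator factor $\tilde{\pi}_e^*\tilde P_s$ with $e\notin E_l$ produces $\tilde\pi_e^*(d\tilde P_s)$, and this is exactly the factor appearing in $\tilde{\Omega}_{G,m'}$ with $m'$ obtained by appending $E_l\sqcup\{e\}$. Each $d\tilde P_s$ equals $r_{\tilde{\mathcal A}}\otimes r_{\tilde{\mathcal A}}(PD(Diag))$, which is closed, so the factors $\tilde\pi_{e_i}^*(d\tilde P_s)$ are closed, and $\tilde{\mathfrak x}$ is closed as well.

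On the right I use (\ref{boundary-blow-up}), which gives
\[
\partial \hat{\tilde Z}_{G,m} = \widehat{\partial \tilde Z_{G,m}} - \sum_e S(\tilde Z_{G,m}\cap Diag_e).
\]
By (\ref{propagator-singularity-iso}), integrating $\tilde P_s$ over the sphere fibres of $S(\tilde Z\cap Diag_e)$ yields $\tilde{\Omega}_{\delta_e G,m}$ paired with $\delta_e\tilde Z_{G,m}$, up to a term from $\tilde{Vol}_{S^2}$ varying in $\lambda$. The trace identity turns $I_{\mathfrak g}$ contracted into two cyclic traces into the trace for the split or merged vertices of $\delta_e G$; this is the point where the definition of $\delta_e G$ via cyclic orders is designed to match. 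I then substitute the cycle condition (\ref{cycle-condition-loop-isotopy}) for $\widehat{\partial\tilde Z_{G,m}}$.

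The second step is the cancellation after summing over graphs. The $\delta_{e'}\tilde Z_{G',m'}$ terms from the cycle condition cancel against the sphere-bundle boundary terms of $(G',m')$, as in the continuity lemma above. For this, the weights must agree: $\chi(\delta_e G)=\chi(G)+1-1$ in the appropriate bookkeeping, so I will check $\hbar$-exponents case by case (merging components, adding genus, splitting a vertex). The $\delta_{v'}$ terms give a factor $N$ from the trace over an empty vertex, $\operatorname{Tr}(1)=N$, which matches $N^{d(\delta_v G)}=N^{d(G)+1}$. The terms $\sum_i(-1)^i\tilde Z_{G,\partial_i m}$ cancel against the Leibniz terms producing $d\tilde P_s$ factors, using the sign $(-1)^{|m|}$ in (\ref{graph-isotopy}) and restricting to markings of the form (\ref{condition-m}); markings not of this form integrate to zero for degree reasons. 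What survives is $d_\lambda\mathfrak P(\tilde Z)$ together with the contribution of $d\tilde P_s=r\otimes r(PD(Diag))$ on edges. The latter, written through $\tilde{\mathfrak x}$ and the odd symplectic pairing, is the BV Laplacian: it replaces an internal edge by two external legs contracted with the inverse pairing, with the $\hbar$ shift coming from $\chi$ increasing by one when an internal edge is removed. Condition (\ref{fixing-class-condition-iso}) kills the cross-terms between $\tilde P$ and $\tilde{\mathcal A}$.

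I expect the main obstacle to be the precise identification of the surviving terms with $\hbar\Delta\mathfrak P$, including all signs and automorphism factors. The two delicate points are the orbifold pushforward normalization in (\ref{maps-orbitfold}) and the orientation rule (\ref{comparing-orientation}), which gives $\delta_e\partial=-\partial\delta_e$. I would first verify the identity on the lowest-order graphs (one vertex and one edge, and the annulus graphs $G_0,G_1,G_2$) to fix the signs. For those graphs I would then check that the forgetful compatibility with $\underline{Z}_{Disk0}$ and the annulus data contributes exactly the $\Delta$-terms, coming from the external edge $\tilde{\mathfrak x}$ insertions, and no residual anomaly.
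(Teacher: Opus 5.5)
Your overall strategy is the paper's: graph-by-graph Stokes, substitute the cycle condition, cancel the $\delta_e$ and $\delta_v$ terms, and read off $\Delta$ from cut edges. Your treatment of the $\delta_e$ terms (using $\chi(\delta_e G)=\chi(G)$) and of the $\delta_v$ terms (the empty vertex gives $\mathrm{Tr}(1)=N$, matching $N^{d(G)+1}$) is right.

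The gap is in the marking faces $\sum_i(-1)^i\tilde{Z}_{G,\partial_i m}$, which is exactly where $\hbar\Delta$ comes from. As stated, your bookkeeping does not close: you assert both that these faces cancel against the Leibniz terms and that a ``$d\tilde{P}_s$ on edges'' contribution survives to give $\Delta$. In fact, for a marking $m'$ with $|m'|=l+1$ the faces of $\tilde{Z}_{G,m'}$ split three ways.
\begin{itemize}
\item The last face $i=l+1$ is matched exactly by the Leibniz expansion (\ref{differential-omega-relation}) of $d\tilde{\underline{\Omega}}_{G,\partial_{l+1}m'}$, so the Leibniz terms are used up there. They are paired with the uncut chain $\tilde{Z}_{G,\partial_{l+1}m'}$, so they could never be $\Delta\mathfrak{P}$, which involves chains of graphs with two extra external legs.
\item The interior faces $0<i<l+1$ carry markings not of the form (\ref{condition-m}). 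They are paired with $\tilde{\underline{\Omega}}_{G,m'}$, so ``degree reasons'' do not remove them. They cancel in pairs by (\ref{twist-cancellation}): $\partial_i m'=\partial_i sw_i(m')$, while $\tilde{\underline{\Omega}}_{G,sw_i(m')}=-\tilde{\underline{\Omega}}_{G,m'}$ because the factors $d\tilde{P}_s$ are odd.
\item What survives is the face $i=0$. By the equivalence $(G,m)\sim(cut_{E_0}G,cut_{E_0}m)$, the chain $\tilde{Z}_{G,\partial_0 m'}$ is the chain of the graph $G'$ in which $e_1$ is cut into two external edges. In $\tilde{\underline{\Omega}}_{G,m'}$ those two legs are joined by $d\tilde{P}_s=r_{\tilde{\mathcal{A}}}\otimes r_{\tilde{\mathcal{A}}}(PD(Diag))$. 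This is the identity (\ref{delta-omega-relation}) for $\Delta\tilde{\underline{\Omega}}_{G',m''}$ with $(G',m'')=(G,\partial_0 m')$, and $\chi(G')=\chi(G)+1$ supplies the $\hbar$.
\end{itemize}
Without this three-way split, the sum over graphs does not reorganize into $\hbar\Delta\mathfrak{P}(\tilde{Z})$.

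Separately, the $d_\lambda$ term cannot come out of the identity you write. If $\langle\cdot,\cdot\rangle$ is the push-forward to $\R$, then Stokes with no $\lambda$-boundary omits precisely $d_\lambda\langle\tilde{\underline{\Omega}}_{G,m},\tilde{\underline{Z}}_{G,m}\rangle$. If it is the total integral, you get a single number and lose the $\lambda$-dependence. What you need is the formula for the differential of the push-forward along $\tilde{Z}_{G,m}\to\R$, i.e.\ (\ref{differential-pairing-isotopy}).

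Finally, several things you plan to check are not involved:
\begin{itemize}
\item There is no extra term from $\tilde{Vol}_{S^2}$ varying in $\lambda$: being closed, it integrates to $1$ on every $S^2$ fibre.
\item Condition (\ref{fixing-class-condition-iso}) is not used.
\item Neither the forgetful compatibility nor the annulus data produce $\Delta$-terms.
\end{itemize}
The QME is a formal consequence of Stokes and the cycle condition alone, so low-order checks on $G_0,G_1,G_2$ would not locate the $\Delta$-term for you.
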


\begin{proof}


Applying the formula for the differential of the push-forward of a differential form we obtain the identity
\begin{equation} \label{differential-pairing-isotopy}
d \langle  \underline{\tilde{\Omega}}_{G,m} ,   \underline{\tilde{Z}}_{G,m}  \rangle =(-1)^{H(G)+ |m|}( \langle  d \underline{\tilde{\Omega}}_{G,m} ,   \underline{\tilde{Z}}_{G,m}  \rangle -
 \langle   \underline{\tilde{\Omega}}_{G,m} , \partial   \underline{\tilde{Z}}_{G,m}  \rangle +
\sum_{e \in E^{in}(G) \setminus E_l} \langle  \underline{\tilde{\Omega}}_{\delta_e G,m} ,  \delta_e \underline{\tilde{Z}}_{G,m}  \rangle ).
\end{equation}
In the last term we have used formula ( \ref{boundary-blow-up}) and property  (\ref{propagator-singularity}) of the propagator.

Let $|m|=l$. For $0< i < l$, define $sw_i(m)$ from $m$ switching  $e_i$ with $e_{i+1}$. Since $\partial_i m = \partial_{i} sw_i(m)$ and $\underline{\tilde{\Omega}}_{G,m}= - \underline{\tilde{\Omega}}_{G,sw_i(m)} $ we have  
\begin{equation} \label{twist-cancellation}
\langle  \underline{\tilde{\Omega}}_{G,m} ,   \underline{\tilde{Z}}_{G, \partial_i m}  \rangle  +  \langle  \underline{\tilde{\Omega}}_{G,sw_i(m)} ,    \underline{\tilde{Z}}_{G, \partial_i (sw_i(m))}  \rangle =0 .
\end{equation}

It is immediate to check
\begin{equation} \label{differential-omega-relation}
d   \underline{\tilde{\Omega}}_{G,m}  = (-1)^{|m|} \sum_{m' | \partial_{l+1} m' =m }  \underline{\tilde{\Omega}}_{G,m'}  
\end{equation}

Using   (\ref{differential-pairing-isotopy}) ,
 (\ref{twist-cancellation}),  (\ref{differential-omega-relation}) and the cycle condition for $\tilde{Z}$ we obtain 
\begin{equation} \label{quantum-master-preliminary}
d \mathfrak{P}(\tilde{Z}) = \sum_{G,m}   (-1)^{|H(G)| + |m|+1} \hbar^{-\chi(G)}   \langle   \underline{\tilde{\Omega}}_{G,m} ,    \underline{\tilde{Z}}_{G, \partial_0 m}  \rangle .
\end{equation}
Note that the terms in $\delta_e, \delta_v$ are cancelled. 

It is easy to see that the RHS of  (\ref{quantum-master-preliminary}) equal to $ - \hbar \Delta  \mathfrak{P}(\tilde{Z})$. This follows from the identity
\begin{equation} \label{delta-omega-relation}
 \Delta   \underline{\tilde{\Omega}}_{G',m'}  =(-1)^{E^{ex}(G')+|m'|} \sum_{ (G,m)| (G, \partial_0 m )=(G',m') }  \underline{\tilde{\Omega}}_{G,m} .
 \end{equation}
The factor $\hbar$ appears in (\ref{master-equation})  since $\chi(G') = \chi(G) +1$.
 


\end{proof}

\section{Coherent Point-Splitting-Cycles}

 Denote by $ \mathfrak{Fr}(M  )$ the set of  frames  of $M$.
Given  $\mathbf{fr}  = (\mathbf{fr}_1 , \mathbf{fr}_2  , \mathbf{fr}_3 ) \in \mathfrak{Fr}(M  )$ we say that un Euler Structure $U$ is compatible with $\mathbf{fr}   $ if there exists $\mathbf{a}=(a_1,a_2,a_3) \in \R^3 \setminus \{ 0,0,0  \}$    such that  
$U =  a_1 \mathbf{fr}_1 +  a_1 \mathbf{fr}_2  +  a_3 \mathbf{fr}_3 $.
Clearly the space of Euler Structures compatible with  $\mathbf{fr}$ is homeomorphic to $ \R^3 \setminus \{ 0,0,0  \}$. This tells us that each pair of Euler Structures compatible with  $\mathbf{fr}$ are isotopic with isotopy canonically  determined up to isotopy of isotopies, and so,
Lemma \ref{transfer-annulus} yields a canonical identification of
$PSH(  M|  U ) $ for different choices of $U$  compatible with  $\mathbf{fr}$. Let $PSH(M |   \mathbf{fr})$ denote this vector space.


The transfer map (\ref{transfer-map-annulus}) yields the transfer map 
$$ Transfer_{\tilde{\mathbf{fr}}}:  PSH(M |   \mathbf{fr}^0) \rightarrow PSH(M |   \mathbf{fr}^1)  $$
for any $\tilde{\mathbf{fr}}$  isotopy between  $\mathbf{fr}^0$ and $\mathbf{fr}^1 \in \mathfrak{Fr}(M  )$

\begin{theorem}
To each $\mathbf{fr} \in \mathfrak{Fr}(M  )$ it is associated canonically a $PS$-homology class 
$$ [Z^{\mathbf{fr} }   ] \in  PSH(M |   \mathbf{fr}).$$

For $\mathbf{fr}^0,\mathbf{fr}^1   \in \mathfrak{Fr}(M  )  $ and $\tilde{\mathbf{fr}}$ isotopy between   $\mathbf{fr}^0$ and $\mathbf{fr}^1$ we have
$$Transfer_{\tilde{\mathbf{fr}}}( [Z^{\mathbf{fr}^0 }   ]  )   =   [Z^{\mathbf{fr}^1 }   ]$$
\end{theorem}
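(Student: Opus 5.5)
The construction goes through flat-approaching $PS$-cycles, as announced in the introduction. Fix $\mathbf{fr}$ and an Euler Structure $U$ compatible with it, say $U=\mathbf{fr}_1$, and realize $U$ by a nice annulus data $\underline{Z}^{\circledcirc,\diamond}$. In the $n$-th term we rescale by $1/n$, using the exponential map together with the constant vector field $\mathbf{fr}_1$. The frame identifies each small ball in $M$ with a ball in $\R^3$. We then define a flat-approaching nice $PS$-cycle $Z^{\diamond}=(Z^{(n)})_n$ by requiring that, in the limit $n\to\infty$, each chain $\underline{Z}^{(n)}_{G,m}$, rescaled by $n$ around a base point of each component, converges in the frame trivialization to a chain that is invariant under simultaneous translation of the points of a component in $\R^3$. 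This is the translation invariance of the vertices mentioned in the introduction. The theorem then splits into two claims: existence and uniqueness up to isotopy of flat-approaching $PS$-cycles for fixed $\mathbf{fr}$, and compatibility with transfer.

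For existence we build $Z^{\diamond}$ by induction along the partial order $\prec$ on $\mathfrak{G}_*$, copying the proofs of Lemma \ref{nice-extension}, Lemma \ref{not-removible-lemma} and Lemma \ref{extension-nice}. The point is that each step there can be done translation-equivariantly in the flat model. Concretely, we first solve the extension problem on $(\R^3)^{H(G)}$ modulo translations, where the transversality arguments are equivariant, and then transplant the result to $M$ via the frame at scale $1/n$. The preliminary chains $\underline{Z}^{prel}_{G,m}$ are built from lower chains that are already translation invariant, so their perturbations can be chosen invariant as well. For graphs with $H(G)\neq\emptyset$ there is no obstruction. For $H(G)=\emptyset$ with $V(G)\neq\emptyset$ we shrink $w_v$ as in Subsection \ref{obstruction-section}. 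The $V(G)=H(G)=\emptyset$ case is simply a number, which we may fix to be zero.

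For uniqueness, take two flat-approaching cycles $Z^{0,\diamond},Z^{1,\diamond}$ and construct a flat-approaching isotopy inductively using Lemma \ref{nice-extension-isotopy} and Lemma \ref{extension-isotopies-inductive}, again equivariantly in the flat model. By Lemma \ref{obstruction-lemma}, the only obstructions are the rational numbers $\mathfrak{obs}_G$ for graphs with $H(G)=V(G)=\emptyset$. This is the main obstacle. Since $\mathfrak{obs}_G$ is independent of $n$, we compute it in the limit $n\to\infty$. There every term $pr_*(\delta_{v'}\tilde{\underline{Z}}_{G'})$ is a pushforward of a translation-invariant family of shrinking loops in flat $\R^3$. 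Such a family has a free $\R^3$-direction and is one dimension too small to contribute, so its degree-zero pushforward vanishes. Hence $\mathfrak{obs}_G=0$. The first thing I would check is whether the normalization of the $V=H=\emptyset$ components, namely $\underline{Z}^{1,(n)}-\underline{Z}^{0,(n)}$, is forced by the flat-approaching condition. If it is not, I would include that normalization in the definition. The same independence of $U$ among the choices $\sum a_i\mathbf{fr}_i$ follows because the flat-approaching condition only uses the trivialization, and the transfer map along the contractible space $\R^3\setminus\{0\}$ is canonical by Lemma \ref{transfer-annulus}.

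For the transfer statement, given $\tilde{\mathbf{fr}}$ we choose an isotopy of annulus data compatible with it and run Lemma \ref{transfer-annulus} with the flat-approaching condition imposed parametrically. This uses the one-parameter versions Lemma \ref{extension-isotopies-inductive} and Lemma \ref{extension-isotopies-isotopies} in the family trivialized by $\tilde{\mathbf{fr}}$. The result is a flat-approaching isotopy from $Z^{\mathbf{fr}^0}$ to some flat-approaching $Z^{1}$ for $\mathbf{fr}^1$. By uniqueness, $[Z^1]=[Z^{\mathbf{fr}^1}]$, which proves $Transfer_{\tilde{\mathbf{fr}}}([Z^{\mathbf{fr}^0}])=[Z^{\mathbf{fr}^1}]$.
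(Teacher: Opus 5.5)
Your overall route is essentially the paper's. You use a translation-invariant model on $\R^3$ and obtain flat-approaching cycles by transplanting it to $M$ at scale $1/n$ through $\exp$ and $\mathbf{fr}$. You prove existence and uniqueness by induction on $\prec$. You kill the vertex-less obstruction by translation invariance, dimension counting and the $n$-independence of Lemma~\ref{obstruction-lemma}, and you get transfer from the one-parameter version (Proposition~\ref{unicity-asymptotic-proposition}).

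The step that fails as written is the normalization. You fix the number $\underline{Z}_G$ to be zero for every graph with $V(G)=H(G)=\emptyset$, and this includes the empty graph $G^{\heartsuit}$.
\begin{itemize}
\item The zero collection is itself a flat-approaching nice $PS$-cycle. Every chain forced by forgetful compatibility is a product with lower chains, ultimately with $\underline{Z}_{G^{\heartsuit}}$.
\item Your own uniqueness argument shows that two flat-approaching cycles with the same vertex-less numbers are isotopic.
\item Hence your ``canonical'' class is $0$ for every frame, and the transfer identity holds only trivially.
\end{itemize}
The paper normalizes differently: $\underline{Z}^{\flat}_{G^{\heartsuit}}=1$, all other chains with $H(G)=\emptyset$ vanish, and $\underline{Z}^{\flat}_{G,m}=0$ when $d(G)\neq 0$ (Proposition~\ref{existence-flat-proposition}). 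This is why the paper requires convergence to a fixed flat cycle $Z^{\flat}$ rather than to some translation-invariant chain. For $V(G)=\emptyset$ translation invariance is vacuous, and the convergence condition reduces to $\underline{Z}^{(n)}_G\to\underline{Z}^{\flat}_G$, which pins these numbers. Uniqueness of $Z^{\flat}$ up to isotopy (Proposition~\ref{unicity-flat-proposition}) is what makes that choice canonical. So the normalization you say you might add by hand is genuinely needed, and it must take the value $1$ on $G^{\heartsuit}$.

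You also depart from the paper in how several components are treated, and this needs justification. You rescale around a base point of each component and demand invariance under independent translations of components. The paper imposes only global translation invariance on the chain actually used, indexed by $\mathcal{P}^{max}=\{V(G)\}$. Configurations where groups of components drift apart at scales between $1/n$ and $O(1)$ are not seen by any single $\rho/n$-ball. The paper controls them with the partition-indexed chains $Z_{G,m,\mathcal{P}}$, invariant under $(\R^3)^{\mathcal{P}}$, together with the cluster condition. The existence proof matches the local transplants to these chains on $U^{\wr,cluster,(n)}_{\mathcal{P}}$.

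Your stronger condition would remove this machinery if it can be met. You then have to check that every inductive step can be done $(\R^3)^{Comp}$-equivariantly. Two points need care:
\begin{itemize}
\item Transversality to intersections of several cross-component diagonals must come from the shape part alone.
\item The cycle condition must hold exactly after transplanting to curved $M$. There, $\delta_e$ of a cross-component edge compares exponential charts at two different base points. It agrees with the transplant of the flat $\delta_e$ only up to errors that are $o(1/n)$ in every $C^k$ norm, and these errors must be corrected. This correction is the real content of Proposition~\ref{existence-asymptotic-proposition}, and your word ``transplant'' hides it.
\end{itemize}

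Finally, $\R^3\setminus\{0\}$ is not contractible. It is simply connected, and that is all Lemma~\ref{transfer-annulus} needs to identify the spaces $PSH(M|U)$ canonically for the $U$ compatible with $\mathbf{fr}$.
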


To prove the theorem we introduce the notion of flat-approaching $PS$-cycle. We first define flat $PS$-cycles and show that there exists a special flat $PS$-cycle  canonically defined up to isotopy (see Proposition \ref{existence-flat-proposition}  and Proposition \ref{unicity-flat-proposition}) . 
Given a flat $PS$-cycle $Z^{\flat}$ and a frame $\mathbf{fr}$, we define the notion of flat-approaching $PS$-cycle $Z^{\wr}$ converging to $(Z^{\flat}, \mathbf{fr})$ . We show that for each pair $(Z^{\flat}, \mathbf{fr})$ there exists a  flat-approaching $PS$-cycle  $Z^{\wr}$ converging to $(Z^{\flat}, \mathbf{fr})$ (Proposition \ref{existence-asymptotic-proposition} ) and moreover it is unique  up to isotopies
(Proposition \ref{unicity-asymptotic-proposition}). 

These results yield  a  flat-approaching $PS$-cycle $Z^{\wr}$ canonically defined up to isotopy. The coherent $PS$-cycle $Z^{\mathbf{fr} } $ is defined  from $Z^{\wr}$ as
$$Z^{\mathbf{fr}}_{(G,m)} = Z^{\wr}_{G,m , \mathcal{P}^{max}}$$
where $ \mathcal{P}^{max}$ is the maximal partition of $G$, i.e., the trivial partition $ \mathcal{P}^{max}= \{ V(G) \}$.


\subsection{Flat $PS$-cycles}

We denote by $\mathfrak{G}^{part}$  the set of pairs $(G, \mathcal{P})$ where $G \in \mathfrak{G}$ and $ \mathcal{P}$ is a partition of $V(G)$.
We denote by $\mathfrak{G}^{part}_*$ the set of triples $(G, m,\mathcal{P})$ with $(G,m)  \in  \mathfrak{G}_*$ and $ (G, \mathcal{P}) \in \mathfrak{G}^{part}$.

On the set of partitions we consider the usual partial order, i.e., $\mathcal{P'} \preccurlyeq \mathcal{P}$  if  the following condition is satisfied: 
$$v_1 \sim_{\mathcal{P}'} v_2 \Rightarrow v_1 \sim_{\mathcal{P}} v_2 .$$


\begin{definition}
Denote by $G_0$ the decorated graph defined in subsection \ref{annulus-data-section}.

A \emph{flat annulus data} $Z^{\flat, \circledcirc}$   is defined by a Borel-Moore chain 
\begin{equation} \label{flat-annulus-chain}
\underline{Z}_{G_0}^{\flat , \circledcirc} \in  C_*(\mathfrak{L}_{\hat{V}(G_0)}(\R^3) , \mathfrak{o}(H(G_0)))^{\text{Aut}(G_0)}  .
\end{equation}
 close in the $C^0$ topology to $\underline{Z}_{Disk0}$ and invariants  (up to triangulations) under the  translations  of  $\R^3$ .
\end{definition}

Note that comparing with the definition of Subsection \ref{annulus-data-section} here we are setting $  \underline{Z}_{G_1}^{\flat, \circledcirc}  =   \underline{Z}_{G_2}^{\flat, \circledcirc} =0$.




Now fix a flat annulus data $Z^{\flat, \circledcirc}$.
A \emph{ flat $PS$-cycle} $Z^{\flat}$, with annulus data  $Z^{\flat, \circledcirc}$, consists in a collection of Borel-Moore chains 
\begin{equation} \label{flat-complex}
   Z^{\flat}= ( Z^{\flat}_{G,m, \mathcal{P}} )_{(G,m, \mathcal{P} ) \in \mathfrak{G}^{part}_*}
\end{equation} 
$$Z^{\flat}_{G,m, \mathcal{P}} \in C_{|H(G)| + |m|}((\R^3)^{H(G)} , \mathfrak{o}_{H(G)}  )^{\text{Aut}(G,m, \mathcal{P})}$$ 
such that :
\begin{itemize}
\item  transversality condition is satisfied;
\item   forgetful compatibility condition is satisfied;
\item cycle condition is satisfied;
\item  support condition is satisfied;
\item  cluster condition is satisfied;
\item  translation condition is satisfied.
\end{itemize}

We denote by $\mathcal{Z}^{\flat}(Z^{\flat, \circledcirc})$ the space of flat $PS$-cycles with annulus data $Z^{\flat, \circledcirc}$ .

We use the notation 
\begin{equation} \label{flat-G}
   Z^{\flat}_{G,m}= ( Z^{\flat}_{G,m, \mathcal{P}} )_{ \mathcal{P} }.
\end{equation} 

Below we explains the conditions appearing in the definition.

\subsubsection{Transversality Condition}

For each $h \in H(G)$ denote by $v_h$ the vertex to which $h$ is attached.
Set 
$$E^{in}(G, \mathcal{P})  = \sqcup_j \{ \{h,h' \}  \in E^{in}(G) | v_{h} \sim_{\mathcal{P}} v_{h'} \} ,   \quad E(G, \mathcal{P}) = E^{in}(G, \mathcal{P}) \sqcup E^{ex}(G).$$

We say that the transversality condition is satisfied for $(G,m, \mathcal{P})$ if
$Z^{\flat}_{G,m, \mathcal{P}}$ is transversal  to   $ \bigcap_{e \in E'} \pi_e^{-1}(Diag)$ for each subset $E' \subset E^{in }(G , \mathcal{P})  \setminus E^l$;

\subsubsection{Forgetful Compatibility Condition}

For $G \in \mathfrak{G}$, the multi-loop space  $\underline{\digamma}_{G}$  is defined as the set of arrays $({w}_v , t_v)_{v \in V(G)}  $  where   
$${w}_v : \mathbb{S}^1 \rightarrow \R^3, t_v : H_v  \hookrightarrow \mathbb{S}^1, \quad \forall v \in V(G).$$  
We require that $t_v$ is compatible with the cyclic ordering of $H_v$ .






We say that   (\ref{flat-complex}) satisfies the  forgetful compatibility condition if there exists a collections of Borel-Moore chains 
\begin{equation} \label{flat-complex-loop}
    (\underline{Z}^{\flat}_{G,m, \mathcal{P}} )_{(G,m, \mathcal{P}) \in \mathfrak{G}^{part}_*}
\end{equation}
$$\underline{Z}^{\flat}_{G,m, \mathcal{P}} \in C_{|H(G)| + |m|}(\underline{\digamma}_{G} , \mathfrak{o}_{H(G)}  )^{\text{Aut}(G,m, \mathcal{P})}$$
such that
\begin{equation} \label{chains-ev-flat}
ev_*( \underline{Z}^{\flat}_{G,m, \mathcal{P}}    )={Z}^{\flat}_{G,m, \mathcal{P}}
\end{equation}

For each external edge $e \in E^{ex}(G)$ we have:
\begin{itemize}
\item If $G$ is stable after removing $e $  
    \begin{equation} \label{flat-stable-forget-condition}    
     \underline{Z}^{\flat}_{G,m, \mathcal{P}} = \underline{Z}^{\flat}_{forget_e(G,m), \mathcal{P}'} \times_{\textbf{forget}_e} \underline{\digamma}_{G}   .
     \end{equation}
      \item in the case $G$ is unstable after removing $e $ and $e$ is removable  we require
 \begin{equation} \label{flat-removible-forget-condition}
 \underline{Z}^{\flat}_{G,m, \mathcal{P}} = \underline{Z}^{\flat}_{forget_e(G,m), \mathcal{P}'} \times \underline{Z}_{G_e}^{\flat, \circledcirc} .
 \end{equation}     
  when $G_e \in Ann0$, and
 \begin{equation} \label{flat-removible-forget-condition2}
 \underline{Z}^{\flat}_{G,m} = \underline{Z}^{\flat}_{forget_e(G,m)} \times \underline{Z}^{\flat}_{Disk0}  
 \end{equation}  
 when $G_e= Disk0$.
     \item In the case  $e$ is not removible we require that $ \underline{Z}^{\flat}_{G,m, \mathcal{P}}$ is close in the $C^1$-topology to $ \underline{Z}^{\flat}_{forget_e(G,m), \mathcal{P}'} \times \underline{Z}_{Disk0}^{\flat, \circledcirc}$:
 \begin{equation} \label{flat-unstable-forget-condition}
 \underline{Z}^{\flat}_{G,m, \mathcal{P}} \approx \underline{Z}^{\flat}_{forget_e(G,m), \mathcal{P}} \times \underline{Z}_{Disk0}^{\flat} .
 \end{equation}
  \end{itemize}


In the notation above we considered any partition $\mathcal{P}$ of a graph $G$ as a partition also of $forget_e(G)$ using the obvious correspondence.


\subsubsection{Cycle Condition} 

We say that the cycle condition is satisfied if 
\begin{equation} \label{cycle-condition-loop-flat}
 \partial  \underline{Z}_{G, \mathcal{P},m}^{\flat}  =   \sum_i (-1)^i \underline{Z}_{G,  \mathcal{P},\partial_i m}^{\flat} + \sum_{e'}  \delta_{e'} \underline{Z}_{G', \mathcal{P}',m'}^{\flat}.
 \end{equation}
for each $(G,m, \mathcal{P}) \in \mathfrak{G}_*^{part}$.

\subsubsection{Support Condition}


Let $G \in \mathfrak{G}$ and  $\rho >0$ be a positive real number. Define   $ \underline{\digamma}_{G }^{< \rho}$ as the set of 
 $( w_v , t_v)_v \in  \underline{\digamma}_{G}   $  such that
$$ Diameter ( \sqcup_{v \in V_c} w_v(\mathbb{S}^1) ) < \rho$$
for each $c \in  Comp_0$  .

We say that the support condition is satisfied for $(G,m, \mathcal{P})$ if there exists  a real positive number $\rho^{\dagger}$ such that 
$$ Supp(  \underline{Z}_{G,m, \mathcal{P}}^{\flat} )  \subset  \digamma_{G}^{<\rho^{\dagger}}.$$

\subsubsection{Cluster Condition}

To a partition $\mathcal{P}$ of $V(G)$ correspond a submanifold $Diag_{\mathcal{P}}^{\flat} \cong (\R^3)^{\mathcal{P}}$ of $(\R^3)^{V(G)}$, which is the diagonal associated to $\mathcal{P}$ .
We have
$$\mathcal{P}' \prec \mathcal{P} \Longleftrightarrow Diag_{  \mathcal{P}  }^{\flat} \subset Diag_{\mathcal{P}'}^{\flat}  .$$




\begin{definition}

Let $\mathcal{P} $ be a partition of $G \in \mathfrak{G}$. For  $  \mathcal{P}^{\triangleright} $ with  $\mathcal{P}  \prec  \mathcal{P}^{\triangleright} $ and $\eta>0$,  
$ Cut_{\mathcal{P}^{\triangleright} , \eta} ( Diag_{\mathcal{P} }^{\flat})$ is defined as the set of $\mathbf{z}  \in Diag_{\mathcal{P} }^{\flat}$ such that
 $d ( \mathbf{z}, Diag_{\mathcal{P}^{\triangleright  \prime}}^{\flat}  ) > \eta  $ for each $\mathcal{P}^{\triangleright \prime}$ with $\mathcal{P}  \prec \mathcal{P}^{\triangleright\prime} \preccurlyeq \mathcal{P}^{\triangleright} $.
\end{definition}

For $\mathbf{z} = (z_{v})_{v \in V(G) } \in (\R^3)^{V(G)}$, 
denote by $\underline{D}_{\rho, \mathbf{z}}^{\flat}$
 the set of elements $({w}_v,t_v)_v  \in  \underline{\digamma}_{G,\mathcal{P}}$
such that 
\begin{equation} \label{shring-condition-loop0-flat}
{w}_v(\mathbb{S}^1) \subset D_{\rho, z_{ v}} \quad \forall  v \in V(G).
\end{equation}

For $\mathcal{P} \prec \mathcal{P}^{\triangleright} $,
for each $\rho>0$ there exists  $\eta$ big enough such that
\begin{equation} \label{disjoint-disks-nuclear-0}
  \mathbf{z} \in  Cut_{\mathcal{P}^{\triangleright} , \eta} ( Diag_{\mathcal{P} }^{\flat}),  v_1 \nsim_{\mathcal{P}^{\triangleright}} v_2 \Longrightarrow  {D}_{\rho, z_{ v_1}} \cap {D}_{\rho, z_{ v_2}} = \emptyset.
 \end{equation}
 
Property (\ref{disjoint-disks-nuclear-0}) implies that we have an inclusion of sets
\begin{equation} \label{inclusion-disks-partion-loop-flat}
     \underline{D}_{\rho, \mathbf{z}}^{\flat} \hookrightarrow   \underline{\digamma}_{G, \mathcal{P}^{\triangleright} } 
\end{equation}
which is compatible with the cycle condition in the sense that $\delta_e \neq 0$ on   the chains of $ \underline{D}_{\rho, \mathbf{z}}^{\flat}$ when  $e \notin  E(G, \mathcal{P})$.

\begin{definition}

For $(G,m, \mathcal{P}) \in \mathfrak{G}_*^{part} $, $\rho, \eta> 0$, 
we say that (\ref{flat-complex}) satisfies the  cluster condition for $(G,m, \mathcal{P}, \rho, \eta)$ if 
\begin{equation} \label{matching-nuclear}
\underline{Z}^{\flat}_{G,m, \mathcal{P}} \cap \underline{D}_{\rho ,  \mathbf{z}}^{\flat} = \underline{Z}^{\flat}_{G,m, \mathcal{P}^{\triangleright}}  \cap \underline{D}_{\rho , \mathbf{z}}^{\flat} ,
  \end{equation}
for each $\mathcal{P}^{\triangleright} $ with $\mathcal{P} \prec \mathcal{P}^{\triangleright} $ and $ \mathbf{z} \in  Cut_{\mathcal{P}^{\triangleright} , \eta} ( Diag_{\mathcal{P} }^{\flat}) $.

We say that the cluster condition is satisfied for
$(G,m, \mathcal{P} , \rho)$ if  there exists $\eta$ big enough such that the cluster condition is satisfied for
$(G,m,  \mathcal{P}, \rho, \eta)$ .
\end{definition}

The cluster condition  defines $\underline{Z}^{\flat}_{G,m, \mathcal{P}}$ outside a compact set in terms of the partitions  $\mathcal{P}' \prec \mathcal{P}$. It can be considered as    boundary conditions at infinity for  $\underline{Z}^{\flat}_{G,m, \mathcal{P}}$.

\subsubsection{Translation Condition}

There exists an obvious action on $\underline{\digamma}_{G} $ of the group $Transl_{\mathcal{P}}:=(\R^3)^{\mathcal{P}} $ by translation .   
This action yields an action on $ C_{|H(G)| + |m|}(\underline{\digamma}_{G} , \mathfrak{o}_{H(G)}  )^{\text{Aut}(G,m, \mathcal{P})}$.

We say that the Translation Condition is satisfied for $(G,m, \mathcal{P})$ if $\underline{Z}^{\flat}_{G,m, \mathcal{P}}$ is invariant by $Transl_{\mathcal{P}}$ (up to triangulations and reparametrizations).


\subsubsection{Isotopies}

Let $\tilde{Z}^{\flat, \circledcirc}$ be an isotopy between flat annulus data ${Z}^{\flat, \circledcirc,0}$ and  ${Z}^{\flat, \circledcirc,1}$. Take $Z^{\flat,0} \in \mathcal{Z}(M|  {Z}^{\flat, \circledcirc,0})$ and $Z^{\flat,1} \in \mathcal{Z}(M|  {Z}^{\flat, \circledcirc,1})$. An isotopy  
of flat $PS$-cycles
  between $Z^{\flat,0}$ and $Z^{\flat,1}$  consists in a collection of chains
\begin{equation} \label{collection-chains-isotopy-flat}
\tilde{Z}^{\flat}=  ( \tilde{Z}_{G,m, \mathcal{P}}^{\flat} )_{G,m, \mathcal{P}} ,
\end{equation}
with
$$\tilde{Z}_{G,m, \mathcal{P}}^{\flat} \in C_{|H(G)| + |m|+1} ( M^{H(G)} \times \R  , \mathfrak{o}_{H(G)})^{\text{Aut}(G,m, \mathcal{P})}  .$$
We require that
\begin{itemize}
\item there exists  $\lambda^{\dagger} >0$  big enough such that
$$ \tilde{Z}^{\flat, <-\lambda^{\dagger}} =  Z^{\flat,0}  \times  \R_{<-\lambda^{\dagger}}  , \quad \tilde{Z}^{\flat, >\lambda^{\dagger}} =  Z^{\flat,1}  \times \R_{>\lambda^{\dagger}}   ;$$

 \item 
the transversality condition is satisfied;


\item the forgetful compatibility condition is satisfied.

\item the cycle condition is satisfied;

\item the translation conditino is satisfied;

\item the support condition is satisfied;

\item the cluster condition is satisfied.


\end{itemize}

We denote by $\tilde{\mathcal{Z}}^{\flat}(  \tilde{Z}^{\flat, \circledcirc})$ the space of isotopies of flat $PS$-cycles. 



The notions of transversality condition, forgetful compatibility condition and cycle condition are parallel to the corresponding notions in section  \ref{isotopies-section}.

The translation condition is defined extending 
the action of $Transl_{\mathcal{P}}$ to   $\underline{\digamma}_{G,\mathcal{P}} \times \R  $ , where it acts trivially on the $\R$ factor.

The cluster condition is defined considering the one parameter version of  condition  (\ref{matching-nuclear}):
\begin{equation} \label{matching-nuclear-iso}
\underline{\tilde{Z}}^{\flat}_{G,m, \mathcal{P}} \cap(  \underline{D}_{\rho ,  \mathbf{z}}^{\flat} \times \R ) = \underline{\tilde{Z}}^{\flat}_{G,m,\mathcal{P}^{\triangleright}}  \cap (  {\underline{D}}_{\rho , \mathbf{z}}^{\flat}    \times \R ),
  \end{equation}
for each  $ \mathbf{z} \in  Cut_{\mathcal{P}^{\triangleright} , \eta} ( Diag_{\mathcal{P} }^{\flat}) $.
.

\subsubsection{Existence}

Denote by $G^{\heartsuit}$ the empty decorated graph, i.e., $Comp_0(G^{\heartsuit})= \emptyset$.


\begin{proposition} \label{existence-flat-proposition}
There exists a flat $PS$-cycle $ {Z}^{\flat}$ such that 
    $$  {\underline{Z}}^{\flat}_{G^{\heartsuit}} = 1$$
    $$H(G)= \emptyset,  \underline{Z}_G^{\flat} \neq 0 \Rightarrow G = G^{\heartsuit}.$$
    $$d(G) \neq 0 \Rightarrow     \underline{Z}_{G,m}^{\flat}=0.$$    
    \end{proposition}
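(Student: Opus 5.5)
The flat $PS$-cycle will be built by induction over the partial order $\prec$ on $\mathfrak{G}^{part}_*$, refined by the partial order on partitions. At each step we extend a truncation $Z^{\flat}_{\prec (G,\mathcal{P})}$ to $Z^{\flat}_{\preccurlyeq (G,\mathcal{P})}$, imitating Lemma \ref{not-removible-lemma} and Lemma \ref{nice-extension}. The three normalizations in the statement are imposed as initial data:
\begin{itemize}
\item set $\underline{Z}^{\flat}_{G^{\heartsuit}}=1$;
\item set $\underline{Z}^{\flat}_{G}=0$ for every nonempty $G$ with $H(G)=\emptyset$;
\item set $\underline{Z}^{\flat}_{G,m,\mathcal{P}}=0$ whenever $d(G)\neq 0$.
\end{itemize}
We must check these are consistent with the cycle condition (\ref{cycle-condition-loop-flat}). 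In the flat setting there is no $\delta_v$ term and the flat annulus data has $\underline{Z}^{\flat,\circledcirc}_{G_1}=\underline{Z}^{\flat,\circledcirc}_{G_2}=0$. Since $\delta_e$ never increases $d$, every graph with $d(G)\neq0$ only receives contributions from graphs with $d\neq 0$, so the zero assignment closes up on that sector. For $H(G)=\emptyset$ and $G\neq G^{\heartsuit}$, the contributions come from $\delta_{e'}$ of graphs $G'$ with one internal edge. We will arrange these to vanish by a support and translation argument: such chains are translation invariant of top dimension, so after the intersection with $Diag_{e'}$ they are degenerate, and we take the zero representative.

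For a general triple $(G,m,\mathcal{P})$ with $H(G)\neq\emptyset$, we proceed in three steps.
\begin{enumerate}
\item The cluster condition (\ref{matching-nuclear}) prescribes $\underline{Z}^{\flat}_{G,m,\mathcal{P}}$ on $\underline{D}^{\flat}_{\rho,\mathbf{z}}$ for $\mathbf{z}$ in $Cut_{\mathcal{P}^{\triangleright},\eta}(Diag^{\flat}_{\mathcal{P}})$, using the already constructed $\underline{Z}^{\flat}_{G,m,\mathcal{P}^{\triangleright}}$ with $\mathcal{P}\prec\mathcal{P}^{\triangleright}$. Compatibility of these prescriptions for different $\mathcal{P}^{\triangleright}$ follows inductively from the cluster condition for the coarser partitions. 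By (\ref{disjoint-disks-nuclear-0}), on these regions only the edges in $E(G,\mathcal{P}^{\triangleright})$ contribute to $\delta_e$, so the prescribed piece already satisfies the cycle condition there.
\item We quotient by $Transl_{\mathcal{P}}=(\R^3)^{\mathcal{P}}$, working on the slice where the barycenter of each block is fixed. The remaining region, the complement of the cut neighbourhoods, is compact, and the support condition holds by induction.
\item If there is a removable external edge, we define the chain directly by (\ref{flat-stable-forget-condition}) or (\ref{flat-removible-forget-condition}). Otherwise we take the preliminary chain $\underline{Z}^{\flat}_{forget_e(G,m),\mathcal{P}}\times\underline{Z}^{\flat}_{Disk0}$ as in Lemma \ref{not-removible-lemma}. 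When $E^{ex}(G)=\emptyset$, we use the cone construction with the marking $m\sqcup\{E(G)\}$ from Lemma \ref{nice-extension}. In either case we perturb in the $C^0$ topology, relative to the region already fixed by the cluster condition, to achieve transversality while keeping $\partial$ unchanged. Finally we sweep the result by translations to restore translation invariance.
\end{enumerate}

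The main obstacle is the obstruction in the top-dimensional step of the flat-space version of the cone construction. The boundary data assembled from $\partial_i m$, the $\delta_{e'}$ terms and the cluster prescription at infinity must be a cycle that bounds in the translation-reduced configuration space. In the compact case this is automatic from Lemma \ref{nice-extension}, but here the "boundary at infinity" given by the cluster condition could carry homology. I would handle this by noting that, after reduction by $Transl_{\mathcal{P}}$, the relevant loop-chain spaces are contractible: loops in $\R^3$ can be shrunk radially toward their block barycenter. This gives an explicit nullhomotopy compatible with the cluster regions, because the shrinking preserves disjointness of the disks in (\ref{disjoint-disks-nuclear-0}). Hence the only possible obstruction is a scalar in degree zero, of the kind in Lemma \ref{obstruction-lemma}. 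It occurs only for $H(G)=\emptyset$, where the statement's normalization kills it.
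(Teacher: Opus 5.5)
\textbf{The order of the induction over partitions is reversed, and this breaks your treatment of the cluster condition.} Most of your architecture matches the paper: induction over graphs; forgetful compatibility or the $Disk0$-product when $E^{ex}(G)\neq\emptyset$; the cone over $m\sqcup\{E(G)\}$ when $E^{ex}(G)=\emptyset$; translation invariance plus dimension giving $\delta_{e'}\underline{Z}^{\flat}_{G'}=0$ when $H(G)=\emptyset$; and decoupling of the $d\neq 0$ sector because the flat cycle condition has no $\delta_v$ term.

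In the paper, for fixed $(G,m)$ one goes from finer to coarser partitions. Once $\underline{Z}^{\flat}_{G,m,\mathcal{P}'}$ exists for all $\mathcal{P}'\prec\mathcal{P}$, the cluster condition prescribes the \emph{coarser} chain $\underline{Z}^{\flat}_{G,m,\mathcal{P}}$ on $U^{\flat,cluster}_{\mathcal{P}}$ of (\ref{set-U-nuclear}), i.e.\ over points of $Diag^{\flat}_{\mathcal{P}'}$ far from $Diag^{\flat}_{\mathcal{P}}$. That region is $Transl_{\mathcal{P}}$-invariant and is the region at infinity of $\underline{Z}^{\flat}_{G,m,\mathcal{P}}$. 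Modulo $Transl_{\mathcal{P}}$, what remains of the support is bounded.

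You use (\ref{matching-nuclear}) the other way round: you prescribe the finer chain $\underline{Z}^{\flat}_{G,m,\mathcal{P}}$ from an already built coarser $\underline{Z}^{\flat}_{G,m,\mathcal{P}^{\triangleright}}$ over $Cut_{\mathcal{P}^{\triangleright},\eta}(Diag^{\flat}_{\mathcal{P}})$. That set is not $Transl_{\mathcal{P}}$-invariant, while $Transl_{\mathcal{P}}$ acts transitively on $Diag^{\flat}_{\mathcal{P}}$. So a translation-invariant $\underline{Z}^{\flat}_{G,m,\mathcal{P}}$ is already determined on all of $\underline{\digamma}_{G,\mathcal{P}}(\rho)$ by its restriction over a single $\mathbf{z}$, and this holds for every $\rho$. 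The finer chain is therefore overdetermined. Your prescriptions are consistent only if the coarse chain you built first is already asymptotically invariant under the larger group $Transl_{\mathcal{P}}$ where its sub-blocks separate. Imposing that on the coarse chain is exactly the job of the cluster condition, and your ordering never arranges it. In particular, your step 2 claim that the unprescribed region is compact fails for the chain you are constructing.

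\textbf{Once the order is corrected, the obstruction you worry about does not arise, but not for the reason you give.} In the paper, every input of $\underline{Z}^{\flat,prel}_{G,m,\mathcal{P}}$ was itself built satisfying the cluster condition (\ref{matching-preview-nuclear}). These inputs are the chains for $\partial_i m\sqcup\{E(G)\}$ and the $\delta_{e'}$ of the chains for $(G',m'\sqcup\{E(G')\})$. This is why $\eta_{\mathcal{P}}$ is chosen so large that all $(G',m')\prec(G,m)$ satisfy the cluster condition with $(\rho_{\mathcal{P}},\eta_{\mathcal{P}})$. Then (\ref{matching-flat-prelimary}) shows the preliminary chain is already close to $\underline{Z}^{\flat,cluster}_{G,m,\mathcal{P}}$ on $U^{\flat,cluster}_{\mathcal{P}}$. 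The cobordism defining $\underline{Z}^{\flat}_{G,m\sqcup\{E(G)\},\mathcal{P}}$ can be chosen compatibly, as in (\ref{matching-flat-prelimary-2}). The filling is supplied explicitly by the cone pieces, so there is nothing to kill.

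Your contractibility argument would not do this job anyway, for two reasons:
\begin{enumerate}
\item Contractibility of the translation-reduced loop space says nothing about chains relative to the region at infinity. That relative homology is the homology of a ball relative to its boundary sphere, which is nonzero.
\item Radial shrinking toward a block barycenter pulls far-separated sub-clusters together, out of the cut regions. So it is not a homotopy relative to the cluster data.
\end{enumerate}
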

The proposition follows by induction using the Lemmas below.




Set
$$ \underline{\digamma}_{G, \mathcal{P} }(\rho) = \bigsqcup_{ \mathbf{z} \in  Diag_{\mathcal{P} }^{\flat} }   \underline{D}_{ \rho, \mathbf{z}}^{\flat}  . $$
and
$$ \underline{\digamma}_{G, \mathcal{P} }(\mathcal{P} ', \rho', \eta') = \bigsqcup_{ \mathbf{z}'  \in Cut_{\mathcal{P} , \eta'} ( Diag_{\mathcal{P} '}^{\flat}) }   \underline{D}_{ \rho', \mathbf{z}'}^{\flat}  \subset  \underline{\digamma}_{G, {\mathcal{P}'} }  $$
for any $\mathcal{P}'$ with $\mathcal{P}' \prec \mathcal{P}$.




Relation (\ref{inclusion-disks-partion-loop-flat}) tells us that for $\eta' $ big enough there is a natural inclusion of sets
\begin{equation} \label{inclusion-partion-loop-flat}
     \underline{\digamma}_{G, \mathcal{P} }(\mathcal{P} ', \rho', \eta') \hookrightarrow   \underline{\digamma}_{G, \mathcal{P} } . 
\end{equation}
 compatible with the cycle condition.

\begin{lemma} \label{cluster-flat-lemma}

Fix $(G,m) \in \mathfrak{G}_*$. Let $\rho^{\dagger}$ be a real positive number. 
Assume that $\underline{Z}_{G,m, \mathcal{P}}^{\flat}$ satisfies the support condition  with radius $\rho^{\dagger}$ for each partition $\mathcal{P}$.


If $(\underline{Z}_{G,m, \mathcal{P}}^{\flat})_{\mathcal{P}}$ satisfies the cluster condition there exists a collection of real positive numbers  $(\rho_{\mathcal{P}}, \eta_{\mathcal{P}})_{\mathcal{P}}$ such that

\begin{itemize}

\item  the cluster condition is satisfied for
\begin{equation}  \label{matching-inductive-lemma-flat}
    (G,m, \mathcal{P} , \rho_{ \mathcal{P}}, \eta_{ \mathcal{P}});
\end{equation}

Set
\begin{equation} \label{set-U-nuclear}
    U^{\flat , cluster}_{\mathcal{P}}=  \bigsqcup_{\mathcal{P}' \prec \mathcal{P}}   \underline{\digamma}_{G, {\mathcal{P}} }(\mathcal{P}', \rho_{ \mathcal{P}'},   \eta_{\mathcal{P}'} )     .
\end{equation}

\item
\begin{equation} \label{covering-boundary-condition-nuclear}
  \underline{ \digamma}_{G}^{< \rho_{G, m}^{\dagger}} \subset  \underline{\digamma}_{G, \mathcal{P} }(\rho_{\mathcal{P}})  \sqcup  U^{\flat, cluster} ;
\end{equation}


\item 

   \begin{equation} \label{not-overlapping-condition-nuclear}
     \underline{\digamma}_{G, {\mathcal{P}} }(\mathcal{P}_1, \rho_{\mathcal{P}_1},   \eta_{\mathcal{P}_1} ) \cap \underline{\digamma}_{G, {\mathcal{P}} }(\mathcal{P}_2, \rho_{ \mathcal{P}_2},   \eta_{\mathcal{P}_2} ) 
   = \emptyset
    \end{equation}   
when $\mathcal{P}_1 \prec \mathcal{P}, \mathcal{P}_2 \prec \mathcal{P}, \mathcal{P}_1 \npreceq \mathcal{P}_2 , \mathcal{P}_2 \npreceq \mathcal{P}_1$.

 

\item \begin{equation} \label{disjoint-disks-nuclear2}
  \mathbf{z} \in Cut_{\mathcal{P}^{\triangleright}, \eta_{\mathcal{P}}} ( Diag_{\mathcal{P}}^{\flat}), v_1 \sim_{\mathcal{P}^{\triangleright}} v_2 ,   v_1 \nsim_{\mathcal{P}} v_2 \Longrightarrow  {D}_{\rho_{ \mathcal{P}}, z_{ v_1}} \cap {D}_{\rho_{\mathcal{P}}, z_{ v_2}} = \emptyset .
\end{equation}
for each $\mathcal{P}^{\triangleright}$ with $\mathcal{P} \prec \mathcal{P}^{\triangleright}$.

\end{itemize}

Vice versa ,  the cluster condition is satisfied if  there exists  a collection of positive real numbers $(\rho_{\mathcal{P}}, \eta_{\mathcal{P}})_{\mathcal{P}}$ such that above conditions are satisfied. 


\end{lemma}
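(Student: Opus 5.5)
The plan is to choose the constants $(\rho_{\mathcal{P}},\eta_{\mathcal{P}})$ by induction over the finite poset of partitions of $V(G)$, starting from the finest partitions and moving upward. The converse direction should then be essentially formal.

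\emph{Forward direction.} Let $\mathcal{P}^{min}$ be the discrete partition, and assume the constants have been fixed for every $\mathcal{P}'\prec\mathcal{P}$. For $\mathcal{P}$ itself I first choose $\rho_{\mathcal{P}}$. The cluster condition for $(G,m,\mathcal{P},\rho)$ holds for every $\rho$ with some $\eta(\rho)$. So I take $\rho_{\mathcal{P}}$ much larger than $\rho^{\dagger}$ and than all $\rho_{\mathcal{P}'}$, $\eta_{\mathcal{P}'}$ with $\mathcal{P}'\prec\mathcal{P}$, and then take $\eta_{\mathcal{P}}\ge\eta(\rho_{\mathcal{P}})$. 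I also enlarge $\eta_{\mathcal{P}}$ so that (\ref{disjoint-disks-nuclear-0}) holds with radius $\rho_{\mathcal{P}}$; this gives (\ref{disjoint-disks-nuclear2}), because points at distance $>\eta_{\mathcal{P}}$ from the coarser diagonals have disjoint $\rho_{\mathcal{P}}$-disks.

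The real content is the ordering of the scales. Heuristically one needs
\[
\rho^{\dagger}\ll\rho_{\mathcal{P}^{min}}\ll\eta_{\mathcal{P}^{min}}\ll\rho_{\mathcal{P}_2}\ll\eta_{\mathcal{P}_2}\ll\cdots,
\]
increasing along chains of the poset. This ordering is available because each $\eta$ is only required to be large. With it, the covering (\ref{covering-boundary-condition-nuclear}) is argued geometrically. Take a multiloop in $\underline{\digamma}_G^{<\rho^{\dagger}}$ with centers $\mathbf{z}$, and project $\mathbf{z}$ to $Diag^{\flat}_{\mathcal{P}}$. Either the projection is within $\rho_{\mathcal{P}}$ of the multiloop, or some cluster of vertices has split. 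In the second case, the largest-scale splitting determines a partition $\mathcal{P}'\prec\mathcal{P}$ such that $\mathbf{z}$ lies in $Cut_{\mathcal{P},\eta_{\mathcal{P}'}}(Diag^{\flat}_{\mathcal{P}'})$ up to $\rho_{\mathcal{P}'}$. This is the usual ``multiscale clustering'' argument: sort the pairwise distances and locate a gap between consecutive scales.

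For (\ref{not-overlapping-condition-nuclear}), suppose $\mathcal{P}_1$ and $\mathcal{P}_2$ are incomparable. Then some pair of vertices is identified by one partition and separated by the other. On the first region those two vertices are within $\rho_{\mathcal{P}_1}$; on the second they are at distance $\gtrsim\eta_{\mathcal{P}_2}$. By the scale ordering (or its symmetric version, using the join $\mathcal{P}_1\vee\mathcal{P}_2$), these cannot both hold, so the regions are disjoint. The cluster condition for each $(G,m,\mathcal{P},\rho_{\mathcal{P}},\eta_{\mathcal{P}})$ then holds by construction.

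\emph{Converse direction.} I will show that cluster conditions with the fixed constants imply the condition for arbitrary $\rho$. Given $\rho$ and $\mathcal{P}^{\triangleright}$, choose $\eta$ so large that for $\mathbf{z}\in Cut_{\mathcal{P}^{\triangleright},\eta}(Diag^{\flat}_{\mathcal{P}})$, the set $\underline{D}^{\flat}_{\rho,\mathbf{z}}$ lies inside a region $\underline{\digamma}_{G,\mathcal{P}^{\triangleright}}(\mathcal{P},\rho_{\mathcal{P}},\eta_{\mathcal{P}})$. Where needed, I also use the support condition: the chain vanishes outside $\underline{\digamma}_G^{<\rho^{\dagger}}$, so outside that region both sides of (\ref{matching-nuclear}) are zero. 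Chaining the identities along $\mathcal{P}\prec\cdots\prec\mathcal{P}^{\triangleright}$ through the covering then gives (\ref{matching-nuclear}).

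\emph{Main obstacle.} I expect the covering step (\ref{covering-boundary-condition-nuclear}) to be the hardest. Its combinatorics need to be set up so that every configuration falls into exactly one clustering regime. I would handle this by fixing a large ratio $K$ and requiring $\eta_{\mathcal{P}}>K\rho_{\mathcal{P}}$ and $\rho_{\mathcal{P}}>K\eta_{\mathcal{P}'}$ for $\mathcal{P}'\prec\mathcal{P}$. One then runs the pigeonhole argument on the at most $|V(G)|^2$ pairwise distances to find a scale gap.
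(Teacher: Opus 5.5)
Your forward direction follows the paper's proof: induct over partitions, take $\rho_{\mathcal P}$ large for the covering, then $\eta_{\mathcal P}$ large for the remaining conditions. Your scale-gap argument fills in what the paper only calls ``big enough''. One small correction concerns incomparable $\mathcal P_1,\mathcal P_2$. Ordering along chains, or via the join, gives no comparison between $\eta_{\mathcal P_2}$ and $\rho_{\mathcal P_1}$. Instead, choose all constants along a linear extension of $\prec$, each much larger than everything chosen before. Disjointness then follows, since it only needs $\eta_{\mathcal P_i}\gg\rho_{\mathcal P_1}+\rho_{\mathcal P_2}$ for one $i$.

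The genuine gap is in the converse. You claim that for $\eta$ large, $\underline{D}^{\flat}_{\rho,\mathbf z}$ lies inside $\underline{\digamma}_{G,\mathcal P^{\triangleright}}(\mathcal P,\rho_{\mathcal P},\eta_{\mathcal P})$. For $\rho>\rho_{\mathcal P}$ this fails for every $\eta$. That region forces all loops of a $\mathcal P$-block into one disk of radius $\rho_{\mathcal P}$. By contrast, $\underline{D}^{\flat}_{\rho,\mathbf z}$ allows a $\mathcal P$-block to split into sub-clusters up to $2\rho$ apart. The support condition does not rescue you: it bounds only the diameter of each component $c\in Comp_0$, and a $\mathcal P$-block may contain vertices from different components. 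On such split configurations $\underline Z^{\flat}_{G,m,\mathcal P}$ is genuinely nonzero; it agrees there with $\underline Z^{\flat}_{G,m,\mathcal P'}$ for finer $\mathcal P'$. So as written you only recover the trivial case $\rho\le\rho_{\mathcal P}$. Chaining along $\mathcal P\prec\cdots\prec\mathcal P^{\triangleright}$ cannot reach the rest, because the missing configurations are controlled by \emph{finer} partitions.

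What is needed, and what the paper does, is to split $\underline D_{\rho,\mathbf z}\cap\underline\digamma_G^{<\rho^\dagger}$ using \eqref{covering-boundary-condition-nuclear} into two kinds of pieces.
\begin{enumerate}
\item Disks $\underline D_{\rho_{\mathcal P},\mathbf z'}$ with $\mathbf z'\in Diag^{\flat}_{\mathcal P}$ and each coordinate within $\rho+\rho_{\mathcal P}$ of $\mathbf z$. Once $\eta$ is large compared with $\rho,\rho_{\mathcal P},\eta_{\mathcal P}$, these $\mathbf z'$ still lie in $Cut_{\mathcal P^{\triangleright},\eta_{\mathcal P}}(Diag^{\flat}_{\mathcal P})$. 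So the fixed-constant condition at $\mathcal P$ applies there.
\item Regions $\underline\digamma_{G,\mathcal P}(\mathcal P',\rho_{\mathcal P'},\eta_{\mathcal P'})$ with $\mathcal P'\prec\mathcal P$. Here you need the compatibility \eqref{cluster-sets-compatibility}: for $\eta$ large, inside $\underline D_{\rho,\mathbf z}$ this region coincides with $\underline\digamma_{G,\mathcal P^{\triangleright}}(\mathcal P',\rho_{\mathcal P'},\eta_{\mathcal P'})$. The reason is that the extra cut conditions come from $\mathcal P''\preccurlyeq\mathcal P^{\triangleright}$ with $\mathcal P''\not\preccurlyeq\mathcal P$. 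They therefore involve vertices in distinct $\mathcal P$-blocks of a single $\mathcal P^{\triangleright}$-block, and these are at mutual distance of order $\eta$ near $\mathbf z$.
\end{enumerate}
On the second kind of piece, apply the fixed-constant cluster condition at $\mathcal P'$ twice, once with target $\mathcal P$ and once with target $\mathcal P^{\triangleright}$. This gives $\underline Z^{\flat}_{G,m,\mathcal P}=\underline Z^{\flat}_{G,m,\mathcal P'}=\underline Z^{\flat}_{G,m,\mathcal P^{\triangleright}}$ there. Combined with the first kind of piece and the support condition, this yields \eqref{matching-nuclear}.
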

\begin{proof}
Let $\mathcal{P}$ be a partition of $G$ and
assume we have defined the real numbers $(\rho_{\mathcal{P}'}, \eta_{\mathcal{P}'})_{\mathcal{P}' \prec \mathcal{P}}$.
Take $\rho_{\mathcal{P}}$ big enough such that condition (\ref{covering-boundary-condition-nuclear}) is satisfied,
then take $\eta_{\mathcal{P}}$ big enough such that conditions  (\ref{matching-inductive-lemma-flat}) , (\ref{not-overlapping-condition-nuclear}) ,  (\ref{disjoint-disks-nuclear2}) are satisfied. 

Now assume we have a collection of real positive numbers $(\rho_{\mathcal{P}}, \eta_{\mathcal{P}})_{\mathcal{P}}$ such that the properties above hold.

Let $ \mathcal{P}^{\triangleright}$ with $\mathcal{P} \prec \mathcal{P}^{\triangleright}$, and let $\rho >0$. Take $\eta$ big enough such that  
\begin{equation} \label{cluster-sets-compatibility}
  \underline{D}_{\rho,\mathbf{z}} \cap   \underline{\digamma}_{G, {\mathcal{P}} }({\mathcal{P}'}, \rho_{{\mathcal{P}'}},   \eta_{{\mathcal{P}'}} ) =    \underline{D}_{\rho,\mathbf{z}} \cap   \underline{\digamma}_{G, {{\mathcal{P}^{\triangleright}}} }({\mathcal{P}'}, \rho_{ {\mathcal{P}'}},   \eta_{{\mathcal{P}'}} ) .
  \end{equation}
 for each $\mathcal{P}' \prec \mathcal{P}$ and 
$ \mathbf{z} \in Cut_{\mathcal{P}^{\triangleright}, \eta } Diag_{\mathcal{P}} $ .

The cluster condition for $\mathcal{P}'$ tells us
$$ \underline{Z}_{G,m, \mathcal{P}}^{\flat} \cap   \underline{\digamma}_{G, {\mathcal{P}} }({\mathcal{P}'}, \rho_{ {\mathcal{P}'}},   \eta_{{\mathcal{P}'}} ) =    \underline{Z}_{G,m, \mathcal{P}'}^{\flat} \cap   \underline{\digamma}_{G, {\mathcal{P}} }({\mathcal{P}'}, \rho_{ {\mathcal{P}'}},   \eta_{{\mathcal{P}'}} ) $$
and
$$ \underline{Z}_{G,m, \mathcal{P}^{\triangleright}}^{\flat} \cap    \underline{\digamma}_{G, {{\mathcal{P}^{\triangleright}}} }({\mathcal{P}'}, \rho_{{\mathcal{P}'}},   \eta_{{\mathcal{P}'}} ) =    \underline{Z}_{G,m, \mathcal{P}'}^{\flat} \cap    \underline{\digamma}_{G, {{\mathcal{P}^{\triangleright}}} }({\mathcal{P}'}, \rho_{\mathcal{P}'},   \eta_{{\mathcal{P}'}} ), $$
and so from ( \ref{cluster-sets-compatibility}) we obtain
\begin{equation} \label{cluster-lemma-basic1}
\underline{Z}_{G,m, \mathcal{P}}^{\flat} \cap  \underline{D}_{\rho,\mathbf{z}} \cap   \underline{\digamma}_{G, {\mathcal{P}} }({\mathcal{P}'}, \rho_{ {\mathcal{P}'}},   \eta_{{\mathcal{P}'}} ) =  \underline{Z}_{G,m, \mathcal{P}^{\triangleright}}^{\flat} \cap  \underline{D}_{\rho,\mathbf{z}} \cap   \underline{\digamma}_{G, {{\mathcal{P}^{\triangleright}}} }({\mathcal{P}'}, \rho_{ {\mathcal{P}'}},   \eta_{{\mathcal{P}'}} ) .
\end{equation}

Taking $\eta> \eta_{\mathcal{P}}$ condition  (\ref{matching-inductive-lemma-flat}) tells us 
\begin{equation} \label{cluster-lemma-basic2}
\underline{Z}_{G,m, \mathcal{P}}^{\flat} \cap  \underline{D}_{\rho_{\mathcal{P}},\mathbf{z}}=  \underline{Z}_{G,m, \mathcal{P}^{\triangleright}}^{\flat} \cap  \underline{D}_{\rho_{\mathcal{P}},\mathbf{z}}  .
\end{equation}

The relation defining the cluster condition 
$$\underline{Z}_{G,m, \mathcal{P}}^{\flat} \cap  \underline{D}_{\rho ,\mathbf{z}}=  \underline{Z}_{G,m, \mathcal{P}^{\triangleright}}^{\flat} \cap  \underline{D}_{\rho ,\mathbf{z}}  $$
follows from (\ref{cluster-lemma-basic1}) and (\ref{cluster-lemma-basic2}) since condition (\ref{covering-boundary-condition-nuclear}) implies
$$ \underline{D}_{\rho,\mathbf{z}}  \cap   \underline{ \digamma}_{G}^{< \rho^{\dagger}}  \subset  \underline{D}_{\rho_{\mathcal{P}},\mathbf{z}} \cup ( \underline{D}_{\rho,\mathbf{z}} \cap U_{\mathcal{P}}^{\flat, cluster}) .$$


\end{proof}


Below we construct flat $PS$-cycles using an inductive argument on the graphs.  It is useful to introduce the following notation.

Fix $\mathcal{P}$ and assume we have defined $(\underline{Z}^{\flat}_{G,m, \mathcal{P}'}, \rho_{\mathcal{P}'},   \eta_{\mathcal{P}'})_{\mathcal{P}' \prec \mathcal{P}}$.
 For the condition  (\ref{matching-inductive-lemma-flat}), for $(G,m, \mathcal{P}' , \rho_{ \mathcal{P}'}, \eta_{ \mathcal{P}'})$  we assume relation $(\ref{matching-nuclear})$ only for   $\mathcal{P}^{\triangleright}$ with $\mathcal{P}^{\triangleright} \prec \mathcal{P}$.

Define $ \rho_{\mathcal{P}},   \eta_{\mathcal{P}}$ as in the proof Lemma  \ref{cluster-flat-lemma}. 

Let $\underline{Z}^{\flat,  cluster}_{G,m ,\mathcal{P}} $ be the chain on $U_{\mathcal{P}}^{\flat, cluster}$ such that   for any $\mathcal{P}' \prec \mathcal{P}$ we have
\begin{equation}  \label{cluster-flat-definition-relation}
 \underline{Z}_{G,  m  , \mathcal{P}}^{\flat, cluster}  =  \underline{Z}_{G,  m  , \mathcal{P}'}^{\flat}  
 \end{equation}
 on $\underline{\digamma}_{G, {\mathcal{P}} }(\mathcal{P}', \rho_{\mathcal{P}'},   \eta_{\mathcal{P}'} )     $ .
Property  (\ref{matching-inductive-lemma-flat}) and  (\ref{not-overlapping-condition-nuclear}) tell us that these conditions are compatible for different $\mathcal{P}'$.
By induction $ \underline{Z}_{G,  m  , \mathcal{P}}^{\flat, cluster}  $ satisfies the translation invariance condition.

If we take  $( \rho_{\mathcal{P}'},   \eta_{\mathcal{P}'})_{\mathcal{P}' \prec \mathcal{P}}$ such that in addition the cluster condition is satisfied for
\begin{equation}  \label{matching-preview-lemma-flat}
    (G',m', \mathcal{P}',  \rho_{\mathcal{P}'}, \eta_{\mathcal{P}'})
\end{equation}
when $(G',m') \prec (G,m)$ and $\mathcal{P}' \prec \mathcal{P}$,
 condition (\ref{disjoint-disks-nuclear2}) the cycle condition lead to
$$\partial \underline{Z}_{G,  m  , \mathcal{P}}^{\flat, cluster} = \sum_i (-1)^i \underline{Z}_{G, \partial_i m , \mathcal{P}}^{\flat} + \sum_{e'}  \delta_{e'} \underline{Z}_{G',m', \mathcal{P} }^{\flat} .$$

\begin{lemma}
Assume $E^{ex}(G) \neq \emptyset$.  For each $Z_{\prec G}^{\flat} \in  \tilde{Z}_{\prec G}^{\flat} $ there exists $Z_{\preccurlyeq G}^{\flat}  \in  \tilde{Z}_{\preccurlyeq G}^{\flat}  $ extending $Z_{\prec G}^{\flat}$ .
\end{lemma}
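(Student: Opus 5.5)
We mimic the proof of Lemma \ref{not-removible-lemma}, adding the cluster, support and translation conditions. The induction runs over the partitions $\mathcal{P}$ of $V(G)$, ordered by $\prec$, and for fixed $\mathcal{P}$ over $l=|m|$. So we assume that $\underline{Z}^{\flat}_{G,m',\mathcal{P}'}$, together with radii $(\rho_{\mathcal{P}'},\eta_{\mathcal{P}'})$ as in Lemma \ref{cluster-flat-lemma}, has been constructed for all $\mathcal{P}'\prec\mathcal{P}$, and also for $\mathcal{P}$ itself when $|m'|<l$.

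\emph{Removable edges.} If some $e\in E^{ex}(G)$ is removable, we define $\underline{Z}^{\flat}_{G,m,\mathcal{P}}$ directly by (\ref{flat-stable-forget-condition}), (\ref{flat-removible-forget-condition}) or (\ref{flat-removible-forget-condition2}). All the conditions are inherited from $forget_e(G,m)$ and the (translation invariant) flat annulus data:
\begin{itemize}
\item the support condition, since forgetting a marked point does not change the loops;
\item the translation condition, since both factors are invariant;
\item the cluster condition, since $\mathcal{P}$ is transported to $forget_e G$;
\item the cycle condition, as in Lemma \ref{not-removible-lemma}.
\end{itemize}
Compatibility between different removable edges follows by induction, since both prescriptions factor through $forget_{e}forget_{e'}(G,m)$.

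\emph{All external edges non-removable.} Pick $e\in E^{ex}(G)$ and set
$$\underline{Z}^{\flat,prel}_{G,m,\mathcal{P}}=\underline{Z}^{\flat}_{forget_e(G,m),\mathcal{P}}\times\underline{Z}^{\flat}_{Disk0}.$$
This chain is translation invariant under $Transl_{\mathcal{P}}$ and has bounded support. By the inductive cluster condition for $forget_e(G,m)$, it agrees with the cluster chain $\underline{Z}^{\flat,cluster}_{G,m,\mathcal{P}}$ on $U^{\flat,cluster}_{\mathcal{P}}$ up to a $C^1$-small error. We then glue: take $\underline{Z}^{\flat,cluster}_{G,m,\mathcal{P}}$ on $U^{\flat,cluster}_{\mathcal{P}}$ and $\underline{Z}^{\flat,prel}_{G,m,\mathcal{P}}$ on $\underline{\digamma}_{G,\mathcal{P}}(\rho_{\mathcal{P}})$, interpolating in the collar. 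By (\ref{covering-boundary-condition-nuclear}) these two regions cover the support region. The boundary of the glued chain is $C^0$-close to
$$\sum_i(-1)^i\underline{Z}^{\flat}_{G,\partial_i m,\mathcal{P}}+\sum_{e'}\delta_{e'}\underline{Z}^{\flat}_{G',m',\mathcal{P}}.$$
This uses the cycle condition for $forget_e(G,m)$, the approximation (\ref{flat-unstable-forget-condition}) for smaller graphs, and, on the cluster region, the identity for $\partial\underline{Z}^{\flat,cluster}$ stated just before the lemma. Finally we perturb relative to $U^{\flat,cluster}_{\mathcal{P}}$, where the chain is already exact and transverse, so that transversality and the cycle condition hold exactly.

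\emph{Main obstacle.} The delicate step is this last perturbation, which must preserve translation invariance. To handle it, work on the quotient $\underline{\digamma}_{G}/Transl_{\mathcal{P}}$. By the support condition the relevant part of the quotient is compact, namely loops of bounded diameter around centers in $Diag^{\flat}_{\mathcal{P}}$ modulo translation. Moreover, transversality to $\pi_e^{-1}(Diag)$ for $e\in E^{in}(G,\mathcal{P})$ is a translation invariant condition, so it descends to the quotient. One then applies the standard finite-dimensional transversality argument there, with perturbations supported away from the cluster region, and pulls the result back. This gives a chain satisfying all six conditions, with the radii $\rho_{\mathcal{P}},\eta_{\mathcal{P}}$ enlarged if necessary.
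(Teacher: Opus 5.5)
Your proposal is correct and follows the paper's proof. Removable edges are handled directly by forgetful compatibility. Otherwise one starts from $\underline{Z}^{\flat}_{forget_e(G,m),\mathcal{P}}\times\underline{Z}^{\flat}_{Disk0}$ and chooses $\rho_{\mathcal{P}},\eta_{\mathcal{P}}$ as in Lemma \ref{cluster-flat-lemma}, with the radii also valid for the smaller graphs. This chain is translation invariant, has approximately the right boundary, and is close to $\underline{Z}^{\flat,cluster}_{G,m,\mathcal{P}}$ on $U^{\flat,cluster}_{\mathcal{P}}$, so one perturbs it until it agrees exactly with the cluster chain there.

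The differences are only in presentation. You nest the induction the other way round (partitions outside, $|m|$ inside); this is harmless, since each step only needs $(G,\partial_i m,\mathcal{P})$ and $(G,m,\mathcal{P}')$ for $\mathcal{P}'\prec\mathcal{P}$. You also make explicit the gluing and the passage to $\underline{\digamma}_G/Transl_{\mathcal{P}}$, which the paper compresses into ``the standard transversality argument''.
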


\begin{proof}

Let $m$ with $|m|=l$ and assume  we have defined  $ \tilde{Z}^{\flat}_{G,  m' }  $ for $|m'| <l$.

If there is  $e \in  E^{ex}(G)$ removable define $\underline{Z}_{G, m}$ using   (\ref{stable-forget-condition})  and (\ref{removible-forget-condition})   . 

Assume that there are no removable external edges.
Take  $e \in  E^{ex}(G)$.


Take $\rho^{\dagger}$ big enough such that the support condition of radius $\rho^{\dagger}$ is satisfied    for each $G'$ with $G' \prec G$.

We want to define $(\underline{Z}^{\flat}_{G,m, \mathcal{P}}, \rho_{\mathcal{P}},   \eta_{\mathcal{P}})_{\mathcal{P}}$ such that  the following conditions are satisfied. 


Set 
$$  \underline{Z}_{G,  m , \mathcal{P} }^{\flat, prel} = \underline{Z}_{forget_e(G,m), \mathcal{P} }^{\flat} \times \underline{Z}_{Disk0}^{\flat} .$$


\begin{itemize} 
\item  $\underline{Z}^{\flat}_{G,m, \mathcal{P}}$ is close in the $C^{1}$-topology to $ \underline{Z}_{G,  m , \mathcal{P} }^{\flat, prel}$;

\item the transversality condition, the translation condition and the cycle condition are satisfied for   $\underline{Z}^{\flat}_{G,m, \mathcal{P}}$;

\item $\rho_{\mathcal{P}}$,   $\eta_{\mathcal{P}}$ satisfies the conditions of Lemma  \ref{cluster-flat-lemma}.

\item The cluster condition (\ref{matching-nuclear}) is satisfied for
\begin{equation}  \label{matching-preview-nuclear-forget}
    (G',m',   \mathcal{P},  \rho_{ \mathcal{P}}, \eta_{ \mathcal{P}})
    \end{equation}
when $(G',m') \preccurlyeq (G,m)$.


\end{itemize}

Assume we have defined $(\underline{Z}^{\flat}_{G,m, \mathcal{P}'}, \rho_{\mathcal{P}'},   \eta_{\mathcal{P}'})_{\mathcal{P}' \prec \mathcal{P}}$.

As in the proof Lemma  \ref{cluster-flat-lemma}, take $\rho_{\mathcal{P}}$ big enough such that condition (\ref{covering-boundary-condition-nuclear}) is satisfied,
then take $\eta_{\mathcal{P}}$ big enough such that conditions  (\ref{matching-inductive-lemma-flat}) , (\ref{not-overlapping-condition-nuclear}) ,  (\ref{disjoint-disks-nuclear2}) are satisfied. Now we ask in addition that
the cluster condition (\ref{matching-preview-nuclear-forget}) is satisfied for $(G',m')$ with $(G',m') \prec (G,m)$.


$ \underline{Z}_{G,  m  , \mathcal{P}}^{\flat, prel} $ satisfies the translation condition. The cycle condition for $forget_e(G,m)$ and the forgetful compatibility condition yield the relation 
$$\partial \underline{Z}_{G,  m  , \mathcal{P}}^{\flat, prel} \approx \sum_i (-1)^i \underline{Z}_{G, \partial_i m , \mathcal{P}}^{\flat} + \sum_{e'}  \delta_{e'} \underline{Z}_{G',m', \mathcal{P} }^{\flat} .$$

Condition  (\ref{matching-preview-nuclear-forget}) implies
$$ \underline{Z}_{G,  m  , \mathcal{P}}^{\flat, prel}  =  \underline{Z}_{G,  m  , \mathcal{P}'}^{\flat, prel}$$
on $ \underline{\digamma}_{G, {\mathcal{P}} }(\mathcal{P}', \rho_{G,m, \mathcal{P}'},   \eta_{G,m,\mathcal{P}'} )     $ for each $\mathcal{P}' \prec \mathcal{P}$.
From the inductive assumption we have
$$ \underline{Z}_{G,  m  , \mathcal{P}}^{\flat, cluster} \approx   \underline{Z}_{G,  m  , \mathcal{P}}^{\flat, prel} $$
on $U^{\flat, cluster}_{\mathcal{P}}$.

The standard transversality argument shows that we may define $\underline{Z}_{G,  m , \mathcal{P} }^{\flat}$ close in the $C^0$-topology to $\underline{Z}_{G,  m , \mathcal{P}   }^{\flat, prel}$ such that   the  transversality condition is satisfied,
$ \underline{Z}_{G,  m  , \mathcal{P}}^{\flat}  =  \underline{Z}_{G,  m  , \mathcal{P}}^{\flat, cluster} $ on $U^{\flat , cluster}_{\mathcal{P}}$,
the cycle condition and the translation  condition are satisfied for $(G,  m  , \mathcal{P})$  .




\end{proof}


\begin{lemma}
Assume $E^{ex}(G) = \emptyset, H(G) \neq \emptyset$. For each $Z_{\prec G}^{\flat} \in  \mathcal{Z}_{\prec G}^{\flat} $ there exists $Z_{\preccurlyeq G}^{\flat}  \in  \mathcal{Z}_{\preccurlyeq G}^{\flat}  $ extending $Z_{\prec G}^{\flat}$ .

\end{lemma}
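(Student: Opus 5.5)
The plan is to combine the proof of Lemma \ref{nice-extension} (the case $E^{ex}(G)=\emptyset$, $H(G)\neq\emptyset$ for ordinary $PS$-cycles) with the cluster/translation bookkeeping from the previous lemma. I would argue by a double induction: an outer induction on the partition $\mathcal{P}$, following the partial order $\prec$ on partitions as in Lemma \ref{cluster-flat-lemma}, and, for fixed $\mathcal{P}$, an inner induction on $l=|m|$. At stage $l$ I would construct simultaneously $\underline{Z}^{\flat}_{G,m,\mathcal{P}}$ and $\underline{Z}^{\flat}_{G,m\sqcup\{E(G)\},\mathcal{P}}$ for every $m$ with $|m|=l$ and $E(G)\notin m$. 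Before starting, I would fix $\rho^{\dagger}$ so that the support condition holds for every $G'\prec G$. For each $\mathcal{P}$ I would then choose $\rho_{\mathcal{P}},\eta_{\mathcal{P}}$ as in the proof of Lemma \ref{cluster-flat-lemma}, additionally requiring the cluster condition (\ref{matching-preview-lemma-flat}) for all $(G',m')\prec(G,m)$. With these choices, $\underline{Z}^{\flat,cluster}_{G,m,\mathcal{P}}$ is defined on $U^{\flat,cluster}_{\mathcal{P}}$, it is translation invariant, and it satisfies the cycle relation displayed just before the previous lemma.

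Next I would define the preliminary chain exactly as in Lemma \ref{nice-extension}:
\[
\underline{Z}^{\flat,prel}_{G,m,\mathcal{P}}=(-1)^l\Bigl(\sum_i(-1)^i\underline{Z}^{\flat}_{G,\partial_i m\sqcup\{E(G)\},\mathcal{P}}+\sum_{(G',e')}\delta_{e'}\underline{Z}^{\flat}_{G',m'\sqcup\{E(G)\},\mathcal{P}}\Bigr).
\]
There are no $\delta_v$ terms here, because in the flat setting $d(G)\neq0$ forces the chains to vanish. By the cycle conditions for the lower terms, this chain satisfies the required boundary identity. Each summand is translation invariant and supported in $\underline{\digamma}^{<\rho^{\dagger}}_G$, so the same holds for $\underline{Z}^{\flat,prel}_{G,m,\mathcal{P}}$. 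On $U^{\flat,cluster}_{\mathcal{P}}$ it agrees with $\underline{Z}^{\flat,cluster}_{G,m,\mathcal{P}}$. This agreement follows from the cluster conditions already imposed on the summands, together with the compatibility (\ref{inclusion-partion-loop-flat}) of $\delta_{e'}$ with the cluster regions.

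The perturbation step is the following. I would make $\underline{Z}^{\flat,prel}_{G,m,\mathcal{P}}$ transverse to the diagonals indexed by $E^{in}(G,\mathcal{P})\setminus E_l$, fixing it on $U^{\flat,cluster}_{\mathcal{P}}$, where transversality already holds by induction. To keep translation invariance, I would carry out the transversality argument on the quotient by $Transl_{\mathcal{P}}$. The diagonals $\pi_e^{-1}(Diag)$ with $e\in E^{in}(G,\mathcal{P})$ are $Transl_{\mathcal{P}}$-invariant, since both endpoints of such an edge lie in the same block. The finite-dimensional family of loop perturbations, submersive on the evaluation at $H(G)$, can therefore be chosen translation equivariant, for instance by perturbing relative positions inside each block. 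This gives $\underline{Z}^{\flat}_{G,m,\mathcal{P}}$ with $\partial^{red}$ unchanged, together with a small cobordism to $\underline{Z}^{\flat,prel}_{G,m,\mathcal{P}}$ that is trivial on the cluster region. Multiplying this cobordism by $(-1)^{l+1}$ gives $\underline{Z}^{\flat,prel}_{G,m\sqcup\{E(G)\},\mathcal{P}}$. A second equivariant transversality step, relative to the boundary and the cluster region, then produces $\underline{Z}^{\flat}_{G,m\sqcup\{E(G)\},\mathcal{P}}$.

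The main obstacle is making the perturbation respect three requirements at once: translation invariance, the prescribed values on $U^{\flat,cluster}_{\mathcal{P}}$, and the support bound $\rho^{\dagger}$. The chains are Borel--Moore and non-compact along $Diag^{\flat}_{\mathcal{P}}$. My plan is to work on the compact quotient of the region of $\underline{\digamma}_G$ that lies outside the cluster region: this region is bounded modulo $Transl_{\mathcal{P}}$ by the covering property (\ref{covering-boundary-condition-nuclear}). I would take perturbations small enough that the support stays inside $\underline{\digamma}_G^{<\rho^{\dagger}}$, possibly after enlarging $\rho^{\dagger}$ slightly. Finally, using the converse part of Lemma \ref{cluster-flat-lemma}, I would verify the cluster condition for $(G,m,\mathcal{P})$ and $(G,m\sqcup\{E(G)\},\mathcal{P})$.
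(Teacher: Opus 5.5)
Your architecture matches the paper's proof: the preliminary chain from Lemma \ref{nice-extension}, the cluster chain on $U^{\flat,cluster}_{\mathcal{P}}$, a translation-invariant transversality step relative to the cluster region, and a cobordism giving the $m\sqcup\{E(G)\}$ chain. You swap the two inductions; the paper fixes $m$, inducts on $\mathcal{P}$, and re-chooses $\rho^{\dagger},\rho_{\mathcal{P}},\eta_{\mathcal{P}}$ for each $m$. That is harmless if you fix $\rho_{\mathcal{P}},\eta_{\mathcal{P}}$ uniformly in $m$ and take $\rho^{\dagger}$ with margin, as the paper does with its factor $\frac12$.

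The genuine gap is your claim that $\underline{Z}^{\flat,prel}_{G,m,\mathcal{P}}$ agrees with $\underline{Z}^{\flat,cluster}_{G,m,\mathcal{P}}$ on $U^{\flat,cluster}_{\mathcal{P}}$. The summands' cluster conditions give only $\underline{Z}^{\flat,prel}_{G,m,\mathcal{P}}=\underline{Z}^{\flat,prel}_{G,m,\mathcal{P}'}$ on $\underline{\digamma}_{G,\mathcal{P}}(\mathcal{P}',\rho_{\mathcal{P}'},\eta_{\mathcal{P}'})$; this is (\ref{matching-flat-prelimary}). The cluster chain there is instead $\underline{Z}^{\flat}_{G,m,\mathcal{P}'}$, the transversal perturbation of $\underline{Z}^{\flat,prel}_{G,m,\mathcal{P}'}$ made at stage $\mathcal{P}'$. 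That region lies inside $\underline{\digamma}_{G,\mathcal{P}'}(\rho_{\mathcal{P}'})$, where the perturbation was actually performed. So the two chains are only $C^0$-close, and your preliminary chain is in general not transverse there, contrary to ``transversality already holds by induction''. Perturbing it while keeping it fixed on $U^{\flat,cluster}_{\mathcal{P}}$ therefore violates both transversality and the cluster condition on that region.

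The same flaw recurs for $m\sqcup\{E(G)\}$. A cobordism ``trivial on the cluster region'' makes $\underline{Z}^{\flat}_{G,m\sqcup\{E(G)\},\mathcal{P}}$ degenerate there. The cluster condition, however, requires it to equal $\underline{Z}^{\flat}_{G,m\sqcup\{E(G)\},\mathcal{P}'}$, which is itself the nontrivial cobordism of stage $\mathcal{P}'$. So your final appeal to the converse of Lemma \ref{cluster-flat-lemma} has no valid input.

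The repair is the paper's route, in four steps:
\begin{enumerate}
\item Use only that the preliminary chain is $C^0$-close to the cluster chain on $U^{\flat,cluster}_{\mathcal{P}}$ and has the same boundary there. Both boundaries equal $\sum_i(-1)^i\underline{Z}^{\flat}_{G,\partial_i m,\mathcal{P}}+\sum_{e'}\delta_{e'}\underline{Z}^{\flat}_{G',m',\mathcal{P}}$.
\item Run the relative transversality argument so that it \emph{replaces} the preliminary chain by the cluster chain on $U^{\flat,cluster}_{\mathcal{P}}$, preserving $\partial$ and translation invariance.
\item Choose the cobordism $\underline{Z}^{\flat,prel}_{G,m\sqcup\{E(G)\},\mathcal{P}}$ to coincide with $\underline{Z}^{\flat,prel}_{G,m\sqcup\{E(G)\},\mathcal{P}'}$ on each $\underline{\digamma}_{G,\mathcal{P}}(\mathcal{P}',\rho_{\mathcal{P}'},\eta_{\mathcal{P}'})$; this is (\ref{matching-flat-prelimary-2}). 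It is possible because, after step 2, the two ends of the cobordism on that region are exactly $\underline{Z}^{\flat}_{G,m,\mathcal{P}'}$ and $\underline{Z}^{\flat,prel}_{G,m,\mathcal{P}'}$. The cobordism is then $C^0$-close to the cluster chain for $m\sqcup\{E(G)\}$.
\item A second relative perturbation makes it equal to that cluster chain on $U^{\flat,cluster}_{\mathcal{P}}$.
\end{enumerate}
Only after these two replacements does the converse of Lemma \ref{cluster-flat-lemma} yield the cluster condition.
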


\begin{proof}



Assume  we have defined  $ \underline{Z}^{\flat}_{G,  m } ,\underline{Z}^{\flat}_{G,  m \sqcup \{E(G) \} } $ for $|m| <l$ and $E(G) \notin m$.

Fix $m$ with $|m|=l$ and $E(G) \notin m$.

Take $\rho^{\dagger}$ big enough such that the support property of radius $\frac{1}{2} \rho^{\dagger}$ is satisfied for
$    (G',m'$ and $( G',  m' \sqcup \{E(G') \})$
when $(G',m') \prec (G,m)$ 

We want to define $(\underline{Z}^{\flat}_{G,m, \mathcal{P}} ,  \underline{Z}^{\flat}_{G,  m \sqcup \{E(G) \}, \mathcal{P} }, \rho_{\mathcal{P}},   \eta_{\mathcal{P}})_{\mathcal{P}}$ satisfying  the following conditions.


Set 
$$  \underline{Z}_{G,  m , \mathcal{P} }^{\flat, prel} =  (-1)^l(   \sum_i (-1)^i \underline{Z}_{G, \partial_i m \sqcup \{ E(G) \} , \mathcal{P}}^{\flat} + \sum_{G'/e'=G}  \delta_{e'} \underline{Z}_{G', m' \sqcup \{ E(G)\} , \mathcal{P}  }^{\flat})  .$$

\begin{itemize} 
\item  $\underline{Z}^{\flat}_{G,m, \mathcal{P}}$ is close in the $C^{0}$-topology to $ \underline{Z}_{G,  m , \mathcal{P} }^{\flat, prel}$;

\item  $ \underline{Z}^{\flat}_{G,  m \sqcup \{E(G) \}, \mathcal{P} }$  is close in the $C^{0}$-topology to $ \underline{Z}^{\flat, prel}_{G,  m \sqcup \{E(G) \}, \mathcal{P} }$,
where $ \underline{Z}^{\flat, prel}_{G,  m \sqcup \{E(G) \}, \mathcal{P} }$  be  a chain defined from a small cobordism between  $\underline{Z}^{\flat}_{G,m, \mathcal{P}}$ and $ \underline{Z}_{G,  m , \mathcal{P} }^{\flat, prel}$;


\item  the transversality condition, the translation invariance condition and the cycle condition are satisfied for   $\underline{Z}^{\flat}_{G,m, \mathcal{P}}$ and $ \underline{Z}^{\flat}_{G,  m \sqcup \{E(G) \}, \mathcal{P} }$;

\item  $\rho_{\mathcal{P}}$,   $\eta_{\mathcal{P}}$ satisfies the conditions of Lemma  \ref{cluster-flat-lemma}.

\item the cluster condition (\ref{matching-nuclear}) is satisfied for
\begin{equation}  \label{matching-preview-nuclear}
    (G',m',   \mathcal{P} , \rho_{ \mathcal{P}}, \eta_{\mathcal{P}}),( G',  m' \sqcup \{E(G') \},  \mathcal{P} ,  \rho_{\mathcal{P}}, \eta_{\mathcal{P}})
\end{equation}
when $(G',m') \preccurlyeq (G,m)$   ;   


\end{itemize}

Assume we have defined $(\underline{Z}^{\flat}_{G,m, \mathcal{P'}} ,  \underline{Z}^{\flat}_{G,  m \sqcup \{E(G) \}, \mathcal{P'} }, \rho_{\mathcal{P'}},   \eta_{\mathcal{P'}})_{\mathcal{P'} \prec \mathcal{P}}$.

Define $ \rho_{\mathcal{P}},   \eta_{\mathcal{P}}$ as in the proof Lemma  \ref{cluster-flat-lemma} asking in addition 
 the cluster condition (\ref{matching-preview-nuclear}) for  $(G',m') $ with $(G',m') \prec (G,m)$  .


$ \underline{Z}_{G,  m  , \mathcal{P}}^{prel}$ satisfies the translation condition.
The cycle condition for $(G', m' \sqcup \{ E(G)\} , \mathcal{P}  )$ with $(G',m') \preccurlyeq (G,m)$ implies
$$\partial \underline{Z}_{G,  m  , \mathcal{P}}^{\flat, prel} = \sum_i (-1)^i \underline{Z}_{G, \partial_i m , \mathcal{P}}^{\flat} + \sum_{e'}  \delta_{e'} \underline{Z}_{G',m', \mathcal{P} }^{\flat} .$$

Condition (\ref{matching-preview-nuclear}) for   $(G',m') \prec (G,m)$ implies 
\begin{equation} \label{matching-flat-prelimary}
 \underline{Z}_{G,  m  , \mathcal{P}}^{prel}  =  \underline{Z}_{G,  m  , \mathcal{P}'}^{prel}
 \end{equation}
on $ \underline{\digamma}_{G, {\mathcal{P}} }(\mathcal{P}', \rho_{\mathcal{P}'},   \eta_{\mathcal{P}'} )     $  for each $\mathcal{P}'  \prec \mathcal{P}$.  Relation  ( \ref{matching-flat-prelimary}) 
implies in particular that  $ \underline{Z}_{G,  m  , \mathcal{P}}^{\flat, cluster}$ is close in the $C^0$-topology to $  \underline{Z}_{G,  m  , \mathcal{P}}^{\flat, prel} $ on $U^{\flat, cluster}_{\mathcal{P}}$.

A standard transversality argument shows that we may define a chain $\underline{Z}_{G,  m , \mathcal{P} }^{\flat}$ close in the $C^0$-topology to $\underline{Z}_{G,  m , \mathcal{P}   }^{\flat, prel}$ such that the transversality condition is satisfied,
$\partial \underline{Z}_{G,  m   , \mathcal{P} }^{\flat} = \partial \underline{Z}_{G,  m   , \mathcal{P} }^{\flat, prel}$,  $\underline{Z}_{G,  m , \mathcal{P} }^{\flat}=\underline{Z}_{G,  m , \mathcal{P} }^{\flat, cluster}$ on  $U^{\flat, cluster}_{\mathcal{P}}$ and the translation invariance  condition  is satisfied.

We now define $\underline{Z}_{G,m \sqcup \{ E(G)\} ,\mathcal{P}}^{\flat}$.
Let $\underline{Z}^{\flat,  prel}_{G,m \sqcup \{ E(G)\} ,\mathcal{P}} $  be a chain obtained from a small cobordism between  $\underline{Z}^{\flat}_{G,m, \mathcal{P}}$ and $ \underline{Z}_{G,  m , \mathcal{P} }^{\flat, prel}$. We may assume $\underline{Z}^{\flat,  prel}_{G,m \sqcup \{ E(G)\} ,\mathcal{P}} $ satisfies the translation condition.
Using property  ( \ref{matching-flat-prelimary}) we may moreover assume  that
\begin{equation} \label{matching-flat-prelimary-2}
 \underline{Z}^{\flat,  prel}_{G,m \sqcup \{ E(G)\} ,\mathcal{P}} = \underline{Z}^{\flat,  prel}_{G,m \sqcup \{ E(G)\} ,\mathcal{P}'} 
 \end{equation} 
  on $ \underline{\digamma}_{G, {\mathcal{P}} }(\mathcal{P}', \rho_{ \mathcal{P}'},   \eta_{\mathcal{P}'} )     $
  for each $\mathcal{P}' \prec \mathcal{P}$. 
Relation ( \ref{matching-flat-prelimary-2})   implies in particular that  $\underline{Z}^{\flat,  prel}_{G,m \sqcup \{ E(G)\} ,\mathcal{P}} $ is close in the $C^0$-topology to $\underline{Z}^{\flat,  cluster}_{G,m \sqcup \{ E(G)\} ,\mathcal{P}} $ on $U^{\flat, cluster}_{\mathcal{P}}$.

A standard transversality argument shows that we may define a chain  $\underline{Z}_{G,m \sqcup \{ E(G)\} ,\mathcal{P}}^{\flat}$ close in the $C^0$-topology to $\underline{Z}^{\flat,  prel}_{G,m \sqcup \{ E(G)\} ,\mathcal{P}} $ such that the  transversality condition is satisfied, 
$\partial \underline{Z}_{G,m \sqcup \{ E(G)\} ,\mathcal{P}}^{\flat}  = \partial \underline{Z}^{\flat,  prel}_{G,m \sqcup \{ E(G)\} ,\mathcal{P}}   $,  $\underline{Z}_{G,m \sqcup \{ E(G)\} ,\mathcal{P}}^{\flat}= \underline{Z}_{G,m \sqcup \{ E(G)\} ,\mathcal{P}}^{\flat, cluster}$ on $U^{\flat, cluster}_{\mathcal{P}}$ and  the translation condition is satisfied.

\end{proof}

\begin{lemma}
Assume  $H(G)=\emptyset, G \neq G^{\heartsuit}$.  For each $Z_{\prec G}^{\flat} \in  \tilde{Z}_{\prec G}^{\flat} $ there exists $Z_{\preccurlyeq G}^{\flat}  \in  \tilde{Z}_{\preccurlyeq G}^{\flat}  $ extending $Z_{\prec G}^{\flat}$ .
We may set $\underline{Z}_G^{\flat} =0$.
\end{lemma}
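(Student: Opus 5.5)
The plan is to verify directly that the zero chain $\underline{Z}_{G,m,\mathcal{P}}^{\flat}=0$, for every marking $m$ and partition $\mathcal{P}$, satisfies all the conditions for $G$. When $H(G)=\emptyset$ there are no half-edges, so $E(G)=\emptyset$ and every marking is trivial. The forgetful compatibility condition is vacuous, since there are no external edges. The transversality condition is vacuous, since there are no internal edges. The translation condition holds trivially for $0$. The support condition holds trivially for $0$. The cluster condition reduces to $0=0$, provided the chains for the coarser partitions are also set to zero, which we do simultaneously for all $\mathcal{P}$.

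The substantive point is the cycle condition (\ref{cycle-condition-loop-flat}) for $(G,m,\mathcal{P})$. The left side is $\partial 0=0$. The terms $\underline{Z}^{\flat}_{G,\partial_i m,\mathcal{P}}$ vanish by the same definition, since they concern the same graph $G$. So it remains to show that $\sum_{e'}\delta_{e'}\underline{Z}^{\flat}_{G',m',\mathcal{P}'}=0$, summing over $(G',e')$ with $\delta_{e'}G'=G$. In the flat cycle condition there is no $\delta_v$ term, so only edge contractions contribute.

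Since $\delta_{e'}$ removes exactly the two half-edges of $e'$ and keeps all others, any $G'$ with $\delta_{e'}G'=G$ has $H(G')=e'$, i.e.\ exactly two half-edges forming one internal edge. The two cases are as follows.
\begin{itemize}
\item If both half-edges lie on one vertex $v_0$, contracting $e'$ splits $v_0$ into two vertices with $H_{v_1}=H_{v_2}=\emptyset$.
\item If they lie on distinct vertices, each vertex carries one half-edge, and the contraction produces a new vertex with empty $H_{v_0}$.
\end{itemize}
In either case I would argue that $\underline{Z}^{\flat}_{G',m'}$ is already forced to vanish, or that its $\delta_{e'}$ vanishes, by the inductive construction.

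The cleanest route is to strengthen the induction hypothesis of Proposition \ref{existence-flat-proposition}: every chain with $d(G)\neq 0$ vanishes, and the construction for graphs with all vertices of valence $\le 2$ and no external edges can be taken to have zero contraction. The main obstacle is that these $G'$ have $H(G')\neq\emptyset$, $E^{ex}(G')=\emptyset$, so they were built by the previous lemma and need not be zero. Their $\delta_{e'}$ is a chain on $\underline{\digamma}_{G}$ of degree $|H(G')|+|m'|-3=|m'|-1$. The cycle condition for $G'$ then gives $\partial(\delta_{e'}\underline{Z}^{\flat}_{G'})=\pm\delta_{e'}\partial\underline{Z}^{\flat}_{G'}$. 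Because $\delta_{e'}\underline{Z}^{\flat}_{G'}$ is translation invariant and has bounded support, I would argue it must be degenerate, hence zero up to triangulation. The support condition bounds the diameter of each component, while translation invariance forces a free $(\R^3)^{\mathcal{P}}$-orbit of dimension at least $3$. The fibers, though, are at most $|m'|-1$ dimensional, which I claim is too small once $|m'|$ is small. If this dimension count fails for larger markings, the fallback is to note that the obstruction lies in a translation-invariant compactly supported class on $(\R^3)^{\mathcal P}$-orbits. Such a class is a boundary, so it can be absorbed by modifying $\underline{Z}^{\flat}_{G',m'\sqcup\{E(G')\}}$, after which $\underline{Z}^{\flat}_G=0$ is consistent.
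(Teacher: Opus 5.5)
Your route is the paper's: set $\underline{Z}^{\flat}_{G,m,\mathcal P}=0$, observe that every condition except the cycle condition is vacuous, and reduce to $\delta_{e'}\underline{Z}^{\flat}_{G',m',\mathcal P'}=0$. The paper gets this vanishing in one line, ``from dimensional reasons'' using translation invariance. You leave exactly this step open (``too small once $|m'|$ is small'', followed by a fallback), and that is the gap.

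\textbf{Closing the dimension count.} You need the observation that $m'$ is forced. The operator $\delta_{e'}$ is only defined for $e'\notin E_l$, and here $E(G')=\{e'\}$ with $E^{ex}(G')=\emptyset$. So every $E_i$ is empty and $m'=\{\emptyset\}$, $|m'|=0$. Equivalently, $\delta_{e'}$ does not change the marking, and you already noted that every marking of $G$ is trivial. Your own formula then gives degree $2+0-3=-1$ for $\delta_{e'}\underline{Z}^{\flat}_{G',m',\mathcal P'}$, so it vanishes.

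Translation invariance gives even more. $\underline{Z}^{\flat}_{G',m',\mathcal P}$ is a $2$-chain on $\underline{\digamma}_{G'}$. Since $V(G')\neq\emptyset$, the group $(\R^3)^{\mathcal P}$ acts freely there with orbits of dimension $3|\mathcal P|\ge 3$. So this chain is itself zero. Your ``main obstacle'', that these chains need not vanish, is therefore not an obstacle.

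\textbf{Drop the fallback.} It is unnecessary, and as stated it would not prove the lemma. The lemma asserts an extension of an \emph{arbitrary} given $Z^{\flat}_{\prec G}$. The truncation is by graphs, so the chains $\underline{Z}^{\flat}_{G',m'\sqcup\{E(G')\}}$ for $G'\prec G$ are part of that given data and may not be modified. Likewise, strengthening the induction hypothesis of the existence proposition would prove a different statement. Finally, altering lower chains by boundaries only changes $\delta_{e'}\underline{Z}^{\flat}_{G'}$ by a boundary. With $\underline{Z}^{\flat}_G=0$, however, the cycle condition requires $\sum_{e'}\delta_{e'}\underline{Z}^{\flat}_{G'}$ to vanish exactly.
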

\begin{proof}

Using the translation invariance condition, from dimensional reasons we have that $\delta_{e'}  {\underline{Z}}_{G'}^{\flat}= 0$  for each $(G' ,e' )$ with $\delta_{e'}G'  =G$ . 

\end{proof}

\subsubsection{Unicity}

\begin{proposition} \label{unicity-flat-proposition}
 The flat $PS$-cycle $Z^{\flat}$ of  Proposition \ref{existence-flat-proposition} is unique up to isotopy.
 \end{proposition}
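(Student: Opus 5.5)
Given two flat $PS$-cycles $Z^{\flat,0}$ and $Z^{\flat,1}$ as in Proposition \ref{existence-flat-proposition}, possibly with different flat annulus data, I will build an isotopy $\tilde{Z}^{\flat}$ between them by induction on the partial order $\prec$ on $\mathfrak{G}_*$. Within each graph $(G,m)$ the induction also runs over partitions $\mathcal{P}$, ordered by $\prec$. This mirrors the existence argument.

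The first step is the annulus data. The flat annulus data consist of the single chain $\underline{Z}^{\flat,\circledcirc}_{G_0}$, which is translation invariant and $C^0$-close to $\underline{Z}_{Disk0}$. Two such chains can be joined by the straight-line family through $\underline{Z}_{Disk0}$, then made transversal and translation invariant. This works because the translation group acts freely, so one can work on the quotient by $\R^3$ and perturb there. This gives $\tilde{Z}^{\flat,\circledcirc}$.

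The inductive step has three cases, paralleling the three extension lemmas for flat cycles but in their one-parameter form, as in Lemmas \ref{not-removible-lemma-isotopies} and \ref{nice-extension-isotopy}.

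\emph{Case $E^{ex}(G)\neq\emptyset$.} If some external edge is removable, the forgetful compatibility conditions define $\tilde{\underline{Z}}^{\flat}_{G,m,\mathcal{P}}$. Otherwise start from $\tilde{\underline{Z}}^{\flat}_{forget_e(G,m),\mathcal{P}}\times \underline{Z}^{\flat}_{Disk0}$ and perturb it so that four things hold: it agrees with the cluster chain $\tilde{\underline{Z}}^{\flat,cluster}_{G,m,\mathcal{P}}$ on $U^{\flat,cluster}_{\mathcal{P}}\times\R$; it agrees with $Z^{\flat,0}$ and $Z^{\flat,1}$ for $|\lambda|>\lambda^{\dagger}$; it is translation invariant; and it satisfies the cycle condition.

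\emph{Case $E^{ex}(G)=\emptyset$, $H(G)\neq\emptyset$.} Use the $m\sqcup\{E(G)\}$ trick of Lemma \ref{nice-extension-isotopy}. Fix a reference chain $\tilde{\underline{Z}}^{\flat,\dagger}_{G,m\sqcup\{E(G)\},\mathcal{P}}$ with the prescribed ends. Define the preliminary chain by the same formula as there. Then take a small cobordism, chosen compatible with the cluster data via the analogue of (\ref{matching-flat-prelimary-2}). The numbers $\rho_{\mathcal{P}},\eta_{\mathcal{P}}$ are chosen as in Lemma \ref{cluster-flat-lemma}, now for the isotopy. The support condition holds because every chain produced is a small perturbation of chains with bounded support.

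\emph{Case $H(G)=\emptyset$.} For $G\neq G^{\heartsuit}$, set $\tilde{\underline{Z}}^{\flat}_G=0$. This is consistent by the dimension count used in the last existence lemma: by translation invariance, $\delta_{e'}\tilde{\underline{Z}}^{\flat}_{G'}$ is a chain of dimension too small to be invariant under $Transl_{\mathcal{P}}$, so it vanishes. For $G^{\heartsuit}$, take $1\times\R$. The analogue of the obstruction (\ref{definition-obstruction}) would live only in graphs with $V(G)=\emptyset$ and $d(G)>0$. But both cycles vanish there, and translation invariance kills the terms $\delta_{v'}$, since a vertex with $H_{v'}=\emptyset$ carries a translation-invariant chain of positive-dimensional orbit. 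So no obstruction arises.

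The main obstacle is making the cluster condition compatible with the cycle condition in the isotopy setting, that is, carrying out the one-parameter version of Lemma \ref{cluster-flat-lemma} and of the matching (\ref{matching-flat-prelimary}) uniformly in $\lambda$. I would first choose $\rho_{\mathcal{P}},\eta_{\mathcal{P}}$ large enough to work simultaneously for $Z^{\flat,0}$, $Z^{\flat,1}$ and all $\tilde{Z}^{\flat}_{\prec G}$; this is possible since only finitely many conditions are involved. Then I would perform the perturbations relative to $U^{\flat,cluster}_{\mathcal{P}}\times\R\cup\{|\lambda|>\lambda^{\dagger}\}$, where the chain is already fixed.
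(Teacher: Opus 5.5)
Your proposal is correct and follows essentially the paper's route. The shared steps are: induction over $(G,m)$ and then over partitions $\mathcal{P}$; the one-parameter cluster chain on $U^{\flat,cluster}_{\mathcal{P}}\times\R$, with $\rho_{\mathcal{P}},\eta_{\mathcal{P}}$ chosen so that $Z^{\flat,0}$ and $Z^{\flat,1}$ also satisfy the cluster condition (this is what gives the cluster chain the correct ends); perturbing $\tilde{\underline{Z}}^{\flat}_{forget_e(G,m)}\times\underline{Z}^{\flat}_{Disk0}$ when $E^{ex}(G)\neq\emptyset$; and the reference chain $\tilde{\underline{Z}}^{\dagger}$ plus a small cobordism when $E^{ex}(G)=\emptyset$, $H(G)\neq\emptyset$.

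Your explicit treatment of the annulus data and of the graphs with $H(G)=\emptyset$ fills in steps the paper leaves implicit. One refinement: the flat cycle condition (\ref{cycle-condition-loop-flat}) has no $\delta_{v'}$ terms at all, so for $V(G)=H(G)=\emptyset$ the chain is simply closed. The only thing needed there is that the two ends agree: both $1$ for $G^{\heartsuit}$, and both $0$ otherwise, which includes the $d(G)=0$ higher-genus graphs your obstruction remark did not list.
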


We start with the isotopic version of Lemma \ref{cluster-flat-lemma}, whose proof is similar.

\begin{lemma} \label{cluster-flat-lemma-iso}

Let $(G,m) \in \mathfrak{G}_*$. Let $\rho^{\dagger}>0$ be a real positive number. 

Assume that $(\tilde{\underline{Z}}_{G,m, \mathcal{P}}^{\flat})$ satisfies the support condition with radius $\rho^{\dagger}$ for each $\mathcal{P}$.


If $(\tilde{\underline{Z}}_{G,m, \mathcal{P}}^{\flat})_{\mathcal{P}}$ satisfies the cluster condition there exists a collection of real positive numbers  $(\rho_{\mathcal{P}}, \eta_{\mathcal{P}})_{\mathcal{P}}$ such that

\begin{itemize}

\item  the cluster condition is satisfied for
\begin{equation}  \label{matching-inductive-lemma-flat-isotopies}
    (G,m,  \mathcal{P}, \rho_{\mathcal{P}}, \eta_{\mathcal{P}}).
\end{equation}

\item conditions  (\ref{covering-boundary-condition-nuclear}), (\ref{not-overlapping-condition-nuclear}), (\ref{disjoint-disks-nuclear2}) are satisfied.

\end{itemize}

Vice versa ,  the cluster condition is satisfied if  there exists  a collection of positive real numbers $(\rho_{\mathcal{P}}, \eta_{\mathcal{P}})_{\mathcal{P}}$ such that the conditions  above are satisfied.


\end{lemma}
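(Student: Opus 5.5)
The proof follows that of Lemma \ref{cluster-flat-lemma} verbatim, with every object replaced by its one-parameter version: the sets $\underline{D}^{\flat}_{\rho,\mathbf{z}}$ and $\underline{\digamma}_{G,\mathcal{P}}(\mathcal{P}',\rho',\eta')$ are replaced by their products with $\R_\lambda$, and the cluster condition (\ref{matching-nuclear}) by (\ref{matching-nuclear-iso}). Since the translation group $Transl_{\mathcal{P}}$ acts trivially on the $\R$ factor and the sets in question are defined purely in terms of the multi-loop coordinates, all the geometric statements about disks and cut diagonals are independent of $\lambda$. The real work is to choose the constants uniformly in $\lambda$.

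\emph{Direct implication.} Proceed by induction on partitions with respect to $\prec$. Assume $(\rho_{\mathcal{P}'},\eta_{\mathcal{P}'})$ has been chosen for all $\mathcal{P}'\prec\mathcal{P}$.
\begin{enumerate}
\item Choose $\rho_{\mathcal{P}}$ large enough that (\ref{covering-boundary-condition-nuclear}) holds. This inclusion only involves $\underline{\digamma}_G^{<\rho^{\dagger}}$ and the disks, not $\lambda$, so the static argument applies.
\item Choose $\eta_{\mathcal{P}}$ large enough that (\ref{not-overlapping-condition-nuclear}) and (\ref{disjoint-disks-nuclear2}) hold. These are again $\lambda$-independent statements about disjointness of disks on cut diagonals.
\item Choose $\eta_{\mathcal{P}}$ also large enough that the isotopic cluster condition (\ref{matching-inductive-lemma-flat-isotopies}) holds for $(G,m,\mathcal{P},\rho_{\mathcal{P}},\eta_{\mathcal{P}})$. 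This is possible because the cluster condition is assumed for $\tilde{\underline Z}^\flat$ with $\rho=\rho_{\mathcal{P}}$, and it provides an $\eta$ for which (\ref{matching-nuclear-iso}) holds on all of $\underline{D}\times\R$.
\end{enumerate}

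\emph{Converse.} Fix $\mathcal{P}\prec\mathcal{P}^{\triangleright}$ and $\rho>0$, and choose $\eta$ so that the $\lambda$-independent analogue of (\ref{cluster-sets-compatibility}) holds. Intersecting with $\R$, the cluster relations for the $\mathcal{P}'\prec\mathcal{P}$ give the one-parameter version of (\ref{cluster-lemma-basic1}). Taking $\eta>\eta_{\mathcal{P}}$, condition (\ref{matching-inductive-lemma-flat-isotopies}) gives the version of (\ref{cluster-lemma-basic2}) over $\underline{D}_{\rho_{\mathcal{P}},\mathbf z}\times\R$. Finally, the support condition with radius $\rho^{\dagger}$ together with (\ref{covering-boundary-condition-nuclear}) shows that the support of $\tilde{\underline Z}^\flat$ inside $\underline D_{\rho,\mathbf z}\times\R$ is covered by $(\underline D_{\rho_{\mathcal{P}},\mathbf z}\cup U^{\flat,cluster}_{\mathcal{P}})\times\R$. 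Combining these yields (\ref{matching-nuclear-iso}).

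\emph{Main obstacle.} The only point not verbatim from the static case is uniformity in $\lambda$, since the $\lambda$-direction is non-compact. I handle it using the isotopy condition: for $|\lambda|>\lambda^{\dagger}$, $\tilde{\underline Z}^\flat$ is a product with $Z^{\flat,0}$ or $Z^{\flat,1}$. On these two ends I take the constants to be at least the maxima of the constants supplied by Lemma \ref{cluster-flat-lemma} for $Z^{\flat,0}$ and $Z^{\flat,1}$; this is allowed because every condition is monotone under increasing $\eta$. On the compact middle range $[-\lambda^{\dagger},\lambda^{\dagger}]$, finitely many cluster constants suffice: translation invariance and the support bound reduce the supports to a compact set modulo $Transl_{\mathcal{P}}$. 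Taking the maximum of all these constants gives the required uniform choice.
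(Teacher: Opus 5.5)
Your proof is the paper's: the paper disposes of this lemma by saying its proof is similar to that of Lemma~\ref{cluster-flat-lemma}. Your direct implication (first $\rho_{\mathcal P}$ for (\ref{covering-boundary-condition-nuclear}), then $\eta_{\mathcal P}$ for the remaining conditions) and your converse (one-parameter versions of (\ref{cluster-sets-compatibility}), (\ref{cluster-lemma-basic1}), (\ref{cluster-lemma-basic2})) are exactly that argument with a factor $\R$ inserted.

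You should drop your ``main obstacle'' paragraph, for two reasons. First, uniformity in $\lambda$ is already built into the hypothesis, since (\ref{matching-nuclear-iso}) is an identity over all of $\underline{D}^{\flat}_{\rho,\mathbf z}\times\R$ (as you yourself use in step 3). Second, its claim that every condition is monotone in $\eta$ is false for (\ref{covering-boundary-condition-nuclear}): enlarging $\eta_{\mathcal P'}$ shrinks $U^{\flat,cluster}_{\mathcal P}$, which is exactly why $\rho_{\mathcal P}$ must be chosen after the $\eta_{\mathcal P'}$.
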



Assume we have defined $(\tilde{\underline{Z}}^{\flat}_{G,m, \mathcal{P}'}, \rho_{\mathcal{P}'},   \eta_{\mathcal{P}'})_{\mathcal{P}'}$ for each $\mathcal{P}' \prec \mathcal{P}$. 

Define $ \rho_{\mathcal{P}},   \eta_{\mathcal{P}}$ as in the proof of Lemma  \ref{cluster-flat-lemma}. 

Let $\tilde{\underline{Z}}^{\flat,  cluster}_{G,m ,\mathcal{P}} $ be the chain on $U^{\flat, cluster}_{\mathcal{P}} \times \R$ such that   for each $\mathcal{P}' \prec \mathcal{P}$
\begin{equation} \label{condition-cluster-iso}
 \tilde{\underline{Z}}_{G,  m  , \mathcal{P}}^{\flat, cluster}  =  \tilde{\underline{Z}}_{G,  m  , \mathcal{P}'}^{\flat}
 \end{equation}
 on $\underline{\digamma}_{G, {\mathcal{P}} }(\mathcal{P}', \rho_{\mathcal{P}'},   \eta_{\mathcal{P}'} )   \times \R  $.
Conditions  (\ref{matching-inductive-lemma-flat-isotopies}) and  (\ref{not-overlapping-condition-nuclear}) imply that the conditions (\ref{condition-cluster-iso}) are compatible for different $\mathcal{P}'$.
Obviously $ \tilde{\underline{Z}}_{G,  m  , \mathcal{P}}^{\flat, cluster}  $ satisfies the translation condition.

If we take $( \rho_{\mathcal{P}'},   \eta_{\mathcal{P}'})_{\mathcal{P}'}$ such that in addition the cluster condition is satisfied for
\begin{equation}  \label{matching-preview-lemma-flat-isotopy}
    (G',m',  \mathcal{P}' , \rho_{ \mathcal{P}}, \eta_{\mathcal{P}})
    \end{equation}
 when $(G',m') \prec (G,m)$,    
from  the inductive cycle condition and  (\ref{disjoint-disks-nuclear2}) we obtain
$$\partial \tilde{\underline{Z}}_{G,  m  , \mathcal{P}}^{\flat, cluster} =  \sum_i (-1)^i \tilde{\underline{Z}}_{G, \partial_i m , \mathcal{P}}^{\flat} + \sum_{e'}  \delta_{e'} \tilde{\underline{Z}}_{G',m', \mathcal{P} }^{\flat} .$$

If we take $( \rho_{G,m,\mathcal{P}'},   \eta_{G,m,\mathcal{P}'})_{\mathcal{P}'}$ such that in addition $Z^{\flat, 0}$ and $Z^{\flat, 1 }$ satisfies the cluster condition for
\begin{equation}  \label{matching-inductive-lemma-flat-start-end}
    (G,m, \mathcal{P}' , \rho_{ \mathcal{P}'}, \eta_{ \mathcal{P}'})
\end{equation}
for each $\mathcal{P}' \prec \mathcal{P} $,
we have
$$\tilde{ {\underline{Z}}}_{G,m , \mathcal{P}  }^{\flat, cluster, < -\lambda^{\dagger}} = \underline{Z}_{G,m, \mathcal{P}  }^{\flat, 0} \times \R_{<-\lambda^{\dagger}}, \quad \tilde{\underline{Z}}_{G,m , \mathcal{P} }^{\flat, cluster, > \lambda^{\dagger}} =  \underline{Z}_{G,m }^{\flat,1} \times \R_{>\lambda^{\dagger}}$$
on  $U^{\flat, cluster}_{\mathcal{P}} \times \R_{<\lambda^{\dagger}} , U^{\flat, cluster}_{\mathcal{P}} \times \R_{>\lambda^{\dagger}}$, for $\lambda^{\dagger} \gg 0$.

\begin{lemma}
Assume $E^{ex}(G) \neq \emptyset$. For each $\tilde{Z}_{\prec G}^{\flat}$ there exists $\tilde{Z}_{\preccurlyeq G}^{\flat}$ extending $\tilde{Z}_{\prec G}^{\flat}$ .
\end{lemma}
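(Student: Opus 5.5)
The plan is to combine the isotopy-extension argument of Lemma \ref{not-removible-lemma-isotopies} with the cluster bookkeeping of the flat existence lemma for $E^{ex}(G) \neq \emptyset$. We induct first on $|m|$. For fixed $m$ we then induct on the partition $\mathcal{P}$ along the partial order $\prec$. So we assume $\tilde{\underline{Z}}^{\flat}_{G,m',\mathcal{P}'}$ is already defined for $|m'|<l$ and, when $|m|=l$, for all $\mathcal{P}' \prec \mathcal{P}$.

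If some $e \in E^{ex}(G)$ is removable, we define $\tilde{\underline{Z}}^{\flat}_{G,m,\mathcal{P}}$ directly by the isotopy versions of (\ref{flat-stable-forget-condition}), (\ref{flat-removible-forget-condition}) and (\ref{flat-removible-forget-condition2}), taking a fibre product over $\R$ with $\tilde{Z}^{\flat,\circledcirc}$ where needed. The transversality, cycle, translation, support and cluster conditions are then inherited from $forget_e(G,m)$ and the annulus data. The only point to check is that the product with a translation-invariant, small-support annulus chain preserves (\ref{matching-nuclear-iso}), which follows because the product only adds a small loop at a vertex.

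Now assume no external edge is removable and pick $e \in E^{ex}(G)$. We set
$$\tilde{\underline{Z}}^{\flat,prel}_{G,m,\mathcal{P}} = \tilde{\underline{Z}}^{\flat}_{forget_e(G,m),\mathcal{P}} \times \underline{Z}^{\flat}_{Disk0}.$$
This chain is translation invariant and satisfies the support condition. By the cycle condition for $forget_e(G,m)$ together with (\ref{flat-unstable-forget-condition}), its boundary is $C^0$-close to the required right-hand side of the cycle condition. Its ends are $C^1$-close to $\underline{Z}^{\flat,0}_{G,m,\mathcal{P}}\times\R_{<-\lambda^\dagger}$ and $\underline{Z}^{\flat,1}_{G,m,\mathcal{P}}\times\R_{>\lambda^\dagger}$.

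Next we choose $\rho_{\mathcal{P}}$ and $\eta_{\mathcal{P}}$ as in Lemma \ref{cluster-flat-lemma-iso}. We additionally require the cluster conditions (\ref{matching-preview-lemma-flat-isotopy}) for $(G',m') \prec (G,m)$, and (\ref{matching-inductive-lemma-flat-start-end}) for the endpoints $Z^{\flat,0}$ and $Z^{\flat,1}$. With these choices, the chain $\tilde{\underline{Z}}^{\flat,cluster}_{G,m,\mathcal{P}}$ on $U^{\flat,cluster}_{\mathcal{P}}\times\R$ is well defined and satisfies the cycle condition exactly. It also agrees with the prescribed ends on $U^{\flat,cluster}_{\mathcal{P}}\times\R_{<-\lambda^\dagger}$ and on $U^{\flat,cluster}_{\mathcal{P}}\times\R_{>\lambda^\dagger}$, and it is $C^0$-close to $\tilde{\underline{Z}}^{\flat,prel}_{G,m,\mathcal{P}}$ there.

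We then perturb. A standard transversality argument, carried out translation-equivariantly, produces $\tilde{\underline{Z}}^{\flat}_{G,m,\mathcal{P}}$ that is $C^0$-close to the preliminary chain and has the following properties:
\begin{itemize}
\item it equals $\tilde{\underline{Z}}^{\flat,cluster}_{G,m,\mathcal{P}}$ on $U^{\flat,cluster}_{\mathcal{P}}\times\R$;
\item it equals the product chains for $|\lambda|>\lambda^\dagger$;
\item it is transversal to the partial diagonals indexed by $E^{in}(G,\mathcal{P})$;
\item it satisfies the cycle condition exactly.
\end{itemize}
Here $\partial$ is corrected only on a compact set modulo translations, using the fact that the remaining discrepancy is small and supported away from the cluster region and the ends.

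I expect the main obstacle to be this equivariant perturbation. It has to be done on the quotient by $Transl_{\mathcal{P}}$, which is compact modulo the cluster region only because of the support condition. So the plan is to pass to the quotient $\underline{\digamma}_{G}/Transl_{\mathcal{P}}$. There we fix the perturbation on the closed set given by the cluster region together with the ends, and extend it by a relative transversality argument. Finally we pull back, which automatically gives translation invariance.
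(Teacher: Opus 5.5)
Your proposal is correct and follows essentially the same route as the paper's proof. The shared steps are induction on $|m|$ and then on $\mathcal{P}$, the removable case handled directly by forgetful compatibility, and the preliminary chain $\tilde{\underline{Z}}^{\flat}_{forget_e(G,m),\mathcal{P}}\times\underline{Z}^{\flat}_{Disk0}$; then $\rho_{\mathcal{P}},\eta_{\mathcal{P}}$ chosen with the extra cluster requirements for $(G',m')\prec(G,m)$ and for $Z^{\flat,0},Z^{\flat,1}$, and a translation-invariant transversality argument that matches $\tilde{\underline{Z}}^{\flat,cluster}_{G,m,\mathcal{P}}$ on $U^{\flat,cluster}_{\mathcal{P}}\times\R$ and the product chains at the ends. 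Your note that the preliminary chain's ends are only $C^1$-close to the endpoint products, and your explicit quotient-by-$Transl_{\mathcal{P}}$ description of the perturbation, are slightly more careful than the paper's text; as in the paper, the final perturbation should really be kept $C^1$-small rather than merely $C^0$-small, so that the non-removable forgetful compatibility condition (which asks for $C^1$-closeness) holds.
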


\begin{proof}

Assume that there are no removable external edges.
Take  $e \in  E^{ex}(G)$.

Assume  we have defined   $ \tilde{\underline{Z}}^{\flat}_{G,  m } $ for $|m| <l$. Let $m$ with $|m|=l$. 

If there is  $e \in  E^{ex}(G)$ removable use   forgetful compatibility   to define $\tilde{\underline{Z}}_{G, m}$.

Take $\rho^{\dagger}$ big enough such that   $ \tilde{\underline{Z}}^{\flat} $ the support condition of radius $\rho^{\dagger}$  for  each $G' \prec G$. Moreover we assume that  $ {\underline{Z}}^{\flat,0} , {\underline{Z}}^{\flat,1} $ satisfies the support condition of radius $\rho^{\dagger}$   for  each $G' \preccurlyeq G$.


We want to define $( \tilde{\underline{Z}}^{\flat}_{G,m, \mathcal{P}}, \rho_{\mathcal{P}},   \eta_{\mathcal{P}})_{\mathcal{P}}$ such that  the following conditions are satisfied.

Set 
$$   \tilde{\underline{Z}}_{G,  m , \mathcal{P} }^{\flat, prel} =  \tilde{\underline{Z}}_{forget_e(G,m) }^{\flat} \times \underline{Z}_{Disk0}^{\flat} .$$


\begin{itemize} 
\item  $ \tilde{\underline{Z}}^{\flat}_{G,m, \mathcal{P}}$ is close in the $C^{0}$-topology to $  \tilde{\underline{Z}}_{G,  m , \mathcal{P} }^{\flat, prel}$;

\item  transversality condition, translation invariance and cycle condition hold for   $ \tilde{\underline{Z}}^{\flat}_{G,m, \mathcal{P}}$;

\item $$\tilde{ {\underline{Z}}}_{G,m , \mathcal{P}  }^{\flat, < -\lambda^{\dagger}} = \underline{Z}_{G,m, \mathcal{P}  }^{\flat, 0} \times \R_{<-\lambda^{\dagger}}, \quad \tilde{\underline{Z}}_{G,m , \mathcal{P} }^{\flat,  >\lambda^{\dagger}} =  \underline{Z}_{G,m }^{\flat,1} \times \R_{>\lambda^{\dagger}}$$
for $\lambda^{\dagger} \gg 0$.

\item $\rho_{\mathcal{P}}$,   $\eta_{\mathcal{P}}$ satisfies conditions of Lemma  \ref{cluster-flat-lemma-iso}.  

\item  $\tilde{\underline{Z}}^{\flat}$ satisfies the cluster condition for
\begin{equation}  \label{matching-preview-nuclear-forget-iso}
    (G',m',  \mathcal{P} , \rho_{ \mathcal{P}}, \eta_{ \mathcal{P}})
    \end{equation}
when $(G' , m') \preccurlyeq (G,m )$    ;


\item $Z^{\flat,0}$, $Z^{\flat,1}$ satisfies the cluster condition 
for
\begin{equation}  \label{matching-inductive-nuclear-forget-iso-boundary}
    (G,m,  \mathcal{P}, \rho_{\mathcal{P}}, \eta_{\mathcal{P}});
\end{equation}

\end{itemize}

Assume we have defined $( \tilde{\underline{Z}}^{\flat}_{G,m, \mathcal{P}'}, \rho_{\mathcal{P}'},   \eta_{\mathcal{P}'})_{\mathcal{P}'}$ for each $\mathcal{P}' \prec \mathcal{P}$.

Define $ \rho_{\mathcal{P}},   \eta_{\mathcal{P}}$ as in the proof Lemma  \ref{cluster-flat-lemma} asking in addition  
condition (\ref{matching-preview-nuclear-forget-iso}) for $(G' , m') \prec (G,m )$ and condition (\ref{matching-inductive-nuclear-forget-iso-boundary}).

 $  \tilde{\underline{Z}}_{G,  m  , \mathcal{P}}^{\flat, prel} $ satisfies the translation condition and condition
$$\tilde{ {\underline{Z}}}_{G,m , \mathcal{P}  }^{\flat, < -\lambda^{\dagger}} = \underline{Z}_{G,m, \mathcal{P}  }^{\flat, 0} \times \R_{<-\lambda^{\dagger}}, \quad \tilde{\underline{Z}}_{G,m , \mathcal{P} }^{\flat,  > \lambda^{\dagger}} = \underline{Z}_{G,m }^{\flat,1} \times \R_{>\lambda^{\dagger}}$$
for $T \gg 0$.
From the cycle condition for $ forget_e(G,m)$ and the forgetful compatibility condition we obtain 
$$\partial  \tilde{\underline{Z}}_{G,  m  , \mathcal{P}}^{\flat, prel} \approx \sum_i (-1)^i  \tilde{\underline{Z}}_{G, \partial_i m , \mathcal{P}}^{\flat} + \sum_{e'}  \delta_{e'}  \tilde{\underline{Z}}_{G',m', \mathcal{P} }^{\flat} .$$

Condition  (\ref{matching-preview-nuclear-forget-iso}) implies
$$  \tilde{\underline{Z}}_{G,  m  , \mathcal{P}}^{\flat, prel}  =   \tilde{\underline{Z}}_{G,  m  , \mathcal{P}'}^{\flat, prel}$$
on $ \underline{\digamma}_{G, {\mathcal{P}} }(\mathcal{P}', \rho_{ \mathcal{P}'},   \eta_{\mathcal{P}'} )  \times \R   $ for each $\mathcal{P}' \prec \mathcal{P}$. From the induction assumption we have
$$  \tilde{\underline{Z}}_{G,  m  , \mathcal{P}}^{\flat, cluster} \approx    \tilde{\underline{Z}}_{G,  m  , \mathcal{P}}^{\flat, prel} $$
 on $U^{\flat, cluster}_{\mathcal{P}} \times \R$.

A the standard transversality argument implies that we may define $ \tilde{\underline{Z}}_{G,  m , \mathcal{P} }^{\flat}$ close in the $C^0$-topology to $ \tilde{\underline{Z}}_{G,  m , \mathcal{P}   }^{\flat, prel}$ such that    transversality condition is satisfied,
$  \tilde{\underline{Z}}_{G,  m  , \mathcal{P}}^{\flat}  =   \tilde{\underline{Z}}_{G,  m  , \mathcal{P}}^{\flat, cluster} $ on $U^{\flat, cluster}_{\mathcal{P}} \times \R$,
the cycle condition and the translation condition is satisfied for $(G,  m  , \mathcal{P})$  and
$$\tilde{ {\underline{Z}}}_{G,m , \mathcal{P}  }^{\flat, < -\lambda^{\dagger}} = \underline{Z}_{G,m, \mathcal{P}  }^{\flat, 0} \times \R_{<-\lambda^{\dagger}}, \quad \tilde{\underline{Z}}_{G,m , \mathcal{P} }^{\flat,  > \lambda^{\dagger}} \cong  \underline{Z}_{G,m, \mathcal{P} }^{\flat,1} \times \R_{>\lambda^{\dagger}}$$
for $\lambda^{\dagger} \gg 0$.




\end{proof}

\begin{lemma}
Assume $E^{ex}(G) = \emptyset, H(G) \neq \emptyset$. For each $\tilde{Z}_{\prec G}^{\flat} \in  \tilde{\mathcal{Z}}_{\prec G}^{\flat} $ there exists $\tilde{Z}_{\preccurlyeq G}^{\flat}  \in  \tilde{\mathcal{Z}}_{\preccurlyeq G}^{\flat}  $ extending $\tilde{Z}_{\prec G}^{\flat}$ .
\end{lemma}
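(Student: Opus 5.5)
The plan is to merge two earlier arguments. The first is the isotopy version of the $E^{ex}(G)=\emptyset$, $H(G)\neq\emptyset$ construction, namely Lemma \ref{nice-extension-isotopy}. The second is the partition-by-partition cluster bookkeeping already used for flat isotopies when $E^{ex}(G)\neq\emptyset$. We argue by induction on $l=|m|$ with $E(G)\notin m$. At each stage we construct the pair $\tilde{\underline{Z}}^{\flat}_{G,m,\mathcal{P}}$ and $\tilde{\underline{Z}}^{\flat}_{G,m\sqcup\{E(G)\},\mathcal{P}}$, and inside each stage we induct on the partition $\mathcal{P}$ with respect to $\prec$.

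First fix $\rho^{\dagger}$ so large that the support condition of radius $\tfrac12\rho^{\dagger}$ holds for all $(G',m')\prec(G,m)$, for all $(G',m'\sqcup\{E(G')\})$, and for $Z^{\flat,0}$ and $Z^{\flat,1}$ up to $G$. Next choose $\rho_{\mathcal{P}}$ and $\eta_{\mathcal{P}}$ as in the proof of Lemma \ref{cluster-flat-lemma} (via Lemma \ref{cluster-flat-lemma-iso}). In addition, require that:
\begin{itemize}
\item the cluster condition holds for the lower graphs $(G',m')$ and $(G',m'\sqcup\{E(G')\})$;
\item the cluster condition holds for the end cycles $Z^{\flat,0}$ and $Z^{\flat,1}$ at $(G,m,\mathcal{P}')$ and $(G,m\sqcup\{E(G)\},\mathcal{P}')$.
\end{itemize}
This makes $\tilde{\underline{Z}}^{\flat,cluster}_{G,m,\mathcal{P}}$ well defined on $U^{\flat,cluster}_{\mathcal{P}}\times\R$. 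It is translation invariant, it satisfies the cycle relation, and it agrees with $\underline{Z}^{\flat,0}\times\R_{<-\lambda^\dagger}$ and $\underline{Z}^{\flat,1}\times\R_{>\lambda^\dagger}$ at the ends.

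Then imitate Lemma \ref{nice-extension-isotopy}. Choose a translation-invariant auxiliary chain $\tilde{\underline{Z}}^{\flat,\dagger}_{G,m\sqcup\{E(G)\},\mathcal{P}}$ that interpolates the end chains $\underline{Z}^{\flat,i}_{G,m\sqcup\{E(G)\},\mathcal{P}}$ and coincides with the cluster chain on $U^{\flat,cluster}_{\mathcal{P}}\times\R$. Then set
$$\tilde{\underline{Z}}^{\flat,prel}_{G,m,\mathcal{P}}=(-1)^l\Big(\sum_i(-1)^i\tilde{\underline{Z}}^{\flat}_{G,\partial_im\sqcup\{E(G)\},\mathcal{P}}+\sum_{e'}\delta_{e'}\tilde{\underline{Z}}^{\flat}_{G',m'\sqcup\{E(G)\},\mathcal{P}}-\partial\tilde{\underline{Z}}^{\flat,\dagger}_{G,m\sqcup\{E(G)\},\mathcal{P}}\Big).$$
The inductive cycle conditions give $\partial\tilde{\underline{Z}}^{\flat,prel}_{G,m,\mathcal{P}}$ equal to the required right-hand side. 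The ends are correct, and the cluster conditions for lower graphs give $\tilde{\underline{Z}}^{\flat,prel}_{G,m,\mathcal{P}}=\tilde{\underline{Z}}^{\flat,prel}_{G,m,\mathcal{P}'}$ on the cluster regions. A standard transversality perturbation, kept fixed on $U^{\flat,cluster}_{\mathcal{P}}\times\R$ and near $\pm\infty$, and performed equivariantly under $Transl_{\mathcal{P}}$, then gives $\tilde{\underline{Z}}^{\flat}_{G,m,\mathcal{P}}$.

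Finally take a small translation-invariant cobordism $\tilde{\underline{Z}}^{\flat,cob}$ between $\tilde{\underline{Z}}^{\flat}_{G,m,\mathcal{P}}$ and $\tilde{\underline{Z}}^{\flat,prel}_{G,m,\mathcal{P}}$ that is trivial on the cluster region, on $\partial$, and at the ends. Set $\tilde{\underline{Z}}^{\flat,prel}_{G,m\sqcup\{E(G)\},\mathcal{P}}=(-1)^l\tilde{\underline{Z}}^{\flat,cob}+\tilde{\underline{Z}}^{\flat,\dagger}$, and perturb this for transversality rel boundary, rel cluster region and rel ends.

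The main obstacle is the perturbation step. It requires simultaneous translation invariance, prescribed values on the noncompact cluster region, and prescribed ends. The approach I would try first is to work on the quotient by $Transl_{\mathcal{P}}$. By the support condition and (\ref{covering-boundary-condition-nuclear}), the complement of the cluster region is relatively compact there. Transversality to the diagonals for $E^{in}(G,\mathcal{P})$ is a translation-invariant condition, so the perturbation can be done on this compact quotient and then pulled back.
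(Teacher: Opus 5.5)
Your proposal is correct and is essentially the paper's own proof. It uses the same double induction on $|m|$ and on partitions $\mathcal{P}$, the same choice of $\rho^{\dagger}$ and of $(\rho_{\mathcal{P}},\eta_{\mathcal{P}})$ with the additional cluster requirements on the lower graphs and on $Z^{\flat,0},Z^{\flat,1}$, the same auxiliary chain $\tilde{\underline{Z}}^{\flat,\dagger}_{G,m\sqcup\{E(G)\},\mathcal{P}}$ and formula for $\tilde{\underline{Z}}^{\flat,prel}_{G,m,\mathcal{P}}$, and the same cobordism step producing $\tilde{\underline{Z}}^{\flat}_{G,m\sqcup\{E(G)\},\mathcal{P}}$. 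Your requirement that $\tilde{\underline{Z}}^{\flat,\dagger}$ be translation invariant and agree with the cluster chain, and your reduction of the perturbation to the quotient by $Transl_{\mathcal{P}}$, just make explicit what the paper leaves to its ``standard transversality argument''; one small correction is that with this choice of $\tilde{\underline{Z}}^{\flat,\dagger}$ the cycle condition for $(G,m\sqcup\{E(G)\},\mathcal{P}')$ gives $\tilde{\underline{Z}}^{\flat,prel}_{G,m,\mathcal{P}}=\tilde{\underline{Z}}^{\flat}_{G,m,\mathcal{P}'}$ on the cluster regions (which is what you need), not $\tilde{\underline{Z}}^{\flat,prel}_{G,m,\mathcal{P}'}$ as you wrote.
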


\begin{proof}

Assume  we have defined   $\tilde{\underline{Z}}_{G, m}^{\flat}$ and $\tilde{\underline{Z}}_{G,m \sqcup \{ E(G)\}}^{\flat}$ for $|m| <l$ and $E(G) \notin m$.

Fix $m$ with $|m|=l$ and $E(G) \notin m$.

Take $\rho^{\dagger}$ big enough such that $    \tilde{\underline{Z}}^{\flat}_{(G',m')}$ and $ \tilde{\underline{Z}}^{\flat}_{( G',  m' \sqcup \{E(G') \})}$ satisfy the support condition of radius $\frac{1}{2} \rho^{\dagger}$
when $(G',m') \prec (G,m)$,
  $ {\underline{Z}}^{\flat,0}_{G',m'} , {\underline{Z}}^{\flat,1}_{G',m'} $ satisfy the support condition of radius $\rho^{\dagger}$   when $(G',m') \preccurlyeq (G,m)$.

We want to define $(\underline{Z}^{\flat}_{G,m, \mathcal{P}} ,  \underline{Z}^{\flat}_{G,  m \sqcup \{E(G) \}, \mathcal{P} }, \rho_{\mathcal{P}},   \eta_{\mathcal{P}})_{\mathcal{P}}$ which satisfy the following conditions.



Take a chain $\tilde{\underline{Z}}_{G,m \sqcup \{ E(G) \} }^{\dagger}$ which agrees with $ \underline{Z}_{G,m \sqcup \{ E(G) \} }^0 \times \R_{<-\lambda^{\dagger}}$ and  $ \underline{Z}_{G,m \sqcup \{ E(G) \} }^1 \times \R_{>\lambda^{\dagger}}$  for $\lambda^{\dagger} \gg 0$.

Set 
$$   \tilde{\underline{Z}}_{G,  m , \mathcal{P} }^{\flat, prel} =  (-1)^l(   \sum_i (-1)^i  \tilde{\underline{Z}}_{G, \partial_i m \sqcup \{ E(G) \} , \mathcal{P}}^{\flat} + \sum_{G'/e'=G}  \delta_{e'}  \tilde{\underline{Z}}_{G', m' \sqcup \{ E(G)\} , \mathcal{P}  }^{\flat} - \partial \tilde{\underline{Z}}_{G,m \sqcup \{ E(G) \}, \mathcal{P} }^{\dagger} ). $$

We want to define $( \tilde{\underline{Z}}^{\flat}_{G,m, \mathcal{P}} ,   \tilde{\underline{Z}}^{\flat}_{G,  m \sqcup \{E(G) \}, \mathcal{P} }, \rho_{G,m,\mathcal{P}},   \eta_{G,m,\mathcal{P}})_{\mathcal{P}}$ such that following conditions are satisfied.



\begin{itemize} 
\item  $ \tilde{\underline{Z}}^{\flat}_{G,m, \mathcal{P}}$ is close in the $C^{0}$-topology to $  \tilde{\underline{Z}}_{G,  m , \mathcal{P} }^{\flat, prel}$;


Let   $ \tilde{\underline{Z}}^{\flat,cob}_{G,m \sqcup \{ E(G)\}  , \mathcal{P}}$ be a chain obtained from a small cobordism between  $ \tilde{\underline{Z}}^{\flat}_{G,m, \mathcal{P}}$ and $  \tilde{\underline{Z}}_{G,  m , \mathcal{P} }^{\flat, prel}$. 
We may assume that $ \tilde{\underline{Z}}^{\flat,cob}_{G,m \sqcup \{ E(G)\}  , \mathcal{P}}$ satisfies the translation condition.

Set
$$ \tilde{\underline{Z}}^{\flat,  prel}_{G,m \sqcup \{ E(G)\} ,\mathcal{P}} = (-1)^l  \tilde{\underline{Z}}^{\flat,cob}_{G,m \sqcup \{ E(G)\}   , \mathcal{P}} + \tilde{\underline{Z}}_{G,m \sqcup \{ E(G)\} , \mathcal{P}  }^{\flat, \dagger}.$$

\item $  \tilde{\underline{Z}}^{\flat}_{G,  m \sqcup \{E(G) \}, \mathcal{P} }$  is close in the $C^{0}$-topology to $  \tilde{\underline{Z}}^{\flat, prel}_{G,  m \sqcup \{E(G) \}, \mathcal{P} }$;


\item  transversality condition, translation  condition and cycle condition are satisfied for   $ \tilde{\underline{Z}}^{\flat}_{G,m, \mathcal{P}}$ and $  \tilde{\underline{Z}}^{\flat}_{G,  m \sqcup \{E(G) \}, \mathcal{P} }$;

\item $$\tilde{ {\underline{Z}}}_{G,m , \mathcal{P}  }^{\flat, < -\lambda^{\dagger}} = \underline{Z}_{G,m, \mathcal{P}  }^{\flat, 0} \times \R_{<-\lambda^{\dagger}}, \quad \tilde{\underline{Z}}_{G,m , \mathcal{P} }^{\flat,  > \lambda^{\dagger}} =  \underline{Z}_{G,m }^{\flat,1} \times \R_{>\lambda^{\dagger}}, $$
$$\tilde{\underline{Z}}_{G,m \sqcup \{ E(G)\} }^{\flat, < -T} = \underline{Z}_{G,m \sqcup \{ E(G)\} }^{\flat, 0} \times \R_{<-\lambda^{\dagger}}, \quad \tilde{\underline{Z}}_{G,m \sqcup \{ E(G)\} }^{\flat,  > \lambda^{\dagger}} =  \underline{Z}_{G,m \sqcup \{ E(G)\}}^{\flat,1} \times \R_{>\lambda^{\dagger}}$$
for $\lambda^{\dagger} \gg 0$

\item $\rho_{\mathcal{P}}$,   $\eta_{\mathcal{P}}$ satisfies the conditions of Lemma  \ref{cluster-flat-lemma};

\item   $ \tilde{\underline{Z}}^{\flat}$ satisfies the cluster condition
for
\begin{equation}  \label{matching-preview-nuclear-iso}
    (G',m', \mathcal{P},  \rho_{\mathcal{P}}, \eta_{ \mathcal{P}}),( G',  m' \sqcup \{E(G') \},\mathcal{P},  \rho_{ \mathcal{P}}, \eta_{ \mathcal{P}})
\end{equation}
when $(G',m') \preccurlyeq (G,m)$   ;


\item $Z^{\flat,0}$, $Z^{\flat,1}$ satisfies the cluster condition 
for
\begin{equation}  \label{matching-inductive-nuclear-iso-boundary}
    (G,m, \mathcal{P}' , \rho_{ \mathcal{P}}, \eta_{ \mathcal{P}}),( G,  m \sqcup \{E(G) \},\mathcal{P}, \rho_{\mathcal{P}}, \eta_{ \mathcal{P}})
\end{equation}
when $\mathcal{P}' \preccurlyeq \mathcal{P} $.

\end{itemize}

Assume we have defined $( \tilde{\underline{Z}}^{\flat}_{G,m, \mathcal{P'}} ,   \tilde{\underline{Z}}^{\flat}_{G,  m \sqcup \{E(G) \}, \mathcal{P'} }, \rho_{\mathcal{P'}},   \eta_{\mathcal{P'}})_{\mathcal{P'} \prec \mathcal{P}}$.

Define $ \rho_{\mathcal{P}},   \eta_{\mathcal{P}}$ as in the proof Lemma  \ref{cluster-flat-lemma} asking in addition  condition
(\ref{matching-preview-nuclear-iso}) when  $(G',m') \prec (G,m)$ and condition   ( \ref{matching-inductive-nuclear-iso-boundary}) .

$  \tilde{\underline{Z}}_{G,  m  , \mathcal{P}}^{prel}$ satisfies the translation condition and
$$\tilde{\underline{Z}}_{G,m , \mathcal{P}  }^{\flat, prel,< -\lambda^{\dagger}} = \underline{Z}_{G,m, \mathcal{P}  }^{\flat, 0} \times \R_{<-\lambda^{\dagger}}, \quad \tilde{\underline{Z}}_{G,m , \mathcal{P}  }^{\flat, prel, > \lambda^{\dagger}} =  \underline{Z}_{G,m }^{\flat,1} \times \R_{>\lambda^{\dagger}}$$  
for $\lambda^{\dagger} \gg 0$. 
Using the cycle condition for $(G, \partial_i m \sqcup \{ E(G) \} , \mathcal{P})$ and  $(G', m' \sqcup \{ E(G)\} , \mathcal{P}  )$ we have 
$$\partial  \tilde{\underline{Z}}_{G,  m  , \mathcal{P}}^{\flat , prel} = \sum_i (-1)^i  \tilde{\underline{Z}}_{G, \partial_i m , \mathcal{P}}^{\flat} + \sum_{e'}  \delta_{e'}  \tilde{\underline{Z}}_{G',m', \mathcal{P} }^{\flat} .$$




Condition (\ref{matching-preview-nuclear-iso}) implies that for each $\mathcal{P}'$ with $\mathcal{P}' \prec \mathcal{P}$ we have
\begin{equation} \label{matching-flat-prelimary-iso}
  \tilde{\underline{Z}}_{G,  m  , \mathcal{P}}^{\flat, prel}  =   \tilde{\underline{Z}}_{G,  m  , \mathcal{P}'}^{\flat, prel}
 \end{equation}
on $ \underline{\digamma}_{G, {\mathcal{P}} }(\mathcal{P}', \rho_{\mathcal{P}'},   \eta_{\mathcal{P}'} )  \times \R   $.
From ( \ref{matching-flat-prelimary-iso}) 
 it follows that  $  \tilde{\underline{Z}}_{G,  m  , \mathcal{P}}^{\flat, prel}$ is close in  $C^0$-topology to $   \tilde{\underline{Z}}_{G,  m  , \mathcal{P}}^{\flat, cluster} $ on $U^{\flat, cluster}_{\mathcal{P}} \times \R$.

A standard transversality argument shows that we may define a chain $ \tilde{\underline{Z}}_{G,  m , \mathcal{P} }^{\flat}$ close in the $C^0$-topology to $ \tilde{\underline{Z}}_{G,  m , \mathcal{P}   }^{\flat, prel}$ such that the transversality condition is satisfied,
$\partial  \tilde{\underline{Z}}_{G,  m   , \mathcal{P} }^{\flat} = \partial  \tilde{\underline{Z}}_{G,  m   , \mathcal{P} }^{\flat, prel}$,  $ \tilde{\underline{Z}}_{G,  m , \mathcal{P} }^{\flat}= \tilde{\underline{Z}}_{G,  m , \mathcal{P} }^{\flat, cluster}$ on $U^{\flat, cluster}_{\mathcal{P}} \times \R$, the translation  condition  is satisfied and
$$\tilde{\underline{Z}}_{G,m   }^{\flat, < -\lambda^{\dagger}} = \underline{Z}_{G,m  }^{\flat, 0} \times \R_{<-\lambda^{\dagger}}, \quad \tilde{\underline{Z}}_{G,m   }^{\flat,  > \lambda^{\dagger}} =  \underline{Z}_{G,m }^{\flat,1} \times \R_{>\lambda^{\dagger}}, $$
for $\lambda^{\dagger} \gg 0$.


  
Now we define $ \tilde{\underline{Z}}^{\flat}_{G,m \sqcup \{ E(G)\} ,\mathcal{P}}$.
$  \tilde{\underline{Z}}_{G,  m \sqcup \{ E(G)\} , \mathcal{P}}^{\flat, prel}$ satisfies the translation condition and the conditions
$$\tilde{\underline{Z}}_{G,m \sqcup \{ E(G)\} }^{\flat, prel,< -\lambda^{\dagger}} = \underline{Z}_{G,m \sqcup \{ E(G)\} }^{\flat, 0} \times \R_{<-\lambda^{\dagger}}, \quad \tilde{\underline{Z}}_{G,m \sqcup \{ E(G)\} }^{\flat, prel, > \lambda^{\dagger}} =  \underline{Z}_{G,m \sqcup \{ E(G)\}}^{\flat,1} \times \R_{>\lambda^{\dagger}}$$  
for $\lambda^{\dagger} \gg 0$,
$$\partial  \tilde{\underline{Z}}_{G,  m \sqcup \{ E(G)\}  , \mathcal{P}}^{\flat , prel} = (-1)^l (   \tilde{\underline{Z}}_{G,  m  , \mathcal{P}}^{\flat, prel }  -  \tilde{\underline{Z}}_{G,  m  , \mathcal{P}}^{\flat } )+ \partial  \tilde{\underline{Z}}_{G,m \sqcup \{ E(G)\}}^{\flat, \dagger}  . $$

 

Moreover, using property  ( \ref{matching-flat-prelimary-iso}), we may take  $ \tilde{\underline{Z}}^{\flat,cob}_{G,m \sqcup \{ E(G)\}  , \mathcal{P}}$ such that  
 $ \tilde{\underline{Z}}^{\flat,  prel}_{G,m \sqcup \{ E(G)\} ,\mathcal{P}} =  \tilde{\underline{Z}}^{\flat,  prel}_{G,m \sqcup \{ E(G)\} ,\mathcal{P}'} $ on $ \underline{\digamma}_{G, {\mathcal{P}} }(\mathcal{P}', \rho_{G,m, \mathcal{P}'},   \eta_{G,m,\mathcal{P}'} )     $ for each $\mathcal{P}' \prec \mathcal{P}$. 
 It follows that  $ \tilde{\underline{Z}}^{\flat,  prel}_{G,m \sqcup \{ E(G)\} ,\mathcal{P}} $ is close in the $C^0$-topology to $ \tilde{\underline{Z}}^{\flat,  cluster}_{G,m \sqcup \{ E(G)\} ,\mathcal{P}} $ on $U^{\flat, cluster}_{\mathcal{P}} \times \R$. 

A standard transversality argument shows that we may define a chain  $ \tilde{\underline{Z}}_{G,m \sqcup \{ E(G)\} ,\mathcal{P}}$ close in the $C^0$-topology to $ \tilde{\underline{Z}}^{\flat,  prel}_{G,m \sqcup \{ E(G)\} ,\mathcal{P}} $ such that  it satisfies the  transversality condition, translation condition and the conditions
$\partial  \tilde{\underline{Z}}_{G,m \sqcup \{ E(G)\} ,\mathcal{P}}  = \partial  \tilde{\underline{Z}}^{\flat,  prel}_{G,m \sqcup \{ E(G)\} ,\mathcal{P}}   $,  $ \tilde{\underline{Z}}_{G,m \sqcup \{ E(G)\} ,\mathcal{P}}=  \tilde{\underline{Z}}_{G,m \sqcup \{ E(G)\} ,\mathcal{P}}^{cluster}$ on $U^{\flat, cluster}_{\mathcal{P}} \times \R$,   
$$\tilde{\underline{Z}}_{G,m \sqcup \{ E(G)\} }^{\flat, < -\lambda^{\dagger}} = \underline{Z}_{G,m \sqcup \{ E(G)\} }^{\flat, 0} \times \R_{<-\lambda^{\dagger}}, \quad \tilde{\underline{Z}}_{G,m \sqcup \{ E(G)\} }^{\flat,  > \lambda^{\dagger}} =  \underline{Z}_{G,m \sqcup \{ E(G)\}}^{\flat,1} \times \R_{>\lambda^{\dagger}}$$  
for $\lambda^{\dagger} \gg 0$.

\end{proof}

\subsection{Flat-Approaching $PS$-cycles}

Let $\mathbf{fr} \in \mathfrak{Fr}(M  )$ and let  $Z^{\flat, \circledcirc}$ be a flat annulus data.

A \emph{flat-approccing-annulus-data} $Z^{\wr, \circledcirc}$ converging to $(\mathbf{fr}, Z^{\flat, \circledcirc})$   is a nice annulus data  with the following properties:
\begin{itemize}
\item
$  \underline{Z}_{G_1}^{\wr, \circledcirc}  =   \underline{Z}_{G_2}^{\wr, \circledcirc} =0$;
\item
$\underline{Z}_{G_0}^{\wr, \circledcirc} $ satisfies the following convergence condition:
for each $\rho>0$
  \begin{equation} \label{convergence-annulus} 
 Dil_n( \text{exp}^{-1}(\underline{D}_{\rho/n, {z} } \cap \underline{Z}_{G_0}^{\wr, \circledcirc, (n)} ) ) \rightarrow \underline{D}_{\rho, (0,0,0)}^{\flat } \cap  \underline{Z}_{G_0}^{\flat, \circledcirc}
   \end{equation}
 in $C^{\infty}$-topology, uniformly for ${z}  \in M$. Here we have used $\mathbf{fr} $ to identify the tangent space of $z$ with $\R^3$ and  $Dil_n $ is the dilatation map of radio $n$.
\end{itemize}

Let $Z^{\flat} \in \mathcal{Z}^{\flat}( Z^{\flat, \circledcirc} )$ and $Z^{\wr, \circledcirc}$ converging to $(\mathbf{fr}, Z^{\flat, \circledcirc})$.

A flat-approaching $PS$-cycles  $\mathcal{Z}^{\wr}$  converging to $(\mathbf{fr} , Z^{\flat} )$ with annulus data $Z^{\wr, \circledcirc}$ is defined by a  collection of chains
\begin{equation} \label{flat-appr-complex}
    ( Z^{\wr , (n)}_{G,m, \mathcal{P}} )_{G,m, \mathcal{P}}
\end{equation}
$$Z^{\wr , (n)}_{G,m, \mathcal{P}} \in C_{|H(G)| + |m| }(M^{H(G)} , \mathfrak{o}_{H(G)}  )^{\text{Aut}(G,m, \mathcal{P})}$$ 
such that :
\begin{itemize}
\item  
 transversality condition is satisfied;
\item   forgetful compatibility condition is satisfied;
\item support  condition is satisfied;
    \item cluster condition  is satisfied;
    \item convergence  condition is satisfied .
\end{itemize}

We denote by ${\mathcal{Z}}^{\wr}(M|  {Z}^{\wr, \circledcirc})$ the space  of asymptotic flat $PS$-cycles. 

\subsubsection{Transversality condition}

We say that the transversality condition is satisfied if $Z^{\wr , (n)}_{G,m, \mathcal{P}}$ is transversal  to   $ \cap_{e \in E'} \pi_e^{-1}(Diag)$ for each subset $E' \subset E^{in }(G , \mathcal{P})  \setminus E^l$.

\subsubsection{Forgetful Compatibility}

The multi-loop space  $\underline{M}_{G,\mathcal{P}}$ associated to $(G, \mathcal{P}) \in \mathfrak{G}^{part}$  is the set of arrays $(( {w}_v , t_v) )_{v \in V(G)}  $  where  
  ${w}_v : \mathbb{S}^1 \rightarrow M$,  $t_v : H_v  \hookrightarrow \mathbb{S}^1$.

Set $Supp({w}_v,t_v)= {w}_v(\mathbb{S}^1)$ for each $v \in V(G)$.

We say that the collection of chains (\ref{flat-appr-complex}) are forgetful compatible if there exists a collections of  chains 
\begin{equation} \label{flat-appr-complex-loop}
    (\underline{Z}^{\wr , (n)}_{G,m, \mathcal{P}} )_{G,m, \mathcal{P}}
\end{equation}
$$\underline{Z}^{\wr , (n)}_{G,m, \mathcal{P}} \in C_{|H(G)| + |m| }(\mathfrak{L}_{\hat{V}(G)}M , \mathfrak{o}_{H(G)}  )^{\text{Aut}(G,m, \mathcal{P})}$$
such that
$$ev_*( \underline{Z}^{\wr , (n)}_{G,m, \mathcal{P}}    )=Z^{\wr , (n)}_{G,m, \mathcal{P}}$$
and for each external edge $e \in E^{ex}(G)$ we have
\begin{itemize}
\item 
    \begin{equation} \label{flat-appr-stable-forget-condition}    
     \underline{Z}_{G,m, \mathcal{P}} = \underline{Z}_{G',m'} \times_{\textbf{forget}_e} \mathfrak{L}_{\hat{V}(G)}( M)  .
     \end{equation}
     when $G$ is stable after removing $e $;     
       \item in the case $G$ is unstable after removing $e $ and $e$ is removable  we require
 \begin{equation} \label{flat-appr-removible-forget-condition}
 \underline{Z}^{\wr , (n)}_{G,m, \mathcal{P}}  =  \underline{Z}^{\wr , (n)}_{G',m', \mathcal{P}} \times \underline{Z}_{G_e}^{\wr , \circledcirc} 
 \end{equation}
 when $G_e \in Ann0$, and
 \begin{equation} \label{flat-appr-removible-forget-condition2}
 \underline{Z}^{\wr , (n)}_{G,m, \mathcal{P}} =  \underline{Z}^{\wr , (n)}_{G',m', \mathcal{P}} \times \underline{Z}_{Disk0}  
 \end{equation}  
 when $G_e= Disk0$.
      \item In the case  $e$ is not removable we require that $ \underline{Z}^{\wr , (n)}_{G,m, \mathcal{P}}$ is close in the $C^1$-topology to $  \underline{Z}^{\wr , (n)}_{G',m', \mathcal{P}}\times \underline{Z}_{Disk0}^{}$:
 \begin{equation} \label{flat-appr-unstable-forget-condition}
 \underline{Z}^{\wr , (n)}_{G,m, \mathcal{P}} \approx \underline{Z}^{\wr , (n)}_{G',m', \mathcal{P}} \times \underline{Z}_{Disk0}^{} .
 \end{equation}
  \end{itemize}

\subsubsection{Support Condition}

Take $(G, \mathcal{P}) \in \mathfrak{G}^{part}$. For $\rho >0$ define the set  $\underline{M}_{G, \mathcal{P}}^{< \rho}$ as the set of elements
$( w_v , t_v)_v \in \underline{M}_{G, \mathcal{P}}$  such that
\begin{itemize}
 \item
$ Diameter ( \sqcup_{v \in V_c} w_v(\mathbb{S}^1) ) < \rho$  when $c \in  Comp_0$  ;
\end{itemize}

We say that the support condition is satisfied for  $(G,m, \mathcal{P}) \in \mathfrak{G}^{part}_*  $ if  there exists  a  real positive number $\rho^{\dagger}$ such that
$$ Supp(  \underline{Z}_{G,m, \mathcal{P}}^{\wr, (n)} )  \subset  \underline{M}_{G, \mathcal{P}}^{<\rho^{\dagger}/n} \quad \forall n .$$

\subsubsection{Cluster Condition}

Denote by $Diag_{\mathcal{P}}$  the set of $\mathbf{z} = (z_v)_{v \in V(G)} \in M^{V(G)}$ such that $z_{v_1} =z_{v_2}$ when $v_1 \sim_{\mathcal{P}} v_2$. 



\begin{definition}

Let $\mathcal{P} $ be a partition of $G \in \mathfrak{G}$. For  $  \mathcal{P}^{\triangleright} $ with  $\mathcal{P}  \prec  \mathcal{P}^{\triangleright} $ and $\eta>0$  denote
by $ Cut_{\mathcal{P}^{\triangleright} , \eta} ( Diag_{\mathcal{P} })$ the set of $\mathbf{z}  \in Diag_{\mathcal{P} }$ such that
$d ( \mathbf{z}, Diag_{\mathcal{P}^{\triangleright, \prime}}  ) > \eta  $ for each $\mathcal{P}^{\triangleright, \prime}$ with $\mathcal{P}  \prec \mathcal{P}^{\triangleright, \prime} \preccurlyeq \mathcal{P}^{\triangleright} $.

\end{definition}


For $z \in M$ and $\rho>0$, denote by $D_{\rho, z}$ the disk of radius $\rho$ with center $z$.

For $\mathbf{z} = (z_{v})_{v \in V(\mathcal{P}) } \in M^{V(\mathcal{P})}$, 
let $\underline{D}_{\rho, \mathbf{z}}$ be
the set of elements $({w}_v,t_v)_v  \in  \underline{M}_{G,\mathcal{P}}$
such that 
\begin{equation} \label{shring-condition-loop0}
{w}_v(\mathbb{S}^1) \subset D_{\rho, z_{ v}} \quad    \forall  v \in V(\mathcal{P}).
\end{equation}

For $\mathcal{P} \prec \mathcal{P}^{\triangleright} $,
for each $\rho>0$ there exists  $\eta$ big enough such that 
\begin{equation} \label{disjoint-disks}
  \mathbf{z} \in  Cut_{\mathcal{P}^{\triangleright} , \eta} ( Diag_{\mathcal{P} }),  v_1 \nsim_{\mathcal{P}^{\triangleright}} v_2 \Longrightarrow  {D}_{\rho, z_{ v_1}} \cap {D}_{\rho, z_{ v_2}} = \emptyset.
 \end{equation}


Property (\ref{disjoint-disks}) tells us that we have an inclusion of sets
\begin{equation} \label{inclusion-disks-partion-loop}
     \underline{D}_{\rho, \mathbf{z}} \hookrightarrow   \underline{M}_{G, \mathcal{P}^{\triangleright} } 
\end{equation}
which is compatible with the cycle condition in the sense that $\delta_e \neq 0$ on   the chains of $ \underline{D}_{\rho, \mathbf{z}}$ for  $e \notin  E(G, \mathcal{P})$.

Let $(G,m,\mathcal{P}) \in \mathfrak{G}_*^{part} $, $\rho, \eta> 0$.
We say that the cluster condition is satisfied for $(G,m, \mathcal{P}, \rho, \eta)$ if 
\begin{equation} \label{matching-asintontically}
\underline{Z}^{\wr, (n)}_{G,m, \mathcal{P}} \cap \underline{D}_{\rho/n ,  \mathbf{z}}  = \underline{Z}^{\wr, (n)}_{G,m, \mathcal{P}^{\triangleright}}  \cap \underline{D}_{\rho/n , \mathbf{z}},
  \end{equation}
for each $\mathcal{P}^{\triangleright} $ with $\mathcal{P} \prec \mathcal{P}^{\triangleright} $ and $ \mathbf{z} \in  Cut_{\mathcal{P}^{\triangleright} , \eta/n} ( Diag_{\mathcal{P} }) $.

We say that  the cluster condition is satisfied for
$(G,m, \mathcal{P} , \rho)$ if there exists $\eta$ big enough such that the cluster condition is satisfied for
$(G,m,  \mathcal{P}, \rho, \eta)$ .





\subsubsection{Convergence Condition}

Use $\mathbf{fr} $ to identify $T_zM$ with $\R^3$ for each $z \in M$.

Denote by $Dil_n$  the dilation of ratio $n$ of the tangent space on $\mathbf{z} $.

We say that the convergence condition is satisfied for $(G,m,\mathcal{P})$ if    for each $\rho>0$
  \begin{equation} \label{convergence-loop} 
 Dil_n( \text{exp}_{\mathbf{z}}^{-1}(\underline{D}_{\rho/n, \mathbf{z} } \cap \underline{Z}_{G,m,\mathcal{P}}^{\wr, (n)} ) ) \rightarrow \underline{D}_{\rho, \mathbf{0}}^{\flat} \cap  {\underline{Z}}^{\flat}_{G,m, \mathcal{P}} 
   \end{equation}
   in the $C^{\infty}$-topology, 
 uniformly for $\mathbf{z}  \in  Diag_{\mathcal{P}}$.

We say that $Z$ satisfies the convergence condition if the convergence condition is satisfied for each $(G,m,\mathcal{P})$.

\subsubsection{Isotopies}

Let $\tilde{Z}^{\flat, \circledcirc}$ be an isotopy between flat annulus data ${Z}^{\flat, \circledcirc,0}$ and  ${Z}^{\flat, \circledcirc,1}$. Let $\tilde{Z}^{\flat} \in \tilde{\mathcal{Z}}(M|  \tilde{Z}^{\flat, \circledcirc})$ be an isotopy of flat $PS$-cycles  $Z^{\flat,0} \in \mathcal{Z}(M|  {Z}^{\flat, \circledcirc,0})$ and $Z^{\flat,1} \in \mathcal{Z}(M|  {Z}^{\flat, \circledcirc,1})$.

Let $\mathbf{fr}^0$ and $\mathbf{fr}^1$ be frames of $M$. Let $\tilde{\mathbf{fr}}$ be an isotopy  between  $\mathbf{fr}^0$ and $\mathbf{fr}^1$.

Let  ${Z}^{\wr, \circledcirc,0}$ and  ${Z}^{\wr , \circledcirc,1}$ be flat approaching annulus data converging to $({Z}^{\flat, \circledcirc,0}, \mathbf{fr}^0), ({Z}^{\flat, \circledcirc,1},\mathbf{fr}^1)$ respectively.

Let $\tilde{Z}^{\wr, \circledcirc}$ be an isotopy between  ${Z}^{\wr, \circledcirc,0}$ and  ${Z}^{\wr , \circledcirc,1}$ converging to $( \tilde{Z}^{\flat, \circledcirc} , \tilde{\mathbf{fr}}  )$ . Let $Z^{\wr,0} \in \mathcal{Z}(M|  {Z}^{\circledcirc,0})$, $Z^{\wr,1} \in \mathcal{Z}(M|  {Z}^{\circledcirc,1})$ converging  to $(Z^{\flat,0 }, \mathbf{fr}^0), (Z^{\flat, 1 },\mathbf{fr}^1)$ respectively.

An isotopy  
of flat approching $PS$-cycles
  between $Z^{\wr,0}$ and $Z^{\wr,1}$ converging to $(\tilde{Z}^{\flat} , \tilde{\mathbf{fr}})$   
   consists in a collection of chains
\begin{equation} \label{collection-chains-isotopy-approching}
\tilde{Z}^{\wr}=  ( \tilde{Z}_{G,m, \mathcal{P}}^{\wr} )_{G,m, \mathcal{P}} ,
\end{equation}
where
$$\tilde{Z}_{G,m, \mathcal{P}}^{\wr} \in C_{|H(G)| + |m|+1} ( M^{H(G) \times \R } , \mathfrak{o}_{H(G)})^{\text{Aut}(G,m, \mathcal{P})}  ,$$
such that 
\begin{itemize}
\item there exists  $T >0$  big enough such that
$$ \tilde{Z}^{\wr, <-\lambda^{\dagger}} =  Z^{\wr,0}  \times  \R_{<-\lambda^{\dagger}}  , \quad \tilde{Z}^{\wr, >\lambda^{\dagger}} =  Z^{\wr,1}  \times \R_{>\lambda^{\dagger}}   ;$$

 \item 
the transversality condition is satisfied;


\item the forgetful compatibility property is satisfied;

\item the cycle condition is satisfied;

\item the support property is satisfied;

\item the cluster condition is satisfied;

\item the convergence condition is satisfied.


\end{itemize}

We denote by $\tilde{\mathcal{Z}}^{\wr}(M|  \tilde{Z}^{\wr, \circledcirc})$ the space of isotopies of asymptotic flat $PS$-cycles.

The transversality condition, forgetful compatibility condition and cycle condition are defined analogously to section  \ref{isotopies-section}.

We say that the support condition is satisfied for  $(G,m, \mathcal{P})$ if there exists a  real positive number $\rho_{G, m}^{\dagger}$ such that
$$ Supp(  \tilde{\underline{Z}}_{G,m}^{(n)} )  \subset   \underline{M}_{G, \mathcal{P}}^{<\rho_{G,m}^{\dagger}/n}  \times \R.$$

The cluster condition is the one parameter version of the cluster condition (\ref{matching-asintontically}).  
Let $(G,m,\mathcal{P}) \in \mathfrak{G}_*^{part} $, $\rho, \eta> 0$.
We say that the cluster condition is satisfied for $(G,m, \mathcal{P}, \rho, \eta)$ if 
\begin{equation} \label{matching-asintontically-isotopy}
\underline{Z}^{\wr, (n)}_{G,m, \mathcal{P}} \cap (\underline{D}_{\rho/n ,  \mathbf{z}} \times \R)  = \underline{Z}^{\wr, (n)}_{G,m, \mathcal{P}^{\triangleright}}  \cap (\underline{D}_{\rho/n , \mathbf{z}} \times \R),
  \end{equation}
for each $\mathcal{P}^{\triangleright} $ with $\mathcal{P} \prec \mathcal{P}^{\triangleright} $ and $ \mathbf{z} \in  Cut_{\mathcal{P}^{\triangleright} , \eta/n} ( Diag_{\mathcal{P} }) $.

The convergence condition is  the one parameter version of  the convergence condition  (\ref{convergence-loop}).
We say that 
 $(\underline{\tilde{Z}}^n_{G,m,\mathcal{P}})_n$ converges to $  ({\underline{\tilde{Z}}}^{\flat}_{G,m,\mathcal{P}}, \tilde{\mathbf{fr}})$, if
  \begin{equation} \label{convergence-loop-isotopy} 
 Dil_n( \text{exp}_{\mathbf{z}}^{-1}((\underline{D}_{\rho/n, \mathbf{z} } \times \R) \cap \tilde{\underline{Z}}_{G,m,\mathcal{P}}^n ) ) \rightarrow(   \underline{D}_{\rho, \mathbf{0}}^{\flat} \times \R ) \cap \tilde{\underline{Z}}^{\flat}_{G,m, \mathcal{P}} 
   \end{equation}
in the $C^{\infty}$-topology, uniformly for $\mathbf{z}  \in  Diag_{\mathcal{P}}$,    for each $\rho>0$ .

\subsubsection{Existence}

Let $(G,m) \in \mathfrak{G}_*$.

Set
$$ \underline{M}_{G, \mathcal{P} }(\rho) = \bigsqcup_{ \mathbf{z} \in  Diag_{\mathcal{P} }}   \underline{D}_{ \rho, \mathbf{z}}  . $$
and,
for $\mathcal{P}' \prec \mathcal{P}, \rho', \eta'>0$,
$$ \underline{M}_{G, \mathcal{P} }(\mathcal{P} ', \rho', \eta') = \bigsqcup_{ \mathbf{z}'  \in Cut_{\mathcal{P} , \eta'} ( Diag_{\mathcal{P} '}) }   \underline{D}_{ \rho', \mathbf{z}'}  \subset  \underline{M}_{G, {\mathcal{P}'} }    . $$

From (\ref{disjoint-disks-asymptotic}) we have that, for $\eta' $ big enough, there is a natural inclusion of sets
\begin{equation} \label{inclusion-partion-loop}
     \underline{M}_{G, \mathcal{P} }(\mathcal{P} ', \rho', \eta') \hookrightarrow   \underline{M}_{G, \mathcal{P} } . 
\end{equation}
 compatible with the cycle condition.



Let  $(\rho_{\mathcal{P}}$,   $\eta_{\mathcal{P}})_{\mathcal{P}}$, $\rho^{\dagger}$ be a collection of real positive numbers as in Lemma \ref{cluster-flat-lemma}.

Set
\begin{equation} \label{set-U-asymptotic}
    U^{\wr, cluster, (n)}_{\mathcal{P}} =\bigsqcup_{\mathcal{P}' \prec \mathcal{P}} \underline{M}_{G,\mathcal{P}}(\mathcal{P}', \rho_{\mathcal{P}'}/n,   \eta_{\mathcal{P}'}/n ).
\end{equation}

Conditions (\ref{covering-boundary-condition-nuclear}), (\ref{not-overlapping-condition-nuclear}) and  (\ref{disjoint-disks-nuclear2}) imply the following conditions:  
\begin{equation} \label{covering-boundary-condition-asymptotic}
  \underline{M}_{G,\mathcal{P}}^{ <  \rho^{\dagger}/n }  \subset  \underline{M}_{G, \mathcal{P} }(\rho/n) \sqcup   U^{\wr, cluster, (n)}_{\mathcal{P}},
\end{equation} 

   \begin{equation} \label{not-overlapping-condition-asymptotic}
     \underline{M}_{G, {\mathcal{P}} }(\mathcal{P}_1, \rho_{\mathcal{P}_1},   \eta_{\mathcal{P}_1} ) \cap  \underline{M}_{G, {\mathcal{P}} }(\mathcal{P}_2, \rho_{ \mathcal{P}_2},   \eta_{\mathcal{P}_2} ) 
   = \emptyset
    \end{equation}   
when $\mathcal{P}_1 \prec \mathcal{P}, \mathcal{P}_2 \prec \mathcal{P}, \mathcal{P}_1 \npreceq \mathcal{P}_2 , \mathcal{P}_2 \npreceq \mathcal{P}_1$.

 \begin{equation} \label{disjoint-disks-asymptotic}
  \mathbf{z} \in Cut_{\mathcal{P}^{\triangleright}, \eta_{\mathcal{P}}} ( Diag_{\mathcal{P}}), v_1 \sim_{\mathcal{P}^{\triangleright}} v_2 ,   v_1 \nsim_{\mathcal{P}} v_2 \Longrightarrow  {D}_{\rho_{ \mathcal{P}}, z_{ v_1}} \cap {D}_{\rho_{\mathcal{P}}, z_{ v_2}} = \emptyset .
\end{equation}
for each $\mathcal{P}^{\triangleright}$ with $\mathcal{P} \prec \mathcal{P}^{\triangleright}$.

\begin{proposition}   \label{existence-asymptotic-proposition}
For each flat $PS$-cycle $Z^{\flat}$ and frame $\mathbf{fr}$ of $M$,
there exists a  flat-approaching $PS$-cycle $Z^{\wr}$ converging  to $(Z^{\flat }, \mathbf{fr})$.
\end{proposition}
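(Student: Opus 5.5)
We construct $Z^{\wr}$ by induction on the partially ordered set of decorated graphs $(G,m)$, as in the existence proofs for flat $PS$-cycles. Inside each graph we induct on partitions $\mathcal{P}$ ordered by $\prec$, starting from the minimal (discrete) partition. For each $n$ we build $\underline{Z}^{\wr,(n)}_{G,m,\mathcal{P}}$ by transplanting the flat chain $\underline{Z}^{\flat}_{G,m,\mathcal{P}}$ to $M$ through the frame. For $\mathbf{z}\in Diag_{\mathcal{P}}$ we use $\mathbf{fr}$ to identify $T_{z_v}M\cong\R^3$. We then take the image of $\underline{D}^{\flat}_{\rho_{\mathcal{P}},\mathbf{0}}\cap\underline{Z}^{\flat}_{G,m,\mathcal{P}}$ under $\exp_{\mathbf{z}}\circ Dil_{1/n}$ and integrate over $\mathbf{z}\in Diag_{\mathcal{P}}$. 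The translation condition for $Z^{\flat}$ is what makes this family over $Diag_{\mathcal{P}}$ well defined (up to triangulations), since the $Transl_{\mathcal{P}}$-invariant chain is a product of a transverse slice with $(\R^3)^{\mathcal{P}}$. The support condition for $Z^{\flat}$ with radius $\rho^{\dagger}$ gives the support condition for $Z^{\wr}$ with radius $\rho^{\dagger}/n$. We call the result $\underline{Z}^{\wr,(n),prel}_{G,m,\mathcal{P}}$. By construction it satisfies the convergence condition (\ref{convergence-loop}) up to an error that is $O(1/n)$ in $C^\infty$. This error comes from the curvature of $\exp$ and from the non-parallelism of $\mathbf{fr}$ and of the flat structure of $\mathfrak{g}$.

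Next we fix the cluster data. We choose $(\rho_{\mathcal{P}},\eta_{\mathcal{P}})_{\mathcal{P}}$ and $\rho^{\dagger}$ for $Z^{\flat}$ as in Lemma \ref{cluster-flat-lemma}, so that (\ref{covering-boundary-condition-asymptotic}), (\ref{not-overlapping-condition-asymptotic}) and (\ref{disjoint-disks-asymptotic}) hold after rescaling by $1/n$. On $U^{\wr,cluster,(n)}_{\mathcal{P}}$ we define $\underline{Z}^{\wr,(n),cluster}_{G,m,\mathcal{P}}$ by gluing the already constructed chains $\underline{Z}^{\wr,(n)}_{G,m,\mathcal{P}'}$ for $\mathcal{P}'\prec\mathcal{P}$. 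These definitions are compatible for different $\mathcal{P}'$ by (\ref{not-overlapping-condition-asymptotic}) and the inductive cluster condition. By (\ref{disjoint-disks-asymptotic}), together with the inductive cycle condition, the glued chain has boundary $\sum_i(-1)^i\underline{Z}^{\wr}_{G,\partial_i m,\mathcal{P}}+\sum\delta_{e'}\underline{Z}^{\wr}_{G',m',\mathcal{P}}$ there. The cluster condition for $Z^{\flat}$ shows that, after rescaling, the prel chain and the cluster chain are $C^0$-close on this region for $n$ large.

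We then run the same case analysis as in the flat existence lemmas. If some external edge is removable, we define the chain through the forgetful compatibility relations (\ref{flat-appr-stable-forget-condition}), (\ref{flat-appr-removible-forget-condition}) and (\ref{flat-appr-removible-forget-condition2}). The annulus convergence (\ref{convergence-annulus}) guarantees that these chains converge to the corresponding flat ones. If all external edges are non-removable, we start from $\underline{Z}^{\wr}_{forget_e(G,m),\mathcal{P}}\times\underline{Z}_{Disk0}$ and perturb. If $E^{ex}(G)=\emptyset$ and $H(G)\neq\emptyset$, we use the $m\sqcup\{E(G)\}$ cobordism trick from Lemma \ref{nice-extension}. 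If $H(G)=\emptyset$, we take the zero chain, or $1$ for $G^{\heartsuit}$, as in Proposition \ref{existence-flat-proposition}. Note that $\delta_{e'}$ of lower chains vanishes there only in the limit, so this case needs checking; a possibility is to absorb the residue into the $n$-dependence via the obstruction argument of Lemma \ref{obstruction-lemma}. In each case the standard transversality argument gives a chain that is $C^0$-close (in rescaled coordinates, $C^\infty$-close) to the prel chain. It agrees with the cluster chain on $U^{\wr,cluster,(n)}_{\mathcal{P}}$ and satisfies transversality and the exact cycle condition.

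The main obstacle is keeping the convergence condition under these corrections. The perturbations must become small in the rescaled $C^\infty$ norm as $n\to\infty$, uniformly in $\mathbf{z}\in Diag_{\mathcal{P}}$. To achieve this, I would do every transversality perturbation in the rescaled picture $Dil_n\circ\exp^{-1}_{\mathbf{z}}$ over the compact parameter space $Diag_{\mathcal{P}}$. There the defect of the prel chain from satisfying the cycle and cluster conditions is $O(1/n)$, so a correction of size $O(1/n)$ suffices. Transversality for $Z^{\flat}$ is an open condition and is uniform in $\mathbf{z}$ by translation invariance, so it persists for $n$ large. The correction can therefore be taken within distance $O(1/n)$ without destroying transversality. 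For the finitely many small $n$ we simply choose any extension, which is allowed because convergence is only an asymptotic requirement.
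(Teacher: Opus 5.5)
Your core construction is the paper's: induct on $(G,m)$ and then on partitions, glue $\underline{Z}^{\wr,cluster,(n)}_{G,m,\mathcal{P}}$ on $U^{\wr,cluster,(n)}_{\mathcal{P}}$ from the finer partitions, take as approximate solution the flat chain transplanted by $\exp_{\mathbf{z}}\circ Dil_{1/n}$ through $\mathbf{fr}$, and correct it by a perturbation that vanishes in the rescaled picture. Sweeping a transverse slice over $Diag_{\mathcal{P}}$ is a reasonable way to turn the paper's local chains $\underline{Z}^{prel,\mathbf{z},(n)}_{G,m,\mathcal{P}}$ into one global chain. Note, however, that you must sweep the slice, not $\underline{D}^{\flat}_{\rho_{\mathcal{P}},\mathbf{0}}\cap\underline{Z}^{\flat}_{G,m,\mathcal{P}}$ itself, or the dimension is off by $3|\mathcal{P}|$.

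The gap is in your third paragraph, where in two cases you switch to the starting chains of the flat existence lemmas.

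\emph{Non-removable edges.} If all external edges are non-removable and you start from $\underline{Z}^{\wr}_{forget_e(G,m),\mathcal{P}}\times\underline{Z}_{Disk0}$, the rescaled limit of that chain is $\underline{Z}^{\flat}_{forget_e(G,m),\mathcal{P}}\times\underline{Z}^{\flat}_{Disk0}$. This is only $C^1$-close to $\underline{Z}^{\flat}_{G,m,\mathcal{P}}$ and does not itself satisfy the cycle condition. So its defect does not tend to zero, and a correction of size $O(1/n)$ does not suffice. A correction of fixed size gives no reason to converge to $\underline{Z}^{\flat}_{G,m,\mathcal{P}}$, as (\ref{convergence-loop}) demands, unless you choose it to converge to the specific flat correction, which is just transplanting $\underline{Z}^{\flat}_{G,m,\mathcal{P}}$ again.

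\emph{The $m\sqcup\{E(G)\}$ trick.} It fails the same way. Its primitive converges to $\underline{Z}^{\flat,prel}_{G,m,\mathcal{P}}$, which differs from $\underline{Z}^{\flat}_{G,m,\mathcal{P}}$ by the flat correction. Likewise, the cobordism chain has no reason to converge to $\underline{Z}^{\flat}_{G,m\sqcup\{E(G)\},\mathcal{P}}$.

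Your claim in the last paragraph that the defect is $O(1/n)$ holds only for the transplanted chain. The fix, which is what the paper does, is to use the transplanted flat chain in every case not forced by forgetful compatibility. Since $Z^{\flat}$ satisfies the cycle condition exactly, and also (\ref{flat-unstable-forget-condition}), the transplant already supplies the primitive the cobordism trick was designed to produce, and it satisfies (\ref{flat-appr-unstable-forget-condition}) for $n$ large. Your treatment of removable edges via (\ref{convergence-annulus}) is correct, and it is a point the paper's proof leaves implicit.

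The case $H(G)=\emptyset$ is left open, and Lemma \ref{obstruction-lemma} is not the right tool. It concerns isotopies of nice $PS$-cycles and the $n$-independence of an obstruction, not the construction of a single flat-approaching cycle.

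\emph{When $V(G)\neq\emptyset$.} You can argue as in Subsection \ref{obstruction-section}: shrink the loops of the right-hand side of the cycle condition to constant loops. After rescaling, that right-hand side converges to the flat one, which vanishes. Hence the resulting chain converges to $\underline{Z}^{\flat}_{G,m}=0$.

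\emph{When $V(G)=\emptyset$.} The right-hand side is a rational number that must vanish exactly, not just approximately, so you need a separate argument that it does for $n$ large. One possibility is that it is built from transverse intersection counts of lower chains, which are stable under the rescaled perturbation and vanish in the flat limit. The paper's uniform argument does not spell this out either.
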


\begin{proof}

Assume we have defined $Z_{\prec G}^{\wr}$. We want to show that there exists $Z_{\preccurlyeq G}^{\wr}$ extending $Z_{\prec G}^{\wr}$ .




Assume  we have defined   $ \underline{Z}^{\wr, (n)}_{G,  m }  $ for $|m| <l$. Take $m$ with $|m|=l$.

Take $\rho^{\dagger}$ big enough such that the support property of radius $\rho^{\dagger}$ is satisfied for  $\underline{{Z}}^{\wr} $ for $(G',m'))$ when $(G',m') \prec (G,m)$ and for  $\underline{{Z}}^{\flat} $  for $(G',m')$ when $(G',m') \preccurlyeq (G,m)$.

We want to define $(\underline{Z}^{\wr , (n)}_{G,m, \mathcal{P}}, \rho_{\mathcal{P}},   \eta_{\mathcal{P}})_{\mathcal{P}}$ such that

\begin{itemize}

\item  transversality condition, convergence condition and cycle condition hold for   $\underline{Z}^{\wr}_{G,m, \mathcal{P}}$;




\item Conditions (\ref{covering-boundary-condition-nuclear}), (\ref{not-overlapping-condition-nuclear}) and  (\ref{disjoint-disks-nuclear2}) are satisfied;

\item the support condition of radius $\rho^{\dagger}$ is satisfied for  $\underline{Z}^{\wr}$ for  $(G,m)$ ;

\item $\underline{Z}^{\flat}$  satisfies the cluster condition for
\begin{equation}  \label{matching-preview-asymptotic-flat}
    (G',m', \mathcal{P},  \rho_{ \mathcal{P}}, \eta_{ \mathcal{P}})
    \end{equation}
when $(G',m') \preccurlyeq (G,m)$;

\item $\underline{Z}^{\wr}$  satisfies the cluster condition for
\begin{equation}  \label{matching-preview-asymptotic}
    (G',m', \mathcal{P},  \rho_{ \mathcal{P}}, \eta_{\mathcal{P}})
    \end{equation}
when $(G',m') \preccurlyeq (G,m)$.




\end{itemize}

Let $\mathcal{P}$ be a partition of $G$ and assume we have defined $(\underline{Z}^{\wr, (n)}_{G,m, \mathcal{P}'}, \rho_{\mathcal{P}'},   \eta_{\mathcal{P}'})_{\mathcal{P}' \prec \mathcal{P}}$.


Take $\rho_{\mathcal{P}}$ big enough such that (\ref{covering-boundary-condition-nuclear}) is satisfied,
then take $\eta_{\mathcal{P}}$ big enough such that condition (\ref{disjoint-disks-nuclear2}) , (\ref{not-overlapping-condition-nuclear}) and (\ref{matching-inductive-lemma-flat}) are satisfied. 
We  also assume that (\ref{matching-preview-asymptotic}) is satisfied for $(G',m') \prec (G,m)$. 


Let $\underline{Z}^{  \wr, cluster, (n)}_{G,m ,\mathcal{P}} $ be the chain on $U^{cluster, (n)}_{\mathcal{P}}$ such that
\begin{equation} \label{definition-cluster-asimptotic}
 \underline{Z}_{G,  m  , \mathcal{P}}^{\wr, cluster, (n)}  =  \underline{Z}_{G,  m  , \mathcal{P}'}^{\wr , (n)} 
 \end{equation}
on $ \underline{M}_{G, {\mathcal{P}} }(\mathcal{P}', \rho_{G,m, \mathcal{P}'},   \eta_{G,m,\mathcal{P}'} )     $, for each $\mathcal{P}' \prec \mathcal{P}$.
Conditions (\ref{matching-preview-asymptotic})
 and  (\ref{not-overlapping-condition-asymptotic}) imply that conditions (\ref{definition-cluster-asimptotic}) are compatible for different $\mathcal{P}'$.



$ \underline{Z}_{G,  m  , \mathcal{P}}^{ cluster, (n)}$ satisfies the transversality condition.

For each $\rho >0$,
for $n \rightarrow \infty$, the set $Dil_n(\underline{D}_{\rho/n,\mathbf{z}} \cap U^{\wr, cluster, (n)}_{\mathcal{P}})$ converges in an obvious sense to the set $\underline{D}_{\rho,\mathbf{0}}^{\flat} \cap U^{\flat, cluster}_{\mathcal{P}}$, where $U^{\flat, cluster}_{\mathcal{P}}$ is the open set defined in (\ref{set-U-nuclear}).

It is easy to show that the convergence condition for $\mathcal{P}' \prec \mathcal{P}$ yields 
\begin{equation} \label{convergence-cluster}
 Dil_n (\text{exp}^{-1}_{\mathbf{z}} (\underline{D}_{\rho/n,\mathbf{z}} \cap \underline{Z}_{G,m , \mathcal{P}}^{\wr, cluster, (n)}  )) \rightarrow \underline{D}_{\rho,\mathbf{0}}^{\flat} \cap  \underline{Z}^{\flat , cluster}_{G,m, \mathcal{P}}   
 \end{equation}
uniformly for $\mathbf{z}  \in  Diag_{\mathcal{P}}$.


The inductive convergence condition yields
\begin{equation} \label{convergence-boundary}
Dil_n (\text{exp}^{-1}_{\mathbf{z}} (  \underline{D}_{\rho/n,\mathbf{z}} \cap (  \sum_i (-1)^i \underline{Z}_{G, \partial_i m , \mathcal{P}}^{\wr (n)} + \sum_{e'}  \delta_{e'} \underline{Z}_{G',m', \mathcal{P} }^{\wr,  (n)})) ) \rightarrow \underline{D}_{\rho,\mathbf{0}}^{\flat} \cap \partial  \underline{Z}^{\flat }_{G,m, \mathcal{P}}  .
 \end{equation}
uniformly for $\mathbf{z}  \in  Diag_{\mathcal{P}}$.

From (\ref{matching-preview-asymptotic}) and the inductive cycle condition we have  
\begin{equation} \label{boundary-cluster}
\partial \underline{Z}_{G,  m  , \mathcal{P}}^{\wr, cluster, (n)} = \sum_i (-1)^i \underline{Z}_{G, \partial_i m , \mathcal{P}}^{\wr,  (n)} + \sum_{e'}  \delta_{e'} \underline{Z}_{G',m', \mathcal{P} }^{\wr, (n)} 
\end{equation}
on $U^{\wr, cluster, (n)}_{\mathcal{P}}$.

For each $\mathbf{z} \in Diag_{\mathcal{P}}$, let   $\underline{Z}_{G,  m , \mathcal{P} }^{prel,\mathbf{z},(n)}$ be the chain  on $ \underline{D}_{\rho_{\mathcal{P}}/n,\mathbf{z}}$ defined by 
$$  \underline{Z}_{G,  m , \mathcal{P} }^{prel,\mathbf{z},(n)}= \text{exp}_{\mathbf{z}}(Dil_{1/n} (   \underline{D}_{\rho_{\mathcal{P}},\mathbf{0}}^{\flat} \cap  \underline{Z}^{\flat }_{G,m, \mathcal{P}}  ) ).$$

 

We claim that there exists a chain $\underline{Z}_{G,  m , \mathcal{P} }^{\wr, (n)}$ such that
\begin{itemize}
 \item  \begin{equation} \label{approx-existence}
 \underline{D}_{\rho_{\mathcal{P}}/n,\mathbf{z}} \cap \underline{Z}_{G,  m , \mathcal{P} }^{\wr, (n)} \approx \underline{Z}_{G,  m , \mathcal{P} }^{prel,\mathbf{z},(n)} 
 \end{equation}
where the error goes to zero faster than $1/n$ in the $C^k$-norm for each $k$;
\item   the cycle condition, the cluster condition, the transversality condition are satisfied.
\end{itemize}

 
  We observe that, in the case that $M$ is flat, the translation invariance condition on $Z^{\flat}$ implies  that the chains $\underline{Z}_{G,  m , \mathcal{P} }^{prel,\mathbf{z},(n)}$ matches on the overlapping regions  for different $\mathbf{z}$ . For not flat manifolds this is not anymore true. However the error  goes  to zero faster than $1/n$ in the $C^k$-norm for each $k$;

 Condition  (\ref{convergence-cluster}) yields
$$  \underline{Z}_{G,  m , \mathcal{P} }^{prel, \mathbf{z}, (n)} \approx \underline{Z}_{G,  m  , \mathcal{P}}^{\wr, cluster, (n)}   $$
 on $U^{\wr, cluster, (n)}_{\mathcal{P}}$
 and  condition (\ref{convergence-boundary}) yields
$$\partial \underline{Z}_{G,  m , \mathcal{P} }^{prel, \mathbf{z}, (n)} \approx \sum_i (-1)^i \underline{Z}_{G, \partial_i m , \mathcal{P}}^{\wr, (n)} + \sum_{e'}  \delta_{e'} \underline{Z}_{G',m', \mathcal{P} }^{\wr,  (n)}    .$$
In both relations the error goest to zero faster than $1/n$ in the $C^k$ norm for each $k$;

It follows that exists $\underline{Z}_{G,  m , \mathcal{P} }^{\wr, (n)}$ satisfying (\ref{approx-existence}) such that
\begin{itemize}
\item \begin{equation} \label{cluster-asi-construction}
\underline{Z}_{G,  m , \mathcal{P} }^{\wr, (n)} = \underline{Z}_{G,  m  , \mathcal{P}}^{\wr, cluster, (n)}  
\end{equation}
on $U^{\wr, cluster, (n)}_{\mathcal{P}}$,
\item $$ \partial \underline{Z}_{G,  m , \mathcal{P} }^{\wr, (n)} = \sum_i (-1)^i \underline{Z}_{G, \partial_i m , \mathcal{P}}^{\wr (n)} + \sum_{e'}  \delta_{e'} \underline{Z}_{G',m', \mathcal{P} }^{\wr,  (n)}   ,$$
\item the convergence condition is satisfied.
\end{itemize}

The cluster condition follows from (\ref{cluster-asi-construction}) as in Lemma \ref{cluster-flat-lemma}.


\end{proof}

\subsubsection{Isotopies}

\begin{proposition} \label{unicity-asymptotic-proposition}







Let  $\tilde{Z}^{\flat}$ be an isotopy of flat $PS$-cycles between  $Z^{\flat, 0}$ and $Z^{\flat, 1}$.   Let $\tilde{\mathbf{fr}}$  be an isotopy of frames between  $\mathbf{fr}^0$ and $\mathbf{fr}^1$.   Let   $Z^{\wr,0}, Z^{\wr,1}$  be flat-approaching $PS$-cycles converging  to $(Z^{\flat,0 }, \mathbf{fr}^0), (Z^{\flat, 1 },\mathbf{fr}^1)$ respectively.
 
There exists an isotopy of flat-approaching $PS$-cycles $\tilde{Z}^{\wr}$ between $Z^{\wr,0}$ and  $Z^{\wr,1}$ converging to  $(\tilde{Z}^{\flat},  \tilde{\mathbf{fr}}  )$.

\end{proposition}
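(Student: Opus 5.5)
The plan is to run the one-parameter version of the inductive construction in the proof of Proposition \ref{existence-asymptotic-proposition}, combining it with the boundary-matching devices from Lemma \ref{not-removible-lemma-isotopies} and Lemma \ref{nice-extension-isotopy}. I would proceed by induction on $(G,m)$ along the partial order $\prec$. For fixed $(G,m)$ I would then induct on the partition $\mathcal{P}$ with respect to $\prec$. At each stage the goal is to construct $\tilde{\underline{Z}}^{\wr,(n)}_{G,m,\mathcal{P}}$ on $\mathfrak{L}_{\hat{V}(G)}M\times\R$, together with constants $(\rho_{\mathcal{P}},\eta_{\mathcal{P}})$ chosen as in Lemma \ref{cluster-flat-lemma-iso}. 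These constants must also be large enough that four cluster conditions hold: for $\tilde{Z}^{\flat}$, for $Z^{\wr,0}$ and $Z^{\wr,1}$ (these are needed near $\lambda\to\pm\infty$), and for the already-constructed $\tilde{Z}^{\wr}_{G',m'}$ with $(G',m')\prec(G,m)$. I would also fix $\rho^{\dagger}$ so that the support condition of radius $\rho^{\dagger}/n$ holds for all of these data.

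When $E^{ex}(G)$ contains a removable edge, $\tilde{\underline{Z}}^{\wr,(n)}_{G,m,\mathcal{P}}$ is forced by forgetful compatibility, using the isotopy of annulus data $\tilde{Z}^{\wr,\circledcirc}$. In the general case I would proceed in four steps. First, glue the chains $\tilde{\underline{Z}}^{\wr,(n)}_{G,m,\mathcal{P}'}$ for $\mathcal{P}'\prec\mathcal{P}$ into $\tilde{\underline{Z}}^{\wr,cluster,(n)}_{G,m,\mathcal{P}}$ on $U^{\wr,cluster,(n)}_{\mathcal{P}}\times\R$; compatibility follows from (\ref{not-overlapping-condition-asymptotic}), and the boundary identity holds there by the inductive cycle condition. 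Second, build local models
$$\tilde{\underline{Z}}^{prel,\mathbf{z},(n)}_{G,m,\mathcal{P}}=\exp_{\mathbf{z}}\bigl(Dil_{1/n}((\underline{D}^{\flat}_{\rho_{\mathcal{P}},\mathbf{0}}\times\R)\cap\tilde{\underline{Z}}^{\flat}_{G,m,\mathcal{P}})\bigr),$$
where at parameter $\lambda$ the tangent space is identified with $\R^3$ by $\tilde{\mathbf{fr}}^{\lambda}$. The translation condition on $\tilde{Z}^{\flat}$ makes these models agree on overlaps up to an error that is $o(1/n)$ in every $C^k$ norm. Third, modify each model to agree with $Z^{\wr,0}\times\R_{<-\lambda^{\dagger}}$ and $Z^{\wr,1}\times\R_{>\lambda^{\dagger}}$. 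This is possible because $Z^{\wr,i}$ converges to $(Z^{\flat,i},\mathbf{fr}^i)$ and $\tilde{Z}^{\flat}$ is constant in $\lambda$ near the ends, so the discrepancy is again $o(1/n)$. Fourth, apply the standard transversality argument to obtain a chain that equals the cluster chain on $U^{\wr,cluster,(n)}_{\mathcal{P}}\times\R$, equals the end products for $|\lambda|>\lambda^{\dagger}$, has the prescribed boundary $\sum_i(-1)^i\tilde{\underline{Z}}_{G,\partial_i m,\mathcal{P}}+\sum\delta_{e'}\tilde{\underline{Z}}_{G',m',\mathcal{P}}$, is transversal, and is $o(1/n)$-close to the local models. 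That last property gives the convergence condition (\ref{convergence-loop-isotopy}), and the cluster condition then follows as in Lemma \ref{cluster-flat-lemma-iso}.

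When $E^{ex}(G)=\emptyset$ and $H(G)\neq\emptyset$, I would treat the pair $(m,\,m\sqcup\{E(G)\})$ simultaneously, as in Lemma \ref{nice-extension-isotopy}. This means choosing an auxiliary chain $\tilde{\underline{Z}}^{\dagger}_{G,m\sqcup\{E(G)\}}$ that matches the ends, defining $\tilde{\underline{Z}}^{prel}_{G,m}$ by the displayed signed sum, and obtaining $\tilde{\underline{Z}}_{G,m\sqcup\{E(G)\}}$ from a small cobordism. All of these choices would be made localized inside dilated disks so that convergence is preserved. When $H(G)=\emptyset$, dimensional reasons together with the translation condition should make the relevant chains vanish or be determined, as in the lemma with $H(G)=\emptyset$, $G\neq G^{\heartsuit}$. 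The remaining obstruction (\ref{definition-obstruction}) is checked directly in the limit.

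I expect the main obstacle to be the third and fourth steps together. The transversality perturbation has to be quantitatively small, $o(1/n)$ in $C^k$ after rescaling by $n$, while simultaneously matching three pieces of prescribed data: the cluster region, the two ends, and the boundary. The key input is that all the local mismatches come from curvature of $M$ and from how $\tilde{\mathbf{fr}}$ varies across a ball of radius $\rho/n$, and both effects are $O(1/n^2)$. So I would first construct the chain in exponential coordinates as an exact flat model and then correct it by a cobordism whose size is controlled by those curvature terms.
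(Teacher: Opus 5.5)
Your proposal is correct and follows essentially the same route as the paper: induction over $(G,m)$ and then over partitions $\mathcal{P}$; constants $(\rho_{\mathcal{P}},\eta_{\mathcal{P}})$ chosen so that the cluster conditions hold for $\tilde{Z}^{\flat}$, for the ends $Z^{\wr,0},Z^{\wr,1}$ and for the previously built chains; the glued cluster chain on $U^{\wr,cluster,(n)}_{\mathcal{P}}\times\R$; the local models $\exp_{\mathbf{z}}(Dil_{1/n}(\cdots))$ built from $\tilde{Z}^{\flat}$; and a final transversality correction that matches the cluster region, the ends and the boundary while preserving convergence. The only difference is that the paper runs this local-model construction uniformly for every $(G,m)$ and does not single out the removable-edge, $E^{ex}(G)=\emptyset$ or $H(G)=\emptyset$ cases, so your extra $(m, m\sqcup\{E(G)\})$ device is a harmless refinement rather than a needed step, since the flat isotopy already prescribes both chains up to $o(1/n)$ errors.
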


\begin{proof}

Assume we have defined $\tilde{Z}_{\prec G}^{\wr}$. We want to show that there exists $\tilde{Z}_{\preccurlyeq G}^{\wr}$ extending $\tilde{Z}_{\prec G}^{\wr}$ .




Assume  we have defined   $ \underline{\tilde{Z}}^{(n)}_{G,  m }  $ for $|m| <l$. Take $m$ with $|m|=l$.

Take $\rho^{\dagger}$ big enough such that the support property of radius $\rho^{\dagger}$ is satisfied for  $\underline{\tilde{Z}}^{\wr} $ for $(G',m'))$ when $(G',m') \prec (G,m)$ and for  $\underline{\tilde{Z}}^{\flat} $  for $(G',m')$ when $(G',m') \preccurlyeq (G,m)$.


We want to define $(\underline{\tilde{Z}}^{(n)}_{G,m, \mathcal{P}}, \rho_{\mathcal{P}},   \eta_{\mathcal{P}})_{\mathcal{P}}$ such that

\begin{itemize}

\item  transversality condition, convergence condition and cycle condition are satisfied for   $\underline{Z}^{\wr}_{G,m, \mathcal{P}}$;




\item Conditions (\ref{covering-boundary-condition-nuclear}), (\ref{not-overlapping-condition-nuclear}) and  (\ref{disjoint-disks-nuclear2}) are satisfied;

\item the support condition of radius $\rho^{\dagger}$ is satisfied for  $\underline{\tilde{Z}}^{\wr}$ for $(G,m)$;

\item $\underline{\tilde{Z}}^{\wr}_{G,m, \mathcal{P}}$ and $\underline{\tilde{Z}}^{\flat}_{G,m, \mathcal{P}}$ satisfies the cluster condition for
\begin{equation}  \label{matching-preview-asymptotic-iso}
    (G',m', \mathcal{P},  \rho_{ \mathcal{P}}, \eta_{ \mathcal{P}})
    \end{equation}
when $(G',m') \preccurlyeq (G,m)$ ;


\item $Z^{\flat, 0}, Z^{\wr, 0}, Z^{\flat, 1 }, Z^{\wr, 1} $ satisfies the cluster condition for
\begin{equation}  \label{matching-inductive-lemma-asi-flat-start-end}
    (G,m, \mathcal{P}, \rho_{\mathcal{P}}, \eta_{\mathcal{P}}).
\end{equation}

\end{itemize}

Let $\mathcal{P}$ be a partition of $G$ and assume we have defined $(\underline{\tilde{Z}}^{\wr, (n)}_{G,m, \mathcal{P}'}, \rho_{\mathcal{P}'},   \eta_{\mathcal{P}'})_{\mathcal{P}' \prec \mathcal{P}}$.

Define $\rho_{\mathcal{P}}$ big enough such that (\ref{covering-boundary-condition-nuclear}) is satisfied,
then take $\eta_{\mathcal{P}}$ big enough such that condition (\ref{disjoint-disks-nuclear2}) , (\ref{not-overlapping-condition-nuclear}) ,  (\ref{matching-inductive-lemma-flat}), (\ref{matching-inductive-lemma-asi-flat-start-end}) are satisfied. 
We  also assume that condition (\ref{matching-preview-asymptotic-iso}) is satisfied for $(G',m') \prec (G,m)$.

Let $\underline{\tilde{Z}}^{  \wr, cluster, (n)}_{G,m ,\mathcal{P}} $ be the chain on $U^{cluster, (n)}_{\mathcal{P}} \times \R$ such that
\begin{equation} \label{definition-cluster-asi-iso}
 \underline{\tilde{Z}}_{G,  m  , \mathcal{P}}^{\wr, cluster, (n)}  =  \underline{\tilde{Z}}_{G,  m  , \mathcal{P}'}^{(n)} 
 \end{equation}
on $ \underline{M}_{G, {\mathcal{P}} }(\mathcal{P}', \rho_{G,m, \mathcal{P}'},   \eta_{G,m,\mathcal{P}'} )  \times \R    $, for each $\mathcal{P}' \prec \mathcal{P}$. Conditions (\ref{matching-preview-asymptotic-iso}) and  (\ref{not-overlapping-condition-asymptotic}) imply that conditions (\ref{definition-cluster-asi-iso}) are compatible for different $\mathcal{P}'$.

$ \underline{\tilde{Z}}_{G,  m  , \mathcal{P}}^{ cluster, (n)}$ satisfies the transversality condition.



As in ( \ref{convergence-cluster}), the convergence condition for $\mathcal{P}' \prec \mathcal{P}$ yields
\begin{equation} \label{convergence-cluster-iso}
 Dil_n  ( \text{exp}^{-1}_{\mathbf{z}} ((D_{\rho/n,\mathbf{z}} \times \R) \cap \tilde{Z}_{G,m , \mathcal{P}}^{\wr, cluster, (n)}  )) \rightarrow (D_{\rho,\mathbf{0}}^{\flat} \times \R)  \cap  \underline{\tilde{Z}}^{\flat , cluster}_{G,m, \mathcal{P}}   
 \end{equation}
uniformly for $\mathbf{z}  \in  Diag_{\mathcal{P}}$.


The inductive convergence condition yields
\begin{equation} \label{convergence-boundary-iso}
Dil_n   \text{exp}^{-1}_{\mathbf{z}} ((  \underline{D}_{\rho/n,\mathbf{z}} \times \R) \cap (\sum_i (-1)^i \underline{\tilde{Z}}_{G, \partial_i m , \mathcal{P}}^{\wr (n)} + \sum_{e'}  \delta_{e'} \underline{\tilde{Z}}_{G',m', \mathcal{P} }^{\wr,  (n)}))  \rightarrow (\underline{D}_{\rho,\mathbf{0}}^{\flat} \times \R) \cap \partial  \underline{\tilde{Z}}^{\flat }_{G,m, \mathcal{P}}  .
 \end{equation}
uniformly for $\mathbf{z}  \in  Diag_{\mathcal{P}}$.

From (\ref{matching-preview-asymptotic-iso}) and the inductive cycle condition we have  
\begin{equation} \label{boundary-cluster-iso}
\partial \underline{\tilde{Z}}_{G,  m  , \mathcal{P}}^{\wr, cluster, (n)} = \sum_i (-1)^i \underline{\tilde{Z}}_{G, \partial_i m , \mathcal{P}}^{\wr,  (n)} + \sum_{e'}  \delta_{e'} \underline{\tilde{Z}}_{G',m', \mathcal{P} }^{\wr, (n)} 
\end{equation}
on $U^{\wr, cluster, (n)}_{\mathcal{P}} \times \R$.

From  (\ref{matching-inductive-lemma-asi-flat-start-end}) we have 
\begin{equation}  \label{start-end-cluster}
{ {\underline{\tilde{Z}}}}_{G,m , \mathcal{P}  }^{\wr, cluster,(n), < -\lambda^{\dagger}} = \underline{Z}_{G,m, \mathcal{P}  }^{\wr, 0, (n)} \times \R_{<-\lambda^{\dagger}}, \quad {\underline{\tilde{Z}}}_{G,m , \mathcal{P} }^{\wr, cluster, (n), > \lambda^{\dagger}} =  \underline{Z}_{G,m }^{\wr,1 , (n)} \times \R_{>\lambda^{\dagger}}
\end{equation}
on $U^{\wr, cluster, (n)}_{\mathcal{P}} \times \R_{<-\lambda^{\dagger}}, U^{\wr, cluster, (n)}_{\mathcal{P}} \times \R_{>\lambda^{\dagger}}$,
for $\lambda^{\dagger} \gg 0$ and $n \gg 0$.



As in the proof of Proposition  \ref{existence-asymptotic-proposition}, using
( \ref{convergence-cluster-iso}) ,  ( \ref{convergence-boundary-iso}), (\ref{boundary-cluster-iso}) and ( \ref{start-end-cluster}), starting from the approximative solution
$$  (\underline{D}_{\rho_{\mathcal{P}},\mathbf{z}} \times \R) \cap \underline{\tilde{Z}}_{G,  m , \mathcal{P} }^{prel,\mathbf{z},(n)} \approx \text{exp}_{\mathbf{z}}(Dil_{1/n} (  ( \underline{D}_{\rho_{\mathcal{P}},\mathbf{0}}^{\flat} \times \R)  \cap  \underline{\tilde{Z}}^{\flat }_{G,m, \mathcal{P}}  )) ,$$ 
 we may construct a chain $\underline{\tilde{Z}}_{G,  m , \mathcal{P} }^{\wr, (n)}$ such that
 \begin{itemize}
\item  $ \underline{\tilde{Z}}_{G,  m  , \mathcal{P}}^{\wr, (n)}  =  \underline{\tilde{Z}}_{G,  m  , \mathcal{P}}^{\wr, cluster, (n)} $ on $U^{\wr, cluster, (n)}_{\mathcal{P}}$,
\item $$ {\underline{\tilde{Z}}}_{G,m , \mathcal{P}  }^{\wr,(n), < -T} = \underline{Z}_{G,m, \mathcal{P}  }^{\wr, 0, (n)} \times \R_{<-\lambda^{\dagger}}, \quad \underline{\tilde{Z}}_{G,m , \mathcal{P} }^{\wr, (n), > T} =  \underline{Z}_{G,m }^{\wr,1 , (n)} \times \R_{>\lambda^{\dagger}}$$
for $T \gg 0$ and $n \gg 0$,
\item the transversality condition, the cycle condition and the convergence condition are satisfied for $(G,  m  , \mathcal{P})$  .
\end{itemize}


\end{proof}

\section{Reduced $PSH$}
 

We now consider a suitable quotient of $PSH$ which has  a natural structure of module over the ring of formal power series $\Q[[\hbar, a]]$, where $\hbar$ and $a$ are formal variables.

Consider decorated graphs with two extra parameters  $\chi_{\vartriangle} \in \Z_{\leq 0} , d_{\vartriangle} \in \Z_{\geq 0}$. Hence we consider arrays
$$  ( \chi_{\vartriangle} , d_{\vartriangle} ,  Comp_0 , (V_c, d_c , g_c)_c , (H_v )_v  , E ) .$$
 Using these graphs  we define $PS$-cycles analogously as before. 

On the set of the graphs we consider the equivalence relation generated by
 \begin{multline} \label{equivalence-zero-component}
       ( \chi_{\vartriangle} , d_{\vartriangle} ,  Comp_0 , (V_c, d_c , g_c)_c , (H_v )_v  , E ) \sim  \\   
( \chi_{\vartriangle} + \chi_{c'} , d_{\vartriangle} + d_{c'} ,  Comp_0 \setminus c' , (V_c,d_c, g_c)_c , (H_v )_v  , E ).
 \end{multline}
if $c' \in Comp_0$ is a component with $V_{c'} = \emptyset$,

Let $\mathcal{Z}^{null}$ be the set of $PS$-cycles $Z$ satisfying the following property: for any equivalence class of graphs (\ref{equivalence-zero-component}) $T$ and $m$ a marking of $T$ we have
$$ \sum_{[G] = T} Z_{G,m}=0.$$

Set
$$PSH = PSH/ \mathcal{Z}^{null}. $$



The ring of formal power series $\Q[[\hbar, a]]$ acts on $PSH$:
  the action of  $\hbar^{k_0} a^{d_0}$ on $PSH$ is yielded by the action on $\mathfrak{G}$ given by 
$$ \chi_{\vartriangle} (\hbar^{k_0} a^{d_0} G) = \chi_{\vartriangle} (G) - k_0,  d_{\vartriangle} (\hbar^{k_0} a^{d_0} G) = d_{\vartriangle}(G) + d_0 .$$

\subsection{Computation}

In the context of reduced $PSH$ we need to consider the obstruction defined in subsection \ref{obstruction-section}  up to elements of $\mathcal{Z}^{null}$. This means that  we are interested to 
$$ \sum_{[G] = T} \mathbf{obs}_{G}$$
for any equivalence class of graphs (\ref{equivalence-zero-component}) $T$.

\begin{proposition} \label{one-dimension}
 $PSH(M|  \underline{Z}^{\circledcirc}  )$ is a rank one free module over $\Q[[\hbar, a]]$.
\end{proposition}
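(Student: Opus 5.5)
The plan is to run an obstruction-theoretic induction over the partial order $\prec$ on $\mathfrak{G}_*$ (extended to graphs carrying the extra parameters $\chi_{\vartriangle}, d_{\vartriangle}$). The extension lemmas show that the only graphs carrying any information are those with $H(G)=\emptyset$ and $V(G)=\emptyset$, that is, graphs all of whose components are vertex-free. By the equivalence relation (\ref{equivalence-zero-component}), every such graph is equivalent to the empty graph $G^{\heartsuit}$ decorated with parameters $(\chi_{\vartriangle}, d_{\vartriangle})$. These classes are therefore indexed by pairs $(k,d)\in\Z_{\geq 0}^2$, corresponding to monomials $\hbar^k a^d$. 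The chain attached to such a class is a $0$-chain on a point, i.e. a rational number, and I expect the whole class $[Z]$ to be determined by the resulting power series.

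First I will show that every nice $PS$-cycle is determined up to isotopy by the formal series
\[
c(Z)=\sum_{k,d} \Bigl(\sum_{[G]=T_{k,d}} \underline{Z}_{G}\Bigr)\hbar^k a^d \in \Q[[\hbar,a]],
\]
where $T_{k,d}$ denotes the equivalence class of vertex-free graphs with total parameters $(k,d)$. Given two nice $PS$-cycles $Z^0, Z^1$, I build an isotopy graph by graph. For $H(G)\neq\emptyset$, Lemma \ref{nice-extension-isotopy} (in its nice form, with empty limit support) extends the isotopy. For $H(G)=\emptyset$ with $V(G)\neq\emptyset$, the isotopy is obtained by shrinking $w_v$, as in Subsection \ref{obstruction-section}. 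For $H(G)=V(G)=\emptyset$, Lemma \ref{obstruction-lemma} says that the only obstruction is $\mathfrak{obs}_G$, which is independent of $n$. Working modulo $\mathcal{Z}^{null}$, only $\sum_{[G]=T}\mathfrak{obs}_G$ matters. I will then argue that the correction term $pr_*(\sum\delta_{v'}\tilde{\underline{Z}}_{G'})$ in (\ref{definition-obstruction}) sums to zero over each class $T$. The idea is that $\delta_{v'}$ moves a vertex into $d_c$, and components with $V_c=\emptyset$ are absorbed into $(\chi_{\vartriangle},d_{\vartriangle})$, so these contributions telescope inside $T$; alternatively, they vanish by a dimension count, since they are $0$-chains produced from $1$-chains shrinking to points, which is the argument in the Remark following Lemma \ref{obstruction-lemma}. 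Consequently the summed obstruction equals the $T$-component of $c(Z^1)-c(Z^0)$. This shows that $c$ descends to an injective $\Q[[\hbar,a]]$-linear map $PSH\to\Q[[\hbar,a]]$; linearity holds because the action of $\hbar^{k_0}a^{d_0}$ simply shifts $(\chi_{\vartriangle},d_{\vartriangle})$.

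Next, for surjectivity and freeness, I will construct, for any prescribed series $f$, a nice $PS$-cycle with $c(Z)=f$. Lemma \ref{extension-nice} lets me extend inductively over all graphs with $H(G)\neq\emptyset$. At vertex-free graphs there is no constraint beyond the cycle condition: the possible $\delta_{e'},\delta_{v'}$ boundary terms are $0$-chains of the wrong degree, as in the $H(G)=\emptyset$ lemma for flat cycles. So I am free to set $\underline{Z}_{G^{\heartsuit},(k,d)}$ equal to the coefficients of $f$. Taking $f=1$ yields a generator $[Z_1]$, and $c$ gives the isomorphism $PSH\cong\Q[[\hbar,a]]\cdot[Z_1]$.

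I expect the main obstacle to be the vanishing, modulo the equivalence (\ref{equivalence-zero-component}), of the $\delta_{v'}$ term in the obstruction. This is the step that ensures $c$ is well defined on isotopy classes and not merely that the obstruction is $n$-independent. I would first attempt to prove it using the limit-support condition: since the components shrink, the isotopy $\tilde{\underline{Z}}_{G'}$ for a graph $G'$ with one empty vertex can be made, for large $n$, a product of a constant loop with an $\R$-direction. Its $N_0$ face, projected to $\R$, then gives a $1$-chain whose $0$-dimensional part pairs trivially once the contributions within a class $T$ are summed.
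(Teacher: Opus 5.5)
Your overall scheme (obstructions concentrated at graphs with $H(G)=V(G)=\emptyset$, a generator with $\underline{Z}_{G^{\heartsuit}}=1$, surjectivity by absorbing each obstruction into a coefficient of $q$) is the paper's. However, the step that carries your well-definedness and injectivity is false: the correction $\sum_{[G]=T}pr_*(\sum_{(G',v')}\delta_{v'}\tilde{\underline{Z}}_{G'})$ in (\ref{definition-obstruction}) does not vanish in general.

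\emph{Why the correction survives.} The relation (\ref{equivalence-zero-component}) only absorbs components with $V_c=\emptyset$. The graphs $G'$ carrying the empty vertex $v'$ are not in $T$, so nothing telescopes inside $T$. The dimension count fails as well. $\delta_{v'}\tilde{\underline{Z}}_{G'}$ consists of the endpoints of the $1$-chain $\tilde{\underline{Z}}_{G'}$ lying on the special face $N_0$, where the loop becomes constant; this is a $0$-chain of exactly the right degree. Computing $\partial\, pr_*\tilde{\underline{Z}}_{G'}$ shows that its multiplicity equals, up to $\delta_{e'}$ and $\delta_{v''}$ terms, the change in total multiplicity of $\underline{Z}_{G'}$ between the two ends, and no shrinking affects this. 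Moreover, for well-definedness you must handle an arbitrary isotopy, not one you choose. The Remark after Lemma \ref{obstruction-lemma} is about isotopies of isotopies, so it does not apply here.

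\emph{A counterexample.} Start from the trivial isotopy $Z^0\times\R$. On a one-vertex graph $G'$ with $H_{v'}=\emptyset$, add the Borel--Moore half-line $\lambda\in[0,\infty)$ of small loops contracting to a constant loop at $\lambda=0$, with diameters tending to $0$ in $n$. The cycle condition at $G'$ survives because $\partial^{red}$ discards that face. At $G=\delta_{v'}G'$, however, it forces $\underline{Z}^1_G=\underline{Z}^0_G\pm1$. Complete the isotopy with Lemma \ref{extension-isotopies-inductive}, whose $H(G)=\emptyset$ case shifts $\underline{Z}^1_G$ by exactly such a $pr_*\delta$-term. This gives $[Z^1]=[Z^0]$ while $c(Z^1)-c(Z^0)=\pm\hbar^k a^d$, with $(k,d)$ labelling the class of $G$, and $Z^1-Z^0$ is not null. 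So $c$ does not descend to $PSH$, and the identity ``summed obstruction $=$ $T$-component of $c(Z^1)-c(Z^0)$'' is wrong.

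\emph{What is missing.} You need the paper's sum over graphs with empty vertices, $\tilde{\underline{Z}}^{\star}_G=\sum_{G'}\pi_*(\tilde{\underline{Z}}_{G'})$ over all $G'$ with $\delta_{V^{\star}(G')}G'=G$. In this sum the special faces $N_{0,v}$ of the chains $\tilde{\underline{Z}}_{G'}$ cancel the $\delta_{v}$ terms of the cycle condition, leaving $\partial\tilde{\underline{Z}}^{\star}_G=\sum\delta_{e'}\tilde{\underline{Z}}^{\star}_{G'}$. Hence any invariant must also see the graphs with $H=\emptyset\neq V$, where an empty vertex plays the role of a unit of $d$. It must still cope with $\delta_{e'}$ terms coming from graphs with half-edges, which is why the paper writes down no explicit invariant.

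\emph{How the paper proceeds.} Injectivity is a leading-term argument: for the lowest monomial $\hbar^{\kappa_0}a^{d_0}$ of $q$ the $\star$-chain is asserted to be closed, so the nonzero number $\underline{Z}_{\hbar^{\kappa_0}a^{d_0}G^{\heartsuit}}$ would be a boundary. Surjectivity builds an isotopy from $Z$ to $qZ^{\heartsuit}$ graph by graph, replacing $q\leadsto q+r\,a^{d(G)}\hbar^{\kappa(G)}$ whenever $\mathfrak{obs}_G=r$; the same $\star$-argument gives independence of $q$ from $n$. Your surjectivity step can be recast in this form, but freeness and well-definedness need the $\star$-construction rather than the formula for $c$.
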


\begin{proof}

Denote by $G^{\heartsuit}$ the trivial decorated graph with $Comp_0= \emptyset$.

Fix a nice $PS$-cycle $Z^{\heartsuit} \in \mathcal{Z}$ with  
\begin{equation} \label{generator-prop2}
\underline{Z}^{\heartsuit}_{G^{\heartsuit}}= 1.
\end{equation}
The existence of $\underline{Z}^{\heartsuit}$ may be seen applying recursively Lemma \ref{extension-nice}.

We claim that the map
$$    \Q[[\hbar, a]] \rightarrow  PSH(M|  \underline{Z}^{\circledcirc}  )$$
\begin{equation} \label{iso-map}
  q(\hbar, a) \mapsto  q(\hbar, a)  Z^{\heartsuit} 
\end{equation}
is an isomorphism.

We first  prove that  (\ref{iso-map}) is injective. Assume that there exists an isotopy $\tilde Z$ between $q(\hbar,a) Z^{\heartsuit}$ and a null $PS$-cycle $Z^{null}$.

Let  $(\kappa_0, d_0)$ be the leading term appearing in the formal power series  $q$. So   
$ Z_{\prec \hbar^{\kappa_0} a^{d_0} G^{\heartsuit}} =0$. 


For each $ G \in  \mathfrak{G}^{\star}  $ set
$$\tilde{\underline{Z}}_G^{\star} = \sum_{G'} \pi_*( \tilde{\underline{Z}}_{G'} )$$
where the sum run over the graphs $G' \in \mathfrak{G}$ such that $\delta_{V^{\star}(G')} G' =G$ and $\pi$ denotes the obvious  projection $\mathfrak{L}_{\hat{V}(G')}(M) \rightarrow \mathfrak{L}_{\hat{V}(G)}(M)$.


Using the cycle condition   we have 
\begin{equation} \label{cycle-star}
  \partial  \tilde{\underline{Z}}_{G}^{\star}  =    \sum_{(G', e')}  \delta_{e'} \tilde{\underline{Z}}_{G'}^{\star} . 
  \end{equation}
The terms in $\delta_{v'}$ are cancelled in the sum.


In particular we have $\partial \tilde{\underline{Z}}^{\star}_{ \hbar^{\kappa_0} a^{d_0} G^{\heartsuit}}= 0$ and hence 
$$  \underline{Z}_{ \hbar^{\kappa_0} a^{d_0} G^{\heartsuit}} = \partial  pr_* (\tilde{\underline{Z}}^{\star}_{ \hbar^{\kappa_0} a^{d_0} G^{\heartsuit}})  .$$
Since $ \underline{Z}_{ \hbar^{\kappa_0} a^{d_0} G^{\heartsuit}} \in \Q$ is different from zero by definition, we obtain a contraction.

Now we prove that the map (\ref{iso-map}) is surjective. We need to show that for each $Z \in \mathcal{Z}$  there exists a formal power series  $q(\hbar,a) \in \Q[[\hbar ,a]]$ such that $Z$ and $q(\hbar, a)  Z^{\heartsuit}$ are isotopic up a null $PS$-cycle.  We shall construct the formal power series $q$ using an inductive argument on the graphs.


Assume we have defined $q$  and $\tilde{\underline{Z}}_{\prec  G}^{(n)}  \in \tilde{\mathcal{Z}}_{\prec G} $ isotopy between   $ \underline{Z}_{\prec  G}^{(n)}$ and $ (q(\hbar,a) \underline{Z}^{\heartsuit,(n)})_{\prec G} $.

The obstruction to the existence of $\tilde{\underline{Z}}_{\preccurlyeq G} $ are concentrated on the graphs $G$ with  $H(G) = \emptyset$, $V(G) = \emptyset$. Let  $r \in \Q$ such that $\mathfrak{Obs}_G = r $. The replacement  
   $$ q  \leadsto q + r a^{d(G)}  \hbar^{\kappa(G)} $$ 
makes the obstruction to vanish.


   

Finally, an argument similar to the one used in the proof of  the injectivity above yields the independence of  $q$ by $n$, at least for $n$ big enough.

\end{proof}


\begin{thebibliography}{10}


\bibitem{AS1}  
S. Axelrod and I. M. Singer, \emph{Chern-Simons perturbation theory}, Proceedings of the XXth International Conference on Differential Geometric Methods in Theoretical Physics, Vol. 1, 2 (New York, 1991), World Sci. Publ., River Edge, NJ, 1992, pp. 3-45. MR 1225107

\bibitem{AS2}   S. Axelrod and I. M. Singer,  \emph{Chern-Simons perturbation theory. II}, J. Differential Geom. 39 (1994), no. 1, 173-213. MR 1258919















\bibitem{CS} V. Iacovino, \emph{Master Equation and Perturbative Chern-Simons theory}, arXiv:0811.2181.














\bibitem{MCH} V. Iacovino,  \emph{Open Gromov-Witten theory without Obstruction}, arXiv:1711.05302.













\bibitem{OGW-QME1} V. Iacovino,  \emph{Quantum Master Equation and Open Gromov-Witten theory}, arXiv:2412.04230.

\bibitem{OGW-QME2}  V. Iacovino,  \emph{Quantum Master Equation and Open Gromov-Witten theory 2},  arXiv:2412.06450.




\bibitem{PSPCS2} V. Iacovino,  \emph{Point-Splitting Perturbative Chern-Simons II}.


\bibitem{PSPCS3} V. Iacovino,  \emph{Point-Splitting Perturbative Chern-Simons III}, in preparation. 












\end{thebibliography}
\end{document}